\documentclass[12pt]{amsart}
\usepackage[margin=1in,includeheadfoot]{geometry}
\usepackage{amsmath}
\usepackage{slashed}
\usepackage{graphicx}

\usepackage[margin=1in,includeheadfoot]{geometry}
\usepackage{amsmath}
\usepackage{slashed}
\usepackage{graphicx}
\usepackage{mathrsfs}
\usepackage{txfonts}
\usepackage{amssymb,dsfont}
\usepackage{enumerate}
\usepackage{slashed}
\usepackage{fancyhdr}
\usepackage{aliascnt}
\usepackage{pdfsync}
\usepackage{hyperref}
\usepackage{color}
\newtheorem{thm}{Theorem}[section]
\newtheorem{cor}[thm]{Corollary}
\newtheorem{lem}[thm]{Lemma}
\newtheorem{prop}[thm]{Proposition}
\theoremstyle{definition}

\theoremstyle{remark}

\numberwithin{equation}{section}

\newcommand{\eps}{\epsilon}

\newcounter{stepnum}

\def\bee{\begin{eqnarray}}
	\def\beee{\begin{eqnarray*}}
		\def\eee{\end{eqnarray}}
	\def\eeee{\end{eqnarray*}}

\def\ba{\begin{array}}
	\def\ea{\end{array}}

\def\S{\mathbb S}
\def\R{\mathbb R}

\begin{document}
	
	\title[Double mean field equation]{On the existence problem for a double mean field equation and its application to prescribed curvature problems}

	\author[]{}
	\address{}
	\email{}

	\author[Li]{Jiayu Li}%
\address{School of Mathematics, Nanjing University, 22 Hankou Road, Nanjing 210093,  People's Republic of China}%
\email{jiayuli@nju.edu.cn}%
	
	\author[Liu]{Lei Liu}
	\address{School of Mathematics and Statistics, Key Laboratory of Nonlinear Analysis and Applications (Ministry of Education), Hubei Key Laboratory of Mathematical Sciences, Central China Normal
		University, Wuhan, 430079, People's Republic of China}%
	\email{leiliu2020@ccnu.edu.cn}

	\author[Liu]{Yanjun Liu}%
	\address{School of Mathematical Sciences, Chongqing Normal University, Chongqing 401331, People's Republic of China}%
	\email{liuyj@mail.nankai.edu.cn}%

\author[Zhu]{Mingyuan Zhu}
	\address{School of Mathematics and Statistics,  Central China Normal
		University, Wuhan, 430079, People's Republic of China}
	\email{2056396052@qq.com}
	
	\thanks{}

	\subjclass[2010]{}
	\keywords{Double mean field equation, prescribed curvature problem, blow-up analysis}
	
	\date{\today}
	\begin{abstract}
		Let $(M,g)$ be a compact Riemann surface with smooth boundary $\partial M$. Let $K(x)>0$, $h(x)> 0$ or $h(x)\equiv 0$ be two smooth functions on $M$ and $\partial M$ respectively. In this paper, we consider the following double mean field type equation with a nonlinear Neumann boundary condition related to a prescribed curvature problem
\begin{equation}\label{equation:01}
		\begin{cases}
			\displaystyle
			-\Delta_g u+\frac{4\pi}{|M|}
			=2\rho\frac{Ke^u}{\int_MKe^u\,dv_g},
			& \text{in }M,\\[1.2em]
			\displaystyle
			\partial_\nu u
			=2\rho'\frac{he^{u/2}}{\int_{\partial M}he^{u/2}\,ds_g},
			& \text{on }\partial M,
		\end{cases}
	\end{equation} where $\rho> 0,\ \rho'\geq 0$ and $\rho+\rho'=2\pi$, $\nu$ is the outward unit normal vector to $\partial M$. When $h\equiv 0$, the second equation changes to $\partial_\nu u=0$ on $\partial M$. By introducing a boundary integration constant \eqref{Def:integration-constant}, we obtain a sufficient condition under which \eqref{equation:01} admits a solution.

As a geometric application, we solve the prescribed curvature equation for both the Gauss curvature and the geodesic curvature in the case that $M$ is topologically a disk.  When $(M,g)$ is the standard hemisphere $\mathbb S^2_+$, let $p\in \partial \mathbb S^2_+$ be a maximum point of $$
	\Phi(q)=h(q)+\sqrt{h(q)^2+K(q)},
	\qquad q\in\partial \mathbb S^2_+.
	$$ Suppose
	\begin{equation*}
		K_\nu(p)+2\Phi(p)(-\Delta_{\partial \mathbb S^2_+})^{1/2}h(p)<0.
	\end{equation*}
Then the prescribed curvature problem
\begin{equation*}
	\begin{cases}
		-\Delta_g u+\frac{4\pi}{|M|}=2Ke^u,
		& \text{in }\mathbb S^2_+,\\
		\partial_\nu u=2he^{u/2},
		& \text{on }\partial \mathbb S^2_+.
	\end{cases}
	\end{equation*}
admits a smooth solution.

When $(M,g)$ is the standard unit disk $\mathbb D$, let $H\in C^\infty(\overline{\mathbb D})$ be the harmonic extension of $h\in C^\infty(\partial \mathbb D)$ and $p\in \partial \mathbb D$ be a maximum point of $$
	\Phi(q)=H(q)+\sqrt{H(q)^2+K(q)},
	\qquad q\in\partial \mathbb D.
	$$ Suppose
	\begin{equation*}
		\Phi_\nu(p)<0.
	\end{equation*}
Then the prescribed curvature problem
\begin{equation*}
		\begin{cases}
			-\Delta u=2Ke^u,
			&\text{in }\mathbb D,\\
			\partial_\nu u+2=2he^{u/2},
			&\text{on }\partial\mathbb D.
		\end{cases}
	\end{equation*}
admits a smooth solution.
	\end{abstract}
	\maketitle
	\section{Introduction}

	\

Let $(M,g)$ be a closed Riemann surface. The classical Kazdan-Warner problem \cite{Kazdan-Warner} asks whether a prescribed function $K$ can be realized as the Gauss curvature of a new metric $\tilde{g}$ conformal to $g$. Let $\Pi$ be the Gauss curvature of $(M,g)$. Then  the Kazdan-Warner problem reduces to solving the following equation $$-\Delta_g u+2\Pi=2Ke^u,\ \
		 \text{in }M.$$ This problem has been well understood in the case of a torus  \cite{Kazdan-Warner}.

When $(M,g)$ is the standard sphere $\mathbb{S}^2$, this problem is particularly  delicate and receives the name of Nirenberg's problem, which is equivalent to solving
\begin{equation}\label{equation:19}
-\Delta_g u+2=2Ke^u,\ \
		 \text{in }\mathbb{S}^2.\end{equation}
This question has been addressed in a large number of papers. See for instance \cite{Aubin,Chang1993,ChangYang1987,ChangYang1988,Chen-Ding1987,Chen-Ding1988,Chen-Li1995,Kazdan-Warner,Struwe2005} and others.

Analytically, it is also interesting to consider \eqref{equation:19} in a general closed Riemann surface $(M,g)$ and to ask under what kind of conditions on $K$, the equation
\begin{equation}\label{equation:20}
-\Delta_g u+\frac{8\pi}{|M|}=2Ke^u,\ \
		 \text{in } M\end{equation} has a solution. By minimizing the functional corresponding to Moser-Trudinger's inequality, Ding-Jost-Li-Wang \cite{DingJostLiWang} proved that: suppose $|M|=1$ and $p$ is a maximum point of $A(x)+2\log K(x)$ in $M$ where $A$ is the constant term in the expansion of Green's function, if $$\frac{\Delta_gK(p)}{K(p)}-\frac{|\nabla_g K|^2(p)}{K^2(p)}>-(8\pi-2\Pi(p)),$$  then \eqref{equation:20} admits a solution.
	
	Now let $(M,g)$ be a compact connected Riemannian surface with smooth
	nonempty boundary. We denote by $\Delta_g$ the Laplace--Beltrami
	operator, by $\nu$ the outward unit normal along $\partial M$, and by
	$dv_g$ and $ds_g$ the area and boundary length elements, respectively.
	The simultaneous prescription of Gaussian curvature in $M$ and
	geodesic curvature on $\partial M$ leads to the boundary value problem
	\begin{equation}\label{equation:02}
	\begin{cases}
		-\Delta_g u+2\Pi=2Ke^u,
		& \text{in }M,\\
		\partial_\nu u+2\kappa=2he^{u/2},
		& \text{on }\partial M,
	\end{cases}
	\end{equation}
	where $\Pi$ and $\kappa$ are the Gaussian and geodesic curvatures of
	the background metric, and $K$ and $h$ are the corresponding
	curvatures of the conformal metric $e^ug$, $\nu$ is the outward normal vector field to $\partial M$. Here the sign convention
	for $\kappa$ is $\nabla_\tau\tau=-\kappa\nu$, with $\tau$ a unit
	tangent vector to $\partial M$. Integrating the equations and using
	the Gauss--Bonnet theorem gives
	$$
	\int_MKe^u\,dv_g+\int_{\partial M}he^{u/2}\,ds_g=\int_M \Pi\,dv_g+\int_{\partial M}\kappa\,ds_g=2\pi\chi(M).
	$$

The solvability of \eqref{equation:02} has been investigated in
some special settings. When $h\equiv0$, the problem consists
of prescribing the Gaussian curvature while requiring the
boundary to be geodesic. See \cite{ChangYang1988,GuoHu1995,LiuLi1993,LopezSorianoRuiz2016,Wang2001}.
The other case $K\equiv0$ concerns flat conformal
metrics with prescribed boundary geodesic curvature.
See \cite{ChangLiu1996,LiuHuang2005,Zhang2015}.
The case of constant curvatures (i.e. $K,h$ are two constants) has been considered in \cite{Brendle}, where solutions are obtained by using a parabolic flow.

Up to our knowledge, very few existence results are available for \eqref{equation:02} when both curvatures are non-constant. One of them is Cruz-Ruiz \cite{Cruz-Ruiz}, where, by introducing the parameter $\rho$, they showed that the problem \eqref{equation:15} is equivalent to the following mean field type equation
\begin{align}\label{equation:16}
		\begin{cases}
			\displaystyle
			-\Delta_g u+\frac{4\pi\chi(M)}{|M|}
			=2\rho\frac{Ke^u}{\int_MKe^u\,dv_g},
			& \text{in }M,\\[1.2em]
			\displaystyle
			\partial_\nu u
			=2(2\pi\chi(M)-\rho)\frac{he^{u/2}}{\int_{\partial M}he^{u/2}\,ds_g},
			& \text{on }\partial M,\\[1.2em]
			\displaystyle
\frac{(2\pi\chi(M)-\rho)^2}{|\rho|}=\frac{\left(\int_{\partial M}he^{u/2}\,ds_g\right)^2}{\left|\int_MKe^u\,dv_g\right|}, & \text{for }0<\rho<2\pi.
		\end{cases}
	\end{align}
In \cite{Cruz-Ruiz}, they got some existence results for the case that $M$ is the unit disk and the functions $K$ and $h$ are nonnegative with certain symmetric properties. When $\chi(M)\leq 0$, the case of surfaces topologically different from the disk,
by  using a variational approach (either by minimization of the Euler-Lagrange energy or via min-max methods) and blow-up analysis, they \cite{Lopez-Soriano-Malchiodi-Ruiz} derived some existence results. In \cite{Lopez-Soriano-Malchiodi-Ruiz}, a new and interesting blow-up phenomenon appears, where the length and area diverge.

Later, Ruiz \cite{Ruiz2024} got some existence results without symmetry assumptions for the unit disk $M$.  Precisely,
on the unit disk $\mathbb D$, let $H$ be the harmonic extension
of $h$. Denote
$$
\Phi=H+\sqrt{H^2+K}.
$$
For smooth data satisfying either $K\geq0$, $h>0$, or
$K>0$, $h\geq0$, by using blow-up analysis and Leray--Schauder degree, the author \cite{Ruiz2024} proved existence under the conditions that
$$
\nabla\Phi\neq0\quad\text{on }\partial\mathbb D,
\qquad
\deg_B(\nabla\Phi,\mathbb D,0)\neq0.
$$

Recently, Struwe \cite{Struwe2024} developed a prescribed
curvature flow for positive $K$ and $h$, based on the joint
variational formulation of Cruz-Ruiz \cite{Cruz-Ruiz}.
By analyzing boundary concentration and the associated
finite-dimensional shadow flow, he obtained further
existence criteria without symmetry assumptions.

By Proposition 3.1 in \cite{Lopez-Soriano-Malchiodi-Ruiz}, using a conformal change of metric, we can always assume $\kappa=0$ and $\Pi=\frac{2\pi\chi(M)}{|M|}$. Then \eqref{equation:02} is equivalent to
\begin{equation}\label{equation:15}
	\begin{cases}
		-\Delta_g u+\frac{4\pi\chi(M)}{|M|}=2Ke^u,
		& \text{in }M,\\
		\partial_\nu u=2he^{u/2},
		& \text{on }\partial M.
	\end{cases}
	\end{equation}

Based on \eqref{equation:15} and \eqref{equation:16},  Battaglia and  L\'opez-Soriano \cite{BattagliaLopezSoriano}  considered the following mean field type equation on a general compact Riemann surface with a smooth boundary
	\begin{equation}\label{equation:mean-field}
		\begin{cases}
			\displaystyle
			-\Delta_g u+\frac{2(\rho+\rho')}{|M|}
			=2\rho\frac{Ke^u}{\int_MKe^u\,dv_g},
			& \text{in }M,\\[1.2em]
			\displaystyle
			\partial_\nu u
			=2\rho'\frac{he^{u/2}}{\int_{\partial M}he^{u/2}\,ds_g},
			& \text{on }\partial M.
		\end{cases}
	\end{equation}
On one hand, it is easy to see that \eqref{equation:mean-field} can be derived from \eqref{equation:15} by setting $\rho=\int_MKe^u\,dv_g$ and $\rho'=\int_{\partial M}he^{u/2}\,ds_g$, which includes also the nongeometrical case $\rho+\rho'\neq 2\pi\chi(M)$. On the other hand, different from \eqref{equation:16}, not all the solutions to \eqref{equation:mean-field} correspond to \eqref{equation:15} since there is an additional third condition in \eqref{equation:16}. By using variational method and Moser-Trudinger inequality, they proved that \eqref{equation:mean-field} has solutions if $\rho<4\pi$ and $\rho+\rho'<2\pi$. They further constructed min-max solutions over a broad range of parameters, under suitable topological assumptions and away from the critical values of $\rho$ and $\rho+\rho'$. Recently, Battaglia and
L\'opez-Soriano \cite{BattagliaLopezSoriano2025} developed
another mean-field formulation of the curvature prescription
problem. On the hemisphere, they obtained existence for
$K\geq0$, $K\not\equiv0$, under a common nontrivial finite
rotational symmetry of $K$ and $h$, without imposing a sign
condition on $h$.

\

 In the present paper, we study \eqref{equation:mean-field} in the critical and geometric case, namely $$\rho+\rho'= 2\pi,\ \ 0<\rho\leq 2\pi,\ 0\leq\rho'<2\pi.$$ Precisely, we focus on the following two issues:
 \begin{itemize}
 \item[(1)] \textbf{Analytical part:} Under what conditions on $K$ and $h$, does \eqref{equation:mean-field} admit a solution?

     \

 \item[(2)] \textbf{Geometric part:} What geometric applications does \eqref{equation:mean-field} have? In particular, can we solve \eqref{equation:02} on a surface $(M,g)$ with $\chi(M)>0$, i.e., when $M$ is topologically a disk?
 \end{itemize}

 \

	Before stating our main result, we give some notations.	For $p\in\partial M$, let $G(x,p)$ be the normalized Neumann Green's
	function satisfying
	\begin{equation}\label{eq:intro-green-function}
		\begin{cases}
			\displaystyle
			\Delta_g G(x,p)=\frac{4\pi}{|M|},
			& \text{in } M,\\[0.8em]
			\displaystyle
			\frac{\partial G(x,p)}{\partial\nu}=4\pi\delta_p,
			& \text{on } \partial M,\\[0.8em]
			\displaystyle
			\int_M G(x,p)\,dv_g=0.
		\end{cases}
	\end{equation}
	In geodesic normal coordinates centered at $p$, with the positive
	$x_2$-direction pointing into $M$, we write
	\begin{equation}\label{eq:intro-green-expansion}
		\begin{aligned}
			G(x,p)
			={}&-4\log r+A(p)+b_1(p)x_1+b_2(p)x_2\\
			&+c_1(p)x_1^2+2c_2(p)x_1x_2+c_3(p)x_2^2+O(r^3),
			\qquad r=d_g(x,p).
		\end{aligned}
	\end{equation}
	In particular, $A(p)$ is continuous for $p\in\partial M$. See Lemma 2.1 in \cite{Hu-Bartsch}.

	Define the following boundary integration constant corresponding to $h$ and Green function $G$ by
	\begin{equation}\label{Def:integration-constant}
		\Lambda(G,h):=\lim_{\delta\downarrow0}
		\left(
		\int_{\partial M\setminus B_\delta(p)}
		\frac{h(x)}{h(p)}e^{G(x,p)/2}\,ds_g
		-\frac{2e^{A(p)/2}}{\delta}
		\right),
	\end{equation}
	where $B_\delta(p)=\{x\in \overline{M}:d_g(x,p)<\delta\}$.	We want to remark  that the limit $\Lambda(G,h)$ exists and is finite. See Lemma \ref{lem:boundary-green-finite-part}.

\

Now we state our first main theorem as follows.
	
	\begin{thm}\label{thm:main-01}
		Suppose $K\in C^\infty(M)$ and
		$h\in C^\infty(\partial M)$ are two positive functions and
\begin{equation}\label{eq:intro-critical-parameters}
		\rho+\rho'=2\pi,\qquad \rho>0,\qquad \rho'>0.
	\end{equation}
Suppose that $p\in\partial M$
		is a maximum point of
		$$
		A+\frac{\rho}{\pi}\log K+\frac{\rho'}{\pi}\log h^2
		\qquad\text{on }\partial M.
		$$
		If
		\begin{equation}\label{eq:intro-existence-condition}
			\begin{aligned}
				&-\frac{K_\nu(p)}{K(p)}-2\kappa(p)+
				\frac{2\rho'}{\pi\rho e^{A(p)/2}}\Lambda(G,h)>0,				
			\end{aligned}
		\end{equation} where $\Lambda(G,h)$ is the boundary integration constant defined in \eqref{Def:integration-constant},
		then $J_{\rho,\rho'}$ attains its minimizer in $W^{1,2}(M)$.
		Consequently, \eqref{equation:mean-field} admits a smooth solution.
	\end{thm}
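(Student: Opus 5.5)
We follow the Ding--Jost--Li--Wang strategy adapted to the boundary setting. Here
$$
J_{\rho,\rho'}(u)=\frac12\int_M|\nabla u|^2\,dv_g+\frac{4\pi}{|M|}\int_M u\,dv_g-2\rho\log\int_M Ke^u\,dv_g-4\rho'\log\int_{\partial M}he^{u/2}\,ds_g .
$$

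\textbf{Step 1: subcritical minimizers.} For $\epsilon>0$ small, consider $J_\epsilon$, obtained by replacing $(\rho,\rho')$ with $(1-\epsilon)(\rho,\rho')$ in the nonlinear terms. The Moser--Trudinger type inequality for surfaces with boundary (Battaglia--L\'opez-Soriano, where the sharp constant in the combined inequality corresponds to $\rho+\rho'=2\pi$) makes $J_\epsilon$ coercive on $\{u:\int_M u=0\}$. Hence a smooth minimizer $u_\epsilon$ exists, and it solves the perturbed equation. If $u_\epsilon$ is bounded in $W^{1,2}$, it converges to a minimizer of $J_{\rho,\rho'}$, and elliptic regularity gives a smooth solution. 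We therefore argue by contradiction and assume $\max u_\epsilon\to\infty$.

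\textbf{Step 2: blow-up analysis and the lower bound.} Since $\rho<4\pi$ and $\rho+\rho'=2\pi$ sit at the boundary threshold, concentration can occur only at a single boundary point $p_0\in\partial M$. Interior bubbles carry mass $8\pi$ in the $\rho$-normalization, which is too much. We rescale around the maximum point and use the classification of solutions of $-\Delta w=2Ke^w$ in $\mathbb R^2_+$ with $\partial_\nu w=2he^{w/2}$ (the Li--Zhu / Zhang type classification). This gives a bubble profile whose area and length masses split exactly as $\rho$ and $\rho'$; this compatibility is precisely the reason for the weights $\rho/\pi$ and $\rho'/\pi$.

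Away from $p_0$, $u_\epsilon-\bar u_\epsilon\to G(\cdot,p_0)$. A Carleson--Chang style neck analysis, matching the bubble with the Green function expansion \eqref{eq:intro-green-expansion}, yields
$$
\inf J_{\rho,\rho'}\ \ge\ -C_0-\pi\Bigl(A(p_0)+\tfrac{\rho}{\pi}\log K(p_0)+\tfrac{\rho'}{\pi}\log h(p_0)^2\Bigr)
$$
for an explicit constant $C_0$ depending only on $\rho,\rho'$. The right-hand side is in turn bounded below by the same expression with $p_0$ replaced by the maximum point $p$.

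\textbf{Step 3: a test function below the bound.} We construct $\phi_\epsilon$ equal to the half-bubble
$$
\log\frac{\lambda}{(1+\lambda\ldots|x-x_\epsilon|^2)^2}
$$
near $p$, centred slightly inside $M$ and with parameters tuned to $\Phi$-type constants from $K(p),h(p)$, glued at scale $R\epsilon$ to $G(\cdot,p)+$const. We then expand $J_{\rho,\rho'}(\phi_\epsilon)$ to order $\epsilon$. The first-order terms come from three sources:

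\begin{enumerate}[(i)]
\item the normal derivative of $K$ integrated over the half-disk, giving $K_\nu(p)/K(p)$;
\item the curvature of $\partial M$ in Fermi coordinates, giving $\kappa(p)$;
\item the boundary integral $\int_{\partial M\setminus B_\delta}he^{G/2}$ of the length term, whose divergent part $2e^{A/2}/\delta$ cancels against the bubble's tail, leaving exactly $\Lambda(G,h)$.
\end{enumerate}

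The area term also produces a similar finite part, but it is lower order because $e^G\sim r^{-4}$ is integrable away from the cut-off in two dimensions. This yields
$$
J(\phi_\epsilon)=\text{(bound of Step 2)}-c\,\epsilon\Bigl(-\tfrac{K_\nu}{K}-2\kappa+\tfrac{2\rho'}{\pi\rho e^{A/2}}\Lambda\Bigr)+o(\epsilon),\qquad c>0 .
$$
Under \eqref{eq:intro-existence-condition}, $J(\phi_\epsilon)$ lies strictly below the bound. This contradicts blow-up, so minimizers exist.

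\textbf{Main obstacle.} The hardest part will be the precise order-$\epsilon$ expansion in Step 3, in particular keeping track of the $1/\delta$ singular boundary contribution and showing that it matches the constant $2e^{A(p)/2}$ in \eqref{Def:integration-constant}. Lemma \ref{lem:boundary-green-finite-part} supplies the finiteness of this limit. A secondary obstacle is showing in Step 2 that the neck region contributes no energy, i.e.\ that the lower bound is sharp to order $o(1)$. For this we would use the Pohozaev identity on half-balls to control the boundary terms.
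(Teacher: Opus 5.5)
Your architecture is the paper's. You take subcritical minimizers, run the boundary blow-up with the Li--Zhu half-plane bubble and its exact mass split, compare with the Green function to get the threshold of Proposition \ref{prop:lower-bound}, and build a test function whose first-order expansion produces $K_\nu$, $\kappa$ and $\Lambda(G,h)$.

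\textbf{The gap is in Step 3.} As you describe the test bubble, it cannot even reach the threshold at order zero.

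\emph{Where the center must be.} For $\rho'>0$ the limiting profile satisfies $\partial_\nu w=\tilde h(0)e^{w/2}>0$ with $\nu$ the \emph{outward} normal. This forces the bubble center to lie outside the half-plane, not "slightly inside $M$". It also sits at a distance comparable to the bubble scale, not small relative to it. The paper's test bubble is $-2\log\bigl(\epsilon+x_1^2+(x_2+a\sqrt{\epsilon})^2\bigr)$, where $x_2$ is the inward coordinate and $a=\rho'/\sqrt{4\pi^2-\rho'^2}$, i.e.\ $a/\sqrt{1+a^2}=\rho'/(2\pi)$. Its center $(0,-a\sqrt{\epsilon})$ lies outside $M$.

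\emph{What fixes the offset.} The offset-to-scale ratio is determined by $\rho'$ alone. $J_{\rho,\rho'}$ is invariant under $u\mapsto u+c$, and multiplying $K$ or $h$ by a constant only shifts $J_{\rho,\rho'}$ by a constant. So $K(p)$ and $h(p)$ cannot enter the shape. The $\Phi$-constants belong to the hemisphere problem, where $\rho$ is optimized as well.

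\emph{Why the wrong offset fails.} Allow a signed $a$ and set $s=a/\sqrt{1+a^2}\in(-1,1)$. Up to an $s$-independent constant, the $O(1)$ part of $J_{\rho,\rho'}(\phi_{\epsilon,\delta})$ is $-(4\pi+2\rho')\log(1+s)+4\pi s$. This is strictly convex, with unique minimum at $s=\rho'/(2\pi)>0$, where it equals the threshold. A center inside $M$ or on $\partial M$ ($s\le 0$), or an offset tuned to $K(p),h(p)$, leaves $J$ a fixed positive amount above the lower bound. No first-order term can compensate for that.

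\textbf{Smaller points.}
\begin{enumerate}
\item The threshold in Step 2 is $-2\pi\bigl(A+\frac{\rho}{\pi}\log K+\frac{\rho'}{\pi}\log h^2\bigr)+\text{const}$, not $-\pi(\cdots)$. On the test-function side the coefficient of $A$ is $2\pi-2\rho-2\rho'=-2\pi$.
\item Besides $\kappa(p)$ and $K_2(p)/K(p)$, the first-order coefficient naturally involves the linear Green coefficient $b_2(p)$. To reach the stated condition you need the identity $b_2=-2\kappa$ of Lemma \ref{lem:04}, obtained from the flux of $G$ through small half-circles.
\item The area tail is negligible because of scaling, not integrability. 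In the paper's normalization (bubble scale $\sqrt{\epsilon}$) it carries a factor $\epsilon$, so even its $\delta^{-2}$ growth is $o(\sqrt{\epsilon})$. The boundary tail carries only a factor $\sqrt{\epsilon}$, so both its $2e^{A(p)/2}h(p)/\delta$ part and its finite part sit at first order. The $1/\delta$ part cancels against the truncation of the bubble's boundary mass at $r=\delta$. This is why the expansion is done at fixed $\delta$ with an $O(\delta\sqrt{\epsilon})$ error, choosing $\delta$ small before letting $\epsilon\to0$.
\item No Pohozaev argument is needed for the lower bound. The bubble's total mass $4\pi$ already equals $2(\rho+\rho')$, so no mass is lost outside it (Lemma \ref{lem:01}). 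The maximum-principle comparison of Lemma \ref{lem:02} with $G(\cdot,x_\epsilon)$, where $x_\epsilon\in\partial M$ by Lemma \ref{lem:maximum-point-on-boundary}, then gives the estimate directly.
\end{enumerate}
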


\

When $h\equiv 0$ and $\rho'=0$, we have
\begin{thm}\label{thm:main-02}
		 Let $K\in C^\infty(M)$ be a positive function,	$h\equiv 0$, $
		\rho=2\pi$, $ \rho'=0.	$
Suppose that $p\in\partial M$
		is a maximum point of
		$$
		A+2\log K
		\qquad\text{on }\partial M.
		$$
		If
		\begin{equation}
			\begin{aligned}
				&\frac{K_\nu(p)}{K(p)}+2\kappa(p)<0,				
			\end{aligned}
		\end{equation}
		then $J_{2\pi,0}$ attains its minimizer in $W^{1,2}(M)$.
		Consequently, the following equation
\begin{equation}\label{equation:17}
		\begin{cases}
			\displaystyle
			-\Delta_g u+\frac{4\pi}{|M|}
			=2 Ke^u,
			& \text{in }M,\\
			\partial_\nu u
			=0,
			& \text{on }\partial M.
		\end{cases}
	\end{equation}
 admits a smooth solution.
	\end{thm}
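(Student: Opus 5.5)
We follow the classical Ding--Jost--Li--Wang strategy adapted to the Neumann setting. We work with the functional
$$J_{2\pi,0}(u)=\frac12\int_M|\nabla_g u|^2\,dv_g+\frac{4\pi}{|M|}\int_M u\,dv_g-2\pi\log\int_M Ke^u\,dv_g ,$$
whose critical points solve \eqref{equation:17}. On a surface with boundary the sharp Moser--Trudinger constant is halved, so $2\pi$ is exactly the critical parameter, and $J_{2\pi,0}$ is bounded below but may fail to be coercive.

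\textbf{Step 1: subcritical approximation.} For $\rho_\epsilon=2\pi-\epsilon$ we have the subcritical functional $J_{\rho_\epsilon,0}$. It is coercive, so it has minimizers $u_\epsilon$, normalized by $\int_M u_\epsilon=0$. These solve the approximate equation with $\rho_\epsilon$ in place of $2\pi$. Moreover $\inf J_{\rho_\epsilon,0}\to\inf J_{2\pi,0}$, and this limit is finite. If $u_\epsilon$ is bounded in $W^{1,2}$, elliptic estimates give convergence to a minimizer of $J_{2\pi,0}$, and we are done.

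\textbf{Step 2: blow-up alternative and its energy.} Suppose instead that $\max u_\epsilon\to\infty$. Since the mass is $2\pi<4\pi$, interior blow-up is impossible: an interior bubble carries $8\pi$. So the $u_\epsilon$ concentrate at a single boundary point $p$, and $u_\epsilon-\log\int_M Ke^{u_\epsilon}$ converges to $\frac12 G(\cdot,p)$ away from $p$. Blow-up analysis (Brezis--Merle on a half-disk after reflection, plus a Li--Shafrir type classification of the half-plane bubble) identifies the profile. Capacity estimates on annuli then give the lower bound
$$\inf J_{2\pi,0}\ \ge\ -2\pi\big(1+\log\pi\big)-\pi\big(A(p)+2\log K(p)\big)$$
or the analogous explicit constant with the correct normalization. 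Since $p$ is a blow-up point, it should also be the point where the quantity is most favorable, so this bound holds in particular with $p$ replaced by the maximum point of $A+2\log K$ on $\partial M$.

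\textbf{Step 3: test functions.} For the maximizer $p$ we construct
$$\phi_\epsilon=\log\frac{\epsilon^2}{(\epsilon^2+r^2)^2}$$
in a neighborhood $B_{L\epsilon}(p)\cap M$, where $r$ is measured in the Fermi/normal coordinates of \eqref{eq:intro-green-expansion}. We glue this to $G(\cdot,p)+c_\epsilon$ outside, add the linear correction $b_1x_1+b_2x_2$ inside, and subtract the mean. We then expand $J_{2\pi,0}(\phi_\epsilon)$ to order $\epsilon$. This is the main obstacle. The order-$\epsilon$ terms come from three sources. The first is the half-disk geometry: the curvature of $\partial M$ contributes $\kappa(p)$ through the area element of the boundary region. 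The second is the first-order Taylor term of $K$ integrated over a half-disk, which gives $K_\nu(p)$ because odd tangential terms cancel while normal ones do not. The third is the $b_2$ coefficient of $G$. I expect $b_2(p)$ to be expressible via $\kappa(p)$, obtained by comparing with the Neumann condition on $G$ at $p$; this is precisely the kind of expansion done in the boundary Green's function lemma. Tracking these contributions should give
$$J_{2\pi,0}(\phi_\epsilon)=\text{(lower bound of Step 2)}+C\,\epsilon\Big(\frac{K_\nu(p)}{K(p)}+2\kappa(p)\Big)+o(\epsilon)$$
with $C>0$. The gluing errors must be checked to be $O(\epsilon^2\log\epsilon)$.

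\textbf{Step 4: conclusion.} Under the hypothesis $\frac{K_\nu(p)}{K(p)}+2\kappa(p)<0$, the expansion of Step 3 shows that $\inf J_{2\pi,0}$ is strictly below the bound of Step 2 for small $\epsilon$. This contradicts blow-up, so the $u_\epsilon$ stay bounded and Step 1 yields a minimizer. Standard regularity then gives a smooth solution of \eqref{equation:17}.
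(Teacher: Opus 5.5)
Your plan is the paper's: minimizers of $J_{2\pi-\epsilon,0}$, exclusion of interior blow-up, a lower bound in terms of $\max_{\partial M}(A+2\log K)$, and a test function centered at $p$. For $\rho'=0$ the paper's $\phi_{\epsilon,\delta}$ has $a=0$, so the bubble sits exactly at $p$, as you propose. The paper gets the lower bound differently, not by capacity estimates: it proves $x_\epsilon\in\partial M$ and uses $G(\cdot,x_\epsilon)$ as a barrier (Lemmas \ref{lem:maximum-point-on-boundary} and \ref{lem:02}). Your capacity route is acceptable in principle, but Step 2 has the normalization wrong. With $\int_M u_\epsilon\,dv_g=0$ one gets $u_\epsilon\to G(\cdot,p)$ away from $p$, not $\frac12G$: the concentrating mass $4\pi$ is exactly the flux of $-4\log r$ through half-circles. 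Also $\log\int_MKe^{u_\epsilon}\,dv_g\to+\infty$, so subtracting it does not give a finite limit. The threshold is $-4\pi+4\pi\log\frac{2}{\pi}-2\pi\max_{\partial M}(A+2\log K)$ (Proposition \ref{prop:lower-bound} at $\rho'=0$). Your constant fails the scaling test $K\mapsto\lambda K$: $J_{2\pi,0}$ shifts by $-4\pi\log\lambda$, your bound only by $-2\pi\log\lambda$. Since Step 4 compares exact constants, this must be right. Passing from the blow-up point to the maximizer needs no heuristic, because the bound at the blow-up point dominates the one with the maximum.

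The genuine gap is Step 3, which carries the theorem and which you only predict; the error control you propose cannot hold. Your bubble $\epsilon^2/(\epsilon^2+r^2)^2$ and radius $L\epsilon$ correspond to the paper's $\epsilon/(\epsilon+r^2)^2$ and $R_\epsilon=\alpha\sqrt{\epsilon}$, so your order $\epsilon$ is its order $\sqrt{\epsilon}$.

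The separate pieces of the expansion contain terms of order $L^{-2}$ and of order $(b_2+\kappa)L\epsilon$. The latter come from the bubble energy in the sliver between the flat half-disk and $\partial M$, from the cross term with $b_2x_2$, and from the boundary term of the Green energy. For fixed $L$ the former are not small individually and the latter are of the same order as the effect you want; for $L\to\infty$ the latter dominate it.

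In the paper (with $\alpha^4\epsilon=1/\log(-\log\epsilon)$) these terms vanish only through exact cancellations across the inner, neck and outer regions:
\begin{itemize}
\item The $\alpha^{-2}$ terms cancel: $8\pi-6\pi-2\pi=0$ in the energy, and $-\frac{\pi}{2}+\frac{3\pi}{8}+\frac{\pi}{8}=0$ in $\int_MKe^{\phi_{\epsilon,\delta}}$.
\item The $R_\epsilon$ terms are $-8(b_2+\kappa)R_\epsilon$ from each of the inner and neck regions, against $8b_2R_\epsilon$ from the exterior Green energy. They cancel precisely because $b_2=-2\kappa$ (Lemma \ref{lem:04}, from the flux of $G$ through $\partial^+B_r(p)$).
\end{itemize}

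So $b_2=-2\kappa$ is not a source of the final coefficient. At $\rho'=0$ the finite $b_2$-terms cancel: $4\pi b_2\sqrt{\epsilon}$ from the energy against $-4\pi b_2\sqrt{\epsilon}$ from $-4\pi\log\int_MKe^{\phi_{\epsilon,\delta}}$. What survives is:
\begin{itemize}
\item $6\pi\kappa$ from the sliver in the energy;
\item $2\pi\kappa$ from the sliver in $\int_MKe^{\phi_{\epsilon,\delta}}$;
\item $-4\pi K_2/K=4\pi K_\nu/K$, using $K_2=\partial_{x_2}K=-K_\nu$.
\end{itemize}
Together these give $4\pi\bigl(\frac{K_\nu}{K}+2\kappa\bigr)\sqrt{\epsilon}$. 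Your predicted formula is correct, but the mechanism producing it is missing. So is the proof that all remaining terms are $o(\sqrt{\epsilon})$, apart from the $O(\delta\sqrt{\epsilon})$ errors handled by fixing $\delta$ small first.
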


\
	
	On a spherical cap, the boundary integration constant $\Lambda(G,h)$ has an explicit expression.
	Let
	$$
	M=\mathbb S^2\cap\{x_3\geq\cos\theta_0\},
	\qquad 0<\theta_0<\pi,
	$$
	with the standard spherical metric. Then $A$ is constant on
	$\partial M$, $\kappa=\cot\theta_0$, and
	$$
	\lim_{\delta\downarrow0}
	\left[
	\int_{\partial M\setminus B_\delta(p)}
	h(x)e^{G(x,p)/2}\,ds_g
	-\frac{2e^{A(p)/2}h(p)}{\delta}
	\right]
	=-\pi e^{A(p)/2}(-\Delta_{\partial M})^{1/2}h(p).
	$$
	Here $(-\Delta_{\partial M})^{1/2}$ denotes the positive square root
	of $-\Delta_{\partial M}$. See Section \ref{sec:hemisphere-application}.

\

By Theorem \ref{thm:main-01}, we have the following corollary.

	\begin{thm}\label{thm:main-03}
		Let $M$ be the spherical cap above and $K$ be a  smooth
		positive function. Then we have:
\begin{itemize}

\item[(1)] Assume $h$ is a smooth  positive function and \eqref{eq:intro-critical-parameters} holds.
		Suppose that $p\in\partial M$ is a maximum point of
		$$\rho\log K+2\rho'\log h \ \  on \ \ \partial M.$$ If
		\begin{equation}\label{equation:22}
		\frac{K_\nu(p)}{K(p)}+2\cot\theta_0
		+\frac{2\rho'}{\rho}
		\frac{(-\Delta_{\partial M})^{1/2}h(p)}{h(p)}<0,
		\end{equation}
		then $J_{\rho,\rho'}$ attains its minimizer and
		\eqref{equation:mean-field} has a smooth solution.

\

\item[(2)] Assume $h\equiv 0$, $\rho=2\pi,\  \rho'=0$. Suppose that $p\in\partial M$ is a maximum point of
		$K$ on $\partial M$. If
		$$
		\frac{K_\nu(p)}{K(p)}+2\cot\theta_0
		<0,
		$$
		then $J_{2\pi,0}$ attains its minimizer and
		\eqref{equation:17} has a smooth solution.

\

\item[(3)] In particular, the above conclusions hold for  constant $K>0$ and $h\geq 0$ whenever $\frac{\pi}{2}<\theta_0<\pi$.
\end{itemize}
	\end{thm}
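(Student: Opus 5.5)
Theorem \ref{thm:main-03} is a direct specialization of Theorems \ref{thm:main-01} and \ref{thm:main-02} to the spherical cap. The plan is to compute, on the cap, the three geometric quantities entering those theorems — the constant term $A$, the geodesic curvature $\kappa$, and the boundary integration constant $\Lambda(G,h)$ — and then to rewrite the hypotheses.

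\textbf{Step 1: $A$ is constant.} The rotations about the $x_3$-axis act transitively on $\partial M$ and preserve $M$, the metric and the normal $\nu$. The normalized Neumann Green's function \eqref{eq:intro-green-function} is unique, so $G(Rx,Rp)=G(x,p)$ for every such rotation $R$. Comparing the expansions \eqref{eq:intro-green-expansion} at $p$ and $Rp$ shows that $A(p)$ is independent of $p$.

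It follows that maximizing $A+\frac{\rho}{\pi}\log K+\frac{\rho'}{\pi}\log h^2$ on $\partial M$ is the same as maximizing $\rho\log K+2\rho'\log h$. Likewise, maximizing $A+2\log K$ amounts to maximizing $K$.

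The geodesic curvature of the circle $\{x_3=\cos\theta_0\}$ with respect to the outward normal (pointing toward larger $\theta$), with the convention $\nabla_\tau\tau=-\kappa\nu$, is $\kappa=\cot\theta_0$.

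\textbf{Step 2: the boundary integration constant.} I would take from Section \ref{sec:hemisphere-application} the stated identity
\[
\lim_{\delta\downarrow0}\Big[\int_{\partial M\setminus B_\delta(p)}h\,e^{G/2}\,ds_g-\frac{2e^{A/2}h(p)}{\delta}\Big]=-\pi e^{A/2}(-\Delta_{\partial M})^{1/2}h(p).
\]
If I had to justify it myself, I would use that $e^{G(x,p)/2}$ restricted to $\partial M$ depends only on the chord distance. Symmetry and the $-4\log r$ singularity force $e^{G/2}=C/|x-p|^2$ on the circle, as an explicit computation of $G$ on the cap (via stereographic projection to a disk) would confirm. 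The finite part of $\int h(x)/|x-p|^2$ on a circle is then the standard principal-value formula for the half-Laplacian, expanded in Fourier modes; the constant $\pi$ comes from this normalization. I expect this computation, and in particular getting the constants to match $2e^{A/2}/\delta$ exactly, to be the main obstacle.

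Dividing by $h(p)$ gives
\[
\Lambda(G,h)=-\pi e^{A/2}\,\frac{(-\Delta_{\partial M})^{1/2}h(p)}{h(p)}.
\]

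\textbf{Step 3: part (1).} Substituting into \eqref{eq:intro-existence-condition}, the condition reads
\[
-\frac{K_\nu}{K}-2\cot\theta_0-\frac{2\rho'}{\rho}\,\frac{(-\Delta_{\partial M})^{1/2}h}{h}>0,
\]
which is exactly \eqref{equation:22}. Theorem \ref{thm:main-01} then gives the minimizer and the smooth solution.

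\textbf{Step 4: part (2).} This follows in the same way from Theorem \ref{thm:main-02}, since $A$ is constant and $\kappa=\cot\theta_0$.

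\textbf{Step 5: part (3).} For constant $K>0$ we have $K_\nu=0$, and every boundary point is a maximum point.

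If $h\equiv0$, the condition of part (2) becomes $\cot\theta_0<0$, i.e. $\theta_0>\pi/2$.

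If $h>0$ is constant, then $(-\Delta_{\partial M})^{1/2}h=0$, because constants lie in the kernel. Condition \eqref{equation:22} again reduces to $\cot\theta_0<0$, and part (1) applies for any admissible $\rho,\rho'>0$.

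Hence all conclusions hold whenever $\pi/2<\theta_0<\pi$.
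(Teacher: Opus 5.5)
Your reduction follows the paper. Once you know that $A$ is constant on $\partial M$, that $\kappa=\cot\theta_0$, and that $\Lambda(G,h)=-\pi e^{A/2}(-\Delta_{\partial M})^{1/2}h(p)/h(p)$, your Steps 3--5 are exactly the substitution into Theorems \ref{thm:main-01} and \ref{thm:main-02} that the paper performs. Your Step 1 (uniqueness of the normalized Neumann Green function plus invariance under rotations about the $x_3$-axis) is a valid and cleaner substitute for reading off constancy from the explicit formula \eqref{eq:spherical-cap-regular-part}.

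\textbf{The gap is Step 2, which is the real content of the theorem.} The identity you ``take from Section \ref{sec:hemisphere-application}'' is not an independent result. The introduction states it only as a pointer to the proof of this very theorem, where it is derived, so invoking it is circular. Your fallback does not close the gap either. Rotational and reflection symmetry only show that $G(\cdot,p)|_{\partial M}$ is an even function of $\varphi$, i.e.\ a function of the chord distance. Together with the local expansion $-4\log r+A$ near $p$, this says nothing about the profile on the rest of the circle, so it cannot force $e^{G/2}=C|x-p|^{-2}$.

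As written, part (2) and the case $h\equiv0$ of part (3) are complete, since they only use that $A$ is constant and $\kappa=\cot\theta_0$. Part (1) and the case of constant $h>0$ in part (3) are not. In the latter, ``constants lie in the kernel'' only helps once the identity is proved; what is really needed there is $\Lambda(G,h)=0$, which is itself a statement about $G$.

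\textbf{The missing idea is a global description of $G$ on $\partial M$.}
\begin{itemize}
\item In the conformal coordinate $z$ mapping $M$ onto $\overline{\mathbb D}$ with $p\mapsto1$, write $g=e^{\psi}|dz|^2$ with $\psi$ radial, and $G=-4\log|z-1|+u$.
\item On $\partial\mathbb D$, $\partial_r(-4\log|z-1|)=-2$ away from $z=1$, plus a Dirac mass $4\pi$ at $z=1$ with respect to $d\varphi$. Also $\partial_{\nu_g}G\,ds_g=\partial_rG\,d\varphi$. So the Dirac masses cancel, and $u$ solves $\Delta u=\frac{4\pi}{|M|}e^{\psi}$ in $\mathbb D$ with $\partial_ru=2$ on $\partial\mathbb D$.
\item These data are rotation invariant, so $u$ is radial; this is what \eqref{eq:spherical-cap-green-function} encodes. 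Hence $G(e^{i\varphi},p)=-4\log|e^{i\varphi}-1|+c$.
\item Matching $c$ with $A$ through $\sin\frac r2=\sin\theta_0|\sin\frac\varphi2|$ gives $e^{G/2}ds_g=\frac{e^{A/2}}{4\sin\theta_0\sin^2(\varphi/2)}\,d\varphi$.
\end{itemize}

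After that, the constants you were worried about come out exactly.
\begin{itemize}
\item Use $\int_{\varphi_\delta}^{2\pi-\varphi_\delta}\sin^{-2}(\varphi/2)\,d\varphi=4\cot(\varphi_\delta/2)$ with $\sin\frac{\varphi_\delta}{2}=\frac{\sin(\delta/2)}{\sin\theta_0}$. Then the $h(p)$-part equals $\frac{2e^{A/2}h(p)}{\delta}+O(\delta)$ and cancels the counterterm.
\item The rest tends to $-\frac{e^{A/2}}{4\sin\theta_0}\operatorname{PV}\int_0^{2\pi}\frac{h(0)-h(\varphi)}{\sin^2(\varphi/2)}\,d\varphi$.
\item With $(-\Delta_{\mathbb S^1})^{1/2}h(0)=\frac1{4\pi}\operatorname{PV}\int_0^{2\pi}\frac{h(0)-h(\varphi)}{\sin^2(\varphi/2)}\,d\varphi$ (test it on $\cos k\varphi$) and $(-\Delta_{\partial M})^{1/2}=(\sin\theta_0)^{-1}(-\Delta_{\mathbb S^1})^{1/2}$, this limit is $-\pi e^{A/2}(-\Delta_{\partial M})^{1/2}h(p)$.
\end{itemize}
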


\

In the above theorems, we focus on the existence problem for the  PDE \eqref{equation:mean-field} on a general compact Riemann surface. Now we come back to the prescribed curvature problem \eqref{equation:02} or \eqref{equation:15}. In the model \eqref{equation:15}, we set $$\chi(M)=1,\ \Pi=1,\ \kappa=0,$$ i.e. $(M,g)$ is the standard hemisphere $\mathbb S^2_+:=\mathbb S^2\cap\{x_3\geq 0\}$.

\

\begin{thm}\label{thm:main-05}
	Let $K\in C^\infty(\mathbb S^2_+)$
	and $h\in C^\infty(\partial \mathbb S^2_+)$ where $K>0$, $h> 0$ or $h\equiv 0$. Denote
	$$
	\Phi(q)=h(q)+\sqrt{h(q)^2+K(q)},
	\qquad q\in\partial \mathbb S^2_+.
	$$
	Let $p\in\partial \mathbb S^2_+$ be the maximal point of $\Phi(x)$ on $\partial \mathbb S^2_+$, i.e.
	$
	\Phi(p)=\max_{\partial \mathbb S^2_+}\Phi.
	$
	Suppose
	\begin{equation}\label{eq:joint-boundary-condition}
		K_\nu(p)+2\Phi(p)(-\Delta_{\partial \mathbb S^2_+})^{1/2}h(p)<0.
	\end{equation}
Then the prescribed curvature problem
\eqref{equation:15}
admits a smooth solution.
\end{thm}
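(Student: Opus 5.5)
The plan is to deduce the theorem from the analytic results for \eqref{equation:mean-field} on the cap with $\theta_0=\pi/2$, so that $\kappa\equiv0$ and $\cot\theta_0=0$, and then to restore the constraint that separates \eqref{equation:mean-field} from \eqref{equation:15}, namely the third line of \eqref{equation:16}. The case $h\equiv0$ is immediate. Then $\Phi=\sqrt K$, so $p$ maximizes $K$ on $\partial\mathbb S^2_+$, and \eqref{eq:joint-boundary-condition} reads $K_\nu(p)<0$. Theorem \ref{thm:main-03}(2) then gives a solution $u$ of \eqref{equation:17} with $\rho=2\pi$. Since $\int_{\mathbb S^2_+}Ke^u\,dv_g=2\pi$ is exactly the normalization in \eqref{equation:15}, adding the constant $\log(2\pi/\int Ke^u)$ to $u$ solves \eqref{equation:15}.

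For $h>0$, I would use the following observation. If $u$ solves \eqref{equation:mean-field} with $\rho+\rho'=2\pi$, then $v=u+c$ solves \eqref{equation:15} if and only if $e^c\int Ke^u=\rho$ and $e^{c/2}\int he^{u/2}=\rho'$. Such a $c$ exists exactly when
$$F(u):=\frac{(\int_{\partial\mathbb S^2_+}he^{u/2}\,ds_g)^2}{\int_{\mathbb S^2_+}Ke^u\,dv_g}=\frac{\rho'^2}{\rho}.$$
I would therefore study the family of problems for $\rho\in(0,2\pi)$ and look for a parameter $\rho_*$ at which some solution satisfies $F(u)=(2\pi-\rho_*)^2/\rho_*$.

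The key computation is on the local boundary bubble at a point $q$. This is the half-plane solution of $-\Delta w=2K(q)e^w$, $\partial_\nu w=2h(q)e^{w/2}$. Its area mass $\rho$ and length mass $\rho'$ satisfy $\rho+\rho'=2\pi$ and $\rho'^2/\rho=h(q)^2/K(q)$ in the scaling-invariant sense. Explicitly, $\rho/2\pi$ is a function of $h/\sqrt{K}$ alone, and $\rho'/\rho$ is expressed through $\Phi(q)/\sqrt{K(q)}$.

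The intended $\rho_*$ is the value matched to $p$ in this sense. For this $\rho_*$, the maximum condition in Theorem \ref{thm:main-03}(1), that $p$ maximizes $\rho\log K+2\rho'\log h$, should become equivalent to $p$ maximizing $\Phi$. Likewise, \eqref{equation:22} should become \eqref{eq:joint-boundary-condition} after dividing by $K(p)$ and using $2\rho'/(\rho h(p))\cdot K(p)=2\Phi(p)$. I would verify this algebra first.

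Theorem \ref{thm:main-03}(1) then supplies minimizers $u_\rho$ of $J_{\rho,\rho'}$ for $\rho$ near $\rho_*$, since the strict inequality is open in $\rho$. The remaining task is to show that $F(u_\rho)-\rho'^2/\rho$ changes sign, or vanishes, on this family.

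I expect this matching step to be the main obstacle. Minimizers need not be unique, so $F(u_\rho)$ need not be continuous in $\rho$. Moreover, the mean-field problem with $\rho'=2\pi-\rho$ does not by itself force the third condition.

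My first attempt would be a different route. I would replace the family by the joint functional of Cruz--Ruiz/Struwe on $\mathbb S^2_+$, whose critical points solve \eqref{equation:15} directly, with its infimum characterized by a Moser--Trudinger type inequality. The proof would then run in three steps. First, blow-up analysis along a minimizing sequence, as in the proof of Theorem \ref{thm:main-01}, shows that loss of compactness forces concentration at a boundary point with the energy level $-2\pi\log\Phi(q)^2$ plus constants. This level is maximized in the bad direction exactly at the maximum point $p$ of $\Phi$. Second, I would build test functions from the rescaled half-plane bubble centered at $p$ and glued to $G(\cdot,p)/2$. I would expand the energy to the next order, where the correction involves $K_\nu(p)$ and the boundary finite part $\Lambda(G,h)=-\pi(-\Delta_{\partial M})^{1/2}h(p)/h(p)$ computed for the cap. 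Third, condition \eqref{eq:joint-boundary-condition} makes this correction strictly lower the energy below the concentration level, so a minimizer exists and solves \eqref{equation:15}.

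If that expansion proves too delicate, I would fall back on the $\rho$-family. There I would use the same blow-up classification to show that $F(u_\rho)$ has one-sided limits with opposite sign defects at the ends of a suitable interval, and extract a convergent subsequence of minimizers at $\rho_*$.
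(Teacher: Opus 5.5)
Your joint-functional route is essentially the paper's proof, and your $h\equiv 0$ reduction also matches. In particular, blow-up at a boundary point $q$ costs at least $4\pi-8\pi\log\Phi(q)\geq 4\pi-8\pi\log\Phi(p)$, and the expansion you worry about is exactly Proposition~\ref{prop:01} at $\rho_p=2\pi\bigl(1-h(p)/\sqrt{K(p)+h(p)^2}\bigr)$, where your identity $2(2\pi-\rho_p)K(p)=2\Phi(p)\rho_p h(p)$ turns the $\sqrt{\epsilon}$-coefficient into \eqref{eq:joint-boundary-condition}, so the $\rho$-family fallback is unnecessary. Three small corrections: the level $4\pi-8\pi\log\Phi(q)$ is the minimum over $\rho$ of the level from Proposition~\ref{prop:lower-bound} plus the $\rho$-terms of $\widehat J$ (coefficient $8\pi$, not the $4\pi$ your $-2\pi\log\Phi(q)^2$ gives); that minimum is attained where $\rho(4\pi-\rho)h(q)^2=(2\pi-\rho)^2K(q)$, not at $\rho'^2/\rho=h(q)^2/K(q)$; and the bubble is glued to $G(\cdot,p)$, not $G/2$.

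The step you leave vague is running blow-up analysis for the joint minimization, since a bare minimizing sequence satisfies no Euler--Lagrange equation. The paper handles this as follows.
\begin{enumerate}
\item It minimizes the coercive perturbation $\widehat J_t=\widehat J+\frac{t}{2}\int_M|\nabla_g u|^2\,dv_g$.
\item The $s\log s$ terms keep $\rho_t\in(0,2\pi)$.
\item It reads off $\rho_t\to\rho_q$ from the mass split of the limiting half-plane bubble.
\item A Taylor expansion in $\rho$ reduces to the fixed slice $\rho=\rho_q$, where Proposition~\ref{prop:lower-bound} applies to the subcritical minimizers of $J_{\rho_q-\epsilon,\,2\pi-\rho_q}$.
\end{enumerate}
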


\

As the first corollary of  Theorem \ref{thm:main-05}, we have
\begin{cor}\label{cor:01}
Let $K$ be a smooth positive function on $\mathbb S^2_+$, $h\geq 0$ be a constant. Suppose that $p\in\partial \mathbb S^2_+$ is a maximum point of		$K$ on $\partial \mathbb S^2_+$, i.e. $K(p)=\max_{\partial \mathbb S^2_+}K(x)$. If	$$	\partial_{\nu}K(p)<0,$$ then the prescribed curvature problem
\eqref{equation:15}
admits a smooth solution.
\end{cor}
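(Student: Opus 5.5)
The corollary follows from Theorem \ref{thm:main-05} once we check that, for constant $h\geq 0$, the hypotheses there reduce to the ones stated here. I will split according to whether $h\equiv 0$ or $h>0$ is a positive constant, which are exactly the two cases Theorem \ref{thm:main-05} allows.

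\textbf{Step 1: identify the maximum point.} With $h$ constant, $\Phi(q)=h+\sqrt{h^2+K(q)}$ on $\partial\mathbb S^2_+$. The map $t\mapsto h+\sqrt{h^2+t}$ is strictly increasing for $t>0$. Hence $\Phi$ attains its maximum on $\partial\mathbb S^2_+$ exactly at the maximum points of $K|_{\partial\mathbb S^2_+}$. So the point $p$ in the corollary, a maximum point of $K$ on the boundary, is a maximum point of $\Phi$ as required in Theorem \ref{thm:main-05}.

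\textbf{Step 2: kill the nonlocal term.} Since $h$ is constant on the circle $\partial\mathbb S^2_+$, we have $(-\Delta_{\partial\mathbb S^2_+})^{1/2}h\equiv 0$: the positive square root annihilates constants, which are the kernel of $-\Delta_{\partial\mathbb S^2_+}$. This is immediate spectrally, since a constant has only a zero Fourier mode. Condition \eqref{eq:joint-boundary-condition} therefore reduces to $K_\nu(p)<0$, which is precisely the hypothesis $\partial_\nu K(p)<0$. When $h\equiv 0$ the same computation applies trivially, and $\Phi=\sqrt{K}$ is maximized where $K$ is.

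\textbf{Step 3: apply the theorem.} With $K>0$ smooth and $h$ either a positive constant or identically zero, all hypotheses of Theorem \ref{thm:main-05} hold. It therefore yields a smooth solution of \eqref{equation:15} on $\mathbb S^2_+$.

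\textbf{Main obstacle.} There is essentially none; the only point needing care is the interpretation of $(-\Delta_{\partial\mathbb S^2_+})^{1/2}h$ on constants. Via the boundary integration constant, $\Lambda(G,h)$ for constant $h$ should equal the corresponding finite part with $h\equiv 1$. The spherical-cap formula then gives $-\pi e^{A/2}(-\Delta)^{1/2}1=0$, which is consistent with Step 2.
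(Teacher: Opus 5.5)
Your proposal is correct and matches the paper's argument, which leaves the corollary as a direct specialization of Theorem \ref{thm:main-05}: for constant $h$, $\Phi=h+\sqrt{h^2+K}$ is a strictly increasing function of $K$, so its boundary maxima are those of $K$, and $(-\Delta_{\partial\mathbb S^2_+})^{1/2}h\equiv 0$, so condition \eqref{eq:joint-boundary-condition} reduces to $\partial_\nu K(p)<0$. The paper uses the same fact that the half-Laplacian annihilates constants in part (3) of Theorem \ref{thm:main-03}.
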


\

As the second corollary of  Theorem \ref{thm:main-05}, we have
\begin{thm}\label{thm:main-06}
	Let $\mathbb D=\{x\in\R^2:|x|<1\}$ be the standard unit disk.
	Let $K\in C^\infty(\overline{\mathbb D})$ and
	$h\in C^\infty(\partial\mathbb D)$, where $K>0$, $h> 0$ or $h\equiv 0$.
	Denote by $H$ the harmonic extension of $h$, i.e.
	$$
	\begin{cases}
		\Delta H=0,
		&\text{in }\mathbb D,\\
		H=h,
		&\text{on }\partial\mathbb D.
	\end{cases}
	$$
	Set
	$$
	\Phi(x)=H(x)+\sqrt{H(x)^2+K(x)},
	\qquad x\in\overline{\mathbb D}.
	$$
	Let $p\in\partial\mathbb D$ be a maximal point of $\Phi(x)$, i.e. $\Phi(p)=\max_{\partial\mathbb D}\Phi$. Suppose
	$$
	\partial_\nu\Phi(p)<0,
	$$
	where $\nu$ is the outward unit normal vector field.
	Then there exists $u\in C^\infty(\overline{\mathbb D})$
	satisfying
	\begin{equation}\label{eq:disk-prescribed-curvature}
		\begin{cases}
			-\Delta u=2Ke^u,
			&\text{in }\mathbb D,\\
			\partial_\nu u+2=2he^{u/2},
			&\text{on }\partial\mathbb D.
		\end{cases}
	\end{equation}
\end{thm}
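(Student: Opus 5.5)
Theorem \ref{thm:main-06} is a corollary of Theorem \ref{thm:main-05}, so the plan is to transfer the disk problem to the hemisphere by stereographic projection and then rewrite the hypothesis $\partial_\nu\Phi(p)<0$ in the form \eqref{eq:joint-boundary-condition}.

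\textbf{Step 1: the conformal map.} Let $\pi:\mathbb S^2_+\to\overline{\mathbb D}$ be stereographic projection from the south pole. Then
$$
(\pi^{-1})^*g_{\mathbb S^2}=e^{\varphi}|dx|^2,\qquad \varphi=\log\frac{4}{(1+|x|^2)^2}.
$$
This $\varphi$ satisfies $-\Delta\varphi=2e^\varphi$ in $\mathbb D$ and $\partial_\nu\varphi=-2$ on $\partial\mathbb D$, where $e^{\varphi/2}=1$. Given a solution $v$ of \eqref{equation:15} on $\mathbb S^2_+$ (with $\Pi=1$, $\kappa=0$) for the data $\tilde K=K\circ\pi$ and $\tilde h=h\circ\pi$, set $u=v\circ\pi^{-1}+\varphi$. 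The conformal covariance of Gauss and geodesic curvature shows that $u$ solves \eqref{eq:disk-prescribed-curvature}. Concretely, $-\Delta u=e^\varphi(-\Delta_g v+2)=2Ke^u$, and $\partial_\nu u+2=\partial_\nu v=2he^{v/2}=2he^{u/2}$, since the boundary metric factor equals $1$. It therefore suffices to verify the hypothesis of Theorem \ref{thm:main-05} for $(\tilde K,\tilde h)$ at $\tilde p=\pi^{-1}(p)$.

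\textbf{Step 2: identifying the ingredients.} On $\partial\mathbb D$ the map $\pi$ is an isometry onto $\partial\mathbb S^2_+$, so the maximum points of $\Phi$ on the two boundaries correspond. In the normal direction, $\partial_\nu$ on the hemisphere equals $e^{-\varphi/2}\partial_\nu=\partial_\nu$ at the boundary, so $\tilde K_\nu(\tilde p)=K_\nu(p)$.

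The Dirichlet-to-Neumann operator of the unit disk equals $(-\Delta_{\partial\mathbb D})^{1/2}$; the modes $e^{ik\theta}$ extend harmonically as $r^{|k|}e^{ik\theta}$. Since the Dirichlet energy is conformally invariant, the harmonic extension is preserved under $\pi$. Hence
$$
(-\Delta_{\partial\mathbb S^2_+})^{1/2}\tilde h(\tilde p)=\partial_\nu H(p).
$$

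\textbf{Step 3: rewriting $\partial_\nu\Phi(p)$.} On $\partial\mathbb D$ we have $H=h$, so with $\sqrt{\,\cdot\,}$ evaluated at $p$,
$$
\partial_\nu\Phi=H_\nu+\frac{2hH_\nu+K_\nu}{2\sqrt{h^2+K}}=\frac{2H_\nu\big(h+\sqrt{h^2+K}\big)+K_\nu}{2\sqrt{h^2+K}}=\frac{2\Phi(p)H_\nu+K_\nu}{2\sqrt{h^2+K}}.
$$
The denominator is positive because $K>0$. Therefore $\partial_\nu\Phi(p)<0$ is equivalent to
$$
K_\nu(p)+2\Phi(p)(-\Delta)^{1/2}h(p)<0,
$$
which is exactly \eqref{eq:joint-boundary-condition}. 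The case $h\equiv0$ is included, since then $H\equiv0$ and the condition reduces to $K_\nu(p)<0$.

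Theorem \ref{thm:main-05} now gives $v$, and Step 1 gives $u$; smoothness transfers because $\pi$ is a diffeomorphism. I expect the main care to lie in Step 2: checking that the normal-derivative and half-Laplacian conventions on $\partial\mathbb S^2_+$, with its unit-length normalization, match those on $\partial\mathbb D$ exactly. This holds because the boundary circles are isometric and $e^{\varphi}=1$ there.
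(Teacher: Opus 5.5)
Your proposal is correct and is essentially the paper's proof. Both arguments pull the problem back to $\mathbb S^2_+$ through the stereographic isometry with conformal factor $\varphi=2\log\frac{2}{1+|x|^2}$, using $\varphi=0$ and $\partial_\nu\varphi=-2$ on $\partial\mathbb D$. Both identify $(-\Delta_{\partial\mathbb S^2_+})^{1/2}h$ with $\partial_\nu H$ through the Dirichlet-to-Neumann description of the half-Laplacian on the unit circle (the paper cites Cabr\'e--Csat\'o--Mas for this), and both use $2\sqrt{h^2+K}\,\partial_\nu\Phi=K_\nu+2\Phi\,\partial_\nu H$ at $p$ to turn $\partial_\nu\Phi(p)<0$ into hypothesis \eqref{eq:joint-boundary-condition} of Theorem~\ref{thm:main-05}. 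Your detour through conformal invariance of harmonic extensions is harmless but unnecessary, since $(-\Delta_{\partial\mathbb S^2_+})^{1/2}$ is defined intrinsically on a circle isometric to $\partial\mathbb D$.
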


\

To prove Theorem \ref{thm:main-01}, inspired by the work of Ding-Jost-Li-Wang \cite{DingJostLiWang}, we shall try to minimize the following functional
		\begin{align*}
			J_{\rho,\rho'}(u)
			=&\frac12\int_M|\nabla_g u|^2\,dv_g
			+\frac{2(\rho+\rho')}{|M|}\int_Mu\,dv_g-2\rho\log\int_MKe^u\,dv_g
			-4\rho'\log\int_{\partial M}he^{u/2}\,ds_g.
		\end{align*}
Let $\rho_\epsilon=\rho-\epsilon$ and $u_\epsilon$ be a
	minimizer of $J_{\rho_\epsilon,\rho'}$. If $u_\epsilon$ converges strongly to $u$ in $W^{1,2}(M)$, then it is easy to see that $u$ is a minimizer of $J_{\rho,\rho'}$ which is the desired solution. Next we just need to consider the case that $\{u_\epsilon\}$ blows up as $\epsilon\to 0$.

 On one hand, by using blow-up analysis, we  show that if $\{u_\epsilon\}$ blows up, then the functional has a lower bound that
	\begin{equation}\label{eq:intro-blow-up-level}
		\begin{aligned}			
			\inf_{u\in W^{1,2}(M)}J_{\rho,\rho'}(u)
			\geq{}&-4\pi+2\rho'+4\pi\log4
			-2(2\pi+\rho')\log(2\pi+\rho')\\
			&-2\pi\max_{q\in\partial M}
			\left[
			A(q)+\frac{\rho}{\pi}\log K(q)
			+\frac{\rho'}{\pi}\log h^2(q)
			\right].
		\end{aligned}
	\end{equation}
Here, in addition to the usual blow-up properties such as the classification of bubbles and the convergence to Green's function away from blow-up points, we need to show that for sufficiently small $\epsilon$, the blow-up point must lie on the boundary $\partial M$. Moreover, we shall compute the value of the quantity $$\lim_{\epsilon\to 0}\frac{\sqrt{\int_M Ke^{u_\epsilon}dv_g}}{\int_{\partial M}he^{\frac{u_\epsilon}{2}}ds_g}$$ which determines the center of the bubble and plays an important role in our later construction. See Section \ref{sec:blow-up-analysis}.

On the other hand, we construct a blow-up sequence $\{\phi_{\epsilon,\delta}\}$ such that $J_{\rho,\rho'}(\phi_{\epsilon,\delta})$ is strictly less than the right hand side of \eqref{eq:intro-blow-up-level} if $0<\delta<\delta_0$ and $\epsilon$ is sufficiently small, where $\delta_0$ just depends on $M,\ \|K\|_{C^1(M)},\ \|h\|_{C^1(\partial M)}$. Then we get a contradiction and Theorem \ref{thm:main-01} follows. Unlike the interior case \cite{DingJostLiWang}, where the center of the bubble coincides with the pole of Green's function, in the boundary case these two points do not coincide in general. To address this, we introduce an additional parameter $\delta$ in the construction of the blow-up sequence.

To prove Theorem \ref{thm:main-05}, or equivalently, to solve \eqref{equation:16}, we minimize the functional \begin{equation}
	\begin{aligned}
		\widehat J(u,\rho)
		={}&J_{\rho,\,2\pi-\rho}(u)+2\rho\log\rho
		+4(2\pi-\rho)\log(2\pi-\rho)+2\rho,
	\end{aligned}
\end{equation}
over both the function and the parameter $$(u,\rho)\in W^{1,2}(\mathbb S^2_+)\times[0,2\pi]\ \ with\ \ \int_{\partial \mathbb S^2_+}ud\theta=0.$$ Denote the perturbed functional $$\widehat J_t(u,\rho)=\widehat J(u,\rho)+\frac{t}{2}\int_M|\nabla_g u|^2dv_g,\ \ t>0.$$  By Moser-Trudinger's inequality, one can see that $\widehat J_t(u,\rho)$ is coercive for any fixed $t>0$, which immediately yields that it has a minimizer $(u_t,\rho_t)$. Set $$
	w_t=u_t+
	\log\frac{\rho_t}
	{(1+t)\displaystyle\int_MKe^{u_t}\,dv_g}.
	$$

If $w_t$ converges strongly to $w_*$ in $W^{1,2}(M)$, then it follows from the E-L equation of $(u_t,\rho_t)$ that $w_*$ is  a desired  solution of \eqref{equation:15}. Theorem \ref{thm:main-05} then follows. If $w_t$ blows up,  we will show that there exists a global minimizer $(u_*,\rho_*)\in W^{1,2}(M)\times (0,2\pi)$ such that $$
	m:=\inf_{\substack{u\in W^{1,2}(M)\\0\leq\rho\leq2\pi}}
	\widehat J(u,\rho)=\widehat J(u_*,\rho_*).
	$$ Then $(u_*,\rho_*)$ solves  \eqref{equation:16}, and Theorem \ref{thm:main-05}  follows.

To establish the existence of $(u_*,\rho_*)$, we proceed in the following two steps. Suppose $w_t$ blows up at $q$ (must be a boundary point, i.e. $q\in\partial M$). By using blow-up analysis, we firstly prove that  $$
	m:=\inf_{u\in W^{1,2}(M)}
	\widehat J(u,\rho_*),
	$$ where $$\rho_*=2\pi\left(
	1-\frac{h(q)}{\sqrt{K(q)+h(q)^2}}
	\right)\in(0,2\pi).$$
Secondly, we minimize $\widehat J(u,\rho_*)$, which is similar to the proof of Theorem \ref{thm:main-01}. Let $u_\epsilon$ be a minimizing sequence. We prove that it must converge strongly to $u_*$ in $W^{1,2}(M)$. Otherwise, on one hand, by Proposition \ref{prop:lower-bound} and the definition of $\widehat J(u,\rho)$, noting that $A=4\log 2-2$ for $M=\mathbb S^2_+$, we have
\begin{equation*}
	m=\inf_{u\in W^{1,2}(M)}\widehat J(u,\rho_*)
	\geq\min_{z\in\partial M}L(\rho_*,z),
\end{equation*}
where
$$
\begin{aligned}
	L(\rho,z)
	={}&4\pi-2(4\pi-\rho)\log(4\pi-\rho)
	+2\rho\log\rho\\
	&+4(2\pi-\rho)\log(2\pi-\rho)
	-2\rho\log K(z)-4(2\pi-\rho)\log h(z),
\end{aligned}
$$
for $0<\rho<2\pi$ and $z\in\partial M$. The function $L(\cdot,z)$ has a unique minimum at
$$
\rho_z=2\pi\left(
1-\frac{h(z)}{\sqrt{K(z)+h(z)^2}}
\right).
$$
Therefore, by the choice of $p$ in Theorem \ref{thm:main-05}, we have
\begin{equation}
	m\geq\min_{z\in\partial M}L(\rho_*,z)\geq \min_{z\in\partial M}L(\rho_z,z)=\min_{z\in\partial M}\left(4\pi-8\pi\log\Phi(z)\right)
	\geq4\pi-8\pi\log\Phi(p).
\end{equation}

On the other hand, taking
	$$
	\rho_p=2\pi\left(
	1-\frac{h(p)}{\sqrt{K(p)+h(p)^2}}
	\right)\in(0,2\pi)
	$$
	and the test function $\phi_{\epsilon,\delta}$ centered at
	$p$ in \eqref{eq:test-function}, by Proposition \ref{prop:01}, we obtain
	$$
	\begin{aligned}
		\widehat J(\phi_{\epsilon,\delta},\rho_p)
		={}&4\pi-8\pi\log\Phi(p)-\frac{4\pi}{\Phi(p)\sqrt{K(p)}}
		\left[
		K_2(p)-2\Phi(p)(-\Delta_{\partial M})^{1/2}h(p)
		\right]\sqrt{\epsilon}\\
		&+O(\delta\sqrt{\epsilon})
		+o(\sqrt{\epsilon}).
	\end{aligned}
	$$
	By \eqref{eq:joint-boundary-condition}, first choosing
	$\delta>0$ sufficiently small and then $\epsilon>0$
	sufficiently small, we obtain
	\begin{equation}
		m\leq\widehat J(\phi_{\epsilon,\delta},\rho_p)
		<4\pi-8\pi\log\Phi(p).
	\end{equation}
This is a contradiction.

\

\noindent{\textbf{Notations:}} Let $B_r(p)\subset M$ be a geodesic ball centered at $p$ with radius $r$. Denote $\partial ^+B_r(p)=\partial B_r(p)\cap M$. Denote by $\nabla_g, \Delta_g$ and $D, \Delta$ the derivative and Laplace operator with respect to Riemannian metric $g$ and Euclidean metric.

Denote $f=o_\epsilon(1)$ if $\lim_{\epsilon\to 0}f=0$, $f=o_R(1)$ if $\lim_{R\to +\infty}f=0$, $f=o(\epsilon)$ if $\lim_{\epsilon\to 0}\frac{f}{\epsilon}=0$, $f=O(\epsilon)$ if $|f|\leq C\epsilon$.

\
	
	The paper is organized as follows. In Section \ref{sec:blow-up-analysis}, we study the blow-up analysis for the minimizing sequence $\{u_\epsilon\}$ and derive a lower bound of $J_{\rho,\rho'}(u_\epsilon)$. In Section \ref{sec:construc-blowup-seq}, we construct a blow-up sequence $\{\phi_{\epsilon,\delta}\}$ such that  $J_{\rho,\rho'}(\phi_{\epsilon,\delta})$ is strictly less than the lower bound obtained in Section \ref{sec:blow-up-analysis}. Theorem \ref{thm:main-01} and Theorem \ref{thm:main-02} are proved in this section. In Section \ref{sec:hemisphere-application}, we consider some geometric applications for the case that $M$ is a spherical cap or a unit disk. The proofs of Theorems \ref{thm:main-03}--\ref{thm:main-06} are given in this section.

	\

	\section{Blow-up analysis and a lower bound for the functional}\label{sec:blow-up-analysis}

\

In this section, we will study the blow-up analysis for the minimizing sequence $u_\epsilon$ of the functional $J_{\rho_\epsilon,\ \rho'}$ where $\rho_\epsilon:=\rho-\epsilon$. Specially, we will derive a lower bound of  $\inf_{\epsilon}J_{\rho_\epsilon,\ \rho'}$ if $u_\epsilon$ blows up.

\

	We consider the functional
	\begin{equation}\label{equation:functional}
		J_{\rho,\rho'}(u):=\frac{1}{2}\int_M|\nabla u|^2dv_g+\frac{2(\rho+\rho')}{|M|}\int_Mudv_g-2\rho\log\int_MKe^udv_g-4\rho'\log\int_{\partial M}he^{\frac{u}{2}}ds_g,
	\end{equation} where  $$u\in W^{1,2}(M),\ \ \int_{\partial M}uds_g=0,\ \ \rho+\rho'=2\pi,\ \ 0<\rho\leq 2\pi.$$
	
	For any $\epsilon\in (0,\frac{1}{2}\rho)$, denoting $\rho_\epsilon:=\rho-\epsilon$,  by the Moser-Trudinger inequality \cite[Theorem~3.1]{BattagliaLopezSoriano}, we know the functional $J_{\rho_\epsilon,\ \rho'}$ is coercive which implies that there exists a minimizer $u_\epsilon\in W^{1,2}(M)$ such that $$J_{\rho_\epsilon,\ \rho'}(u_\epsilon)=\inf_{u\in W^{1,2}(M)}J_{\rho_\epsilon,\ \rho'}(u).$$	

	If $u_\epsilon\to u$ strongly in $W^{1,2}(M)$ as $\epsilon\searrow 0$, then it is easy to see that $u$ is a minimizer of $J_{\rho,\ \rho'}$. This gives the desired solution to \eqref{equation:mean-field}, and Theorem \ref{thm:main-01} follows.

Thus, we may assume, after passing to a subsequence, that $u_\epsilon$ does not converge strongly to $u$ in $W^{1,2}(M)$ as $\epsilon\searrow 0$.

In this part, we will show the following proposition.
\begin{prop}\label{prop:lower-bound}
If $\{u_\epsilon\}$ blows up, then we have the following energy bound \begin{align*}
\liminf_{\epsilon\to 0}J_{\rho_\epsilon,
			\rho'}(u_\epsilon)\geq& -4\pi+2\rho'+ 4\pi\log 4-2(2\pi+\rho')\log(2\pi+\rho')\notag\\&-2\pi \max_{p\in \partial M}\left(A(p) +\frac{\rho}{\pi}\log K(p)+\frac{\rho'}{\pi}\log h^2(p)\right).\end{align*}
\end{prop}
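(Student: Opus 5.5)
The plan follows the Ding--Jost--Li--Wang scheme, adapted to a boundary blow-up. First I would normalize: set $\lambda_\epsilon=\int_MKe^{u_\epsilon}dv_g$ and $\mu_\epsilon=\int_{\partial M}he^{u_\epsilon/2}ds_g$. The Euler--Lagrange equation is \eqref{equation:mean-field} with $\rho$ replaced by $\rho_\epsilon$, and the right-hand sides are normalized measures of masses $2\rho_\epsilon$ and $2\rho'$. Let $m_\epsilon=\max_{\overline M}u_\epsilon=u_\epsilon(x_\epsilon)\to+\infty$; this follows from the failure of strong convergence together with elliptic estimates. Rescale by $r_\epsilon$ chosen so that $r_\epsilon^2e^{m_\epsilon}/\lambda_\epsilon=1$.

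\textbf{Step 1: the blow-up point is on the boundary.} I would show that $d(x_\epsilon,\partial M)/r_\epsilon$ stays bounded, so that $x_\epsilon\to p\in\partial M$. If instead $d(x_\epsilon,\partial M)/r_\epsilon\to\infty$, the rescaled limit would be an entire solution of $-\Delta w=2\rho K(p)e^w/\ldots$ on $\mathbb R^2$. Such a solution carries mass $8\pi$ in the normalization $-\Delta w=e^w$, and this exceeds the total available mass $2\rho_\epsilon\le 4\pi$. So the blow-up is at the boundary. Straightening the boundary near $p$, the rescaled functions $w_\epsilon(y)=u_\epsilon(x_\epsilon+r_\epsilon y)-m_\epsilon$ converge in $C^1_{loc}$ on a half plane to a solution of
\begin{equation*}
-\Delta w=2\rho K(p)\,e^{w}/c_1\ \text{ in }\mathbb R^2_+,\qquad \partial_\nu w=2\rho' h(p)\,e^{w/2}/c_2\ \text{ on }\partial\mathbb R^2_+ .
\end{equation*}
The constants here are $c_1=\lim$ of the rescaled $\lambda_\epsilon$ and $c_2=\lim r_\epsilon^{-1}e^{-m_\epsilon/2}\mu_\epsilon$. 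By the Li--Zhu/Zhang classification, $w$ is a shifted ``cap'' bubble, and its total area mass plus boundary mass is exactly $2\pi$ after normalization. Comparing with $\rho_\epsilon+\rho'\to2\pi$ shows that all the mass concentrates at $p$ (single blow-up). Matching the interior mass $\rho$ and boundary mass $\rho'$ fixes the center height of the bubble, i.e. the limit of $\sqrt{\lambda_\epsilon}/\mu_\epsilon$ in terms of $\rho,\rho',K(p),h(p)$. This is the quantity mentioned in the introduction.

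\textbf{Step 2: convergence to the Green function away from $p$.} Set $\bar u_\epsilon$ to be the boundary mean, which is zero by normalization. Then $u_\epsilon\to G(\cdot,p)$ in $C^1_{loc}(\overline M\setminus\{p\})$, weakly in $W^{1,q}$ for $q<2$. This follows because the measures $Ke^{u_\epsilon}/\lambda_\epsilon$ and $he^{u_\epsilon/2}/\mu_\epsilon$ converge to $\delta_p$ with the correct weights, giving total boundary flux $4\pi$ as in \eqref{eq:intro-green-function}.

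\textbf{Step 3: energy estimate.} Split $M=(M\setminus B_\delta(p))\cup(B_\delta(p)\setminus B_{Rr_\epsilon}(x_\epsilon))\cup B_{Rr_\epsilon}(x_\epsilon)$. On the outer region, $\tfrac12\int|\nabla u_\epsilon|^2\to\tfrac12\int|\nabla G|^2$, which is computed by Green's formula as $-8\pi\log\delta+\pi A(p)+o_\delta(1)$ up to constants coming from $\int G$. On the bubble region, the Dirichlet energy is computed from the explicit bubble as $8\pi\log R+C(\rho,\rho',K(p),h(p))+o_R(1)$. On the neck, I would use the standard capacity argument: the minimum of the Dirichlet integral with boundary values $\approx\inf_{\partial^+B_\delta}u_\epsilon$ and $\sup_{\partial^+B_{Rr_\epsilon}}u_\epsilon$ on a half annulus is $\pi(\text{osc})^2/\log(\delta/(Rr_\epsilon))$. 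Expanding this produces the term in $m_\epsilon$ together with a lower bound. Then I would combine everything with $-2\rho_\epsilon\log\lambda_\epsilon-4\rho'\log\mu_\epsilon$, expressing $\lambda_\epsilon,\mu_\epsilon$ through $m_\epsilon,r_\epsilon$ and the bubble constants. The divergent terms in $m_\epsilon$, $\log\delta$ and $\log R$ should cancel precisely because the total mass is $2\pi$. What remains is $-2\pi[A(p)+\tfrac\rho\pi\log K(p)+\tfrac{\rho'}\pi\log h^2(p)]$ plus the explicit constant $-4\pi+2\rho'+4\pi\log4-2(2\pi+\rho')\log(2\pi+\rho')$, which comes from integrating the cap bubble. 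Bounding by the maximum over $\partial M$ then gives the claim.

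I expect the main obstacle to be Step 3's neck estimate in the boundary setting. The bubble center sits at height comparable to $r_\epsilon$ from $\partial M$, not at $p$, so the annuli must be half-annuli centered at the boundary projection. The boundary term $\int_{\partial M}he^{u_\epsilon/2}$ must also be shown to have negligible contribution on the neck. I would first try the Ding--Jost--Li--Wang capacity trick on half annuli with the Neumann condition handled by reflection, using the decay $|u_\epsilon-G|\to0$ on $\partial^+B_\delta$ and the bubble asymptotics on $\partial^+B_{Rr_\epsilon}$.
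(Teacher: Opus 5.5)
Your route differs from the paper's and can be made to work. The paper does not use capacity. It first proves that $x_\epsilon$ lies exactly on $\partial M$ (Lemma~\ref{lem:maximum-point-on-boundary}). It then compares $v_\epsilon$ with the boundary Green function $G(\cdot,x_\epsilon)$ by the maximum principle on $M\setminus B_{R\delta_\epsilon}(x_\epsilon)$ (Lemma~\ref{lem:02}, using $\Delta_g(v_\epsilon-G_\epsilon)\le0$ and $\partial_\nu v_\epsilon\ge0$). This pointwise bound is inserted into $\int Ke^{v_\epsilon}v_\epsilon$ and $\int\tilde h_\epsilon e^{v_\epsilon/2}v_\epsilon$ after integrating $|\nabla v_\epsilon|^2$ by parts, and the rest is evaluated by Green's identities, so no quadratic term appears.

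Your half-annulus capacity argument needs only the two end asymptotics: the bubble on $B_{Rr_\epsilon}$ and $G$ off $B_\delta$. It does not need $x_\epsilon\in\partial M$, since you center at the boundary projection, and it needs nothing about $\partial_\nu u_\epsilon$ in the neck. So the obstacle you announce is not one. Work in a conformal chart straightening $\partial M$; Dirichlet energy is conformally invariant and the chart costs only $O(\delta)$. Cauchy--Schwarz along rays then gives $\int_{\{Rr_\epsilon<|x|<\delta,\ x_2>0\}}|\nabla\phi|^2\,dx\ge\pi(a-b)^2/L_\epsilon$, with $L_\epsilon=\log\frac{\delta}{Rr_\epsilon}$, for every $\phi\ge a$ on the inner arc and $\phi\le b$ on the outer arc, with no condition on the flat part. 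Carried through, your scheme reproduces exactly the paper's constant.

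Two things must be supplied.

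\textbf{The ratio bound.} Step 1 presupposes that $\mu_\epsilon/\sqrt{\lambda_\epsilon}$ stays in a compact subset of $(0,\infty)$. After your rescaling the boundary coefficient is $2\rho'h\sqrt{\lambda_\epsilon}/\mu_\epsilon$, and if $\mu_\epsilon/\sqrt{\lambda_\epsilon}\to0$ there is no $C^1_{loc}$ limit to a mixed bubble. Matching masses identifies the limit only once it is known to exist. The paper obtains the bound in Lemma~\ref{lem:normalization-ratio-bounds}, combining:
\begin{itemize}
\item minimality, $J_{\rho_\epsilon,\rho'}(u_\epsilon)\le J_{\rho_\epsilon,\rho'}(0)$;
\item the identity $J_{\rho_\epsilon,\rho'}=(1-\frac{\epsilon}{2\pi})J_{2\pi,0}+\frac{\epsilon}{4\pi}\int_M|\nabla u|^2+4\rho'\log\frac{\sqrt{\lambda_\epsilon}}{\mu_\epsilon}$ and its analogue with $J_{0,2\pi}$;
\item the Moser--Trudinger bounds $J_{2\pi,0},J_{0,2\pi}\ge-C$.
\end{itemize}

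\textbf{The neck estimate.} You must take $a_\epsilon=\inf_{\partial^+B_{Rr_\epsilon}}u_\epsilon$ and $b_\epsilon=\sup_{\partial^+B_\delta}u_\epsilon$; you wrote them the other way round. The resulting term $\pi(a_\epsilon-b_\epsilon)^2/(2L_\epsilon)$ cannot simply be ``expanded'', because you do not know a priori that $m_\epsilon+4\log r_\epsilon$ converges. The missing step is the elementary inequality $\frac{\pi(a-b)^2}{2L}\ge4\pi(a-b)-8\pi L$, i.e.\ $(a-b-4L)^2\ge0$.

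Then insert the following.
\begin{itemize}
\item $a_\epsilon=m_\epsilon-4\log R+C_0+o(1)$, with $C_0$ determined by the limiting bubble.
\item $b_\epsilon=-4\log\delta+A(p)+c+O(\delta)+o(1)$. With your boundary-mean normalization the limit is $u_\epsilon\to G+c$, where $c=-\frac{1}{|\partial M|}\int_{\partial M}G\,ds_g$, not $u_\epsilon\to G$.
\item $-2\rho_\epsilon\log\lambda_\epsilon-4\rho'\log\mu_\epsilon=-2(\rho_\epsilon+\rho')\log\lambda_\epsilon-4\rho'\log c_2+o(1)$, where $\lambda_\epsilon=r_\epsilon^2e^{m_\epsilon}$.
\end{itemize}
The terms in $m_\epsilon$ and $\log r_\epsilon$ then cancel exactly, up to $2\epsilon\log\lambda_\epsilon\ge0$ (since $\lambda_\epsilon\to\infty$). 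The $\log\delta$ terms cancel against the outer energy, which is $-8\pi\log\delta+2\pi A(p)$ (not $\pi A(p)$). The $\log R$ terms cancel against the bubble energy $8\pi\log R+C_1$, and $c$ cancels against $\frac{4\pi}{|M|}\int_Mu_\epsilon$.
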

	
\

We first assume $\rho'>0$. The case $\rho'=0$ is similar and  treated at the end of this section.

\

	Since $u_\epsilon$ is a minimizer, it satisfies
	\begin{align}\label{equation:mean-field-01}
		\begin{cases}
			-\Delta u_\epsilon+\frac{2(\rho_\epsilon+\rho')}{|M|}=2\rho_\epsilon\frac{Ke^{u_\epsilon}}{\int_MKe^{u_\epsilon}} \ \ &in\ \ M,\\
			\partial_\nu u_\epsilon=2\rho'\frac{he^{\frac{u_\epsilon}{2}}}{\int_{\partial M}he^{\frac{u_\epsilon}{2}}}\ \ &on\ \ \partial M.
	\end{cases}\end{align} By translation invariance, we may assume
	\begin{equation}\label{equation:03}
		\int_MKe^{u_\epsilon}dv_g+(\int_{\partial M}he^{\frac{u_\epsilon}{2}}ds_g)^2=1.
	\end{equation}

	Set $$\mu_\epsilon:=\sqrt{\int_MKe^{u_\epsilon}},\ \ \kappa_\epsilon:=\int_{\partial M}he^{\frac{u_\epsilon}{2}}.$$ Then it is easy to see that  $$\mu_\epsilon,\ \kappa_\epsilon\in (0,1),\ \ \mu_\epsilon^2+\kappa_\epsilon^2=1.$$ By Moser-Trudinger inequality, we have the following lemma.
\begin{lem}\label{lem:normalization-ratio-bounds}
		There exists a constant $C>1$, independent of $\epsilon$, such that
		$$
		C^{-1}\leq\frac{\mu_\epsilon}{\kappa_\epsilon}\leq C
		$$
		for all sufficiently small $\epsilon>0$.
	\end{lem}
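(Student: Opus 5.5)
The idea is to split $J_{\rho_\epsilon,\rho'}$ as a convex combination of the two endpoint functionals $J_{2\pi,0}$ and $J_{0,2\pi}$, plus a nonnegative remainder. Since $\mu_\epsilon^2+\kappa_\epsilon^2=1$, the lemma is equivalent to a uniform bound on $|\log\mu_\epsilon-\log\kappa_\epsilon|$. We will get this bound as a multiple of the difference $J_{2\pi,0}(u_\epsilon)-J_{0,2\pi}(u_\epsilon)$, once we know each endpoint functional is bounded above and below along $u_\epsilon$. As in the setting of Proposition \ref{prop:lower-bound}, we assume $\rho'>0$.

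\textbf{Step 1 (invariance).} For $a+b=2\pi$, the functional $J_{a,b}$ is invariant under $u\mapsto u+c$. Indeed, the mean term changes by $\frac{2(a+b)}{|M|}|M|c=4\pi c$, and the two logarithmic terms change by $-2ac-4b\cdot\frac c2=-4\pi c$. Hence neither the normalization $\int_{\partial M}u\,ds_g=0$ nor \eqref{equation:03} affects the values of $J_{2\pi,0}$ and $J_{0,2\pi}$.

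\textbf{Step 2 (the two endpoint inequalities).} We use the sharp Moser--Trudinger inequalities at the endpoints. These are the limiting cases of \cite[Theorem~3.1]{BattagliaLopezSoriano}, namely the interior inequality on a surface with boundary and the trace inequality. They give a constant $C_0$ with
\begin{equation*}
J_{2\pi,0}(u)\geq -C_0,\qquad J_{0,2\pi}(u)\geq -C_0\qquad\text{for all }u\in W^{1,2}(M).
\end{equation*}
This is the only analytic input. If only the strict-inequality version were available, I would instead run the same argument with $J_{2\pi-\eta,\eta}$ and $J_{\eta,2\pi-\eta}$ for a small fixed $\eta>0$.

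\textbf{Step 3 (decomposition and two-sided bounds).} A direct comparison of coefficients, using $\rho_\epsilon+\rho'=2\pi-\epsilon$, gives
\begin{equation*}
J_{\rho_\epsilon,\rho'}(u)=\frac{\rho_\epsilon}{2\pi}J_{2\pi,0}(u)+\frac{\rho'}{2\pi}J_{0,2\pi}(u)+\frac{\epsilon}{4\pi}\int_M|\nabla u|^2\,dv_g .
\end{equation*}
The mean-value and logarithmic coefficients match exactly. The gradient coefficients differ by $\frac12-\frac{2\pi-\epsilon}{4\pi}=\frac{\epsilon}{4\pi}\ge0$. Since $u_\epsilon$ is a minimizer, $J_{\rho_\epsilon,\rho'}(u_\epsilon)\leq J_{\rho_\epsilon,\rho'}(0)\leq C$ uniformly in $\epsilon$. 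Dropping the nonnegative gradient remainder gives
\begin{equation*}
\frac{\rho_\epsilon}{2\pi}J_{2\pi,0}(u_\epsilon)+\frac{\rho'}{2\pi}J_{0,2\pi}(u_\epsilon)\le C .
\end{equation*}
Because $\rho_\epsilon\ge\rho/2>0$ and $\rho'>0$ are bounded below, Step 2 applied to the other summand yields uniform upper bounds $J_{2\pi,0}(u_\epsilon)\le C$ and $J_{0,2\pi}(u_\epsilon)\le C$. Combined with Step 2, both quantities are bounded in absolute value.

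\textbf{Step 4 (conclusion).} In the difference the gradient and mean terms cancel, leaving
\begin{equation*}
J_{2\pi,0}(u_\epsilon)-J_{0,2\pi}(u_\epsilon)=-4\pi\log\mu_\epsilon^2+8\pi\log\kappa_\epsilon=8\pi\log\frac{\kappa_\epsilon}{\mu_\epsilon}.
\end{equation*}
By Step 3 this is bounded, so $C^{-1}\le\mu_\epsilon/\kappa_\epsilon\le C$.

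The main obstacle is Step 2. It requires the endpoint (sharp-constant) versions of the two Moser--Trudinger inequalities, and these must be checked against the exact normalizations of $J$: $\frac12\int|\nabla u|^2$ paired with the constants $4\pi$ and $8\pi$. Everything else is bookkeeping with the coefficients.
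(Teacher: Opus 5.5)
Your argument is correct and is essentially the paper's proof. The paper uses the same endpoint bounds $J_{2\pi,0}\geq -C$ and $J_{0,2\pi}\geq -C$, taken directly from the Moser--Trudinger inequalities of Chang--Yang, Li--Liu and Yang (so your ``main obstacle'' is settled by citation there). It uses the same upper bound $J_{\rho_\epsilon,\rho'}(u_\epsilon)\leq J_{\rho_\epsilon,\rho'}(0)\leq C$. It writes $J_{\rho_\epsilon,\rho'}(u_\epsilon)=(1-\frac{\epsilon}{2\pi})J_{2\pi,0}(u_\epsilon)+\frac{\epsilon}{4\pi}\int_M|\nabla u_\epsilon|^2\,dv_g+4\rho'\log\frac{\mu_\epsilon}{\kappa_\epsilon}$, together with the analogous identity using $J_{0,2\pi}$ and $4\rho_\epsilon\log\frac{\kappa_\epsilon}{\mu_\epsilon}$, and reads off the two one-sided bounds directly; this is just your convex splitting combined with your identity $J_{2\pi,0}-J_{0,2\pi}=8\pi\log\frac{\kappa_\epsilon}{\mu_\epsilon}$.
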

	
	\begin{proof}
		By the Moser--Trudinger inequalities on compact surfaces with boundary \cite{ChangYang1988,LiLiu2005,Yang2006} and the boundedness of $K,h$, we have
		$$
		J_{2\pi,0}(u)\geq-C,\ \ J_{0,2\pi}(u)\geq-C,
		\qquad u\in W^{1,2}(M).
		$$
		
		Since $\rho_\epsilon+\rho'=2\pi-\epsilon$,
		a direct computation gives
		$$
		\begin{aligned}
			J_{\rho_\epsilon,\rho'}(u_\epsilon)
			={}&
			\left(1-\frac{\epsilon}{2\pi}\right)
			J_{2\pi,0}(u_\epsilon)
			+\frac{\epsilon}{4\pi}
			\int_M|\nabla u_\epsilon|^2\,dv_g+4\rho'
			\log\frac{\mu_\epsilon}{\kappa_\epsilon}\\
			={}&
			\left(1-\frac{\epsilon}{2\pi}\right)
			J_{0,2\pi}(u_\epsilon)
			+\frac{\epsilon}{4\pi}
			\int_M|\nabla u_\epsilon|^2\,dv_g+4\rho_\epsilon
			\log\frac{\kappa_\epsilon}{\mu_\epsilon}.
		\end{aligned}
		$$
		Noting $u_\epsilon$ is a minimizer, then
		$$
		J_{\rho_\epsilon,\rho'}(u_\epsilon)
		\leq
		J_{\rho_\epsilon,\rho'}(0)
		\leq C,
		$$ which implies
		$$
		4\rho'
		\log\frac{\mu_\epsilon}{\kappa_\epsilon}\leq C,
		\qquad
		4\rho_\epsilon
		\log\frac{\kappa_\epsilon}{\mu_\epsilon}\leq C
		$$ when $\epsilon$ is small.
		Since $\rho,\ \rho'\in (0,2\pi)$, then the conclusion of the lemma follows.
		
	\end{proof}

Denoting $$v_\epsilon:=u_\epsilon-2\log \mu_\epsilon,$$ then $v_\epsilon$ satisfies
	\begin{align}\label{equation:liouville-01}
		\begin{cases}
			-\Delta v_\epsilon+\frac{2(\rho_\epsilon+\rho')}{|M|}=2\rho_\epsilon K e^{v_\epsilon} \ \ &in\ \ M,\\
			\partial_\nu v_\epsilon=2\rho'\widetilde{h}_\epsilon e^{\frac{v_\epsilon}{2}}\ \ &on\ \ \partial M,
	\end{cases}\end{align} where $\widetilde{h}_\epsilon=h\frac{\mu_\epsilon}{\kappa_\epsilon}$.
	Moreover, there holds
	$$ 0\leq  \widetilde{h}_\epsilon\leq C,\ \ \int_M K e^{v_\epsilon}dv_g=1,\ \ \int_{\partial M}\widetilde{h}_\epsilon e^{\frac{v_\epsilon}{2}}ds_g=1$$  and	
	\begin{align}\label{equation:06}
		J_{\rho_\epsilon,\ \rho'}(u_\epsilon)&=J_{\rho_\epsilon,\ \rho'}(v_\epsilon)\notag\\
		&=\frac{1}{2}\int_M|\nabla v_\epsilon|^2dv_g+\frac{2(\rho_\epsilon+\rho')}{|M|}\int_Mv_\epsilon dv_g-2\rho_\epsilon\log\int_MK e^{v_\epsilon}dv_g-4\rho'\log\int_{\partial M}h e^{\frac{v_\epsilon}{2}}ds_g\notag\\
		&	=\frac{1}{2}\int_M|\nabla v_\epsilon|^2dv_g+\frac{2(\rho_\epsilon+\rho')}{|M|}\int_Mv_\epsilon dv_g-4\rho'\log\frac{\kappa_\epsilon}{\mu_\epsilon}.
	\end{align}

	\
	\begin{lem}
		Setting $\lambda_\epsilon=\max_{x\in M}v_\epsilon(x)$, then $\liminf_{\epsilon\searrow 0}\lambda_\epsilon=+\infty$.
	\end{lem}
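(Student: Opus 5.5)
We argue by contradiction. Suppose that along a subsequence $\lambda_\epsilon\leq C$. The plan is to show that $v_\epsilon$ is then bounded in $W^{1,2}(M)$, so that $u_\epsilon$ converges strongly. This contradicts the standing assumption that $\{u_\epsilon\}$ blows up, i.e. does not converge strongly in $W^{1,2}(M)$.

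\textbf{Step 1: uniform bounds on the right-hand sides.} An upper bound $v_\epsilon\leq C$ gives $0\leq Ke^{v_\epsilon}\leq C$ in $M$. By Lemma \ref{lem:normalization-ratio-bounds}, $0\leq \widetilde h_\epsilon e^{v_\epsilon/2}\leq C$ on $\partial M$. Hence the right-hand sides of \eqref{equation:liouville-01}, both interior and boundary, are uniformly bounded in $L^\infty$. Write $\bar v_\epsilon$ for the mean of $v_\epsilon$ over $M$. Elliptic $L^p$ estimates for the Neumann problem then give $\|v_\epsilon-\bar v_\epsilon\|_{W^{2,p}(M)}\leq C$ for every $p<\infty$; equivalently one can use the Green representation with the Neumann Green function. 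In particular $v_\epsilon-\bar v_\epsilon$ is bounded in $C^{1,\alpha}(\overline M)$.

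\textbf{Step 2: control of the constant $\bar v_\epsilon$.} We have $\bar v_\epsilon\leq \max v_\epsilon\leq C$. For the lower bound, use the normalization $1=\int_MKe^{v_\epsilon}\leq Ce^{\bar v_\epsilon}$, which follows from Step 1 since $v_\epsilon\leq \bar v_\epsilon+C$. So $\bar v_\epsilon$ is bounded, $v_\epsilon$ is bounded in $C^{1,\alpha}$, and Arzel\`a--Ascoli gives a subsequence converging in $C^1(\overline M)$. Since $|\mu_\epsilon|\leq1$ and $\mu_\epsilon$ is bounded below by Lemma \ref{lem:normalization-ratio-bounds} together with $\mu_\epsilon^2+\kappa_\epsilon^2=1$, the constant $2\log\mu_\epsilon$ is bounded. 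Passing to a further subsequence where $\mu_\epsilon$ converges, $u_\epsilon=v_\epsilon+2\log\mu_\epsilon$ converges strongly in $W^{1,2}(M)$. This contradicts the assumption, so $\liminf\lambda_\epsilon=+\infty$.

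\textbf{Main obstacle.} The only delicate point is the justification in Step 1 that the nonlinear Neumann data are bounded. This needs the two-sided bound on $\mu_\epsilon/\kappa_\epsilon$ from Lemma \ref{lem:normalization-ratio-bounds}, which keeps $\widetilde h_\epsilon$ uniformly bounded, together with the correct Neumann elliptic estimate with $L^\infty$ boundary data. Everything else is routine.
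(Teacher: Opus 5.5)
Your argument is correct and follows the same route as the paper. In both, $\lambda_\epsilon\le C$ together with the bound on $\widetilde h_\epsilon$ bounds the right-hand sides of \eqref{equation:liouville-01}, elliptic estimates then bound $v_\epsilon$ and the constant $2\log\mu_\epsilon$, and the resulting compactness of $u_\epsilon$ contradicts the assumed failure of strong $W^{1,2}$ convergence. Your only differences are in how the constants are controlled: you bound $\bar v_\epsilon$ below via $\int_M Ke^{v_\epsilon}=1$, and $\mu_\epsilon$ below via Lemma \ref{lem:normalization-ratio-bounds} and $\mu_\epsilon^2+\kappa_\epsilon^2=1$, where the paper appeals to \eqref{equation:03} directly.

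One technical point, which the paper also glosses over: $L^\infty$ Neumann data alone gives only $W^{1,q}$ (hence $C^{0,\alpha}$) bounds, not $W^{2,p}$ for large $p$. So first obtain uniform H\"older bounds on $v_\epsilon$; then $\widetilde h_\epsilon e^{v_\epsilon/2}$ is uniformly H\"older on $\partial M$, and bootstrapping gives the $C^{1,\alpha}$ bounds you use.
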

	\begin{proof}
		If not, there exists a subsequence of $v_{\epsilon}$ (also denoted by itself), such that $\lambda_\epsilon\leq C$. By standard elliptic theory, we get $\|v_\epsilon\|_{L^\infty(M)}\leq C.$

On one hand, by \eqref{equation:liouville-01} and elliptic estimates, there holds $\|v_\epsilon\|_{W^{2,p}(M)}\leq C(p),$ for any $p>1$.	On the other hand, since $2\log \mu_\epsilon$ is a constant, there must be $$-C\leq 2\log \mu_\epsilon \leq 0,$$ which yields $$\|u_\epsilon\|_{L^\infty(M)}\leq C.$$ Otherwise, we have $$u_\epsilon=v_\epsilon+2\log  \mu_\epsilon \to -\infty$$ uniformly in $M$, which is a contradiction to \eqref{equation:03}.
		
		Thus, we obtain $$\|u_\epsilon\|_{W^{2,p}(M)}\leq C(p),$$ for any $p>1$. Then passing to a subsequence, $u_\epsilon\to u$ strongly in $W^{1,2}(M)$, which is also a contradiction.
	\end{proof}
	
	\

	Assume $v_\epsilon(x_\epsilon)=\max_{x\in M}v_\epsilon (x)=\lambda_\epsilon$ and $x_\epsilon\to p$ as $\epsilon\searrow 0$. Since $2\rho_\epsilon\int_M K e^{v_\epsilon}\leq 4\pi$, by the classification theory in \cite{Chen-Li}, we know $p\in\partial M$. We choose a conformal coordinate system in a  neighborhood $B_R(p)$ around $p$, i.e. there exists a conformal map  $F:D^+(0)\to B_R(p)$ with the properties $F(0)=p$ and $$F^*g=e^{\varphi}((dx^1)^2+(dx^2)^2),$$ where $\varphi$ is a smooth function with $\varphi(0)=0$.
	
	For simplicity of notations, we still denote $v_\epsilon\circ F$ by $v_\epsilon$. Setting $$w_\epsilon(x):=v_\epsilon (x_\epsilon+\delta_\epsilon x)+2\log\delta_\epsilon,\ \ where\ \ \delta_\epsilon=e^{-\frac{\lambda_\epsilon}{2}}.$$ It is easy to see that $\lim_{\epsilon\searrow 0}\delta_\epsilon=0$, $w_\epsilon (x)\leq w_\epsilon(0)=0$ and
	\begin{align*}
		\begin{cases}
			-\Delta_g w_\epsilon (x)+\frac{2(\rho_\epsilon+\rho')}{|M|}\delta_\epsilon^2=2\rho_\epsilon K (x_\epsilon+\delta_\epsilon x )e^{w_\epsilon (x)},\ &x\in\{x\in\R^2\ |\ x_\epsilon+\delta_\epsilon x\in D^+\}\\
			\partial_\nu w_\epsilon(x)=2\rho' \widetilde{h}_\epsilon (x_\epsilon+\delta_\epsilon x)e^{\frac{w_\epsilon (x)}{2}}, \ &x\in\{x\in\R^2\ |\ x_\epsilon+\delta_\epsilon x\in \partial^0 D^+\},
		\end{cases}
	\end{align*}
	where $g(x)=e^{\varphi(x_\epsilon+\delta_\epsilon x)}((dx^1)^2+(dx^2)^2)$.
	
	\
	
\begin{lem}
We have $$\limsup_{\epsilon\searrow 0}\frac{dist(x_\epsilon,\ \partial^0D^+)}{\delta_\epsilon}\leq C<+\infty.$$
\end{lem}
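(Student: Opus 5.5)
We argue by contradiction: suppose that, along a subsequence, $d_\epsilon:=dist(x_\epsilon,\partial^0D^+)$ satisfies $d_\epsilon/\delta_\epsilon\to+\infty$. The rescaled domains $\Omega_\epsilon=\{x: x_\epsilon+\delta_\epsilon x\in D^+\}$ then contain the balls $B_{d_\epsilon/\delta_\epsilon}(0)$, whose radii tend to infinity, so they exhaust $\R^2$. On these balls $w_\epsilon$ satisfies
$$-\Delta w_\epsilon+\tfrac{2(\rho_\epsilon+\rho')}{|M|}e^{\varphi(x_\epsilon+\delta_\epsilon x)}\delta_\epsilon^2=2\rho_\epsilon K(x_\epsilon+\delta_\epsilon x)e^{\varphi(x_\epsilon+\delta_\epsilon x)}e^{w_\epsilon},$$
with no boundary condition, together with $w_\epsilon\le w_\epsilon(0)=0$ and
$$\int_{B_{d_\epsilon/\delta_\epsilon}}Ke^{\varphi}e^{w_\epsilon}dx\le \int_MKe^{v_\epsilon}dv_g=1.$$

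The first step is local compactness. The right-hand side is bounded in $L^\infty$ because $w_\epsilon\le0$. Applying the Harnack inequality to $w_\epsilon$ (split into a harmonic part plus a bounded potential) together with $w_\epsilon(0)=0$ gives $w_\epsilon$ bounded in $L^\infty_{loc}(\R^2)$. Elliptic estimates then give $w_\epsilon\to w$ in $C^{1,\alpha}_{loc}(\R^2)$. The limit solves
$$-\Delta w=2\rho K(p)e^{w}\ \text{ in }\R^2,\qquad \int_{\R^2}e^{w}<\infty,$$
using $\varphi(0)=0$ and $\rho_\epsilon\to\rho$.

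The second step applies the Chen--Li classification \cite{Chen-Li}: every such solution satisfies
$$2\rho K(p)\int_{\R^2}e^{w}dx=8\pi.$$
Fatou's lemma and the normalization give
$$2\rho K(p)\int_{\R^2}e^{w}dx\le\liminf_{\epsilon\to 0}2\rho_\epsilon\int_MKe^{v_\epsilon}dv_g=2\rho\le4\pi<8\pi.$$
This is a contradiction, so the lemma follows. The same mass comparison is what shows above that $p\in\partial M$. I expect the main obstacle to be the uniform local bound on $w_\epsilon$, that is, ruling out $w_\epsilon\to-\infty$ locally. It is handled by Harnack anchored at $w_\epsilon(0)=0$, using that $e^{w_\epsilon}\le1$ so the forcing term is bounded.
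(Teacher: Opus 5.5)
Your proof is correct and follows essentially the same route as the paper: argue by contradiction, pass to a blow-up limit on all of $\mathbb R^2$, and set the Chen--Li mass $8\pi$ against the available mass $2\rho\le 4\pi$. Your Harnack argument for local boundedness and your check that the limit has finite mass, which Chen--Li requires, supply details the paper only calls standard.
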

\begin{proof}
If not, we may assume  $$\lim_{\epsilon\to 0}\frac{dist(x_\epsilon,\ \partial^0D^+)}{\delta_\epsilon}=+\infty.$$ Then $$\{x\in\R^2\ |\ x_\epsilon+\delta_\epsilon x\in D^+\}\to \R^2,\ \ as\ \ \epsilon\searrow 0.$$
Since $w_\epsilon(x)\leq w_\epsilon (0)=0$, the standard estimates for the Liouville type equation yield $\|w_\epsilon\|_{C^3(D_R)}\leq C$. Passing to a subsequence, we may assume that $$w_\epsilon\rightharpoonup w\ \ in\ \ C^{2}_{loc}(\R^2),$$ where $w$ satisfies
	\begin{align*}
		-\Delta w= 2\rho K(p)e^{w}\ \ in \ \ \R^2.
	\end{align*}
The classification result in \cite{Chen-Li} gives that $$w(x)=-2\log(1+\frac{2\rho K(p)}{8}|x|^2).$$
Thus we have
\begin{align*}
2\rho=\lim_{\epsilon\to 0}2\rho_\epsilon\int_M Ke^{v_\epsilon}dv_g\geq \int_{\R^2}2\rho K(p)e^{w}dx=8\pi
\end{align*} which yields $\rho\geq 4\pi$. This is a contradiction to $\rho\leq 2\pi$.
	
\end{proof}
	
	\

	Passing to a subsequence, we may assume $$\lim_{\epsilon\searrow 0}\frac{dist(x_\epsilon,\ \partial^0D^+)}{\delta_\epsilon}=a.$$
	Then $$\Omega_\epsilon:=\{x\in\R^2\ |\ x_\epsilon+\delta_\epsilon x\in D^+\}\to \R_a^2:=\{x=(x^1,x^2)\in\R^2\ |\ x^2\geq -a\},\ \ as\ \ \epsilon\searrow 0.$$  Since $w_\epsilon(x)\leq w_\epsilon (0)=0$, by the standard estimates of Liouville type equation, for any $k,\ R>1$, we have $$\|w_\epsilon\|_{C^k(D_R\cap \Omega_\epsilon)}\leq C(k,R).$$ Passing to a subsequence, we get $$\lim_{\epsilon\to 0}\|w_\epsilon- w\|_{C^k(D_R\cap\Omega_\epsilon\cap \R^2_a)}=0,$$ where $w$ is a limit solution on $\R^2_a$ satisfying
	\begin{align*}
		\begin{cases}
			-\Delta w=\widetilde{K}(0)e^w,\ \ &in\ \ \R^2_a,\\
			\partial_\nu w=\widetilde{h}(0)e^{\frac{w}{2}},\ \ &on\ \ \partial\R^2_a,
		\end{cases}
	\end{align*}where
	\begin{align*}
			\widetilde{K}(0)=2\rho K(p),\ \
			\widetilde{h}(0)=2\rho'h(p)\lim_{\epsilon\to 0}\frac{\mu_\epsilon}{\kappa_\epsilon}.
	\end{align*}
	By the classification results in \cite{Li-Zhu}, we have $$w(x)=\log\frac{8\lambda ^2}{\widetilde{K}(0)[\lambda^2+|x-(0,\frac{c\lambda}{\sqrt{2}}-a)|^2]^2},\  \ where\ \ c=-\frac{\widetilde{h}(0)}{\sqrt{\widetilde{K}(0)}},\ \lambda=\frac{2 \sqrt{2\widetilde{K}(0)}}{\widetilde{K}(0)+\frac{1}{2}\widetilde{h}^2(0)}.$$
Furthermore, since $w(x)\leq w(0)=0$, we can see that there must be $$a=0.$$

	\

	\begin{lem}
		We have $$\lim_{\epsilon\to 0}\frac{\mu_\epsilon^2}{\kappa_\epsilon^2}=\frac{K(p)}{(4\pi-\rho)h^2(p)}.$$
	\end{lem}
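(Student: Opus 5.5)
The plan is to identify the limit ratio $r:=\lim_{\epsilon\to0}\mu_\epsilon/\kappa_\epsilon$ (which exists along a subsequence and lies in $[C^{-1},C]$ by Lemma \ref{lem:normalization-ratio-bounds}) by a \emph{mass identification}. We compare the masses of $v_\epsilon$, which are $2\rho_\epsilon$ in the interior and $2\rho'$ on the boundary, with the explicit masses of the half-plane bubble $w$ obtained above with $a=0$. In that bubble $\widetilde K(0)=2\rho K(p)$ and $\widetilde h(0)=2\rho' h(p)\,r$.

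\textbf{Step 1: total mass of the bubble.} Since $w\sim -4\log|x|$ at infinity, integrating the equation of $w$ over large half-discs and computing the flux through $\partial^+D_R$ gives
$$\int_{\R^2_+}\widetilde K(0)e^w\,dx+\int_{\partial\R^2_+}\widetilde h(0)e^{w/2}\,dx^1=4\pi .$$

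\textbf{Step 2: no mass outside the bubble.} In the original variables the total mass is
$$2\rho_\epsilon\int_MKe^{v_\epsilon}\,dv_g+2\rho'\int_{\partial M}\widetilde h_\epsilon e^{v_\epsilon/2}\,ds_g=2\rho_\epsilon+2\rho'\to4\pi .$$
By Fatou's lemma under the rescaling, the interior and boundary masses of $v_\epsilon$ are each at least the corresponding mass of $w$. Since both totals equal $4\pi$, equality must hold separately in each. Hence
$$\int_{\R^2_+}\widetilde K(0)e^w\,dx=2\rho,\qquad \int_{\partial\R^2_+}\widetilde h(0)e^{w/2}\,dx^1=2\rho'.$$
I expect this step to need care, since a priori mass could split between the interior and boundary parts or escape to other regions. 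Here the total being exactly $4\pi$, together with the lower semicontinuity of each piece, forces the split.

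\textbf{Step 3: explicit bubble masses.} I compute the boundary mass of the explicit bubble $w$ (center $(0,c\lambda/\sqrt2)$, with $c=-\widetilde h(0)/\sqrt{\widetilde K(0)}$). The boundary integral is of the form $\int_{\R}\frac{dt}{\lambda^2+t^2+(c\lambda/\sqrt2)^2}$, which is elementary. I expect the formula
$$\int_{\partial\R^2_+}\widetilde h(0)e^{w/2}\,dx^1=4\pi s,\qquad s=\frac{\widetilde h(0)}{\sqrt{\widetilde h(0)^2+2\widetilde K(0)}},$$
with interior mass $4\pi(1-s)$. This step is the main computational point. I would verify it by direct integration in the boundary variable, using the given value of $\lambda$ and checking the normalization against the $4\pi$ total from Step 1.

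\textbf{Step 4: solve for $r$.} Combining Steps 2 and 3 gives $s=\rho'/(2\pi)$, that is,
$$\frac{4\rho'^2h(p)^2r^2}{4\rho'^2h(p)^2r^2+4\rho K(p)}=\frac{\rho'^2}{4\pi^2}.$$
Using $\rho'=2\pi-\rho$, we have $4\pi^2-\rho'^2=\rho(4\pi-\rho)$. The equation then reduces to $r^2=K(p)/\bigl((4\pi-\rho)h(p)^2\bigr)$. As a check, substituting this $r$ back gives $\widetilde h^2+2\widetilde K=16\pi^2K(p)/(4\pi-\rho)$, which returns $s=\rho'/2\pi$. The limit is therefore independent of the subsequence, so the full limit exists and equals the stated value.
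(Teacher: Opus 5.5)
Your proposal is correct and follows essentially the same route as the paper. The paper also takes the $4\pi$ total mass of the half-plane bubble (citing Li--Zhu instead of your flux computation) and uses $\rho+\rho'=2\pi$ to force $\int_{\R^2_+}\widetilde K(0)e^w\le 2\rho$ and $\int_{\partial\R^2_+}\widetilde h(0)e^{w/2}\le 2\rho'$ to be equalities. It then solves $\frac{\sqrt8\pi\widetilde h(0)}{\sqrt{\widetilde K(0)+\frac12\widetilde h^2(0)}}=2\rho'$, which is exactly your $4\pi s=2\rho'$. One small remark: the boundary integral in Step 3 is independent of $\lambda$, so you do not need the specific value of $\lambda$ there.
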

	\begin{proof}
		On the one hand, by \cite{Li-Zhu}, we have $$\int_{\R^2_+}\tilde{K}(0)e^wdx+ \int_{\partial\R^2_+}\tilde{h}(0)e^{\frac{w}{2}}=4\pi.$$ On the other hand, since $$\int_M Ke^{v_\epsilon}dv_g=1,\ \ \int_{\partial M}\widetilde{h}_\epsilon e^{\frac{v_\epsilon}{2}}ds_g=1,\ \ \rho+\rho'=2\pi,$$ then $$\int_{\R^2_+}\tilde{K}(0)e^wdx\leq 2\rho,\ \  \int_{\partial\R^2_+}\tilde{h}(0)e^{\frac{w}{2}}\leq 2\rho',$$ which implies the inequalities in the above estimates must be equalities.
		
		A direct computation yields $$2\rho'=\int_{\partial\R^2_+}\tilde{h}(0)e^{\frac{w}{2}}=\frac{\sqrt{8}\pi\tilde{h}(0)}{\sqrt{\tilde{K}(0) +\frac{1}{2}\tilde{h}^2(0)}}.$$ That is $$\frac{\tilde{h}^2(0)}{\tilde{K}(0)}=\frac{2\rho'^2}{4\pi^2-\rho'^2}.$$ Then one can  easily get the conclusion of the lemma from the definition of $(\tilde{K}(0),\tilde{h}(0))$.
	\end{proof}

	\

	\begin{lem}\label{lem:maximum-point-on-boundary}
		When $\epsilon>0$ is small enough, there must be
		$
		x_\epsilon\in\partial M.
		$
		
	\end{lem}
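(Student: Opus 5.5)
We argue by contradiction, using the classification of the limit profile obtained above together with the fact that $a=0$. Suppose that, along a subsequence, $x_\epsilon\in M\setminus\partial M$, i.e.\ $x_\epsilon$ lies in the interior of $D^+$ in the conformal chart. Write $t_\epsilon:=dist(x_\epsilon,\partial^0D^+)>0$. We already know that $t_\epsilon/\delta_\epsilon\to a=0$.

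Since $x_\epsilon$ is an interior maximum point of $v_\epsilon$, it is also an interior critical point in the chart $F$. Hence
$$\nabla w_\epsilon(0)=0\quad\text{for every such }\epsilon.$$
The plan is to show instead that $\partial_{x^2}w_\epsilon(0)$ stays bounded away from $0$, which is incompatible with this.

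\textbf{Step 1 (fixed domain).} To handle the moving domain $\Omega_\epsilon$, I will work with the translated functions
$$\hat w_\epsilon(y):=w_\epsilon\bigl(y-(0,t_\epsilon/\delta_\epsilon)\bigr).$$
These are defined on the fixed half-disks $D_R\cap\R^2_+$, with boundary $\{y^2=0\}$ corresponding to $\partial^0D^+$.

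\textbf{Step 2 (uniform estimates up to the boundary).} The function $\hat w_\epsilon$ satisfies the rescaled Liouville equation with a nonlinear Neumann condition. Since $\hat w_\epsilon\le 0$, the right-hand sides $2\rho_\epsilon K e^{\hat w_\epsilon}$ and $2\rho'\widetilde h_\epsilon e^{\hat w_\epsilon/2}$ are uniformly bounded, together with their derivatives once we bootstrap. The conformal factor $e^{\varphi(x_\epsilon+\delta_\epsilon\cdot)}$ is $C^k$-close to $1$.

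Standard $L^p$ and Schauder estimates for the Neumann problem, applied on $D_{2R}\cap\R^2_+$ and using the Harnack-type control from $\hat w_\epsilon(y_\epsilon)=0$ with $y_\epsilon:=(0,t_\epsilon/\delta_\epsilon)$, then give
$$\|\hat w_\epsilon\|_{C^{2}(\overline{D_R\cap\R^2_+})}\le C(R).$$
Hence $\hat w_\epsilon\to w$ in $C^1(\overline{D_R\cap\R^2_+})$, where $w$ is the explicit solution above with $a=0$.

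\textbf{Step 3 (sign of the normal derivative).} For the limit, the boundary condition gives
$$\partial_{y^2}w(0)=-\partial_\nu w(0)=-\widetilde h(0)e^{w(0)/2}=-\widetilde h(0).$$
Here $\widetilde h(0)=2\rho' h(p)\lim_{\epsilon\to0}\mu_\epsilon/\kappa_\epsilon>0$, because $\rho'>0$, $h(p)>0$, and Lemma \ref{lem:normalization-ratio-bounds} gives a positive lower bound on $\mu_\epsilon/\kappa_\epsilon$. Geometrically, the center of the bubble $(0,c\lambda/\sqrt2)$ with $c<0$ lies strictly outside $\overline{\R^2_+}$, which is consistent with $w$ decreasing strictly in the inward direction at $0$.

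\textbf{Step 4 (contradiction).} Since $y_\epsilon\to0$ and the convergence $\hat w_\epsilon\to w$ is in $C^1$ up to the boundary, we obtain
$$\partial_{x^2}w_\epsilon(0)=\partial_{y^2}\hat w_\epsilon(y_\epsilon)\to-\widetilde h(0)<0.$$
This contradicts $\nabla w_\epsilon(0)=0$. Therefore $x_\epsilon\in\partial M$ for all small $\epsilon$.

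\textbf{Main obstacle.} I expect the main difficulty to be Step 2, namely the uniform $C^{1}$ (indeed $C^{1,\alpha}$) convergence up to the flat boundary, uniformly in $\epsilon$, for the nonlinear Neumann problem in the rescaled, slightly curved metric. I would obtain it by writing $\hat w_\epsilon=\psi_\epsilon+\eta_\epsilon$. Here $\psi_\epsilon$ solves a linear Neumann problem with the bounded data, and $\eta_\epsilon$ is harmonic with zero Neumann data, so it can be evenly reflected across $\{y^2=0\}$ and controlled by Harnack's inequality. Standard boundary regularity then applies to both pieces.
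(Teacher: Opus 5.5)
Your proposal is correct and follows the paper's argument: an interior maximum forces $\nabla w_\epsilon(0)=0$, while the uniform $C^2$ bounds up to the flat boundary, together with $t_\epsilon/\delta_\epsilon\to 0$ and $\rho' h(p)\lim_{\epsilon\to0}\mu_\epsilon/\kappa_\epsilon>0$, force the normal derivative near $0$ to stay away from zero. The only difference is cosmetic. You translate to a fixed half-disk and use $C^1$ convergence to read off $\partial_{y^2}w(0)=-\widetilde h(0)$ from the limit profile. The paper instead evaluates the boundary condition for $w_\epsilon$ itself at the boundary point $y_\epsilon=(0,-t_\epsilon/\delta_\epsilon)$, and uses the $C^2$ bound to get $|\nabla w_\epsilon(y_\epsilon)-\nabla w_\epsilon(0)|\le C|y_\epsilon|\to0$.
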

	
	\begin{proof}
		Suppose the conclusion is false. Then there
		exists a sequence $\epsilon_j\searrow 0$ such that
		$x_{\epsilon_j}\in M\setminus\partial M$. For simplicity of notations, we
		still denote it by $\epsilon$.
		
		In the conformal coordinate system introduced above, identifying $p$
		with the origin, write
		\[
		x_\epsilon=(s_\epsilon,t_\epsilon),
		\qquad
		t_\epsilon>0.
		\]
		Let
		\[
		x_\epsilon^0:=(s_\epsilon,0)\in\partial^0D^+.
		\]
		Since $a=0$, we have
		\[
		\lim_{\epsilon\to 0}\frac{t_\epsilon}{\delta_\epsilon}
		=\lim_{\epsilon\to 0}
		\frac{\operatorname{dist}
			\left(x_\epsilon,\partial^0D^+\right)}
		{\delta_\epsilon}
		= 0.
		\]
		Set
		\[
		y_\epsilon
		:=
		\frac{x_\epsilon^0-x_\epsilon}{\delta_\epsilon}
		=
		\left(0,-\frac{t_\epsilon}{\delta_\epsilon}\right).
		\]
		Then $y_\epsilon$ lies on the flat part of
		$\partial\Omega_\epsilon$ and
		$
		y_\epsilon\longrightarrow (0,0).
		$
		
		Since $x_\epsilon$ is an interior maximum point of $v_\epsilon$, we
		have
		$
		\nabla v_\epsilon(x_\epsilon)=0.
		$
		Recalling the definition of $w_\epsilon$, it follows that
		\[
		w_\epsilon(0)=0,
		\qquad
		\nabla w_\epsilon(0)=0.
		\]
		Moreover, the boundary elliptic estimates obtained above imply that
		$
		\left\|w_\epsilon\right\|_
		{C^2(D_1(0)\cap\overline{\Omega_\epsilon})}
		\leq C.
		$
		Therefore, we have
		$$
		\begin{aligned}
			\left|\nabla w_\epsilon(y_\epsilon)\right|
			&=
			\left|
			\nabla w_\epsilon(y_\epsilon)-\nabla w_\epsilon(0)
			\right| \leq
			\left\|D^2w_\epsilon\right\|_
			{L^\infty(D_1(0)\cap\Omega_\epsilon)}
			|y_\epsilon|
			\leq C|y_\epsilon|
			\longrightarrow 0,
		\end{aligned}
		$$
		and
		$$
		\begin{aligned}
			|w_\epsilon(y_\epsilon)|
			&=
			|w_\epsilon(y_\epsilon)-w_\epsilon(0)| \leq
			\left\|\nabla w_\epsilon\right\|_
			{L^\infty(D_1(0)\cap\Omega_\epsilon)}
			|y_\epsilon|
			\leq C|y_\epsilon|
			\longrightarrow 0.
		\end{aligned}
		$$
		In particular,
		$
		\frac{\partial w_\epsilon}{\partial\nu}(y_\epsilon)
		\longrightarrow 0.
		$
		
		On the other hand, the boundary condition satisfied by
		$w_\epsilon$ gives
		$$
		\frac{\partial w_\epsilon}{\partial\nu}(y_\epsilon)
		=
		2\rho'
		\widetilde{h}_\epsilon(x_\epsilon^0)
		e^{\frac{w_\epsilon(y_\epsilon)}{2}}= 2\rho'
			h(x_\epsilon^0)
			\frac{\mu_\epsilon}{\kappa_\epsilon}e^{\frac{w_\epsilon(y_\epsilon)}{2}} \longrightarrow
			2\rho'h(p)
			\sqrt{
				\frac{K(p)}
				{(4\pi-\rho)h^2(p)}
			}
			\neq 0
		$$
		which contradicts
		\[
		\frac{\partial w_\epsilon}{\partial\nu}(y_\epsilon)
		\longrightarrow 0.
		\]
		Consequently, $x_\epsilon\in\partial M$ for all sufficiently small
		$\epsilon>0$.
	\end{proof}

\

Following the proof of Lemma 5.1 in \cite{Jost-Zhou-Zhu} and applying elliptic regularity, we have
\begin{lem}
	Passing to a subsequence, there holds
	$$
	v_\epsilon-\frac{1}{|M|}\int_Mv_\epsilon\,dv_g
	\longrightarrow G
	$$
	in $C^\infty_{\mathrm{loc}}(M\setminus\{p\})$ and weakly in
	$W^{1,q}(M)$ for every $1<q<2$, where $G$ is defined in \eqref{eq:intro-green-function}.
	
	There exists $R>0$ small enough such that
	$$
	G(x,p)=4\log\frac{1}{|x-p|}+g(x),
	\qquad
	x\in \overline{B_R(p)}\setminus\{p\},
	$$
	where $g\in C^\infty(\overline{B_R(p)})$.
\end{lem}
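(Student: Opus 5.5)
The plan is to represent $v_\epsilon$ through the Neumann Green's function. Write $\bar v_\epsilon=\frac{1}{|M|}\int_M v_\epsilon\,dv_g$. By Green's representation for \eqref{equation:liouville-01}, using that the constant term $\frac{2(\rho_\epsilon+\rho')}{|M|}$ integrates to exactly the total mass $2\rho_\epsilon+2\rho'$, we get
$$
v_\epsilon(x)-\bar v_\epsilon=\frac{1}{4\pi}\int_M G(x,y)\,2\rho_\epsilon Ke^{v_\epsilon}(y)\,dv_g(y)+\frac{1}{4\pi}\int_{\partial M}G(x,y)\,2\rho'\widetilde h_\epsilon e^{v_\epsilon/2}(y)\,ds_g(y).
$$
Here $G(x,y)$ is the Neumann Green's function with pole $y$, normalized as in \eqref{eq:intro-green-function} for boundary poles; for interior poles one uses the analogous function, with singularity $-2\log r$.

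The first substantive step is to show concentration of the measures. By the blow-up picture established above, the limit bubble $w$ on $\R^2_+$ carries exactly $2\rho$ interior mass and $2\rho'$ boundary mass, and these are the full masses. Hence
$$
2\rho_\epsilon Ke^{v_\epsilon}\,dv_g\rightharpoonup 2\rho\,\delta_p,\qquad 2\rho'\widetilde h_\epsilon e^{v_\epsilon/2}\,ds_g\rightharpoonup 2\rho'\delta_p
$$
weakly as measures, and no mass remains away from $p$. I would also confirm there is no second blow-up point. A second point would carry at least a positive quantum of mass, by the same rescaling and classification, and this contradicts the total mass being exhausted at $p$. Combined with the bound $\sup v_\epsilon\le C$ on compact subsets away from $p$ (again from the absence of further bubbles), this shows that $Ke^{v_\epsilon}\to0$ uniformly there and $\widetilde h_\epsilon e^{v_\epsilon/2}\to0$ uniformly on $\partial M\setminus B_\delta(p)$.

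Next comes the $W^{1,q}$ estimate, following Lemma 5.1 of \cite{Jost-Zhou-Zhu}. Test the equation against functions with $\|\nabla\phi\|_{L^{q'}}\le1$, $q'>2$; these are bounded in $L^\infty$, with a trace also bounded, by Sobolev embedding. This gives $\|\nabla v_\epsilon\|_{L^q}\le C$ for every $1<q<2$, since the right-hand sides are bounded measures in $M$ and on $\partial M$. By Poincaré, $v_\epsilon-\bar v_\epsilon$ is bounded in $W^{1,q}(M)$, so a subsequence converges weakly to some $v$. Passing to the limit in the weak formulation, using the measure convergence above, shows that $v$ solves \eqref{eq:intro-green-function} weakly with $\int_M v=0$. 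Uniqueness of the normalized Neumann Green's function then gives $v=G(\cdot,p)$.

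For $C^\infty_{loc}(M\setminus\{p\})$ convergence, I work on a compact $\Omega\subset\overline M\setminus\{p\}$ that may include boundary portions. The right-hand sides tend to zero uniformly there and $v_\epsilon-\bar v_\epsilon$ is bounded in $L^q$, so interior and boundary $L^p$ estimates for the Neumann problem give $W^{2,p}$ bounds. The boundary data $2\rho'\widetilde h_\epsilon e^{v_\epsilon/2}$ is smooth and bounded together with its derivatives once $v_\epsilon$ is locally bounded above. Bootstrapping through Schauder estimates then yields $C^k$ bounds, and Arzelà–Ascoli gives the convergence.

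The expansion $G(x,p)=4\log\frac{1}{|x-p|}+g(x)$ with $g$ smooth up to the boundary near $p$ is standard, via reflection in conformal coordinates where the boundary is flat. In the half-disk, $-4\log|x|$ has normal derivative matching $4\pi\delta_0$ on the flat boundary, since the half-circle flux is $4\pi$. The remainder $g$ then solves a Neumann problem with smooth data: $\Delta g=\frac{4\pi}{|M|}e^{\varphi}$, and the boundary term comes only from the conformal factor, which vanishes on the flat part. So $g$ is smooth by elliptic regularity, as in Lemma 2.1 of \cite{Hu-Bartsch}.

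The main obstacle I expect is the concentration step: ruling out residual mass away from $p$. I would handle it by the mass-quantization identity above, since the single bubble already saturates both $2\rho$ and $2\rho'$.
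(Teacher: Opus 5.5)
Your proposal is correct and is essentially the argument the paper relies on. The paper's proof is only a pointer to Lemma 5.1 of \cite{Jost-Zhou-Zhu} plus elliptic regularity, and your steps write that argument out: the single boundary bubble exhausts the total mass $2\rho+2\rho'=4\pi$, so both measures concentrate at $p$; you bound $\|\nabla v_\epsilon\|_{L^q}$ by duality; you identify the weak limit, where the interior mass $2\rho$ at the boundary point $p$ and the boundary mass $2\rho'$ together give $\partial_\nu G=4\pi\delta_p$; you bootstrap locally away from $p$; and you flatten conformally to get the expansion of $G$.

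The one step worth tightening is the local upper bound for $v_\epsilon$ away from $p$. It follows most cleanly from a Brezis--Merle small-mass estimate, since the mass near any $q\neq p$ tends to zero, rather than from the vaguer appeal to ``absence of further bubbles'' via rescaling at a non-global maximum.
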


\

Let $G(x,p)=-4\log r+A(p)+b_1x_1+b_2x_2+c_1x_1^2+2c_2x_1x_2+c_3x_2^2+O(r^3),$ where $r(x)=dist(x, p)$.
	\begin{lem}\label{lem:02}
		Let
		$
		G_\epsilon(x):=G(x,x_\epsilon).
		$
		In $M \backslash B_{R\delta_\epsilon}(x_\epsilon)$, we have
		\begin{equation}
			\left.
			\begin{aligned}[b]
				v_\epsilon \geq{}&
				G_\epsilon
				+2\log\delta_\epsilon
				+\log\frac{8\lambda^2}{\widetilde{K}(0)}
				+2\log\frac{R^2}
				{\lambda^2+\left(R-\frac{c\lambda}{\sqrt{2}}\right)^2}
				-A(p)
				+o_\epsilon(1).
			\end{aligned}
			\right.
		\end{equation}
	\end{lem}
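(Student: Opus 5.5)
The plan is the standard Ding--Jost--Li--Wang comparison: a maximum principle applied to the difference between $v_\epsilon$ and a shifted Green function. On $M\setminus B_{R\delta_\epsilon}(x_\epsilon)$ set
$$
z_\epsilon:=v_\epsilon-G_\epsilon-c_\epsilon,
$$
where the constant $c_\epsilon$ is chosen so that $z_\epsilon\geq o_\epsilon(1)$ on the inner boundary $\partial^+B_{R\delta_\epsilon}(x_\epsilon)$. Then I show that $z_\epsilon$ is superharmonic in the interior and has nonnegative outer normal derivative on $\partial M\setminus B_{R\delta_\epsilon}(x_\epsilon)$. The minimum of $z_\epsilon$ must then be attained on the inner arc, which gives the lower bound.

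\emph{Differential inequalities.} In $M$ we have $-\Delta v_\epsilon=-\frac{2(\rho_\epsilon+\rho')}{|M|}+2\rho_\epsilon Ke^{v_\epsilon}$ and $-\Delta G_\epsilon=-\frac{4\pi}{|M|}$. Hence
$$
-\Delta z_\epsilon=\frac{2\epsilon}{|M|}+2\rho_\epsilon Ke^{v_\epsilon}\geq0 .
$$
On $\partial M$ away from $x_\epsilon$, the Neumann condition in \eqref{eq:intro-green-function} gives $\partial_\nu G_\epsilon=0$. By Lemma \ref{lem:maximum-point-on-boundary}, $x_\epsilon\in\partial M$, so $G_\epsilon$ is a genuine Neumann Green function with pole on the boundary. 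Therefore
$$
\partial_\nu z_\epsilon=2\rho'\widetilde h_\epsilon e^{v_\epsilon/2}\geq0 .
$$
If $\min z_\epsilon$ were attained at an interior point or at a point of $\partial M$ outside the ball, the strong maximum principle and the Hopf lemma would force $z_\epsilon$ to be constant, unless that point is on $\partial^+B_{R\delta_\epsilon}$. In either case,
$$
\min z_\epsilon\geq\min_{\partial^+B_{R\delta_\epsilon}(x_\epsilon)}z_\epsilon\geq o_\epsilon(1).
$$

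\emph{Matching on the inner boundary.} For $|y|=R\delta_\epsilon$, the $C^k$ convergence of $w_\epsilon$ to the bubble $w$ (with $a=0$) gives
$$
v_\epsilon(x_\epsilon+y)=-2\log\delta_\epsilon+w(y/\delta_\epsilon)+o_\epsilon(1).
$$
Together with the expansion $G_\epsilon=-4\log|y|+A(x_\epsilon)+O(|y|)=-4\log(R\delta_\epsilon)+A(p)+o_\epsilon(1)$, where continuity of $A$ on $\partial M$ is used, this yields the value of $v_\epsilon-G_\epsilon$ on the arc. It equals $2\log\delta_\epsilon+\log\frac{8\lambda^2}{\widetilde K(0)}+4\log R-2\log(\lambda^2+|\xi-(0,c\lambda/\sqrt2)|^2)-A(p)$, where $\xi=y/\delta_\epsilon\in\partial^+D_R$. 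The bubble is not radial about the origin, since its centre $(0,c\lambda/\sqrt2)$ is off the origin. Because $c<0$, the centre lies below the flat boundary, so on the upper half circle $|\xi-(0,c\lambda/\sqrt2)|^2\leq(R-c\lambda/\sqrt2)^2$, with the extreme value attained at the top point. Taking the worst case gives exactly the constant in the statement: $2\log\frac{R^2}{\lambda^2+(R-c\lambda/\sqrt2)^2}$.

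The conformal factor $\varphi$ and the mismatch between geodesic and Euclidean distance contribute only $o_\epsilon(1)$, since $\varphi(0)=0$ and the radius $R\delta_\epsilon\to0$.

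\emph{Main obstacle.} The delicate point is justifying the boundary maximum principle near the corners where $\partial^+B_{R\delta_\epsilon}$ meets $\partial M$, and ensuring the sign $\partial_\nu G_\epsilon=0$ exactly on $\partial M\setminus\{x_\epsilon\}$. This is where Lemma \ref{lem:maximum-point-on-boundary} is essential: with $x_\epsilon$ in the interior, $G(\cdot,x_\epsilon)$ would not be defined by \eqref{eq:intro-green-function}.

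The corners are handled by the Hopf lemma on the smooth part. At a corner point the minimum lies on the closed inner arc anyway, so it is already covered by the arc estimate.
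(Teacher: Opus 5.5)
Your proposal is correct and follows the paper's proof. The paper also uses that $v_\epsilon-G_\epsilon$ is superharmonic with nonnegative outer normal derivative on $\partial M\setminus B_{R\delta_\epsilon}(x_\epsilon)$ (since $\partial_\nu G_\epsilon=0$ there once $x_\epsilon\in\partial M$), matches the bubble against $-4\log(R\delta_\epsilon)+A(p)$ on the inner arc, and applies the strong maximum principle; your worst-case computation on the upper half circle (using $c\leq 0$) and your remark about the corners just make explicit what the paper leaves implicit.
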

	
	\begin{proof}
		In $M \backslash B_{R\delta_\epsilon}(x_\epsilon)$,
		\begin{equation*}
			\left.
			\begin{aligned}
				\Delta(v_\epsilon-G_\epsilon)
				={}&
				\frac{2(\rho+\rho'-\epsilon)}{|M|}
				-2(\rho-\epsilon)K e^{v_\epsilon}
				-\frac{4\pi}{|M|} \\
				={}&
				-\frac{2\epsilon}{|M|}
				-2(\rho-\epsilon)K e^{v_\epsilon}
				\leq 0.
			\end{aligned}
			\right.
		\end{equation*}
		
		Since $x_\epsilon\to p$ and the regular part $A$ of the Green
		function is continuous on $\partial M$ (c.f. \cite{Hu-Bartsch} Lemma 2.1), we have
		$
		A(x_\epsilon)=A(p)+o_\epsilon(1).
		$
		Therefore,
		\begin{equation*}
			\left.
			\begin{aligned}
				(v_\epsilon-G_\epsilon)
				\big|_{\partial B_{R\delta_\epsilon}(x_\epsilon)}
=&-2\log\delta_\epsilon
				+w\big|_{\partial^+B_R(0)}
				+4\log(R\delta_\epsilon)
				-A(x_\epsilon)+o_{\epsilon}(1)
				 \\
				=&
				-2\log\delta_\epsilon
				+w\big|_{\partial^+B_R(0)}
				+4\log(R\delta_\epsilon)
				-A(p)
				+o_{\epsilon}(1) \\
				\geq &
				2\log\delta_\epsilon
				+\log\frac{8\lambda^2}{\widetilde{K}(0)}
				+2\log\frac{R^2}
				{\lambda^2+\left(R-\frac{c\lambda}{\sqrt{2}}\right)^2}
				-A(p)
				+o_\epsilon(1).
			\end{aligned}
			\right.
		\end{equation*}
		Moreover,
		\[
		\frac{\partial(v_\epsilon-G_\epsilon)}{\partial\nu}
		\bigg|_{\partial M\setminus B_{R\delta_\epsilon}(x_\epsilon)}
		=
		\frac{\partial v_\epsilon}{\partial\nu}
		\bigg|_{\partial M\setminus B_{R\delta_\epsilon}(x_\epsilon)}
		\geq 0.
		\]
		Applying the strong maximum principle, we get the conclusion of the
		lemma.
	\end{proof}
	
	\begin{lem}\label{lem:01}
		There holds
		\begin{align*}
			\lim\limits_{R\rightarrow +\infty}\lim\limits_{\epsilon\rightarrow 0}\int_{M \backslash B_{R\delta_\epsilon}(x_\epsilon)}\rho_\epsilon K e^{v_\epsilon}dv_g
			+\lim\limits_{R\rightarrow +\infty}\lim\limits_{\epsilon\rightarrow 0}\int_{ \partial M \backslash B_{R\delta_\epsilon}(x_\epsilon)}\rho'  \tilde{h}_\epsilon e^{\frac{v_\epsilon}{2}}ds_g=0.
		\end{align*}
	\end{lem}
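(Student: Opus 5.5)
Both limits can be computed from the total masses rather than estimated directly in the exterior region. The normalization gives exactly
$$2\rho_\epsilon\int_M Ke^{v_\epsilon}dv_g=2\rho_\epsilon,\qquad 2\rho'\int_{\partial M}\tilde h_\epsilon e^{v_\epsilon/2}ds_g=2\rho'.$$
The proof of the lemma on $\lim \mu_\epsilon^2/\kappa_\epsilon^2$ shows that the bubble $w$ on $\R^2_+$ carries interior mass exactly $2\rho$ and boundary mass exactly $2\rho'$; both inequalities there were forced to be equalities. So the exterior integrals equal the total masses minus the masses inside $B_{R\delta_\epsilon}(x_\epsilon)$. It therefore suffices to show that the masses in the small ball converge, as $\epsilon\to0$ and then $R\to\infty$, to the full bubble masses.

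To do this, change variables $x=x_\epsilon+\delta_\epsilon y$ in the conformal chart. Since $w_\epsilon(y)=v_\epsilon(x_\epsilon+\delta_\epsilon y)+2\log\delta_\epsilon$, we get
$$\int_{B_{R\delta_\epsilon}(x_\epsilon)}Ke^{v_\epsilon}dv_g=\int_{D_R\cap\Omega_\epsilon}K(x_\epsilon+\delta_\epsilon y)e^{\varphi(x_\epsilon+\delta_\epsilon y)}e^{w_\epsilon(y)}dy .$$
The boundary term transforms the same way, with a factor $\delta_\epsilon$ from the length element that is absorbed by $e^{w_\epsilon/2}$. The conformal factor $e^{\varphi}$ and the fact that geodesic balls are not exactly Euclidean discs only introduce $o_\epsilon(1)$ errors, because $\varphi(0)=0$ and $\delta_\epsilon\to0$.

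We know $x_\epsilon\in\partial M$ by Lemma \ref{lem:maximum-point-on-boundary}, so $\Omega_\epsilon\to\R^2_+$ with $a=0$. The local $C^k$ convergence $w_\epsilon\to w$ on $D_R\cap\Omega_\epsilon$ then gives
$$\lim_{\epsilon\to0}2\rho_\epsilon\int_{B_{R\delta_\epsilon}(x_\epsilon)}Ke^{v_\epsilon}dv_g=\int_{D_R^+}\widetilde K(0)e^{w}dy,$$
and the analogous identity holds for the boundary term with $\widetilde h_\epsilon(x_\epsilon+\delta_\epsilon y)\to\widetilde h(0)/(2\rho')$. Since $e^w\sim|y|^{-4}$ and $e^{w/2}\sim|y|^{-2}$ at infinity, both are integrable, and letting $R\to\infty$ recovers $2\rho$ and $2\rho'$ respectively.

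Subtracting, both nonnegative exterior limits vanish, and so does their sum. The only delicate point is the bookkeeping of the transformed measures and checking that the limit ordering ($\epsilon$ first, then $R$) matches the local convergence of $w_\epsilon$. Lemma \ref{lem:02} is not needed for this argument.
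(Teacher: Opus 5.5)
Your argument is correct and is essentially the paper's: both compare the mass captured in $B_{R\delta_\epsilon}(x_\epsilon)$ with the total mass fixed by the normalization $\int_MKe^{v_\epsilon}dv_g=\int_{\partial M}\widetilde h_\epsilon e^{v_\epsilon/2}ds_g=1$, computing the former through the local convergence $w_\epsilon\to w$ and the Li--Zhu bubble. The only difference is that the paper matches the combined mass $4\pi=2\rho+2\rho'$ in one step, whereas you subtract the interior and boundary parts separately, using the equalities $2\rho$ and $2\rho'$ established in the preceding lemma from the same two facts.
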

	\begin{proof}
		On  one hand, we have
		\begin{align*}
			\lim\limits_{R\rightarrow +\infty}\lim\limits_{\epsilon\rightarrow 0}\left(\int_{ B_{R\delta_\epsilon}(x_\epsilon)}2\rho_\epsilon K e^{v_\epsilon}
			+\int_{ \partial M \cap B_{R\delta_\epsilon}(x_\epsilon)}2\rho'  \tilde{h}_\epsilon e^{\frac{v_\epsilon}{2}}\right)= \int_{\R^2_+}\tilde{K}(0)e^wdx+ \int_{\partial\R^2_+}\tilde{h}(0)e^{\frac{w}{2}}=4\pi.
		\end{align*}
		
		On the other hand,
		\begin{align*}
			\lim\limits_{\epsilon\rightarrow 0}\left(\int_{M}2\rho_\epsilon K e^{v_\epsilon}dv_g
			+\int_{ \partial M }2\rho' \tilde{h}_\epsilon e^{\frac{v_\epsilon}{2}}ds_g\right)=\lim_{\epsilon\to 0}2(\rho_\epsilon+\rho')=4\pi,
		\end{align*} which immediately implies the conclusion of the lemma.
		
	\end{proof}

	Now, we prove Proposition \ref{prop:lower-bound}.
\begin{proof}[\textbf{Proof of Proposition \ref{prop:lower-bound} for the case $\rho'>0$}]
	
On one hand, it is easy to see that
	\begin{equation}
		\left.
		\begin{aligned}[b]
			\int_{ B_{R\delta_\epsilon}(x_\epsilon)}|\nabla v_\epsilon|^2&=\int_{ B_{R}^+(0)}|\nabla w|^2+o_\epsilon(1)\\
			&=8\pi \log(\lambda^2+R^2) -8\pi\log\lambda^2-8\pi-8\pi\log\frac{2\pi+\rho'}{2\pi-\rho'}+4\rho'+o_R(1)+o_\epsilon(1).
		\end{aligned}
		\right.
	\end{equation}
	
On the other hand, from the divergence theorem and equation, we have
	\begin{equation}
		\left.
		\begin{aligned}[b]
			\int_{M \backslash B_{R\delta_\epsilon}(x_\epsilon)}|\nabla  v_\epsilon|^2
			=&\int_{M \backslash B_{R\delta_\epsilon}(x_\epsilon)}-v_\epsilon\Delta   v_\epsilon+\int_{ \partial (M \backslash B_{R\delta_\epsilon}(x_\epsilon))}v_\epsilon \frac { \partial v_\epsilon}{\partial \nu}\\
			=&-\frac{2(\rho+\rho'-\epsilon)}{|M|} \int_{M \backslash B_{R\delta_\epsilon}(x_\epsilon)}v_\epsilon+2(\rho-\epsilon)\int_{M \backslash B_{R\delta_\epsilon}(x_\epsilon)} K e^{v_\epsilon} v_\epsilon\\
			&-\int_{ \partial^+ B_{R\delta_\epsilon}(x_\epsilon)}v_\epsilon \frac { \partial v_\epsilon}{\partial r} +2\rho'\int_{ \partial M \backslash B_{R\delta_\epsilon}(x_\epsilon)}  \tilde{h}_\epsilon e^{\frac{v_\epsilon}{2}}v_\epsilon.
		\end{aligned}
		\right.
	\end{equation}
	
	By Lemma \ref{lem:02} and Lemma \ref{lem:01}, we have
	\begin{align*}
		&2\rho_\epsilon\int_{M \setminus B_{R\delta_\epsilon}(x_\epsilon)} K e^{v_\epsilon} v_\epsilon+2\rho'\int_{ \partial M \setminus B_{R\delta_\epsilon}(x_\epsilon)}  \tilde{h}_\epsilon e^{\frac{v_\epsilon}{2}}v_\epsilon\\
		\geq&2\rho_\epsilon\int_{M \setminus B_{R\delta_\epsilon}(x_\epsilon)} K e^{v_\epsilon}G_\epsilon+4\rho_\epsilon\log\delta_\epsilon\int_{M \setminus B_{R\delta_\epsilon}(x_\epsilon)}K e^{v_\epsilon}\\
		&+2\rho_\epsilon\int_{M \setminus B_{R\delta_\epsilon}(x_\epsilon)} K e^{v_\epsilon}\left(\log\frac{8\lambda^2}{\tilde{K}(0)}+2\log\frac{R^2}{\lambda^2+(R-\frac{c\lambda}{\sqrt{2}})^2}- A(p)  + o_\epsilon(1)+o_R(1)\right)\\
		&+2\rho'\int_{ \partial M \setminus B_{R\delta_\epsilon}(x_\epsilon)} \tilde{h}_\epsilon e^{\frac{v_\epsilon}{2}} G_\epsilon+4\rho' \log\delta_\epsilon\int_{ \partial M \setminus B_{R\delta_\epsilon}(x_\epsilon)}  \tilde{h}_\epsilon e^{\frac{v_\epsilon}{2}}\\
		&+2\rho'\int_{ \partial M \setminus B_{R\delta_\epsilon}(x_\epsilon)}  \tilde{h}_\epsilon e^{\frac{v_\epsilon}{2}}\left(\log\frac{8\lambda^2}{\tilde{K}(0)}+2\log\frac{R^2}{\lambda^2+(R-\frac{c\lambda}{\sqrt{2}})^2}- A(p)  + o_\epsilon(1)+o_R(1)\right)\\
		\geq&2\rho_\epsilon\int_{M \setminus B_{R\delta_\epsilon}(x_\epsilon)} K e^{v_\epsilon}G_\epsilon+4\rho_\epsilon \log\delta_\epsilon\int_{M \setminus B_{R\delta_\epsilon}(x_\epsilon)} K e^{v_\epsilon}\\
		&+2\rho'\int_{ \partial M \setminus B_{R\delta_\epsilon}(x_\epsilon)}  \tilde{h}_\epsilon e^{\frac{v_\epsilon}{2}} G_\epsilon+4\rho' \log\delta_\epsilon\int_{ \partial M \setminus B_{R\delta_\epsilon}(x_\epsilon)} \tilde{h}_\epsilon e^{\frac{v_\epsilon}{2}}+ o_\epsilon(1)+o_R(1).
	\end{align*}
	
	By the equation and  divergence theorem, we have
	\begin{align*}
		&2\rho_\epsilon\int_{M \setminus B_{R\delta_\epsilon}(x_\epsilon)} K e^{v_\epsilon}G_\epsilon+2\rho'\int_{ \partial M \setminus  B_{R\delta_\epsilon}(x_\epsilon)}  \tilde{h}_\epsilon e^{\frac{v_\epsilon}{2}} G_\epsilon\\
		=&\int_{M \setminus B_{R\delta_\epsilon}(x_\epsilon)} \left(-\Delta v_\epsilon+\frac{2(\rho_\epsilon+\rho')}{|M|}\right)G_\epsilon+2\rho'\int_{ \partial M \backslash B_{R\delta_\epsilon}(x_\epsilon)}  \tilde{h}_\epsilon e^{\frac{v_\epsilon}{2}} G_\epsilon\\
		=&\int_{M \backslash B_{R\delta_\epsilon}(x_\epsilon)} -\Delta G_\epsilon v_\epsilon
		-\int_{ \partial M \backslash B_{R\delta_\epsilon}(x_\epsilon)} \frac{\partial v_\epsilon}{\partial \nu}G_\epsilon+\int_{ \partial M \backslash B_{R\delta_\epsilon}(x_\epsilon)}  v_\epsilon \frac{\partial G_\epsilon}{\partial \nu}+ \int_{ \partial^+ B_{R\delta_\epsilon}(x_\epsilon)} \frac{\partial v_\epsilon}{\partial r}G_\epsilon \\
		&- \int_{ \partial^+ B_{R\delta_\epsilon}(x_\epsilon)} v_\epsilon \frac{\partial G_\epsilon}{\partial r}+\frac{2(\rho_\epsilon+\rho')}{|M|}\int_{M \setminus B_{R\delta_\epsilon}(x_\epsilon)} G_\epsilon+2\rho'\int_{ \partial M \setminus B_{R\delta_\epsilon}(x_\epsilon)}  \tilde{h}_\epsilon e^{\frac{v_\epsilon}{2}} G_\epsilon\\
		=&-\frac{4\pi}{|M|}\int_{M \backslash B_{R\delta_\epsilon}(x_\epsilon)}  v_\epsilon+ \int_{ \partial^+ B_{R\delta_\epsilon}(x_\epsilon)} \frac{\partial v_\epsilon}{\partial r}G_\epsilon - \int_{ \partial^+ B_{R\delta_\epsilon}(x_\epsilon)} v_\epsilon \frac{\partial G_\epsilon}{\partial r}
		+o_\epsilon(1).
	\end{align*}
	
	Similarly,
	\begin{align*}
		&4\rho_\epsilon \log\delta_\epsilon\int_{M \setminus B_{R\delta_\epsilon}(x_\epsilon)} K e^{v_\epsilon}+4\rho' \log\delta_\epsilon\int_{ \partial M \backslash B_{R\delta_\epsilon}(x_\epsilon)}  \tilde{h}_\epsilon e^{\frac{v_\epsilon}{2}}\\
		=&2\log\delta_\epsilon\int_{M \backslash B_{R\delta_\epsilon}(x_\epsilon)} \left(-\Delta v_\epsilon+\frac{2(\rho_\epsilon+\rho')}{|M|}\right)+4\rho' \log\delta_\epsilon\int_{ \partial M \backslash B_{R\delta_\epsilon}(x_\epsilon)} \tilde{h}_\epsilon e^{\frac{v_\epsilon}{2}} \\
		=& 2\log\delta_\epsilon \int_{\partial^+ B_{R\delta_\epsilon}(x_\epsilon)} \frac{\partial v_\epsilon}{\partial r} +\frac{4(\rho_\epsilon+\rho')}{|M|}\log\delta_\epsilon |M \backslash B_{R\delta_\epsilon}(x_\epsilon)|.
	\end{align*}
	
	Now, we arrive at
	\begin{align*}
		\int_{M \backslash B_{R\delta_\epsilon}(x_\epsilon)}|\nabla  v_\epsilon|^2
		\geq&-\frac{2(\rho_\epsilon+\rho')}{|M|} \int_{M \backslash B_{R\delta_\epsilon}(x_\epsilon)}v_\epsilon-\int_{ \partial^+ B_{R\delta_\epsilon}(x_\epsilon)}v_\epsilon \frac { \partial v_\epsilon}{\partial r} \\
		&-\frac{4\pi}{|M|}\int_{M \backslash B_{R\delta_\epsilon}(x_\epsilon)}  v_\epsilon + \int_{ \partial^+ B_{R\delta_\epsilon}(x_\epsilon)} \frac{\partial v_\epsilon}{\partial r}G_\epsilon - \int_{\partial^+ B_{R\delta_\epsilon}(x_\epsilon)} v_\epsilon \frac{\partial G_\epsilon}{\partial r}\\
		& +2\log\delta_\epsilon \int_{ \partial^+ B_{R\delta_\epsilon}(x_\epsilon)} \frac{\partial v_\epsilon}{\partial r} +\frac{4(\rho_\epsilon+\rho')}{|M|}\log\delta_\epsilon |M \backslash B_{R\delta_\epsilon}(x_\epsilon)|+o_\epsilon(1) \\
		=&-\frac{2(\rho_\epsilon+\rho'+2\pi)}{|M|} \int_{M \backslash B_{R\delta_\epsilon}(x_\epsilon)}v_\epsilon-\int_{\partial^+ B_{R\delta_\epsilon}(x_\epsilon)}\frac{\partial v_\epsilon}{\partial r}\left(v_\epsilon-G_\epsilon-2\log\delta_\epsilon\right)\\
		&-\int_{\partial^+ B_{R\delta_\epsilon}(x_\epsilon)}v_\epsilon \frac{\partial G_\epsilon}{\partial r}+4(\rho_\epsilon+\rho')\log\delta_\epsilon  + o_\epsilon(1)\\
		:=&\mathbf{I}+\mathbf{II}+\mathbf{III}+4(\rho_\epsilon+\rho')\log\delta_\epsilon + o_\epsilon(1).
	\end{align*}
	
For $\mathbf{I}$, since $$|v_\epsilon(x)|\leq -2\log\delta_\epsilon+C,\ \ \forall\ x\in B_{R\delta_\epsilon}(x_\epsilon),$$ then \begin{equation*}\left|\int_{B_{R\delta_\epsilon}(x_\epsilon)}v_\epsilon dv_g\right|\leq C(R)\delta_\epsilon^2|\log\delta_\epsilon|=o_\epsilon(1).\end{equation*}
	Computing directly,
	we have
	\begin{align*}
		\mathbf{I}=&-\frac{2(\rho_\epsilon+\rho'+2\pi)}{|M|} \int_{M\setminus B_{R\delta_\epsilon}(x_\epsilon) }v_\epsilon\\ =&-\frac{2(\rho_\epsilon+\rho'+2\pi)}{|M|} \int_{M}v_\epsilon+o_\epsilon (1)\\
		=&-\frac{2(\rho_\epsilon+\rho'+2\pi)}{|M|} \int_{M }(v_\epsilon-\overline{v}_\epsilon) -\frac{2(\rho_\epsilon+\rho'+2\pi)}{|M|} \int_{M }\overline{v}_\epsilon\\
		=&-\frac{2(\rho_\epsilon+\rho'+2\pi)}{|M|}  \int_{M }G_\epsilon-2(\rho_\epsilon+\rho'+2\pi)  \bar{v}_\epsilon+o_\epsilon(1)\\=&-2(\rho_\epsilon+\rho'+2\pi)  \bar{v}_\epsilon+o_\epsilon(1).
	\end{align*}
	
	By Lemma \ref{lem:02}, we get
	\begin{align*}
		\mathbf{II}=&-\int_{\partial^+ B_{R\delta_\epsilon}(x_\epsilon)}\frac{\partial v_\epsilon}{\partial r}\left(v_\epsilon-G_\epsilon-2\log\delta_\epsilon\right)\\
		\geq& -\int_{\partial^+ B^+_{R}(0)}\frac{\partial w}{\partial r}dS\cdot \left(- A(p)+\log\frac{8\lambda^2}{\tilde{K}(0)} + o_\epsilon(1)+o_R(1)\right) \\
		=& 4\pi \left(- A(p)+\log\frac{8\lambda^2}{\tilde{K}(0)}\right)+ o_\epsilon(1)+o_R(1).
	\end{align*}
	
	For the term $\mathbf{III}$, since \begin{align*}
v_\epsilon\bigg|_{\partial^+ B_{R\delta_\epsilon}(x_\epsilon)}&=w|_{\partial^+B_R(0)}-2\log\delta_\epsilon+o_\epsilon(1)\\
&=-2\log\delta_\epsilon+\log\frac{8\lambda^2}{\tilde{K}(0)}-2\log\big(\lambda^2+(R-\frac{c\lambda}{\sqrt{2}})^2\big) +o_\epsilon(1)+o_R(1), \end{align*} we obtain
	\begin{align*}
		\mathbf{III}&=-\int_{\partial^+ B_{R\delta_\epsilon}(x_\epsilon)}v_\epsilon \frac{\partial G_\epsilon}{\partial r}\\		&=-\left(-2\log\delta_\epsilon+\log\frac{8\lambda^2}{\tilde{K}(0)}-2\log\big(\lambda^2+(R-\frac{c\lambda}{\sqrt{2}})^2\big)\right) \int_{\partial^+ B_{R\delta_\epsilon}(x_\epsilon)} \frac{\partial G_\epsilon}{\partial r}+o_\epsilon (1)+o_R(1)\\		&=\left(-2\log\delta_\epsilon+\log\frac{8\lambda^2}{\tilde{K}(0)}-2\log\big(\lambda^2+(R-\frac{c\lambda}{\sqrt{2}})^2\big)\right)\int_{M\setminus  B_{R\delta_\epsilon}(x_\epsilon)} \Delta G_\epsilon+o_\epsilon (1)+o_R(1)\\
		&=4\pi\left(-2\log\delta_\epsilon+\log\frac{8\lambda^2}{\tilde{K}(0)} -2\log\big(\lambda^2+(R-\frac{c\lambda}{\sqrt{2}})^2\big)\right)+o_\epsilon (1)+o_R(1).
	\end{align*}
	
	Combining these together, we have
	\begin{align*}
		&\int_{M \backslash B_{R\delta_\epsilon}(x_\epsilon)}|\nabla  v_\epsilon|^2\\
		\geq& -2(\rho_\epsilon+\rho'+2\pi)  \bar{v}_\epsilon+ 4\pi\left(-A(p)+2\log\frac{8\lambda^2}{\tilde{K}(0)}-2\log(\lambda^2+R^2)\right)-4\epsilon\log\delta_\epsilon+o_\epsilon(1)+o_R(1).
	\end{align*}
	Therefore, we have
	\begin{align*}
		\int_{ M}|\nabla v_\epsilon|^2=&\int_{M \backslash B_{R\delta_\epsilon}(x_\epsilon)}|\nabla  v_\epsilon|^2+\int_{ B_{R\delta_\epsilon}(x_\epsilon)}|\nabla  v_\epsilon|^2\\
		\geq& -2(\rho_\epsilon+\rho'+2\pi)  \bar{v}_\epsilon-4\epsilon\log\delta_\epsilon+ 4\pi\left(-A(p)+2\log\frac{8\lambda^2}{\tilde{K}(0)}-2\log\lambda^2-2\right)\\ & -8\pi\log\frac{2\pi+\rho'}{2\pi-\rho'}+4\rho'+o_\epsilon(1)+o_R(1),
	\end{align*} which implies
	\begin{align*}
		\frac{1}{2}\int_M|\nabla v_\epsilon|^2+\frac{2(\rho_\epsilon+\rho')}{|M|}\int_M v_\epsilon
		\geq&-2\epsilon\log\delta_\epsilon-\epsilon\bar{v}_\epsilon+2\pi\left(-A(p)+2\log\frac{8}{\tilde{K}(0)}-2\right)\\& -4\pi\log\frac{2\pi+\rho'}{2\pi-\rho'}+2\rho'+o_\epsilon(1)+o_R(1)
	\end{align*}
	
	By \eqref{equation:06}, we have
	\begin{align*}
		J_{\rho_\epsilon,\rho'}(v_\epsilon)\geq& -2\epsilon\log\delta_\epsilon-\epsilon\bar{v}_\epsilon-4\pi-2\pi A(p)+ 4\pi\log 4+2\rho'\\& -2\rho\log K(p)-2\rho'\log h^2(p)-2(2\pi+\rho')\log(2\pi+\rho')+o_\epsilon(1).\end{align*} Hence,
	\begin{align}\label{equation:08}
		\liminf_{\epsilon \to 0}J_{\rho_\epsilon,
			\rho'}(v_\epsilon)\geq& -4\pi+2\rho'+ 4\pi\log 4-2(2\pi+\rho')\log(2\pi+\rho')\notag\\&-2\pi \max_{p\in \partial M}\left(A(p) +\frac{\rho}{\pi}\log K(p)+\frac{\rho'}{\pi}\log h^2(p)\right),\end{align} where $\rho+\rho'=2\pi$.
\end{proof}

\

\begin{proof}[\textbf{Proof of Proposition \ref{prop:lower-bound} for the case $\rho'=0$}]

	Using the notations as before, then we have $\int_MKe^{v_\varepsilon}\,dv_g=1$ and
	$$
	\begin{cases}
		-\Delta_gv_\varepsilon
		+\dfrac{4\pi-2\varepsilon}{|M|}
		=
		(4\pi-2\varepsilon)Ke^{v_\varepsilon},
		&\text{in }M,\\[6pt]
		\partial_\nu v_\varepsilon=0,
		&\text{on }\partial M.
	\end{cases}
	$$

	 A similar argument yields that passing to a subsequence, there holds
	$$
	\lambda_\varepsilon
	=
	v_\varepsilon(x_\varepsilon)
	=
	\max_{\overline M}v_\varepsilon\to +\infty,\ \ x_\epsilon\to p\in\partial M,\ \
	\frac{\operatorname{dist}(x_\varepsilon,\partial M)}
	{\delta_\varepsilon}\to a=0,\ \ as\ \ \epsilon\to 0.
	$$

	In the conformal coordinates used above, write
	$x_\varepsilon=(s_\varepsilon,t_\varepsilon)$ and set
	$$
	w_\varepsilon(x)
	=
	v_\varepsilon(x_\varepsilon+\delta_\varepsilon x)
	+2\log\delta_\varepsilon.
	$$
	Then
	$$
	w_\varepsilon(x)
	\longrightarrow
	w(x)
	=
	-2\log\left(1+\frac{\pi K(p)}2|x|^2\right)
	$$
	in $C^2_{\mathrm{loc}}(\overline{\mathbb R^2_+})$, where $w(x)$ satisfies $$-\Delta w=4\pi K(p)e^w\ \ in\ \ \R^2_+,\ \ \partial_\nu w=0\ \ on\ \ \partial \R^2_+.$$
	In particular,
	$$
	w(x)=-2\log\left(1+\frac{\pi K(p)}{2}|x|^2\right),\ \ \int_{\mathbb R^2_+}4\pi K(p)e^w\,dx=4\pi.
	$$

	Now we claim the conclusion of Lemma~\ref{lem:maximum-point-on-boundary}  still holds.
	
	Otherwise, there exists a sequence
	$\varepsilon\searrow0$ such that $t_\varepsilon>0$. Since
	$$
	\partial_{22}w(0)=-2\pi K(p)<0,
	$$
	by the preceding $C^2$ convergence and the fact
	$t_\varepsilon/\delta_\varepsilon\to0$, we get $$\partial^2_{22}w_\epsilon (0)<-\frac{3}{2}\pi K(p)<0,$$ when $\epsilon$ is small.

Then for any $-\frac{t_\epsilon}{\delta_\epsilon}\leq x_2\leq 0$, we get
\begin{align*}
\partial^2_{22}w_\epsilon(0,x_2)\leq \partial^2_{22}w_\epsilon(0)+\|D^3 w_\epsilon\|_{L^\infty(D_1(0)\cap \overline{\Omega_\epsilon})}\frac{t_\epsilon}{\delta_\epsilon}<-\pi K(p)
\end{align*}when $\epsilon$ is small.

	Therefore, we obtain
	$$
	\begin{aligned}
		0
		&=
		\partial_2w_\varepsilon(0)
		-
		\partial_2w_\varepsilon(0,-\frac{t_\epsilon}{\delta_\epsilon})&=
		\int^0_{-\frac{t_\varepsilon}{\delta_\epsilon}}
		\partial_{22}w_\varepsilon(0,t)\,dt\leq
		-\frac{\pi K(p)t_\varepsilon}{\delta_\varepsilon}
		<0,
	\end{aligned}
	$$
	which is a contradiction. Thus $x_\varepsilon\in\partial M$
	for all sufficiently small $\varepsilon$.
	
	One can check all the remaining proofs in the previous case still work. We finished the proof of Proposition \ref{prop:lower-bound}.
	
\end{proof}

	\

	\section{Construct the blow-up sequence}\label{sec:construc-blowup-seq}

\
	
In this section, we firstly construct the blow-up sequence $\{\phi_{\epsilon,\delta}\}$ such that $$J_{\rho_\epsilon,\rho'}(\phi_{\epsilon,\delta})< -4\pi+2\rho'+ 4\pi\log 4-2(2\pi+\rho')\log(2\pi+\rho')-2\pi \max_{p\in \partial M}\left(A(p) +\frac{\rho}{\pi}\log K(p)+\frac{\rho'}{\pi}\log h^2(p)\right),$$ when $\epsilon$ and $\delta$ are small enough. Combining this with the lower bound obtained in Section 2, we will prove Theorem \ref{thm:main-01} and Theorem \ref{thm:main-02}.

\

We start this section by proving the following lemma.

	\begin{lem}\label{lem:04}
		Let  $G(x, p)$ be the Green function defined by \eqref{eq:intro-green-function}. Let $p\in\partial M$ and $\Pi(p)$, $\kappa(p)$ be the Gaussian curvature and geodesic curvature of $(M, g)$ and $\partial M$ at $p$ respectively. Then the following identity holds
		$$b_2=-2\kappa,\ \ \ \frac{2\pi}{3}\Pi+c_1\pi+c_3\pi=\frac{2\pi^2}{|M|}.$$
	\end{lem}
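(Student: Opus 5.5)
The plan is to extract both identities from the local expansion \eqref{eq:intro-green-expansion} by substituting it into the two equations of \eqref{eq:intro-green-function} and matching the lowest-order terms as $r\to0$. I take $(x_1,x_2)$ to be geodesic normal coordinates centered at $p$, with $x_2$ pointing into $M$. Then $g_{ij}=\delta_{ij}+O(r^2)$, the Christoffel symbols vanish at $p$, and in polar form $\sqrt{\det g}=r\bigl(1-\tfrac{\Pi(p)}{6}r^2+O(r^3)\bigr)$.

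\emph{The interior identity.} For $x\in M$ near $p$ I compute $\Delta_g$ of each term of \eqref{eq:intro-green-expansion}. Since $\log r$ is radial, $\Delta_g\log r=\frac{1}{\sqrt{\det g}}\partial_r\bigl(\sqrt{\det g}\,\partial_r\log r\bigr)$. Inserting the expansion of $\sqrt{\det g}$ gives $\partial_r\bigl(1-\tfrac{\Pi}{6}r^2\bigr)/r=-\tfrac{\Pi}{3}+O(r)$, hence $\Delta_g(-4\log r)=\tfrac{4}{3}\Pi(p)+O(r)$. The linear terms give $\Delta_g(b_ix_i)=-g^{jk}\Gamma^i_{jk}b_i=O(r)$, because the Christoffel symbols vanish at the center. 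The quadratic terms give $2c_1+2c_3+O(r)$, and the remainder $O(r^3)$ contributes $O(r)$, provided the expansion may be differentiated twice (I will take this from the construction in \cite{Hu-Bartsch}). Setting the sum equal to $\frac{4\pi}{|M|}$ and letting $x\to p$ inside $M$ yields $\frac43\Pi+2(c_1+c_3)=\frac{4\pi}{|M|}$. Multiplying by $\pi/2$ gives the second identity.

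\emph{The boundary identity.} On $\partial M\setminus\{p\}$ we have $\partial_\nu G=0$. The convention $\nabla_\tau\tau=-\kappa\nu$, with $\nu$ pointing in the $-x_2$ direction at $p$, means that near $p$ the boundary is the graph $x_2=\frac{\kappa(p)}{2}x_1^2+O(|x_1|^3)$. The outward normal there is $\nu=(\kappa x_1,-1)+O(x_1^2)$, with the metric corrections being $O(x_1^2)$.

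Then $\partial_\nu(-4\log r)=-4\frac{x\cdot\nu}{r^2}+O(|x_1|)=-4\frac{\kappa x_1^2-\frac{\kappa}{2}x_1^2}{x_1^2}+O(|x_1|)=-2\kappa(p)+O(|x_1|)$. Also $\partial_\nu(b_1x_1+b_2x_2)=-b_2+O(|x_1|)$, and the higher-order terms are $O(|x_1|)$. Letting $x_1\to0$ gives $-2\kappa-b_2=0$, that is, $b_2=-2\kappa$.

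\emph{Main obstacle.} I expect the delicate step to be the boundary computation. The singular term $\partial_\nu(-4\log r)$ is of size $O(1)$ only because the first-order contributions cancel: $x\cdot\nu$ is quadratic in $x_1$, and dividing by $r^2$ leaves a finite limit. So the boundary curve must be expanded to second order with the correct sign of $\kappa$, and I must check that the metric deviation $g-\delta=O(r^2)$ in normal coordinates does not alter the $O(1)$ term. The same care is needed to justify that the remainder $O(r^3)$ in \eqref{eq:intro-green-expansion} can be differentiated once on $\partial M$ and twice in $M$.
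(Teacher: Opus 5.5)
Your argument is correct, but it takes a different route from the paper's. The paper never evaluates the equations pointwise. It applies the divergence theorem on $M\setminus B_r(p)$ to get $\int_{\partial^+B_r(p)}\partial_rG=-\frac{4\pi}{|M|}\bigl(|M|-|B_r(p)\cap M|\bigr)=-4\pi+\frac{2\pi^2}{|M|}r^2+o(r^2)$. It then computes the same flux by integrating the once-differentiated expansion against $\bigl(r-\frac{\Pi}{6}r^3\bigr)\,d\theta$ over the arc $\theta\in[\theta_1(r),\pi-\theta_2(r)]$, which gives $-4\pi+(4\kappa+2b_2)r+\bigl(\frac{2\pi}{3}\Pi+\pi c_1+\pi c_3\bigr)r^2+O(r^3)$. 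Both identities are read off from the coefficients of $r$ and $r^2$.

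In the paper's version, the boundary condition enters only through $\partial_\nu G=0$ on $\partial M\setminus\{p\}$ and the angular deficit $\theta_1+\theta_2=\kappa r+O(r^3)$. The interior equation enters only through the area of the half-ball. The gain is that only $\partial_r$ of the remainder is ever used. The cost is geometric bookkeeping: the $r^2$-coefficient is right only because the $\kappa'$-terms in $\theta_1,\theta_2$ cancel, which is why the appendix expands $\partial M$ to third order.

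Your version decouples the two identities. The boundary identity needs the graph of $\partial M$ only to second order, and the interior identity needs no boundary geometry at all. Its price is the one you flagged: $G+4\log r$ must be $C^2$ up to $\partial M$ at $p$. This does hold, for three reasons. First, $\Delta_g\log r=-\frac{\Pi(p)}{3}+O(r)$ is Lipschitz. Second, $-4\log r$ already carries the full mass $4\pi\delta_p$, since its flux through $\partial^+B_\varepsilon(p)$ tends to $-4\pi$. Third, the leftover Neumann datum $4\,x\cdot\nu/r^2$ is smooth along $\partial M$. Schauder theory for the Neumann problem then gives $C^{2,\alpha}$, so your $O(r)$ errors become $o(1)$, which suffices.

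In the boundary step, the Gauss lemma $g_{ij}x^j=x^i$ gives $\langle\nabla_g r,\nu\rangle_g=x\cdot\nu/r$ exactly. So the only metric effect is the $O(x_1^2)$ change in the components of $\nu$, as you claimed.
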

	\begin{proof}
		In a normal coordinate system near $p$,
		the Riemannian metric can be written as $g = dr^2 + g^2(r,\theta)d\theta^2$ with
		$$g(r, \theta)=r-\frac{\Pi(p)}{6}r^3+O(r^4).$$
		By the divergence theorem, we have
		\begin{align}\label{equation:05}
			\int_{\partial^+B_r(p)}\frac{\partial G}{\partial\nu}=&\int_{M \backslash B_r(p)}-\Delta G +\int_{\partial M \backslash B_r(p)} \frac{\partial G}{\partial\nu}\notag\\
			=&-\frac{4\pi}{|M|}(|M|-|B_r(p)|)\notag\\
			=&-4\pi+\frac{4\pi}{|M|}\frac{\pi r^2}{2}+o(r^2).
		\end{align}
		On the other hand,
		\begin{align*}
			\int_{\partial^+B_r(p)}\frac{\partial G}{\partial\nu}=\int_{\theta_1(r)}^{\pi-\theta_2(r)}&\left(-\frac{4}{r}+b_1\cos\theta+b_2\sin\theta+2c_1r \cos^2\theta +4c_2r\sin\theta\cos\theta+2c_3 r \sin^2 \theta\right)\\
			&\cdot \left(r-\frac{1}{6}\Pi(p)r^3+O(r^4)\right)d\theta .
		\end{align*}
		
		Let $p=0$ and $x_1,x_2$ be rectangular coordinates on $T_0M$ such that $\frac{\partial}{\partial x_2}$ is the inward unit normal vector to $\partial M$ at $0\in \partial M$. By \cite{Glutsyuk} or Lemma \ref{lem:03} in the appendix, in a small neighborhood of $0$, we can write $\partial M$ as a graph $(x_1, f(x_1))$, where
		$$
		f(x_1)=\frac{1}{2}\kappa x_1^2+\frac{1}{6}\kappa'x_1^3+O(x_1^4)
		$$
		and $\kappa=\kappa(p)$ is the geodesic curvature of $\partial M$ at $p\in \partial M$, $\kappa'=\nabla_\tau\kappa (p)$.
		
		The circle $\partial^+B_r(p)$ intersects $\partial M$ at two points
		$$
		(R_+(r),f(R_+(r)))\quad\mbox{and}\quad
		(-R_-(r),f(-R_-(r))),
		$$
		where $R_+(r),R_-(r)>0$. Then
		$$
		r^2=R_+^2(r)+f^2(R_+(r))
		=R_-^2(r)+f^2(-R_-(r))
		=R_\pm^2(r)+O(R_\pm^4(r)),
		$$
		which implies
		$$
		R_+(r)=r+O(r^3),\qquad R_-(r)=r+O(r^3).
		$$
		For the positive intersection point, we have
		$$
		\tan\theta_1(r)=\frac{f(R_+(r))}{R_+(r)}
		=\frac{1}{2}\kappa R_+(r)+\frac{1}{6}\kappa'R_+^2(r)+O(R_+^3(r))
		=\frac{1}{2}\kappa r+\frac{1}{6}\kappa'r^2+O(r^3),
		$$
		which implies
		$$
		\theta_1(r)=\arctan\left(\frac{1}{2}\kappa r+\frac{1}{6}\kappa'r^2+O(r^3)\right)
		=\frac{1}{2}\kappa r+\frac{1}{6}\kappa'r^2+O(r^3).
		$$
		For the negative intersection point, we have
		$$
		f(-R_-(r))
		=\frac{1}{2}\kappa R_-^2(r)-\frac{1}{6}\kappa'R_-^3(r)+O(R_-^4(r)),
		$$
		and hence
		$$
		\tan\theta_2(r)=\frac{f(-R_-(r))}{R_-(r)}
		=\frac{1}{2}\kappa R_-(r)-\frac{1}{6}\kappa'R_-^2(r)+O(R_-^3(r))
		=\frac{1}{2}\kappa r-\frac{1}{6}\kappa'r^2+O(r^3),
		$$
		which implies
		$$
		\theta_2(r)=\frac{1}{2}\kappa r-\frac{1}{6}\kappa'r^2+O(r^3).
		$$
		Therefore,
		$$
		\theta_1(r)+\theta_2(r)=\kappa r+O(r^3).
		$$
		
		A direct computation yields
		\begin{align*}
			\int_{\theta_1(r)}^{\pi-\theta_2(r)}-\frac{4}{r}\left(r-\frac{1}{6}\Pi(p)r^3+O(r^4)\right)d\theta
			&=(-4+\frac{2}{3}\Pi r^2)(\pi-\theta_1(r)-\theta_2(r))+O(r^3)\\
			&=-4\pi+4\kappa r+\frac{2\pi}{3}\Pi r^2+O(r^3),
		\end{align*}
		\begin{align*}
			&\int_{\theta_1(r)}^{\pi-\theta_2(r)}(b_1\cos\theta+b_2\sin\theta)(r-\frac{1}{6}\Pi(p)r^3+O(r^4))d\theta \\
			&=b_1r(\sin(\pi-\theta_2)-\sin\theta_1)+b_2r(\cos\theta_1-\cos(\pi-\theta_2))+O(r^3)\\
			&=2b_2r+O(r^3)
		\end{align*}
		and
		\begin{align*}
			&\int_{\theta_1(r)}^{\pi-\theta_2(r)}\left(2c_1r \cos^2\theta +4c_2r\sin\theta\cos\theta+2c_3 r \sin^2 \theta\right)\left(r-\frac{1}{6}\Pi(p)r^3+O(r^4)\right)d\theta\\&=(c_1+c_3)\pi r^2+O(r^3).
		\end{align*}
		Therefore,
		\begin{align}\label{equation:04}
			\int_{\partial^+B_r(p)}\frac{\partial G}{\partial\nu}
			=-4\pi+(4\kappa+2b_2)r+
			\left(\frac{2\pi}{3}\Pi+c_1\pi+c_3\pi\right)r^2+O(r^3).
		\end{align}
		By comparing \eqref{equation:04} with \eqref{equation:05}, we get
		$$
		b_2=-2\kappa,\qquad
		\frac{2\pi}{3}\Pi+c_1\pi+c_3\pi=\frac{2\pi^2}{|M|}.
		$$
	\end{proof}

	\

	Now we begin to construct the blow-up sequence. Fix $p\in\partial M$. Let $x_1,x_2$ be the normal coordinates at $p$ such that $\frac{\partial}{\partial x_2}$ is the inward unit normal vector at $p=0$. Let
	$
	r=\sqrt{x_1^2+x_2^2}.
	$

	Choose $\alpha=\alpha_\epsilon$ such that
	$$
	\alpha^4\epsilon
	=
	\frac{1}{\log(-\log\epsilon)}.
	$$

Set
	$$
	a
	=
	\frac{\rho'}{\sqrt{4\pi^2-\rho'^2}},
	\ \
	r_\epsilon^2
	=
	x_1^2
	+
	\bigl(x_2+a\sqrt{\epsilon}\bigr)^2,\ \ R_\eps=\alpha\sqrt{\epsilon}.
	$$

	Fix $\delta>0$ sufficiently small and take $\epsilon>0$ such that
	$
	2R_\eps+a\sqrt{\epsilon}<\delta.
	$
	Choose $\eta_\delta\in C^\infty(M)$, $\chi_\epsilon\in C^\infty(M)$ such that
	$$
	0\leq\eta_\delta\leq1,\ \ |\nabla_g\eta_\delta|
	\leq
	\frac{C}{\delta}\ \ \text{and}
	\ \
	\eta_\delta\equiv1
	\ \ \text{for }r\leq\delta,
	\ \
	\eta_\delta\equiv0
	\ \ \text{for }r\geq2\delta.
	$$	
	and
	$$
	0\leq\chi_\epsilon\leq1,
	\ \ |\nabla_g\chi_\epsilon|
	\leq
	\frac{C}{R_\eps}\ \ \text{and}\ \
	\chi_\epsilon\equiv1
	\ \ \text{for }r_\epsilon\leq R_\eps,
	\ \
	\chi_\epsilon\equiv0
	\ \ \text{for }r_\epsilon\geq2R_\eps.
	$$

	Define
	\begin{equation}\label{eq:test-function}
		\phi_{\epsilon,\delta}(x)
		=
		\begin{cases}
			\displaystyle
			-2\log\bigl(\epsilon+r_\epsilon^2\bigr)
			+
			A(p)
			+
			b_1(p)x_1
			+
			b_2(p)x_2+\log\eps,
			& r_\epsilon\leq R_\eps,
			\\[1.2em]
			\displaystyle
			-2\log\bigl(\epsilon+r_\epsilon^2\bigr)
			+
			\bigl(1-\chi_\epsilon(x)\bigr)
			\bigl[G(x,p)+4\log r\bigr]
			\\
			\displaystyle \quad
			+
			\chi_\epsilon(x)
			\bigl[
			A(p)
			+
			b_1(p)x_1
			+
			b_2(p)x_2
			\bigr]+\log\eps,
			& R_\eps<r_\epsilon<2R_\eps,
			\\[1.2em]
			\displaystyle
			G(x,p)
			+
			\eta_\delta(x)
			\left[
			4\log r
			-
			2\log\bigl(\epsilon+r_\epsilon^2\bigr)
			\right]+\log\eps,
			& r_\epsilon\geq2R_\eps.
		\end{cases}
	\end{equation}

\

In this section, we will prove the following proposition.

\begin{prop}\label{prop:01}
		Assume that
		$$
		\rho+\rho'=2\pi,
		\qquad
		0<\rho\leq2\pi,
		\qquad
		0\leq\rho'<2\pi.
		$$
		Then we have
		\begin{align*}
			&J_{\rho,\rho'}(\phi_{\epsilon,\delta})\\& =
			-4\pi
			+
			2\rho'
			+
			4\pi\log4
			-
			2(2\pi+\rho')\log(2\pi+\rho')
			-
			2\pi
			\left[
			A(p)
			+
			\frac{\rho}{\pi}\log K(p)
			+
			\frac{\rho'}{\pi}\log h^2(p)
			\right] \\
			&
			\quad -
			4\pi
			\sqrt{\frac{\rho}{2\pi+\rho'}}
			\Bigg[
			\frac{K_2(p)}{K(p)}
			-
			2\kappa(p)
			+
			\frac{2\rho'}
			{\pi\rho e^{A(p)/2}h(p)}
			\left(
			\int_{\partial M\cap\{r\geq\delta\}}
			h(x)e^{G(x,p)/2}\,ds_g
			-
			\frac{2e^{A(p)/2}h(p)}{\delta}
			\right)
			\Bigg]\sqrt{\epsilon} \\
			&\quad
			+
			O(\delta\sqrt{\epsilon})
			+
			o(\sqrt{\epsilon}).
		\end{align*}
		
	\end{prop}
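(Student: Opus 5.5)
The plan is to expand each of the four pieces of $J_{\rho,\rho'}(\phi_{\epsilon,\delta})$ separately up to order $\sqrt\epsilon$:
$$\tfrac12\int_M|\nabla_g\phi|^2,\qquad \tfrac{4\pi}{|M|}\int_M\phi,\qquad -2\rho\log\int_MKe^{\phi},\qquad -4\rho'\log\int_{\partial M}he^{\phi/2}.$$
All computations are done in the normal coordinates at $p$. The additive constant $\log\epsilon$ in \eqref{eq:test-function} cancels between the linear term and the two logarithms, because $2\rho+2\rho'=4\pi$; so it can be dropped throughout.

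\textbf{Step 1: the two logarithmic terms.} The bubble $-2\log(\epsilon+r_\epsilon^2)$ is centered at $(0,-a\sqrt\epsilon)$, which lies \emph{outside} $M$. After rescaling $x=\sqrt\epsilon\,y$:

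\emph{Interior integral.} On $\{r_\epsilon\le R_\epsilon\}$ I expand
$$K=K(p)+K_1x_1+K_2x_2+O(r^2),\qquad e^{b_1x_1+b_2x_2}=1+b_1x_1+b_2x_2+O(r^2),$$
the volume element as $1+O(r^2)$, and the domain as $\{x_2\ge f(x_1)\}$ with $f=\tfrac12\kappa x_1^2+O(x_1^3)$ (Lemma~\ref{lem:03}). The leading term is $e^{A(p)}K(p)\,\epsilon^{-1}\!\int_{\{y_2\ge0\}}(1+|y+(0,a)|^2)^{-2}dy$. Odd terms in $x_1$ integrate to zero up to $O(\epsilon)$. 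The $x_2$-linear terms, $(K_2/K(p)+b_2)x_2$ with $b_2=-2\kappa$ by Lemma~\ref{lem:04}, produce a $\sqrt\epsilon$ correction. The curvature of $\partial M$ removes a strip of width $\tfrac12\kappa x_1^2$; after rescaling this is also of size $\sqrt\epsilon$, and I will compute it explicitly. The annulus and outer region contribute $O(\alpha^{-2})=o(\sqrt\epsilon)$, which is where the choice of $\alpha$ enters.

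\emph{Boundary integral.} $e^{\phi/2}$ restricted to $\partial M$ equals $e^{A(p)/2}\sqrt\epsilon/(\epsilon+x_1^2+a^2\epsilon)$ near $p$, giving the leading term $\pi e^{A/2}h(p)/\sqrt{1+a^2}$. The key point is that, for $r\ge\delta$, $e^{\phi/2}=\sqrt\epsilon\,e^{G/2}$, whose integral is already of order $\sqrt\epsilon$. On $\{|x_1|<\delta\}$ the Cauchy-type kernel differs from its full-line integral by $-2\sqrt\epsilon\,e^{A/2}h(p)/\delta+O(\delta\sqrt\epsilon)$. Summing the two pieces produces exactly the truncated quantity $\int_{\partial M\cap\{r\ge\delta\}}he^{G/2}-2e^{A/2}h(p)/\delta$ appearing in the statement; the first-order corrections from $h'(p)x_1$ and $b_1x_1$ are odd and drop out. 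Taking logarithms converts these relative $\sqrt\epsilon$ corrections into the linear terms.

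\textbf{Step 2: the Dirichlet energy.} I split $M$ into $\{r_\epsilon\le R_\epsilon\}$, the annulus, and $\{r_\epsilon\ge 2R_\epsilon\}$.
\begin{itemize}
\item On the outer region, $\phi=G+\eta_\delta(4\log r-2\log(\epsilon+r_\epsilon^2))$. I integrate by parts using \eqref{eq:intro-green-function}. Since $\partial_\nu G=0$ on $\partial M\setminus\{p\}$, the boundary term reduces to one over $\partial^+\{r_\epsilon=2R_\epsilon\}$. This should give $-16\pi\log(2R_\epsilon)+4\pi A(p)$ plus an $x_2$-asymmetric correction coming from the shifted center and the curvature. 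That correction is again expressed via $b_2=-2\kappa$ and the angle defect $\theta_1+\theta_2=\kappa r+O(r^3)$ computed in Lemma~\ref{lem:04}; the volume term $\tfrac{4\pi}{|M|}\int\phi$ is $O(\delta^2)$-local and cancels against the mean-value term.
\item On the inner region, the bubble energy over the shifted half-disk is explicit: $\int|\nabla\log(\epsilon+r_\epsilon^2)|^2$ over $\{y_2\ge -f\}$. It yields $16\pi\log(R_\epsilon/\sqrt\epsilon)$, a constant depending on $a$, and a $\sqrt\epsilon$ term from the boundary curvature.
\item The cutoff region and the $b_1x_1$ terms contribute $o(\sqrt\epsilon)$, by $|\nabla\chi_\epsilon|\le C/R_\epsilon$ and because $G+4\log r-A-b\cdot x=O(r^2)$ there.
\end{itemize}

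\textbf{Step 3: assembly.} I sum the pieces with $a=\rho'/\sqrt{4\pi^2-\rho'^2}$. The $\log\epsilon$ and $\log R_\epsilon$ terms must cancel, and the constants should reproduce $-4\pi+2\rho'+4\pi\log4-2(2\pi+\rho')\log(2\pi+\rho')-2\pi A-2\rho\log K-2\rho'\log h^2$. The choice of $a$ is the one minimizing the zeroth-order terms, matching the bubble center in Proposition~\ref{prop:lower-bound}; this should follow from $1+a^2=4\pi^2/(4\pi^2-\rho'^2)$. Collecting the $\sqrt\epsilon$ coefficients gives the bracket, with prefactor $4\pi\sqrt{\rho/(2\pi+\rho')}$.

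The main obstacle is Step 3 together with the $\sqrt\epsilon$ bookkeeping in Steps 1--2. The $\kappa$-contributions arise in three places: the energy, the area integral through both $b_2$ and the domain shape, and the boundary length. They must combine to exactly $-2\kappa$, while the dependence on $a$ collapses to the single prefactor. I would first carry out all computations on the flat half-plane model with exact integrals in $y$, then add the curvature corrections as separate linear perturbations and check the combination.
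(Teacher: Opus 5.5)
Your architecture matches the paper's: the same test function, the bubble/neck/exterior split of the energy, the four pieces of $J_{\rho,\rho'}$, and the identities $a/\sqrt{1+a^2}=\rho'/(2\pi)$ and $\sqrt{1+a^2}-a=\sqrt{\rho/(2\pi+\rho')}$. However, the error bookkeeping that is supposed to close the expansion at $o(\sqrt\epsilon)$ is wrong.

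\textbf{The intermediate scales are not negligible.} With $\alpha^4\epsilon=1/\log(-\log\epsilon)$ you get $\alpha^{-2}=\sqrt{\epsilon\log(-\log\epsilon)}$. The quantities $\alpha^{-1}$ and $R_\epsilon=\alpha\sqrt\epsilon$ are larger still, so all three are $\gg\sqrt\epsilon$.
\begin{itemize}
\item Your claim that the annulus and outer region contribute $O(\alpha^{-2})=o(\sqrt\epsilon)$ is therefore false. They contribute $\frac{3\pi}{8\alpha^2}K(p)e^{A(p)}$ and $\frac{\pi}{8\alpha^2}K(p)e^{A(p)}$. These disappear only because they cancel the truncation deficit $-\frac{\pi}{2\alpha^2}K(p)e^{A(p)}$ of the inner region.
\item You cannot re-choose $\alpha$ to make the claim true. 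Since $R_\epsilon^2\alpha^{-2}=\epsilon$, and the paper's estimates need $R_\epsilon^2=o(\sqrt\epsilon)$ (for $\frac12|b|^2|\{r_\epsilon\le R_\epsilon\}|$, for the $O(r^2)$ remainder of $G$ in the neck, and for the boundary term on $\{r=2R_\epsilon\}$), $\alpha^{-2}$ must be $\gg\sqrt\epsilon$.
\item The missing mechanism is exact cancellation of these terms across the three regions. In $\frac12\int_M|\nabla_g\phi_{\epsilon,\delta}|^2$, bubble, neck and exterior give $\frac{16a}{\alpha}-\frac{8a}{\alpha}-\frac{8a}{\alpha}$, $\frac{8\pi}{\alpha^2}-\frac{6\pi}{\alpha^2}-\frac{2\pi}{\alpha^2}$, and $(-8-8+16)(b_2+\kappa)R_\epsilon$. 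On $\partial M$ one has $-\frac2\alpha+\frac1\alpha+\frac1\alpha$.
\item Your list for the inner energy omits exactly these terms. It also omits the genuine term $4\pi\sqrt{1+a^2}\,b_2\sqrt\epsilon$, together with its companion $-8b_2R_\epsilon$, coming from the cross term $D(-2\log(\epsilon+r_\epsilon^2))\cdot D(b_2x_2)$ on $\{r_\epsilon\le R_\epsilon\}$.
\end{itemize}
For the two exponential integrals there is a simple fix, cleaner than the paper's region-by-region computation. On $\{r\le\delta\}$, in all three regions, $\phi_{\epsilon,\delta}-\log\epsilon=-2\log(\epsilon+r_\epsilon^2)+A(p)+b_1x_1+b_2x_2+O(r^2)$. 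So you can integrate the untruncated profile over $M\cap\{r\le\delta\}$ without splitting. For the energy the interface at $r_\epsilon\sim R_\epsilon$ cannot be avoided, because $2\pi A(p)$ must come from the exterior energy of $G$. There the intermediate-scale terms have to be carried explicitly and shown to cancel.

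\textbf{The boundary integrand is missing a curvature term.} It is not $e^{A(p)/2}\sqrt\epsilon/(\epsilon+x_1^2+a^2\epsilon)$. On $\partial M$ one has $x_2=f(x_1)=\frac{\kappa}{2}x_1^2+O(|x_1|^3)$, so
\[
\epsilon+r_\epsilon^2=(1+a^2)\epsilon+(1+a\kappa\sqrt\epsilon)\,x_1^2+O(\sqrt\epsilon|x_1|^3+x_1^4).
\]
The cross term $2a\sqrt\epsilon f(x_1)$ produces a relative correction $-\frac{a\kappa}{2}\sqrt\epsilon$ in $\int_{\partial M}he^{\phi_{\epsilon,\delta}/2}\,ds_g$. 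In $J_{\rho,\rho'}$ this is $+2\rho'a\kappa\sqrt\epsilon=\frac{4\pi a^2\kappa}{\sqrt{1+a^2}}\sqrt\epsilon$.

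This term is exactly what makes the $\kappa$-terms close:
\begin{itemize}
\item After substituting $b_2=-2\kappa$, the energy contributes $-\frac{(2\pi+4\pi a^2)\kappa}{\sqrt{1+a^2}}\sqrt\epsilon$.
\item The domain shape in $-2\rho\log\int_MKe^{\phi_{\epsilon,\delta}}$ contributes $+\frac{2\pi\kappa}{\sqrt{1+a^2}}\sqrt\epsilon$.
\item The $-2\kappa$ in the final bracket comes solely from $b_2$ in the area integral.
\end{itemize}
Without the boundary correction, your Step 3 would leave a spurious $-\frac{4\pi a^2\kappa}{\sqrt{1+a^2}}\sqrt\epsilon$ and would not produce $-2\kappa$. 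This is not a boundary-length effect: $ds_g=(1+O(x_1^2))\,dx_1$ contributes only $O(\delta\sqrt\epsilon)$.
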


\

For simplicity of notations, we assume $\kappa(p)>0$ in the proof of Proposition \ref{prop:01}. The case $\kappa(p)\leq 0$ is similar.

\
	
	\begin{lem}\label{lem:bubble-gradient}
		As $\epsilon\to0$, we have
		\begin{align}
			\frac12\int_{M\cap\{r_\epsilon\leq R_\eps\}}
			|\nabla_g\phi_{\epsilon,\delta}|^2\,dv_g
			={}&8\pi\log\alpha-4\pi
			-8\pi\log\bigl(a+\sqrt{1+a^2}\bigr)
			+\frac{4\pi a}{\sqrt{1+a^2}} \notag\\
			&+\frac{16a}{\alpha}+\frac{8\pi}{\alpha^2}
			-8\bigl(b_2(p)+\kappa(p)\bigr)R_\eps \notag\\
			&+\left(
			4\pi\sqrt{1+a^2}\,b_2(p)
			+\frac{2\pi\kappa(p)(2a^2+3)}{\sqrt{1+a^2}}
			\right)\sqrt{\epsilon}
			+o(\sqrt{\epsilon}).
			\label{eq:bubble-gradient}
		\end{align}
	\end{lem}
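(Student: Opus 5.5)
On the region $\{r_\epsilon\le R_\epsilon\}\cap M$ the test function is $-2\log(\epsilon+r_\epsilon^2)+A(p)+b_1x_1+b_2x_2+\log\epsilon$. The plan is to expand $|\nabla_g\phi|^2$ explicitly and integrate it over a disk centred at the shifted point $q_\epsilon=(0,-a\sqrt\epsilon)$, which lies outside $M$, with the boundary $\partial M=\{x_2=f(x_1)\}$ cutting the disk. In normal coordinates $g_{ij}=\delta_{ij}+O(r^2)$ and $dv_g=(1+O(r^2))dx$. Since $r\le 2R_\epsilon=2\alpha\sqrt\epsilon$, the metric corrections contribute at most $O(R_\epsilon^2)\int|\nabla\phi|^2=O(\alpha^2\epsilon\log\alpha)=o(\sqrt\epsilon)$. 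So we work with the Euclidean gradient and Lebesgue measure.

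Write $\nabla\phi=-\frac{4(x_1,x_2+a\sqrt\epsilon)}{\epsilon+r_\epsilon^2}+(b_1,b_2)$. Then
\[
|\nabla\phi|^2=\frac{16r_\epsilon^2}{(\epsilon+r_\epsilon^2)^2}-\frac{8(b_1x_1+b_2(x_2+a\sqrt\epsilon))}{\epsilon+r_\epsilon^2}+|b|^2 .
\]
The last term contributes $O(R_\epsilon^2)=o(\sqrt\epsilon)$.

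\textbf{Step 1 (main term over the flat half-disk).} Rescale $x=\sqrt\epsilon\,y$. The domain becomes $\{|y-(0,-a)|\le\alpha\}\cap\{y_2\ge f(\sqrt\epsilon y_1)/\sqrt\epsilon\}$. First compute the integral over the flat domain $\{y_2\ge0\}$ in polar coordinates around $(0,-a)$. The chord $y_2=0$ sits at distance $a$ from the centre, so the angular range for radius $s>a$ has length $\pi+2\arcsin(a/s)$, and
\[
\frac12\int\frac{16 s^2}{(1+s^2)^2}\,dA=8\int_a^\alpha\frac{s^3(\pi+2\arcsin(a/s))}{(1+s^2)^2}\,ds .
\]
Expanding as $\alpha\to\infty$ gives $8\pi\log\alpha-4\pi-8\pi\log(a+\sqrt{1+a^2})+\frac{4\pi a}{\sqrt{1+a^2}}+\frac{16a}{\alpha}+\frac{8\pi}{\alpha^2}+o(\sqrt\epsilon)$. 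Here the $\arcsin$ part produces the $16a/\alpha$ term, and the terms $\log(a+\sqrt{1+a^2})$ and $a/\sqrt{1+a^2}$ arise from $\int\frac{s\arcsin(a/s)}{(1+s^2)^2}$-type integrals after integrating by parts. I expect these are exactly the constants displayed in \eqref{eq:bubble-gradient}, and I will verify them via the substitution $s=a/\sin t$.

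\textbf{Step 2 (curvature of the boundary).} The true domain differs from the half-plane by the thin region between $y_2=0$ and $y_2=\frac12\kappa\sqrt\epsilon\,y_1^2+O(\epsilon y_1^3)$, which is removed when $\kappa>0$. Its contribution is
\[
-\frac{\kappa\sqrt\epsilon}{2}\int\frac{8|y-(0,-a)|^2}{(1+|y-(0,-a)|^2)^2}\,y_1^2\,dy_1\Big|_{y_2=0}+\dots
\]
Along $y_2=0$ this integrand is roughly $8y_1^2/(1+a^2+y_1^2)$, so it grows linearly in the cutoff. That linear growth yields the term $-8\kappa R_\epsilon$ (since $\alpha\sqrt\epsilon=R_\epsilon$), and the finite part gives a coefficient of the form $\frac{2\pi\kappa(2a^2+3)}{\sqrt{1+a^2}}\sqrt\epsilon$. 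One must subtract the linear divergence carefully and evaluate $\int_{\mathbb R}\bigl(\frac{y_1^2(1+a^2+y_1^2)+\dots}{(1+a^2+y_1^2)^2}-1\bigr)dy_1$ exactly. The cubic term $\kappa'$ is odd in $y_1$, so it cancels to the required order.

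\textbf{Step 3 (cross term).} The term $-8b_1x_1/(\epsilon+r_\epsilon^2)$ is odd in $x_1$ up to the boundary asymmetry, which is $O(\epsilon)$. So it contributes $o(\sqrt\epsilon)$. The term $-8b_2(x_2+a\sqrt\epsilon)/(\epsilon+r_\epsilon^2)$ is computed on the half-disk. After rescaling it equals $-8b_2\sqrt\epsilon\int\frac{(y_2+a)}{1+|y+(0,a)|^2}dy$, taken with a factor $\frac12$. The chord geometry gives a linearly divergent part $-8b_2R_\epsilon$ plus the finite part $4\pi\sqrt{1+a^2}\,b_2\sqrt\epsilon$, again computed using the polar formula with angular range $\pi+2\arcsin(a/s)$.

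Summing Steps 1--3 gives \eqref{eq:bubble-gradient}. The main obstacle is the bookkeeping of the divergent linear terms and the exact finite constants in Steps 2 and 3. My plan there is to integrate first in $y_2$ over vertical strips, for which the integrals are elementary arctangents, and then perform the $y_1$ integration with the disk cutoff, expanding in $1/\alpha$.
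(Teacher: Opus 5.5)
Your plan matches the paper's: reduce to the Euclidean integrand, integrate the radial term over the flat region, subtract the thin strip under the curved boundary (the paper's $\Omega_2$), and handle the cross terms by oddness in $y_1$ plus an explicit integral. Your polar coordinates about $(0,-a)$ just repackage the paper's $B^+_\alpha\setminus\Omega_1$. However, two steps fail as written.

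\textbf{Metric reduction.} The bound $O(R_\epsilon^2)\int|\nabla\phi|^2=O(\alpha^2\epsilon\log\alpha)$ is \emph{not} $o(\sqrt{\epsilon})$. With $\alpha^4\epsilon=1/\log(-\log\epsilon)$ you have $\alpha^2\epsilon\log\alpha/\sqrt{\epsilon}=\log\alpha/\sqrt{\log(-\log\epsilon)}\to\infty$, since $\log\alpha\sim\frac14|\log\epsilon|$. The margin $R_\epsilon^2/\sqrt{\epsilon}=(\log(-\log\epsilon))^{-1/2}$ cannot absorb a logarithm. The fix is to keep the weight inside the integral. The error density is $O(r^2)|D\phi|^2$, and $r^2\le C(\epsilon+r_\epsilon^2)$ makes $r^2|D\phi|^2$ bounded on $\{r_\epsilon\le R_\epsilon\}$, so the error is $O(R_\epsilon^2)=o(\sqrt{\epsilon})$. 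The paper uses exactly this weighted bound for $dv_g$. For the gradient it writes $g=dr^2+G(r,\theta)^2d\theta^2$, so only $(\partial_\theta\phi)^2$ sees the metric; its bubble part is $\le C\epsilon/(\epsilon+r_\epsilon^2)$, costing only $O(\epsilon\log\alpha)$.

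\textbf{Angular range in Step 1.} The sign is wrong. The centre $(0,-a)$ lies on the opposite side of the chord $y_2=0$ from $M$. So the circle of radius $s>a$ meets $\{y_2\ge 0\}$ in $\theta\in[\arcsin(a/s),\pi-\arcsin(a/s)]$, of length $\pi-2\arcsin(a/s)$, not $\pi+2\arcsin(a/s)$.

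With your sign, the tail of the $\arcsin$ integral produces $-16a/\alpha$, and the $a$-dependent constant comes out wrong. This is not cosmetic: $16a/\alpha$ and $8\pi/\alpha^2$ are far larger than $\sqrt{\epsilon}$ and must cancel exactly against the neck and exterior regions.

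With the correct sign the computation closes. Set $J(a)=\int_a^\infty\frac{s^3\arcsin(a/s)}{(1+s^2)^2}\,ds$. You get $8\pi\log\alpha+\frac{8\pi}{\alpha^2}+\frac{16a}{\alpha}-4\pi\log(1+a^2)-\frac{4\pi}{1+a^2}-16J(a)+O(\alpha^{-3})$. Moreover, $-4\pi\log(1+a^2)-\frac{4\pi}{1+a^2}-16J(a)=-4\pi-8\pi\log(a+\sqrt{1+a^2})+\frac{4\pi a}{\sqrt{1+a^2}}$. To see this, note both sides equal $-4\pi$ at $a=0$, and their $a$-derivatives agree by $\int_a^\infty\frac{s^3\,ds}{(1+s^2)^2\sqrt{s^2-a^2}}=\frac{\pi}{2\sqrt{1+a^2}}-\frac{\pi}{4(1+a^2)^{3/2}}$.

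In Step 3 the slip happens to be harmless, because $\int\sin\theta\,d\theta=2\sqrt{1-a^2/s^2}$ over either range.

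\textbf{Finite part in Step 2.} You must use the exact density $\frac{y_1^2(y_1^2+a^2)}{(1+a^2+y_1^2)^2}$. For it, $\int_{\mathbb{R}}\bigl(\frac{y_1^2(y_1^2+a^2)}{(1+a^2+y_1^2)^2}-1\bigr)dy_1=-\frac{\pi(2a^2+3)}{2\sqrt{1+a^2}}$, which yields your coefficient $\frac{2\pi\kappa(2a^2+3)}{\sqrt{1+a^2}}\sqrt{\epsilon}$. The ``rough'' density $\frac{y_1^2}{1+a^2+y_1^2}$ would instead give $4\pi\kappa\sqrt{1+a^2}\sqrt{\epsilon}$.
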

	
	\begin{proof}
		
		Noting that $r^2\leq C(r_\epsilon^2+\epsilon)$, $$|\nabla_g\phi_{\epsilon,\delta}|^2=|D\phi_{\epsilon,\delta}|^2+(g^{-2}-r^{-2}) |\frac{\partial}{\partial\theta}\phi_{\epsilon,\delta}|^2$$ and
		\begin{align*}
			\int_{\{r_\epsilon\leq R_\eps\}}|g^{-2}-r^{-2}||\frac{\partial}{\partial\theta}\phi_{\epsilon,\delta}|^2 dv_g&= \int_{\{r_\epsilon\leq R_\eps\}}\left|\frac{\Pi}{3}+O(r)\right|\left| \frac{-4a\sqrt{\epsilon}r\cos\theta}{r_\epsilon^2+\epsilon}-b_1r\sin\theta+b_2r\cos\theta\right|^2 dv_g\\&\leq C\int_{\{r_\epsilon\leq R_\eps\}}\left(\left| \frac{4a\sqrt{\epsilon}r\cos\theta}{r_\epsilon^2+\epsilon}\right|^2+r^2 \right)dx\\
			&\leq C\epsilon\int_{\{r_\epsilon\leq R_\eps\}}\frac{1}{r_\epsilon^2+\epsilon}dx+C(\alpha\sqrt{\epsilon})^4 \leq C(\alpha\sqrt{\epsilon})^4+C\epsilon\log\alpha\\&=o(\sqrt{\epsilon}) ,
		\end{align*} and
		
		\begin{align*}
			\int_{\{r_\epsilon\leq R_\eps\}}|D\phi_{\epsilon,\delta}|^2r^2dx&\leq C\int_{\{r_\epsilon\leq R_\eps\}}\left(\frac{r_\epsilon^2}{(\epsilon+r_\epsilon^2)^2}+1\right)r^2dx\\
			&\leq C \int_{\{r_\epsilon\leq R_\eps\}}\frac{r_\epsilon^2}{\epsilon+r_\epsilon^2}dx+ C(\alpha\sqrt{\epsilon})^4\\
			&\leq C(\alpha\sqrt{\epsilon})^2=o(\sqrt{\epsilon}),
		\end{align*}	
		we have
		\begin{align*}
			\int_{\{r_\epsilon\leq R_\eps\}}|\nabla_g\phi_{\epsilon,\delta}|^2dv_g= \int_{\{r_\epsilon\leq R_\eps\}}|D\phi_{\epsilon,\delta}|^2dv_g+ o(\sqrt{\epsilon})= \int_{\{r_\epsilon\leq R_\eps\}\cap M}|D\phi_{\epsilon,\delta}|^2dx+ o(\sqrt{\epsilon}).
		\end{align*}
		
		\
		
		\noindent\textbf{Step 1:} Compute $\int_{\{r_\epsilon\leq R_\eps\}\cap M}|D (2\log (\epsilon+r_\epsilon^2))|^2dx$.
		
		\
		
		It is easy to see that
		\begin{equation}
			\int_{\{r_\epsilon\leq R_\eps\}\cap M}|D (2\log (\epsilon+r_\epsilon^2))|^2dx=16\int_{\{x=(x_1,x_2),r_\epsilon\leq R_\eps,x_2\geq f(x_1)\}}\frac{r_\epsilon^2}{(\epsilon+r_\epsilon^2)^2}dx,
		\end{equation} where $x_2=f(x_1)=\frac{\kappa(p)}{2}x_1^2+O(x_1^3)$ is the local representation of $\partial M$.
		
		Setting
		$$
		y_1=\frac{x_1}{\sqrt{\epsilon}},
		\qquad
		y_2=\frac{x_2+a\sqrt{\epsilon}}{\sqrt{\epsilon}},
		$$ then
		$$
		\int_{\{r_\epsilon\leq R_\eps\}\cap M}|D (2\log (\epsilon+r_\epsilon^2))|^2dx
		=
		16\int_{\Omega}
		\frac{|y|^2}{(1+|y|^2)^2}\,dy,
		$$
		where $$\Omega:=\{y=(y_1,y_2)| \ y\in B_\alpha(0), y_2\geq a+\frac{f(y_1\sqrt{\epsilon})}{\sqrt{\epsilon}}=a+d_\epsilon(y_1)\}$$ and $$d_\epsilon(y_1)=\frac{\kappa(p)}{2}\sqrt{\epsilon}y_1^2+O(\epsilon y_1^3).$$

		Denote  $\Omega=B^+_\alpha(0)\setminus (\Omega_1\cup\Omega_2)$ where
		\begin{align*}
			\Omega_1&:=\{y=(y_1,y_2)\in\R^2\ |\ |y|\leq \alpha,\ 0\leq y_2\leq a\},\\
			\Omega_2&:=\{y=(y_1,y_2)\in\R^2\ |\ |y|\leq \alpha,\ a\leq y_2\leq a+d_\epsilon(y_1)\}.
		\end{align*}		
		Firstly, a direct computation yields
		\begin{align}\label{equation:09}
			16\int_{B^+_\alpha(0)}
			\frac{|y|^2}{(1+|y|^2)^2}\,dy&=8\pi\log (\alpha^2+1)+\frac{8\pi}{\alpha^2+1}-8\pi\notag\\
			&=16\pi\log \alpha+\frac{16\pi}{\alpha^2}-8\pi+O(\frac{1}{\alpha^4}).
		\end{align}		
		Secondly, noting that
		\begin{align*}
			32\int_0^a\int_0^{\sqrt{\alpha^2-y_2^2}}\frac{1}{1+|y|^2}\,dy&=32\int_0^a\frac{1}{\sqrt{1+y_2^2}} \arctan\frac{\sqrt{\alpha^2-y_2^2}}{\sqrt{1+y_2^2}}dy_2\\
			&=32\int_0^a\frac{1}{\sqrt{1+y_2^2}} \left(\frac{\pi}{2}- \frac{\sqrt{1+y_2^2}}{\sqrt{\alpha^2-y_2^2}}+O(\frac{1}{\alpha^3}) \right)dy_2\\
			&=16\pi\log (\sqrt{1+a^2}+a)-32\frac{a}{\alpha}+O(\frac{1}{\alpha^3})
		\end{align*}
		and
		\begin{align}\label{equation:07}
			&-32\int_0^a\int_0^{\sqrt{\alpha^2-y_2^2}}\frac{1}{(1+|y|^2)^2}\,dy\notag\\&=-16\int_0^a\frac{1}{(\sqrt{1+y_2^2})^3} \arctan\frac{\sqrt{\alpha^2-y_2^2}}{\sqrt{1+y_2^2}}dy_2-16\int_0^a\frac{1}{1+y_2^2} \frac{\sqrt{\alpha^2-y_2^2}}{\alpha^2+1}dy_2\notag\\
			&=-16\int_0^a\frac{1}{(\sqrt{1+y_2^2})^3} \left(\frac{\pi}{2}- \frac{\sqrt{1+y_2^2}}{\sqrt{\alpha^2-y_2^2}}+O(\frac{1}{\alpha^3}) \right) dy_2 -\frac{16\alpha}{\alpha^2+1}\int_0^a\frac{1}{1+y_2^2} \left( 1+O(\frac{1}{\alpha^2})\right)dy_2\notag\\
			&=-8\pi\int_0^a\frac{1}{(\sqrt{1+y_2^2})^3} dy_2+\frac{16}{\alpha}\int_0^a\frac{1}{1+y_2^2}(1+O(\frac{1}{\alpha^2})) dy_2 -\frac{16\alpha}{\alpha^2+1}\int_0^a\frac{1}{1+y_2^2} dy_2\notag\\
			&\quad +O(\frac{1}{\alpha^3})\notag\\
			&=-8\pi\frac{a}{\sqrt{1+a^2}}+O(\frac{1}{\alpha^3}),
		\end{align}
		we get
		\begin{align}\label{equation:10}
			16\int_{\Omega_1}
			\frac{|y|^2}{(1+|y|^2)^2}\,dy&=32\int_0^a\int_0^{\sqrt{\alpha^2-y_2^2}}\frac{|y|^2}{(1+|y|^2)^2}\,dy\notag\\
			&=16\pi\log (\sqrt{1+a^2}+a)-8\pi\frac{a}{\sqrt{1+a^2}}-32\frac{a}{\alpha}+O(\frac{1}{\alpha^3}).
		\end{align}
		
		Thirdly, let $t_\epsilon^-<0<t_\epsilon^+$ and $(t_\epsilon^-,a+d_\epsilon(t_\epsilon^-))$, $(t_\epsilon^+,a+d_\epsilon(t_\epsilon^+))$ are the two intersection points of the curve $y_2=\sqrt{\alpha^2-y_1^2}$ and the curve $y_2=a+d_\epsilon(y_1)$. Then we have $$(t_\epsilon^\pm)^2+(a+d_\epsilon(t_\epsilon^\pm))^2=\alpha^2$$ and
		\begin{align*}
			\sqrt{\alpha^2-a^2}-|t_\epsilon^\pm|=\frac{2ad_\epsilon(t_\epsilon^\pm)+ (d_\epsilon(t_\epsilon^\pm))^2}{\sqrt{\alpha^2-a^2}+|t_\epsilon^\pm|} \leq C\frac{d_\epsilon(t_\epsilon^\pm)}{|t_\epsilon^\pm|}\leq C\alpha\sqrt{\epsilon},
		\end{align*} which implies $\frac{1}{2}\alpha<|t_\epsilon^\pm|<\alpha$.
		
		Thus,
		\begin{align*}
			\int_{t_\epsilon^+}^{\sqrt{\alpha^2-a^2}} \int_a^{a+d_\epsilon(y_1)}\frac{|y|^2}{(1+|y|^2)^2}\,dy&\leq  \int_{t_\epsilon^+}^{\sqrt{\alpha^2-a^2}} \int_a^{a+d_\epsilon(\sqrt{\alpha^2-a^2})}\frac{1}{|y|^2}\,dy\\
			&\leq \frac{C}{(t_\epsilon^+)^2}\alpha\sqrt{\epsilon}\alpha^2\sqrt{\epsilon}\leq C\alpha\epsilon=o(\sqrt{\epsilon}).
		\end{align*}
		Similarly, $$\int^{t_\epsilon^-}_{-\sqrt{\alpha^2-a^2}} \int_a^{a+d_\epsilon(y_1)}\frac{|y|^2}{(1+|y|^2)^2}\,dy=o(\sqrt{\epsilon}).$$
		Therefore,
		\begin{align*}
			16\int_{\Omega_2}
			\frac{|y|^2}{(1+|y|^2)^2}\,dy&=16\int_{-\sqrt{\alpha^2-a^2}}^{\sqrt{\alpha^2-a^2}} \int_a^{a+d_\epsilon(y_1)}\frac{|y|^2}{(1+|y|^2)^2}\,dy+ o(\sqrt{\epsilon}).
		\end{align*}
		
		Computing directly, we obtain
		\begin{align*}
			&\int_{-\sqrt{\alpha^2-a^2}}^{\sqrt{\alpha^2-a^2}} \int_a^{a+d_\epsilon(y_1)}\left|\frac{|y|^2}{(1+|y|^2)^2}-\frac{y_1^2+a^2}{(1+y_1^2+a^2)^2}\right| \,dy \\ &\leq C\int_{-\sqrt{\alpha^2-a^2}}^{\sqrt{\alpha^2-a^2}} \int_a^{a+d_\epsilon(y_1)}\frac{(y_1^2+a^2)(1+|y|^2)+(1+|y|^2)^2}{(1+y_1^2+a^2)^2(1+|y|^2)^2}d_\epsilon(y_1)\,dy\\
			&\leq C\int_{-\sqrt{\alpha^2-a^2}}^{\sqrt{\alpha^2-a^2}} \frac{1}{(1+y_1^2+a^2)^2}d^2_\epsilon(y_1)\,dy_1\leq C\alpha\epsilon=o(\sqrt{\epsilon})
		\end{align*}
		and
		\begin{align*}
			&16\int_{-\sqrt{\alpha^2-a^2}}^{\sqrt{\alpha^2-a^2}} \int_a^{a+d_\epsilon(y_1)}\frac{y_1^2+a^2}{(1+y_1^2+a^2)^2} \,dy \\ &=16\int_{-\sqrt{\alpha^2-a^2}}^{\sqrt{\alpha^2-a^2}}\frac{y_1^2+a^2}{(1+y_1^2+a^2)^2} \left(\frac{\kappa(p)}{2}\sqrt{\epsilon}y_1^2+O(\epsilon |y_1|^3)\right)\,dy_1\\
			&=16\kappa(p)\sqrt{\epsilon}\left(\sqrt{\alpha^2-a^2}+\frac{\sqrt{\alpha^2-a^2}}{2(\alpha^2+1)}- \frac{2a^2+3}{2\sqrt{a^2+1}}\arctan\frac{\sqrt{\alpha^2-a^2}}{\sqrt{1+a^2}}\right)+O(\alpha^2\epsilon)\\
			&=16\kappa(p)\alpha\sqrt{\epsilon}-4\pi \kappa(p)\frac{2a^2+3}{\sqrt{a^2+1}}\sqrt{\epsilon}+o(\sqrt{\epsilon}).
		\end{align*}
		
		Then
		\begin{align}\label{equation:11}
			16\int_{\Omega_2}
			\frac{|y|^2}{(1+|y|^2)^2}\,dy&=16\kappa(p)\alpha\sqrt{\epsilon}-4\pi \kappa(p)\frac{2a^2+3}{\sqrt{a^2+1}}\sqrt{\epsilon}+o(\sqrt{\epsilon}).
		\end{align}
		
		Combining \eqref{equation:09}-\eqref{equation:11}, we get
		\begin{align}
			\frac{1}{2}\int_{\{r_\epsilon\leq R_\eps\}\cap M}|D (2\log (\epsilon+r_\epsilon^2))|^2dx
			&=
			8\pi\log\alpha-4\pi
			-8\pi\log\bigl(a+\sqrt{1+a^2}\bigr)
			+\frac{4\pi a}{\sqrt{1+a^2}}
			+\frac{16a}{\alpha}
			+\frac{8\pi}{\alpha^2} \notag\\
			&\quad
			-8\kappa(p)\alpha\sqrt{\epsilon}
			+\frac{2\pi\kappa(p)(2a^2+3)}
			{\sqrt{1+a^2}}\sqrt{\epsilon}
			+o(\sqrt{\epsilon}).
			\label{eq:bubble-radial-complete}
		\end{align}

		\
		
		\noindent\textbf{Step 2:} Compute $\int_{\{r_\epsilon\leq R_\eps\}\cap M}D (-2\log (\epsilon+r_\epsilon^2))D(b_1x_1+b_2x_2)dx$.
		
		\
		
		It is easy to see that
		\begin{align*}
			&\int_{\{r_\epsilon\leq R_\eps\}\cap M}D (-2\log (\epsilon+r_\epsilon^2))D(b_1x_1+b_2x_2)dx\\
			&=\int_{\{x=(x_1,x_2),r_\epsilon\leq R_\eps,x_2\geq f(x_1)\}}\left(-\frac{4b_1 x_1}{\epsilon+r_\epsilon^2}- \frac{4b_2 (x_2+a\sqrt{\epsilon})}{\epsilon+r_\epsilon^2}\right)dx\\
			&=\sqrt{\epsilon}\int_\Omega \left(-\frac{4b_1 y_1}{1+|y|^2}- \frac{4b_2 y_2}{1+|y|^2}\right)dy.
		\end{align*}

		On one hand, by symmetry, we have
		\begin{align*}
			-4b_1\sqrt{\epsilon}\int_\Omega \frac{ y_1}{1+|y|^2}dy&= 4b_1\sqrt{\epsilon}\int_{\Omega_2} \frac{ y_1}{1+|y|^2}dy\\
			&=4b_1\sqrt{\epsilon}\int_{-\sqrt{\alpha^2-a^2}}^{\sqrt{\alpha^2-a^2}} \int_a^{a+d_\epsilon(y_1)}\frac{ y_1}{1+|y|^2}\,dy+ o(\sqrt{\epsilon})\\
			&=4b_1\sqrt{\epsilon}\int_{-\sqrt{\alpha^2-a^2}}^{\sqrt{\alpha^2-a^2}} \int_a^{a+d_\epsilon(y_1)}\frac{ y_1}{1+|y_1|^2+a^2}\,dy+ o(\sqrt{\epsilon})\\
			&=4b_1\sqrt{\epsilon}\int_{-\sqrt{\alpha^2-a^2}}^{\sqrt{\alpha^2-a^2}}\frac{ y_1}{1+|y_1|^2+a^2}\left(\frac{\kappa(p)}{2}\sqrt{\epsilon}y_1^2+O(\epsilon |y_1|^3)\right) \,dy_1+ o(\sqrt{\epsilon})\\
			&=o(\sqrt{\epsilon}).
		\end{align*}
		
		On the other hand, noting that
		\begin{align*}
			\int_{\Omega\cup\Omega_2} \frac{ y_2}{1+|y|^2}dy&= \int_{-\sqrt{\alpha^2-a^2}}^{\sqrt{\alpha^2-a^2}}\int_a^{\sqrt{\alpha^2-y_1^2}} \frac{ y_2}{1+|y|^2}dy\\
			&=2\sqrt{\alpha^2-a^2}-2\sqrt{1+a^2}\arctan\sqrt{\frac{\alpha^2-a^2}{1+a^2}}\\
			&=2\alpha(1-\frac{1}{2}\frac{a^2}{\alpha^2}+O(\alpha^{-4})) -2\sqrt{1+a^2}\left(\frac{\pi}{2}-\sqrt{\frac{1+a^2}{\alpha^2-a^2}}+O(\alpha^{-3})\right)\\
			&=2\alpha-\pi\sqrt{1+a^2}+\frac{2+a^2}{\alpha}+O(\alpha^{-3})
		\end{align*}
		and
		\begin{align*}
			\int_{\Omega_2} \frac{ y_2}{1+|y|^2}dy&= \int_{-\sqrt{\alpha^2-a^2}}^{\sqrt{\alpha^2-a^2}}\int_a^{a+d_\epsilon(y_1)} \frac{ y_2}{1+|y|^2}dy+o(\sqrt{\epsilon})\\
			&= \int_{-\sqrt{\alpha^2-a^2}}^{\sqrt{\alpha^2-a^2}}\int_a^{a+d_\epsilon(y_1)} \frac{ a}{1+a^2+y_1^2}dy+o(\sqrt{\epsilon})+O(\alpha^3\epsilon)=O(\alpha\sqrt\epsilon),
		\end{align*}
		we have
		\begin{align*}
			-4b_2\sqrt{\epsilon}\int_\Omega \frac{ y_2}{1+|y|^2}dy= -8b_2\alpha\sqrt{\epsilon}+4\pi b_2\sqrt{1+a^2}\sqrt{\epsilon}+o(\sqrt{\epsilon}).
		\end{align*}		
		Then
		\begin{align*}
			\int_{\{r_\epsilon\leq R_\eps\}\cap M}D (-2\log (\epsilon+r_\epsilon^2))D(b_1x_1+b_2x_2)dx= -8b_2\alpha\sqrt{\epsilon}+4\pi b_2\sqrt{1+a^2}\sqrt{\epsilon}+o(\sqrt{\epsilon}).
		\end{align*}

		Finally, noting that
		\begin{equation}\label{eq:bubble-constant-gradient}
			\frac12\bigl(b_1(p)^2+b_2(p)^2\bigr)
			\int_{\{r_\epsilon\leq R_\eps\}\cap M}dx
			=
			O(\alpha^2\epsilon)
			=
			o(\sqrt{\epsilon}).
		\end{equation}
		we obtain
		\begin{align*}
			\frac12\int_{M\cap\{r_\epsilon\leq R_\eps\}}
			|\nabla_g\phi_{\epsilon,\delta}|^2\,dv_g
			=&8\pi\log\alpha-4\pi
			-8\pi\log\bigl(a+\sqrt{1+a^2}\bigr)
			+\frac{4\pi a}{\sqrt{1+a^2}}\\
			&+\frac{16a}{\alpha}+\frac{8\pi}{\alpha^2}
			-8\bigl(b_2(p)+\kappa(p)\bigr)R_\eps\\
			&+\left(
			4\pi\sqrt{1+a^2}\,b_2(p)
			+\frac{2\pi\kappa(p)(2a^2+3)}{\sqrt{1+a^2}}
			\right)\sqrt{\epsilon}
			+o(\sqrt{\epsilon}).
		\end{align*}
		This proves \eqref{eq:bubble-gradient}.
	\end{proof}
	
	\
	
	\begin{lem}\label{lem:neck-gradient}
		As $\epsilon\to0$, we have
		\begin{align}
			\frac12\int_{M\cap\{R_\eps<r_\epsilon<2R_\eps\}}
			|\nabla_g\phi_{\epsilon,\delta}|^2\,dv_g
			=8\pi\log2-\frac{8a}{\alpha}-\frac{6\pi}{\alpha^2}-8\bigl(b_2(p)+\kappa(p)\bigr)R_\eps
			+o(\sqrt{\epsilon}).
			\label{eq:neck-gradient}
		\end{align}
	\end{lem}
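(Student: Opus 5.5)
In the neck region $\{R_\eps<r_\epsilon<2R_\eps\}$ we have $\phi_{\epsilon,\delta}=-2\log(\epsilon+r_\epsilon^2)+\chi_\epsilon[A+b_1x_1+b_2x_2]+(1-\chi_\epsilon)[G+4\log r]+\log\eps$. By the expansion \eqref{eq:intro-green-expansion}, $G+4\log r=A+b_1x_1+b_2x_2+O(r^2)$, so
$$\phi_{\epsilon,\delta}=-2\log(\epsilon+r_\epsilon^2)+A+b_1x_1+b_2x_2+(1-\chi_\epsilon)\psi+\log\eps,\qquad \psi=O(r^2),\ D\psi=O(r).$$
Since $|D\chi_\epsilon|\le C/R_\eps$ and $r\le CR_\eps$ on the neck, the perturbation satisfies $|D((1-\chi_\epsilon)\psi)|\le CR_\eps$. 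It is paired against $|D\log(\epsilon+r_\epsilon^2)|\le C/R_\eps$ over an area $O(R_\eps^2)$, so its contribution is $O(R_\eps^2)=O(\alpha^2\epsilon)=o(\sqrt\epsilon)$. Likewise $\int|Db|^2$ over the neck is $O(R_\eps^2)$. The replacement of $|\nabla_g\cdot|^2dv_g$ by the Euclidean integrand costs $O(r^2)$ relative error, again $O(R_\eps^2)$, exactly as in the first paragraph of the proof of Lemma \ref{lem:bubble-gradient}. The computation therefore reduces to the same two pieces as in that lemma, now on the annulus. These are the radial term $8\int|y|^2(1+|y|^2)^{-2}$ and the cross term $-4\sqrt\epsilon\int(b_1y_1+b_2y_2)(1+|y|^2)^{-1}$, both taken over $\Omega'=\{\alpha<|y|<2\alpha,\ y_2\ge a+d_\epsilon(y_1)\}$ after the same rescaling $y=(x_1,(x_2+a\sqrt\epsilon))/\sqrt\epsilon$.

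For the radial term, decompose $\Omega'=(B_{2\alpha}^+\setminus B_\alpha^+)\setminus(\Omega_1'\cup\Omega_2')$ as in Step 1, with $\Omega_1'$ the strip $0\le y_2\le a$ and $\Omega_2'$ the thin layer $a\le y_2\le a+d_\epsilon$.

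\emph{Half-annulus.} By \eqref{equation:09} evaluated at $2\alpha$ and at $\alpha$, this piece contributes $8\pi\log2-\frac{6\pi}{\alpha^2}+O(\alpha^{-4})$ after the factor $\frac12$.

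\emph{Strip.} Here $|y|\sim\alpha$, so the integrand is $|y|^{-2}+O(\alpha^{-4})$. The integral of $16|y|^{-2}$ over the strip is $32a\int_\alpha^{2\alpha}dy_1/y_1^2+O(\alpha^{-3})=16a/\alpha+O(\alpha^{-3})$. Removing it contributes $-8a/\alpha$ after halving, consistent with the bubble computation \eqref{equation:10}.

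\emph{Layer.} On $\alpha<|y_1|<2\alpha$ the integrand is $y_1^{-2}+O(\alpha^{-4})$ and the layer has thickness $\frac{\kappa}{2}\sqrt\epsilon y_1^2$. The layer integral $16\int\frac{\kappa}{2}\sqrt\epsilon\,dy_1$ over both sides is $16\kappa\alpha\sqrt\epsilon+o(\sqrt\epsilon)$, which gives $-8\kappa R_\eps$ after halving.

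\emph{Endpoints.} The curved endpoints near $|y|=\alpha$ and $|y|=2\alpha$ are handled by the same $t_\epsilon^\pm$ estimate as in Step 1, and are $o(\sqrt\epsilon)$.

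For the cross term, the $y_1$ part vanishes by symmetry up to $o(\sqrt\epsilon)$, as in Step 2. The $y_2$ part uses
$$\int_{\{a\le y_2,\ \alpha<|y|<2\alpha\}}\frac{y_2}{1+|y|^2}dy=\big[2\beta-\pi\sqrt{1+a^2}+\cdots\big]_{\beta=\alpha}^{2\beta=2\alpha}=2\alpha+O(\alpha^{-1}),$$
obtained from the formula computed in Step 2 at the two radii. This yields $-8b_2\alpha\sqrt\epsilon+o(\sqrt\epsilon)=-8b_2R_\eps+o(\sqrt\epsilon)$, since $\alpha^{-1}\sqrt\epsilon=o(\sqrt\epsilon)$. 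The $\pi\sqrt{1+a^2}$ terms cancel, so no $\sqrt\epsilon$-order constant survives here, and the layer correction is $O(\alpha\sqrt\epsilon)\cdot\sqrt\epsilon=o(\sqrt\epsilon)$.

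\emph{Main obstacle.} The delicate point is bookkeeping. Every error must be shown to be $o(\sqrt\epsilon)$, given that $\alpha\to\infty$ only like $\epsilon^{-1/4}(\log\log)^{-1/4}$. Terms like $O(\alpha^{-3})$ are not $o(\sqrt\epsilon)$ on their own, so I keep them in the form $O(\alpha^{-3})$ and track them for cancellation against the outer-region lemma. Alternatively, I would note that they are dominated once combined, as the statement's displayed error suggests. I would also verify that the cutoff term $(1-\chi_\epsilon)\psi$ truly contributes only $O(R_\eps^2)$.
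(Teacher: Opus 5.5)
Your proposal is correct and is essentially the paper's argument. You absorb the cutoff perturbation, the metric correction and the $|D(b_1x_1+b_2x_2)|^2$ term into $O(R_\epsilon^2)=o(\sqrt{\epsilon})$, and then evaluate on the annulus the same pieces (half-annulus, strip, boundary layer, linear cross term) that Lemma \ref{lem:bubble-gradient} computes; the paper gets the same thing by applying that lemma at $2R_\epsilon$ and $R_\epsilon$ and subtracting.

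One correction to your closing remark: the $O(\alpha^{-3})$ errors are already $o(\sqrt{\epsilon})$, since $\alpha^{-3}/\sqrt{\epsilon}=\epsilon^{1/4}\bigl(\log(-\log\epsilon)\bigr)^{3/4}\to0$, so nothing needs to be deferred to the exterior lemma. Only the $\alpha^{-1}$ and $\alpha^{-2}$ terms fail to be $o(\sqrt{\epsilon})$, e.g.\ $\alpha^{-2}/\sqrt{\epsilon}=\bigl(\log(-\log\epsilon)\bigr)^{1/2}\to\infty$, which is exactly why $-8a/\alpha$ and $-6\pi/\alpha^2$ appear explicitly in the statement.
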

	\begin{proof}
		In the neck region, we write
		\begin{align*}
			\phi_{\epsilon,\delta}
			={}&\log\epsilon-2\log\bigl(\epsilon+r_\epsilon^2\bigr)
			+A(p)+b_1(p)x_1+b_2(p)x_2\\
			&+(1-\chi_\epsilon)
			\bigl[G(x,p)+4\log r-A(p)-b_1(p)x_1-b_2(p)x_2\bigr].
		\end{align*}
		By the expansion of $G(x,p)$,
		$$
		G(x,p)+4\log r-A(p)-b_1(p)x_1-b_2(p)x_2=O(r^2),
		$$
		it follows that
		$$
		D\phi_{\epsilon,\delta}
		=
		D\left(-2\log(\epsilon+r_\epsilon^2)
		+A(p)+b_1(p)x_1+b_2(p)x_2\right)
		+O(R_\eps).
		$$
		Moreover,
		$$
		\left|
		D\left(-2\log(\epsilon+r_\epsilon^2)
		+A(p)+b_1(p)x_1+b_2(p)x_2\right)
		\right|
		\leq\frac{C}{R_\eps}.
		$$
		We obtain
		\begin{align}
			&\frac12\int_{M\cap\{R_\eps<r_\epsilon<2R_\eps\}}
			|\nabla_g\phi_{\epsilon,\delta}|^2\,dv_g \notag\\
			&=\frac12\int_{M\cap\{R_\eps<r_\epsilon<2R_\eps\}}
			|D\phi_{\epsilon,\delta}|^2\,dx +o(\sqrt{\epsilon})\notag\\
			&=
			\frac12\int_{M\cap\{R_\eps<r_\epsilon<2R_\eps\}}
			\left|
			D\left(-2\log(\epsilon+r_\epsilon^2)
			+A(p)+b_1(p)x_1+b_2(p)x_2\right)
			\right|^2\,dx
			+o(\sqrt{\epsilon}).
			\label{eq:neck-energy-reduction}
		\end{align}

		Applying
		Lemma \ref{lem:bubble-gradient} to the regions
		$r_\epsilon\leq2R_\eps$ and $r_\epsilon\leq R_\eps$, and
		subtracting the resulting expansions, we obtain \eqref{eq:neck-gradient}.
	\end{proof}

	\

	\begin{lem}\label{lem:exterior-gradient}
		For every fixed  small constant $\delta>0$, as $\epsilon\to0$, we have
		\begin{align}
			\frac12\int_{M\cap\{r_\epsilon\geq2R_\eps\}}
			|\nabla_g\phi_{\epsilon,\delta}|^2\,dv_g
			={}&-8\pi\log(2R_\eps)+2\pi A(p)
			-\frac{8a}{\alpha}-\frac{2\pi}{\alpha^2} \notag\\
			&+16\bigl(b_2(p)+\kappa(p)\bigr)R_\eps
			+O(\delta\sqrt{\epsilon})
			+o(\sqrt{\epsilon}).
			\label{eq:exterior-gradient}
		\end{align}
		
	\end{lem}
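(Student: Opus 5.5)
We would prove \eqref{eq:exterior-gradient} by writing, on $M\cap\{r_\epsilon\geq 2R_\eps\}$,
$$
\phi_{\epsilon,\delta}=\log\epsilon+G(x,p)+\psi_\epsilon,\qquad
\psi_\epsilon:=\eta_\delta\left[4\log r-2\log(\epsilon+r_\epsilon^2)\right]=2\eta_\delta\log\frac{r^2}{\epsilon+r_\epsilon^2}.
$$
This splits the energy into three pieces:
$$
\frac12\int|\nabla_g G|^2+\int\nabla_g G\cdot\nabla_g\psi_\epsilon+\frac12\int|\nabla_g\psi_\epsilon|^2 .
$$
On the region $r_\epsilon\geq 2R_\eps$ we have $r\sim r_\epsilon$, since $a\sqrt\epsilon\ll R_\eps$. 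A direct expansion then gives $\psi_\epsilon=O(\sqrt\epsilon/r)$ and $|\nabla\psi_\epsilon|=O(\sqrt\epsilon/r^2)$ for $r\leq\delta$. On the cutoff annulus $\delta\leq r\leq2\delta$ we have $|\nabla\psi_\epsilon|=O(\sqrt\epsilon/\delta^2)$. Every piece can therefore be reduced to explicit integrals near the inner curve $\Gamma_\epsilon:=M\cap\{r_\epsilon=2R_\eps\}$ plus controllable errors.

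For the pure Green term we integrate by parts, using $\Delta_g G=4\pi/|M|$ and $\partial_\nu G=0$ on $\partial M\setminus\{p\}$:
$$
\int|\nabla_g G|^2=-\frac{4\pi}{|M|}\int_{\{r_\epsilon\geq2R_\eps\}}G-\int_{\Gamma_\epsilon}G\,\partial_{n}G .
$$
Here $n$ is the outward normal of the small region. Since $\int_M G=0$, the first term is $O(R_\eps^2|\log R_\eps|)=o(\sqrt\epsilon)$. For the arc integral we insert the expansion \eqref{eq:intro-green-expansion} and parametrize $\Gamma_\epsilon$ as a circle of radius $2R_\eps$ centered at $(0,-a\sqrt\epsilon)$. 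Its angular range is cut off by the graph $x_2=f(x_1)=\frac{\kappa}{2}x_1^2+\dots$, exactly as in the proof of Lemma \ref{lem:04}, and the metric correction $g(r,\theta)=r-\frac{\Pi}{6}r^3$ enters only at order $R_\eps^2$. The leading part $(-4\log r+A)(-4/r)$ over an arc of length roughly $\pi\cdot 2R_\eps$ produces $-16\pi\log(2R_\eps)+4\pi A$. The loss of angle $\kappa\cdot 2R_\eps$ and the linear term $b_2x_2$ produce the $32(b_2+\kappa)R_\eps$ contribution, exactly as the terms $(4\kappa+2b_2)r$ arise in \eqref{equation:04}. The shift of the center by $a\sqrt\epsilon=2aR_\eps/(2\alpha)$ yields corrections of order $a/\alpha$ and $1/\alpha^2$.

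For the cross term we integrate by parts again:
$$
\int\nabla G\cdot\nabla\psi_\epsilon=-\frac{4\pi}{|M|}\int\psi_\epsilon+\int_{\partial M}\psi_\epsilon\,\partial_\nu G-\int_{\Gamma_\epsilon}\psi_\epsilon\,\partial_nG .
$$
The first integral is $O(\sqrt\epsilon\,\delta)$. The second vanishes away from $p$, and $p\notin$ region. On $\Gamma_\epsilon$ we expand $\psi_\epsilon=2\log\frac{r^2}{\epsilon+r_\epsilon^2}$ to second order in $a\sqrt\epsilon/R_\eps=a/(2\alpha)$, using $r^2=r_\epsilon^2-2a\sqrt\epsilon(x_2+a\sqrt\epsilon)+a^2\epsilon$. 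Multiplying by $\partial_nG\approx 4/r$ gives terms of the form $c\,a/\alpha+c'/\alpha^2$, together with an $O(\sqrt\epsilon\cdot R_\eps)=o(\sqrt\epsilon)$ remainder.

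For $\frac12\int|\nabla\psi_\epsilon|^2$ we compute the radial bubble difference explicitly in the shifted variable $y=(x_1,x_2+a\sqrt\epsilon)/\sqrt\epsilon$. This is the same kind of computation as Step 1 of Lemma \ref{lem:bubble-gradient}, now on $\{|y|\geq2\alpha\}$, where $\frac{\sqrt\epsilon}{r^2}$-type integrands give $O(\epsilon/R_\eps^2)=O(1/\alpha^2)$ exact contributions. The cutoff annulus contributes only $O(\epsilon/\delta^2)$, which is negligible for fixed $\delta$. The term $O(\delta\sqrt\epsilon)$ comes from the replacement of $dv_g$ by $dx$ and from the $O(r^2)$ remainders of $G$ on $r\leq2\delta$ paired against $\nabla\psi_\epsilon$.

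Finally we add the three pieces and collect terms, checking that the totals are $-\frac{8a}{\alpha}-\frac{2\pi}{\alpha^2}$ and $16(b_2+\kappa)R_\eps$. A consistency check is that these numbers must cancel the $\alpha$-dependent terms from Lemmas \ref{lem:bubble-gradient} and \ref{lem:neck-gradient}. The main obstacle is the precise bookkeeping on the arc $\Gamma_\epsilon$: the circle is off-center from $p$ by $a\sqrt\epsilon$ and truncated by the curved boundary, and one must track all terms of order $a/\alpha$, $1/\alpha^2$ and $R_\eps$ without losing $\sqrt\epsilon$-level terms. I would handle this first by doing all arc integrals in the $y$-variables with the explicit intersection angles $t_\epsilon^\pm$ from Lemma \ref{lem:bubble-gradient}, and verify the constants against that cancellation requirement.
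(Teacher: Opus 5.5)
Your decomposition $\phi_{\epsilon,\delta}=\log\epsilon+G+\psi_\epsilon$ on all of $M\cap\{r_\epsilon\ge 2R_\eps\}$ is a legitimate alternative route. Integrating the cross term by parts against $G$ over the whole exterior region, using $\partial_\nu G=0$ on $\partial M\setminus\{p\}$, is in fact a simplification. The paper instead computes $\frac12\int|\nabla_gG|^2$ on the centered region $\{r\ge 2R_\eps\}$ and compares separately on $\{r\le\delta\}$ and $\{r\ge\delta\}$, where terms $\pm 32a\sqrt\epsilon/\delta$ and $\mp 2\pi ab_2\sqrt\epsilon$ appear and cancel between its Steps 2 and 3. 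In your version the Green and cross pieces reduce to integrals over the single arc $\Gamma_\epsilon$, with no $\delta$-arc bookkeeping. However, two of your estimates are false at the order that matters.

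First, $R_\eps^2|\log R_\eps|$ is not $o(\sqrt\epsilon)$. Since $\alpha^4\epsilon=1/\log(-\log\epsilon)$, one has $R_\eps^2=\sqrt\epsilon/\sqrt{\log(-\log\epsilon)}$, so $R_\eps^2|\log R_\eps|\approx\sqrt\epsilon\,|\log\epsilon|/(4\sqrt{\log(-\log\epsilon)})\gg\sqrt\epsilon$. The term $\frac{4\pi}{|M|}\int_{\{r_\epsilon<2R_\eps\}}G$ genuinely has this size, since $G\approx-4\log r$ has a sign there. So does the $O(r^2)$ part of the flux density on $\Gamma_\epsilon$ (the metric correction and the quadratic terms of $G$ in $\partial_nG$). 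You say it enters only at order $R_\eps^2$, but it is multiplied by $G|_{\Gamma_\epsilon}\approx-4\log(2R_\eps)$. These two contributions cancel, but only through the device the paper uses. Split $G=\bigl(-4\log(2R_\eps)+A(p)\bigr)+\widetilde G$ and use the exact identity $\int_{\Gamma_\epsilon}(-\partial_nG)\,ds_g=4\pi-\frac{4\pi}{|M|}|M\cap\{r_\epsilon<2R_\eps\}|$. Then the volume term is $\frac{2\pi}{|M|}\int\widetilde G=O(R_\eps^2)$ and the arc integrand is $O(\alpha^{-1}+R_\eps)$. The metric corrections and the $\log R_\eps$-weighted angle losses on the shifted arc then drop out automatically.

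Second, the off-centering is not only an $a/\alpha$, $\alpha^{-2}$ effect, because $R_\eps\cdot a/\alpha=a\sqrt\epsilon$ is exactly of critical order. On $\Gamma_\epsilon$ one has $-4\log\frac{r}{2R_\eps}=\frac{2a\sin\theta}{\alpha}+O(\alpha^{-2})$, $\psi_\epsilon=-\frac{2a\sin\theta}{\alpha}+O(\alpha^{-2})$, and $x_2=2R_\eps\sin\theta-a\sqrt\epsilon$. Pairing these with the linear part $b_1x_1+b_2x_2$ of $G$, and with $-(b_1n_1+b_2n_2)$ in $-\partial_nG$, produces two terms:
\begin{itemize}
\item $-2\pi ab_2\sqrt\epsilon$ in $\frac12\int|\nabla_gG|^2$ over the shifted region;
\item $+2\pi ab_2\sqrt\epsilon$ in $\int\langle\nabla_gG,\nabla_g\psi_\epsilon\rangle_g$.
\end{itemize}
So your cross-term remainder is $O(\sqrt\epsilon)$, not $O(\sqrt\epsilon R_\eps)$: it is $\psi_\epsilon=O(1/\alpha)$ against an $O(1)$ density over an arc of length $O(R_\eps)$, and it must be computed explicitly. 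Your proposed consistency check against Lemmas \ref{lem:bubble-gradient} and \ref{lem:neck-gradient} cannot detect such $\alpha$-independent $\sqrt\epsilon$ terms. An uncancelled one would change exactly the $\sqrt\epsilon$-coefficient in Proposition \ref{prop:01} that drives the existence criterion.

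With both corrections, up to $o(\sqrt\epsilon)$, your three pieces are:
\begin{itemize}
\item $-8\pi\log(2R_\eps)+2\pi A+\frac{8a}{\alpha}+\frac{\pi a^2}{\alpha^2}+8b_2R_\eps-2\pi ab_2\sqrt\epsilon$;
\item $-\frac{16a}{\alpha}-\frac{2\pi(1+a^2)}{\alpha^2}+2\pi ab_2\sqrt\epsilon+O(\delta\sqrt\epsilon)$;
\item $\frac{\pi a^2}{\alpha^2}$.
\end{itemize}
Their sum is the claimed formula, since $8b_2=16(b_2+\kappa)$ by Lemma \ref{lem:04}.
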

	\begin{proof}
		
		\noindent\textbf{Step 1:} Compute $\frac12\int_{M\cap\{r\geq2R_\eps\}}|\nabla_gG|^2\,dv_g$.
		
		\

		Integrating by parts, we get
		\begin{align*}
			\frac12\int_{M\cap\{r\geq2R_\eps\}}|\nabla_gG|^2\,dv_g &=-\frac12\int_{M\cap\{r\geq2R_\eps\}}\Delta_gG\cdot G\,dv_g +\frac12\int_{M\cap\{r=2R_\eps\}}G(-\partial_rG)\,ds_g\\
			&=\frac{2\pi}{|M|}\int_{M\cap\{r<2R_\eps\}}G\,dv_g
			+\frac12\int_{M\cap\{r=2R_\eps\}}G(-\partial_rG)\,ds_g.
		\end{align*}
		Since the divergence theorem yields
		$$
		\int_{M\cap\{r=2R_\eps\}}(-\partial_rG)\,ds_g
		=
		4\pi-\frac{4\pi}{|M|}
		\operatorname{Vol}_g\bigl(M\cap\{r<2R_\eps\}\bigr),
		$$
		then
		\begin{align}
			\frac12\int_{M\cap\{r\geq2R_\eps\}}|\nabla_gG|^2\,dv_g
			={}&-8\pi\log(2R_\eps)+2\pi A(p) \notag\\
			&+\frac12\int_{M\cap\{r=2R_\eps\}}
			\bigl[G+4\log(2R_\eps)-A(p)\bigr](-\partial_rG)\,ds_g \notag\\
			&+\frac{2\pi}{|M|}\int_{M\cap\{r<2R_\eps\}}
			\bigl[G+4\log(2R_\eps)-A(p)\bigr]\,dv_g.
			\label{eq:green-energy-exterior-reduction}
		\end{align}
		
		For $r<2R_\eps$,
		$$
		G+4\log(2R_\eps)-A(p)
		=
		-4\log\frac{r}{2R_\eps}+b_1(p)x_1+b_2(p)x_2+O(r^2).
		$$
		Since
		$$
		\int_0^{2R_\eps}\left|\log\frac{r}{2R_\eps}\right|r\,dr=O(R_\eps^2),
		$$
		we have
		$$
		\int_{M\cap\{r<2R_\eps\}}
		\bigl[G+4\log(2R_\eps)-A(p)\bigr]\,dv_g
		=
		O(R_\eps^2)=o(\sqrt{\epsilon}).
		$$
		
		On $r=2R_\eps$, we have
		$$
		G+4\log(2R_\eps)-A(p)
		=
		2R_\eps\bigl(b_1(p)\cos\theta+b_2(p)\sin\theta\bigr)+O(R_\eps^2),
		$$
		$$
		-\partial_rG
		=
		\frac{2}{R_\eps}-b_1(p)\cos\theta-b_2(p)\sin\theta+O(R_\eps),
		\quad
		ds_g=2R_\eps\bigl(1+O(R_\eps^2)\bigr)\,d\theta.
		$$
		Let $\theta_+$ and $\theta_-$ be defined so that the two endpoints of the circular arc $r=2R_\epsilon$ correspond to the polar angles $\theta_+$ and $\pi-\theta_-$, respectively. Then by the proof of Lemma \ref{lem:04}, we have
		$$
		\theta_+=\kappa(p)R_\eps+O(R_\eps^2),
		\qquad
		\theta_-=\kappa(p)R_\eps+O(R_\eps^2),
		$$
		and
		\begin{align*}
			\int_{\theta_+}^{\pi-\theta_-}\cos\theta\,d\theta
			=\sin\theta_- -\sin\theta_+=O(R_\eps^2),\
			\int_{\theta_+}^{\pi-\theta_-}\sin\theta\,d\theta
			=\cos\theta_++\cos\theta_-=2+O(R_\eps^2).
		\end{align*}
		Therefore,
		\begin{align*}
			&\frac12\int_{M\cap\{r=2R_\eps\}}
			\left(G+4\log(2R_\eps)-A(p)\right)(-\partial_rG)\,ds_g\\
			&=
			4R_\eps\int_{\theta_+}^{\pi-\theta_-}
			\bigl(b_1(p)\cos\theta+b_2(p)\sin\theta\bigr)\,d\theta
			+O(R_\eps^2)=
			8b_2(p)R_\eps+O(R_\eps^2).
		\end{align*}
		Substituting these estimates into
		\eqref{eq:green-energy-exterior-reduction}, and using
		$b_2(p)=-2\kappa(p)$ and $R_\eps^2=o(\sqrt{\epsilon})$, we obtain
		\begin{equation}\label{eq:complete-green-energy-exterior-final}
			\frac12\int_{M\cap\{r\geq2R_\eps\}}|\nabla_gG|^2\,dv_g
			=
			-8\pi\log(2R_\eps)+2\pi A(p)
			+16\bigl(b_2(p)+\kappa(p)\bigr)R_\eps
			+o(\sqrt{\epsilon}).
		\end{equation}

		\
		
		\noindent\textbf{Step 2:} Compute $\frac12\int_{M\cap\{r_\epsilon\geq2R_\eps,\ r\leq\delta\}}
		|\nabla_g\phi_{\epsilon,\delta}|^2\,dv_g-\frac12\int_{M\cap\{2R_\eps\leq r\leq\delta\}}
		|\nabla_gG|^2\,dv_g $.
		
		\

		In $M\cap\{r_\epsilon\geq2R_\eps,\ r\leq\delta\}$, we have
		\begin{align*}
			\nabla_g\phi_{\epsilon,\delta}
			=&
			\nabla_g\bigl[-2\log(\epsilon+r_\epsilon^2)\bigr]
			+
			\nabla_g\bigl(G(x,p)+4\log r\bigr),\\
			\nabla_gG
			=&
			\nabla_g(-4\log r)
			+
			\nabla_g\bigl(G(x,p)+4\log r\bigr).
		\end{align*}
		Then
		\begin{align*}
			&\frac12\int_{M\cap\{r_\epsilon\geq2R_\eps,\ r\leq\delta\}}
			|\nabla_g\phi_{\epsilon,\delta}|^2\,dv_g
			-
			\frac12\int_{M\cap\{2R_\eps\leq r\leq\delta\}}
			|\nabla_gG|^2\,dv_g \notag\\
			&=
			\frac12\int_{M\cap\{r_\epsilon\geq2R_\eps,\ r\leq\delta\}}
			\left|\nabla_g\bigl[-2\log(\epsilon+r_\epsilon^2)\bigr]\right|^2\,dv_g
			-
			\frac12\int_{M\cap\{2R_\eps\leq r\leq\delta\}}
			|\nabla_g(-4\log r)|^2\,dv_g \notag\\
			&\quad+
			\int_{M\cap\{r_\epsilon\geq2R_\eps,\ r\leq\delta\}}
			\left\langle
			\nabla_g\bigl[-2\log(\epsilon+r_\epsilon^2)\bigr],
			\nabla_g\bigl(G(x,p)+4\log r\bigr)
			\right\rangle_g\,dv_g \notag\\
			&\quad-
			\int_{M\cap\{2R_\eps\leq r\leq\delta\}}
			\left\langle
			\nabla_g(-4\log r),
			\nabla_g\bigl(G(x,p)+4\log r\bigr)
			\right\rangle_g\,dv_g \notag\\
			&\quad+
			\frac12\int_{M\cap\{r_\epsilon\geq2R_\eps,\ r\leq\delta\}}
			\left|\nabla_g\bigl(G(x,p)+4\log r\bigr)\right|^2\,dv_g -
			\frac12\int_{M\cap\{2R_\eps\leq r\leq\delta\}}
			\left|\nabla_g\bigl(G(x,p)+4\log r\bigr)\right|^2\,dv_g\\
			&:=\mathbf{I_1}+\mathbf{I_2}+\mathbf{I_3}.
		\end{align*}
		
		For $\mathbf{I_1}$, noting that
		\begin{align*}
			\int_{\{r_\epsilon\leq \delta\}}|g^{-2}-r^{-2}||\frac{\partial}{\partial\theta}(-2\log(\epsilon+r_\epsilon^2))|^2 dv_g&= \int_{\{r_\epsilon\leq \delta\}}\left|\frac{\Pi}{3}+O(r)\right|\left| \frac{4a\sqrt{\epsilon}r\cos\theta}{r_\epsilon^2+\epsilon}\right|^2 dv_g\\&\leq C\int_{\{r_\epsilon\leq \delta\}}\left(\left| \frac{4a\sqrt{\epsilon}r\cos\theta}{r_\epsilon^2+\epsilon}\right|^2 \right)dx\\
			&\leq C\epsilon\int_{\{r_\epsilon\leq \delta\}}\frac{1}{r_\epsilon^2+\epsilon}dx\leq C\epsilon\log\alpha=o(\sqrt{\epsilon})
		\end{align*} and
		$$
		\left|D\bigl[-2\log(\epsilon+r_\epsilon^2)\bigr]\right|^2
		-|D(-4\log r)|^2=O(\frac{\sqrt{\epsilon}x_2+\epsilon}{r^4}),\ for\  r_\epsilon\geq2R_\eps,\ 2R_\eps\leq  r\leq\delta,
		$$
		we have
		\begin{align*}
			\mathbf{I_1}
			&=\frac12\int_{M\cap\{r_\epsilon\geq2R_\eps,\ r\leq\delta\}}
			\left|D\bigl[-2\log(\epsilon+r_\epsilon^2)\bigr]\right|^2\,dv_g
			-\frac12\int_{M\cap\{2R_\eps\leq r\leq\delta\}}
			|D(-4\log r)|^2\,dv_g +o(\sqrt{\epsilon})\notag\\
			&=
			\frac12\int_{M\cap\{r_\epsilon\geq2R_\eps,\ 2R_\eps\leq  r\leq\delta\}}
			\left(
			\left|D\bigl[-2\log(\epsilon+r_\epsilon^2)\bigr]\right|^2
			-|D(-4\log r)|^2
			\right)dv_g +o(\sqrt{\epsilon})\notag\\
			&\quad+
			\frac12\int_{M\cap\{r_\epsilon\geq2R_\eps,\ r<2R_\eps\}}
			\left|D\bigl[-2\log(\epsilon+r_\epsilon^2)\bigr]\right|^2\,dv_g -
			\frac12\int_{M\cap\{r\geq2R_\eps,\ r_\epsilon<2R_\eps\}}
			|D(-4\log r)|^2\,dv_g\\
			&=
			\frac12\int_{M\cap\{r_\epsilon\geq2R_\eps,\ 2R_\eps\leq  r\leq\delta\}}
			\left(
			\left|D\bigl[-2\log(\epsilon+r_\epsilon^2)\bigr]\right|^2
			-|D(-4\log r)|^2
			\right)dx +o(\sqrt{\epsilon})+O(\delta\sqrt{\epsilon})\\
			&\quad+
			\frac12\int_{M\cap\{r_\epsilon\geq2R_\eps,\ r<2R_\eps\}}
			\left|D\bigl[-2\log(\epsilon+r_\epsilon^2)\bigr]\right|^2\,dx -
			\frac12\int_{M\cap\{r\geq2R_\eps,\ r_\epsilon<2R_\eps\}}
			|D(-4\log r)|^2\,dx.
		\end{align*}

		Since
		\begin{align*}
			\{r_\epsilon\geq2R_\eps,\ r<2R_\eps, \ 0\leq x_2\leq f(x_1)\}\subset \{\sqrt{4R_\epsilon^2-(x_2+a\sqrt{\epsilon})^2}\leq |x_1|\leq 2R_\eps, \ 0\leq x_2\leq f(x_1)\}
		\end{align*} and $$2R_\eps-\sqrt{4R_\epsilon^2-(x_2+a\sqrt{\epsilon})^2}=O(\frac{\epsilon}{R_\epsilon})=o(\sqrt{\epsilon}),$$
		we get
		\begin{align*}
			&\int_{\{r_\epsilon\geq2R_\eps,\ r<2R_\eps, \ 0\leq x_2\leq f(x_1)\}}
			\left|D\bigl[-2\log(\epsilon+r_\epsilon^2)\bigr]\right|^2\,dx=o(\sqrt{\epsilon}).
		\end{align*}
		Similarly,
		\begin{align*}
			\int_{\{r\geq2R_\eps,\ r_\epsilon<2R_\eps, \ 0\leq x_2\leq f(x_1)\}}
			|D(-4\log r)|^2\,dx=o(\sqrt{\epsilon}).
		\end{align*}
		
		Noting that
		\begin{align*}
			&\int_{\{r_\epsilon\geq2R_\eps,\ 2R_\eps\leq  r\leq\delta,\ 0\leq x_2\leq f(x_1)\}}
			\left|
			\left|D\bigl[-2\log(\epsilon+r_\epsilon^2)\bigr]\right|^2
			-|D(-4\log r)|^2
			\right|dx\\& \leq C\int_{R_\epsilon}^\delta\int_0^{f(x_1)}\frac{\sqrt{\epsilon}x_2+\epsilon}{r^4}dx_2dx_1=O(\delta\sqrt{\epsilon})+o(\sqrt{\epsilon}),
		\end{align*}
		we have
		\begin{align*}
			\mathbf{I_1}
			&=\frac12\int_{\{r_\epsilon\geq2R_\eps,\ r\leq\delta,\ x_2\geq 0\}}
			\left|D\bigl[-2\log(\epsilon+r_\epsilon^2)\bigr]\right|^2\,dx
			-\frac12\int_{\{2R_\eps\leq r\leq\delta, \ x_2\geq 0\}}
			|D(-4\log r)|^2\,dx \\&\quad +o(\sqrt{\epsilon})+O(\delta\sqrt{\epsilon})\\
			&=8\int_{r_\epsilon\geq2R_\eps,\ r\leq\delta,\ x_2\geq 0}
			\frac{r_\epsilon^2}{(\epsilon+r_\epsilon^2)^2}\,dx
			-
			8\int_{2R_\eps\leq r\leq\delta,\ x_2\geq 0}
			\frac1{r^2}\,dx +
			O(\delta\sqrt{\epsilon})
			+o(\sqrt{\epsilon}).
		\end{align*}
		
		On one hand, by \eqref{equation:09} and \eqref{equation:10}, we have
		\begin{align*}
			&8\int_{r_\epsilon\leq \alpha\sqrt{\epsilon},\ x_2\geq 0}
			\frac{r_\epsilon^2}{(\epsilon+r_\epsilon^2)^2}\,dx\\ &=8\int_{y\in B^+_\alpha(0),\ y_2\geq a}
			\frac{|y|^2}{(1+|y|^2)^2}\,dy \\
			&=8\pi\log \alpha+\frac{8\pi}{\alpha^2}+16\frac{a}{\alpha}-4\pi-8\pi\log (\sqrt{1+a^2}+a)+4\pi\frac{a}{\sqrt{1+a^2}}+O(\frac{1}{\alpha^3}),
		\end{align*} which implies
		\begin{align*}
			8\int_{2\alpha\sqrt{\epsilon}\leq r_\epsilon\leq\delta,\ x_2\geq 0}
			\frac{r_\epsilon^2}{(\epsilon+r_\epsilon^2)^2}\,dx&=\left(8\pi\log s+\frac{8\pi}{s^2}+16\frac{a}{s}\right)\bigg|_{s=2\alpha}^{s=\frac{\delta}{\sqrt{\epsilon}}}+O(\frac{1}{\alpha^3})\\
			&=8\pi\log \frac{\delta}{2R_\epsilon}+8\pi(\frac{\epsilon}{\delta^2}-\frac{1}{4\alpha^2})+16a(\frac{\sqrt{\epsilon}}{\delta}-\frac{1}{2\alpha}) +O(\frac{1}{\alpha^3})\\
			&=8\pi\log \frac{\delta}{2R_\epsilon}-\frac{2\pi}{\alpha^2}+16a\frac{\sqrt{\epsilon}}{\delta}-\frac{8a}{\alpha} +o(\sqrt{\epsilon}).
		\end{align*}
		
		On the other hand, since
		\begin{align*}
			8\int_{ r_\epsilon\geq \delta,\ r\leq\delta ,\ x_2\geq 0}
			\frac{r_\epsilon^2}{(\epsilon+r_\epsilon^2)^2}\,dx&= 8\int_{r_\epsilon\geq \delta,\ r\leq\delta ,\ x_2\geq 0}
			\frac{1}{r^2}\,dx+O(\frac{\sqrt{\epsilon}\delta+\epsilon}{\delta^4})\int_{r_\epsilon\geq \delta,\ r\leq\delta ,\ x_2\geq 0}
			\,dx\\
			&= 8\int_{|y|\geq \frac{\delta}{\sqrt{\epsilon}},\ y_1^2+(y_2-a)^2\leq\frac{\delta^2}{\epsilon} ,\ y_2\geq a}
			\frac{1}{y_1^2+(y_2-a)^2}\,dy+O(\frac{\sqrt{\epsilon}\delta+\epsilon}{\delta^4})\cdot O(\delta\sqrt{\epsilon})\\
			&=8\int_0^\pi d\theta\int_{-a\sin\theta+\sqrt{\frac{\delta^2}{\epsilon}-a^2\cos^2\theta}}^{\frac{\delta}{\sqrt{\epsilon}}}\frac{1}{t}dt+ o(\sqrt{\epsilon})\\
			&=16a\frac{\sqrt{\epsilon}}{\delta}+ o(\sqrt{\epsilon}),
		\end{align*}
		where $y_1=t\cos\theta$, $y_2=a+t\sin\theta$, then
		\begin{align*}
			8\int_{r_\epsilon\geq2R_\eps,\ r\leq\delta,\ x_2\geq 0}
			\frac{r_\epsilon^2}{(\epsilon+r_\epsilon^2)^2}\,dx= 8\pi\log \frac{\delta}{2R_\epsilon}-\frac{2\pi}{\alpha^2}+32a\frac{\sqrt{\epsilon}}{\delta}-\frac{8a}{\alpha} +o(\sqrt{\epsilon}).
		\end{align*}
		
		Noting the fact that $$8\int_{2R_\eps\leq r\leq\delta,\ x_2\geq 0}
		\frac1{r^2}\,dx= 8\pi\log \frac{\delta}{2R_\epsilon},$$
		thus,
		\begin{align*}
			\mathbf{I_1}=
			-\frac{8a}{\alpha}-\frac{2\pi}{\alpha^2}+32a\frac{\sqrt{\epsilon}}{\delta} +
			O(\delta\sqrt{\epsilon})
			+o(\sqrt{\epsilon}).
		\end{align*}
		
		\
		
		For $\mathbf{I_2}$, noting that
		\begin{align*}
			&\int_{M\cap\{2R_\eps\leq r\leq\delta\}}
			\left\langle
			\nabla_g(-4\log r),
			\nabla_g\bigl(G(x,p)+4\log r\bigr)
			\right\rangle_g\,dv_g\\&=\int_{M\cap\{2R_\eps\leq r\leq\delta\}}
			D(-4\log r)D\bigl(G(x,p)+4\log r\bigr)
			\,dv_g
		\end{align*}
		and
		\begin{align*}
			&\int_{M\cap\{r_\epsilon\geq2R_\eps,\ r\leq\delta\}}
			\left\langle
			\nabla_g\bigl[-2\log(\epsilon+r_\epsilon^2)\bigr],
			\nabla_g\bigl(G(x,p)+4\log r\bigr)
			\right\rangle_g\,dv_g \\
			&=\int_{M\cap\{r_\epsilon\geq2R_\eps,\ r\leq\delta\}}
			D\bigl[-2\log(\epsilon+r_\epsilon^2)\bigr]D\bigl(G(x,p)+4\log r\bigr)
			\,dv_g\\&\quad +\int_{M\cap\{r_\epsilon\geq2R_\eps,\ r\leq\delta\}}
			(g^{-2}-r^{-2})\frac{\partial}{\partial\theta}\bigl[-2\log(\epsilon+r_\epsilon^2)\bigr] \cdot\frac{\partial}{\partial\theta}\bigl(G(x,p)+4\log r\bigr)
			\,dv_g\\
			&=\int_{M\cap\{r_\epsilon\geq2R_\eps,\ r\leq\delta\}}
			D\bigl[-2\log(\epsilon+r_\epsilon^2)\bigr]D\bigl(G(x,p)+4\log r\bigr)
			\,dv_g+O(\delta\sqrt{\epsilon}),
		\end{align*}
		we have
		\begin{align*}
			\mathbf{I_2}&=\int_{M\cap\{r_\epsilon\geq2R_\eps,\ r\leq\delta\}}
			D\bigl[-2\log(\epsilon+r_\epsilon^2)\bigr]D\bigl(G(x,p)+4\log r\bigr)
			\,dv_g\\&\quad -\int_{M\cap\{2R_\eps\leq r\leq\delta\}}
			D(-4\log r)D\bigl(G(x,p)+4\log r\bigr)
			\,dv_g+O(\delta\sqrt{\epsilon}).
		\end{align*}
		Following the same argument as in the treatment of $\mathbf{I_1}$, we get
		\begin{align*}
			\mathbf{I_2}&=\int_{M\cap\{r_\epsilon\geq2R_\eps,\ r\leq\delta\}}
			D\bigl[-2\log(\epsilon+r_\epsilon^2)\bigr]D\bigl(G(x,p)+4\log r\bigr)
			\,dx\\&\quad -\int_{M\cap\{2R_\eps\leq r\leq\delta\}}
			D(-4\log r)D\bigl(G(x,p)+4\log r\bigr)
			\,dx+O(\delta\sqrt{\epsilon})+o(\sqrt{\epsilon})\\
			&=\int_{\{r_\epsilon\geq2R_\eps,\ r\leq\delta,\ x_2\geq 0\}}
			D\bigl[-2\log(\epsilon+r_\epsilon^2)\bigr]D\bigl(G(x,p)+4\log r\bigr)
			\,dx\\&\quad -\int_{\{2R_\eps\leq r\leq\delta,\ x_2\geq 0\}}
			D(-4\log r)D\bigl(G(x,p)+4\log r\bigr)
			\,dx+O(\delta\sqrt{\epsilon})+o(\sqrt{\epsilon})\\
			&=\int_{\{r_\epsilon\geq2R_\eps,\ 2R_\eps\leq r\leq\delta,\ x_2\geq 0\}}
			\left(D\bigl[-2\log(\epsilon+r_\epsilon^2)\bigr]-D(-4\log r) \right)D\bigl(G(x,p)+4\log r\bigr)
			\,dx\\&\quad +\int_{\{r_\epsilon\geq2R_\eps,\ r\leq 2R_\eps,\ x_2\geq 0\}}
			D\bigl[-2\log(\epsilon+r_\epsilon^2)\bigr]D\bigl(G(x,p)+4\log r\bigr)
			\,dx\\&\quad -\int_{\{r\geq 2R_\eps, \ r_\epsilon\leq 2R_\epsilon,\ x_2\geq 0\}}
			D(-4\log r)D\bigl(G(x,p)+4\log r\bigr)
			\,dx+O(\delta\sqrt{\epsilon})+o(\sqrt{\epsilon}).
		\end{align*}
		Noting that $$\int_{\{r_\epsilon\geq2R_\eps,\ 2R_\eps\leq r\leq\delta,\ x_2\geq 0\}}
		\frac{x_1}{\epsilon+r_\epsilon^2}
		\,dx=\int_{\{r_\epsilon\geq2R_\eps,\ 2R_\eps\leq r\leq\delta,\ x_2\geq 0\}}
		\frac{x_1}{r^2}
		\,dx=0$$
		and
		$$\left|D\bigl[-2\log(\epsilon+r_\epsilon^2)\bigr]-D(-4\log r) \right|\leq C\frac{\epsilon+r\sqrt{\epsilon}}{r^3},$$ we have
		\begin{align*}
			&\int_{\{r_\epsilon\geq2R_\eps,\ 2R_\eps\leq r\leq\delta,\ x_2\geq 0\}}
			\left(D\bigl[-2\log(\epsilon+r_\epsilon^2)\bigr]-D(-4\log r) \right)D\bigl(G(x,p)+4\log r\bigr)
			\,dx\\
			&= \int_{\{r_\epsilon\geq2R_\eps,\ 2R_\eps\leq r\leq\delta,\ x_2\geq 0\}}
			\left(\frac{-4x_1}{\epsilon+r_\epsilon^2}-\frac{-4x_1}{r^2}\right)(b_1+O(r))
			\,dx\\
			&\quad + \int_{\{r_\epsilon\geq2R_\eps,\ 2R_\eps\leq r\leq\delta,\ x_2\geq 0\}}
			\left( \frac{-4(x_2+a\sqrt{\epsilon})}{\epsilon+r_\epsilon^2}-\frac{-4x_2}{r^2}\right)(b_2+O(r))
			\,dx\\
			&=b_2 \int_{\{r_\epsilon\geq2R_\eps,\ 2R_\eps\leq r\leq\delta,\ x_2\geq 0\}}
			\left( \frac{-4(x_2+a\sqrt{\epsilon})}{\epsilon+r_\epsilon^2}-\frac{-4x_2}{r^2}\right)
			\,dx+O(\delta\sqrt{\epsilon})+o(\sqrt{\epsilon}).
		\end{align*}

		Similarly, we have
		\begin{align*}
			&\int_{\{r_\epsilon\geq2R_\eps,\ r\leq 2R_\eps,\ x_2\geq 0\}}
			D\bigl[-2\log(\epsilon+r_\epsilon^2)\bigr]D\bigl(G(x,p)+4\log r\bigr)
			\,dx\\&= \int_{\{r_\epsilon\geq2R_\eps,\ r\leq 2R_\eps,\ x_2\geq 0\}}
			\left(\frac{-4x_1}{\epsilon+r_\epsilon^2}(b_1+O(r))+ \frac{-4(x_2+a\sqrt{\epsilon})}{\epsilon+r_\epsilon^2}(b_2+O(r))\right)dx\\&= b_2\int_{\{r_\epsilon\geq2R_\eps,\ r\leq 2R_\eps,\ x_2\geq 0\}}
			\frac{-4(x_2+a\sqrt{\epsilon})}{\epsilon+r_\epsilon^2}dx+ o(\sqrt{\epsilon})
		\end{align*}
		and
		\begin{align*}
			-\int_{\{r\geq 2R_\eps, \ r_\epsilon\leq 2R_\epsilon,\ x_2\geq 0\}}
			D(-4\log r)D\bigl(G(x,p)+4\log r\bigr)
			\,dx
			=-b_2\int_{\{r\geq2R_\eps,\ r_\epsilon\leq 2R_\eps,\ x_2\geq 0\}}
			\frac{-4x_2}{r^2}dx+ o(\sqrt{\epsilon}).
		\end{align*}
		Thus,
		\begin{align*}
			\mathbf{I_2}&= b_2\int_{\{r_\epsilon\geq2R_\eps,\ r\leq \delta,\ x_2\geq 0\}}
			\frac{-4(x_2+a\sqrt{\epsilon})}{\epsilon+r_\epsilon^2}dx- b_2\int_{\{2R_\eps\leq  r\leq \delta,\ x_2\geq 0\}}
			\frac{-4x_2}{r^2}dx+O(\delta\sqrt{\epsilon})+ o(\sqrt{\epsilon}).
		\end{align*}
		
		Setting $x_1=\sqrt{\epsilon}t\cos\theta$,   $x_2=\sqrt{\epsilon}t\sin\theta$, a direct computation yields
		\begin{align*}
			&-4 b_2\int_{\{r_\epsilon\geq2R_\eps,\ r\leq \delta,\ x_2\geq 0\}}
			\frac{x_2+a\sqrt{\epsilon}}{\epsilon+r_\epsilon^2}dx\\&=-4b_2\sqrt{\epsilon}\int_0^\pi \int^{\frac{\delta}{\sqrt{\epsilon}}}_{-a\sin\theta+\sqrt{4\alpha^2-a^2\cos^2\theta}} \frac{t(a+t\sin\theta)}{1+t^2+2at\sin\theta+a^2}dtd\theta\\&=-4b_2\sqrt{\epsilon}\int_0^\pi \int^{\frac{\delta}{\sqrt{\epsilon}}}_{-a\sin\theta+\sqrt{4\alpha^2-a^2\cos^2\theta}} \left(\sin\theta+\frac{a}{t}\right)\left(1-\frac{2a\sin\theta}{t}+O(t^{-2})\right)dtd\theta\\&=-4b_2\sqrt{\epsilon}\int_0^\pi \int^{\frac{\delta}{\sqrt{\epsilon}}}_{-a\sin\theta+\sqrt{4\alpha^2-a^2\cos^2\theta}} \left(\sin\theta+\frac{a(1-2\sin^2\theta)}{t}+O(t^{-2})\right)dtd\theta\\
			&=-8b_2\delta+16b_2\alpha\sqrt{\epsilon}-2a\pi b_2\sqrt{\epsilon}+o(\sqrt{\epsilon}).
		\end{align*}
		Then we get
		\begin{align*}
			\mathbf{I_2}&= - b_2\int_{\{2R_\eps\leq  r\leq \delta,\ x_2\geq 0\}}
			\frac{-4x_2}{r^2}dx-8b_2\delta+16b_2\alpha\sqrt{\epsilon}-2a\pi b_2\sqrt{\epsilon}+O(\delta\sqrt{\epsilon})+ o(\sqrt{\epsilon})\\
			&=-2a\pi b_2\sqrt{\epsilon}+O(\delta\sqrt{\epsilon})+ o(\sqrt{\epsilon}).
		\end{align*}
		
		For $\mathbf{I_3}$, we have
		\begin{align*}
			\mathbf{I_3}&=\frac12\int_{M\cap\{r_\epsilon\geq 2R_\eps,\ r\leq 2R_\eps\}}
			\left|\nabla_g\bigl(G(x,p)+4\log r\bigr)\right|^2\,dv_g \\&\quad -
			\frac12\int_{M\cap\{r\geq 2R_\eps,\  r_\epsilon\leq 2R_\eps\}}
			\left|\nabla_g\bigl(G(x,p)+4\log r\bigr)\right|^2\,dv_g\\
			&=\frac12\int_{M\cap\{r_\epsilon\geq 2R_\eps,\ r\leq 2R_\eps\}}
			\left|D\bigl(G(x,p)+4\log r\bigr)\right|^2\,dv_g \\&\quad -
			\frac12\int_{M\cap\{r\geq 2R_\eps,\  r_\epsilon\leq 2R_\eps\}}
			\left|D\bigl(G(x,p)+4\log r\bigr)\right|^2\,dv_g+o(\sqrt{\epsilon})=o(\sqrt{\epsilon}).
		\end{align*}
		
		Thus,
		\begin{align*}
			&\frac12\int_{M\cap\{r_\epsilon\geq2R_\eps,\ r\leq\delta\}}
			|\nabla_g\phi_{\epsilon,\delta}|^2\,dv_g-\frac12\int_{M\cap\{2R_\eps\leq r\leq\delta\}}
			|\nabla_gG|^2\,dv_g \\
			&=-\frac{8a}{\alpha}-\frac{2\pi}{\alpha^2}+32a\frac{\sqrt{\epsilon}}{\delta}-2a\pi b_2\sqrt{\epsilon} +
			O(\delta\sqrt{\epsilon})
			+o(\sqrt{\epsilon}).
		\end{align*}
		
		\
		
		\noindent\textbf{Step 3:} Compute $\frac12\int_{M\cap\{ r\geq\delta\}}
		|\nabla_g\phi_{\epsilon,\delta}|^2\,dv_g-\frac12\int_{M\cap\{r\geq\delta\}}
		|\nabla_gG|^2\,dv_g $.
		
		\
		
		In this case, we have
		\begin{align*}
			&\frac12\int_{M\cap\{ r\geq\delta\}}
			|\nabla_g\phi_{\epsilon,\delta}|^2\,dv_g-\frac12\int_{M\cap\{r\geq\delta\}}
			|\nabla_gG|^2\,dv_g\\
			&=\int_{M\cap\{ r\geq\delta\}}
			\nabla_g G \nabla_g\left(\eta_\delta (4\log r-2\log (\epsilon+r_\epsilon^2))\right)\,dv_g+ \frac12\int_{M\cap\{ r\geq\delta\}}\left|\nabla_g\left(\eta_\delta (4\log r-2\log (\epsilon+r_\epsilon^2))\right)\right|^2dv_g.
		\end{align*}
		
		On one hand, noting that $$4\log r-2\log (\epsilon+r_\epsilon^2)=O(\frac{\sqrt{\epsilon}}{\delta})$$ and
		\begin{align*}
			\left|\nabla_g\left(4\log r-2\log (\epsilon+r_\epsilon^2)\right)\right|^2&= \left|D\left(4\log r-2\log (\epsilon+r_\epsilon^2)\right)\right|^2+(g^{-2}-r^{-2})\left|\frac{\partial}{\partial\theta}\left(-2\log (\epsilon+r_\epsilon^2)\right)\right|^2\\
			&=O(\frac{\epsilon}{\delta^2})=o(\sqrt{\epsilon}),
		\end{align*}
		we have
		\begin{equation}
			\int_{M\cap\{ r\geq\delta\}}\left|\nabla_g\left(\eta_\delta (4\log r-2\log (\epsilon+r_\epsilon^2))\right)\right|^2dv_g=o(\sqrt{\epsilon}).
		\end{equation}
		
		On the other hand, integrating by parts, we get
		\begin{align*}
			&\int_{M\cap\{ r\geq\delta\}}
			\nabla_g G \nabla_g\left(\eta_\delta (4\log r-2\log (\epsilon+r_\epsilon^2))\right)\,dv_g\\
			&=-\frac{4\pi}{|M|} \int_{M\cap\{ r\geq\delta\}}
			\eta_\delta (4\log r-2\log (\epsilon+r_\epsilon^2))\,dv_g +\int_{M\cap\{ r=\delta\}}
			(-\frac{\partial}{\partial r} G) \left(\eta_\delta (4\log r-2\log (\epsilon+r_\epsilon^2))\right)\,ds_g.
		\end{align*}
		Since $$2\log (\epsilon+r_\epsilon^2)-4\log r=\frac{4a\sqrt{\epsilon}x_2}{r^2}+O(\frac{\epsilon}{r^2}),\ for\ r\geq\delta,$$ we have
		\begin{align*}
			\int_{M\cap\{ r\geq\delta\}}
			\eta_\delta (4\log r-2\log (\epsilon+r_\epsilon^2))\,dv_g &=\int_{\{ \delta\leq r\leq 2\delta\}}
			\eta_\delta (4\log r-2\log (\epsilon+r_\epsilon^2))\,dv_g =O(\delta\sqrt{\epsilon})+o(\sqrt{\epsilon})
		\end{align*} and
		\begin{align*}
			&\int_{M\cap\{ r=\delta\}}
			(\frac{\partial}{\partial r} G) \left(\eta_\delta (-4\log r+2\log (\epsilon+r_\epsilon^2))\right)\,ds_g\\
			&= \int_{M\cap\{ r=\delta\}}\left(-\frac{4}{\delta}+b_1\cos\theta+b_2\sin\theta+O(\delta) \right) \left( \frac{4a\sqrt{\epsilon}\delta \sin\theta}{\delta^2}+O(\frac{\epsilon}{\delta^2})\right)(\delta+O(\delta^3))d\theta\\
			&=
			-4a\sqrt{\epsilon}\int_{\theta_+(\delta)}^{\pi-\theta_-(\delta)}
			\left[
			\frac{4\sin\theta}{\delta}
			-b_1(p)\sin\theta\cos\theta
			-b_2(p)\sin^2\theta
			+O(\delta)
			\right]d\theta\\
			&=
			-\frac{32a}{\delta}\sqrt{\epsilon}
			+
			2\pi a b_2(p)\sqrt{\epsilon}
			+
			O(\delta\sqrt{\epsilon})
			+
			o_\epsilon(\sqrt{\epsilon}),
		\end{align*} where $\theta_{\pm}(\delta)=\frac{\kappa(p)}{2}\delta+O(\delta^2)$ which are defined  so that the two endpoints of the circular arc $r=\delta$ correspond to the polar angles $\theta_+(\delta)$ and $\pi-\theta_-(\delta)$ respectively.
		
		Thus,
		\begin{align*}
			&\frac12\int_{M\cap\{ r\geq\delta\}}
			|\nabla_g\phi_{\epsilon,\delta}|^2\,dv_g-\frac12\int_{M\cap\{r\geq\delta\}}
			|\nabla_gG|^2\,dv_g\\
			&=-\frac{32a}{\delta}\sqrt{\epsilon}
			+
			2\pi a b_2(p)\sqrt{\epsilon}
			+
			O(\delta\sqrt{\epsilon})
			+
			o_\epsilon(\sqrt{\epsilon}).
		\end{align*}
		
		Combining these together, we arrive at
		\begin{align*}
			&\frac12\int_{M\cap\{r_\epsilon\geq2R_\eps\}}
			|\nabla_g\phi_{\epsilon,\delta}|^2\,dv_g\\&
			=
			-8\pi\log(2R_\eps)+2\pi A(p)
			-\frac{8a}{\alpha}-\frac{2\pi}{\alpha^2}
			+16\bigl(b_2(p)+\kappa(p)\bigr)R_\eps
			+O(\delta\sqrt{\epsilon})
			+o_\epsilon(\sqrt{\epsilon}).
		\end{align*}
		
	\end{proof}

	By Lemma \ref{lem:bubble-gradient}-Lemma \ref{lem:exterior-gradient}, we get
	\begin{align}
		\frac12\int_M
		|\nabla_g\phi_{\epsilon,\delta}|^2\,dv_g
		&=
		-4\pi\log\epsilon
		-4\pi
		-8\pi\log\bigl(a+\sqrt{1+a^2}\bigr)
		+\frac{4\pi a}{\sqrt{1+a^2}}
		+2\pi A(p) \notag\\
		&\quad
		+
		\left[
		4\pi\sqrt{1+a^2}\,b_2(p)
		+
		\frac{2\pi(2a^2+3)}
		{\sqrt{1+a^2}}\kappa(p)
		\right]\sqrt{\epsilon}
		+
		O(\delta\sqrt{\epsilon})
		+
		o_\epsilon(\sqrt{\epsilon}).
		\label{eq:total-gradient-expansion}
	\end{align}

	\
	
	\begin{lem}\label{lem:bubble-neck-integral}
		As $\epsilon\to0$, we have
		\begin{equation}\label{eq:bubble-neck-integral}
			\int_{M}
			\phi_{\epsilon,\delta}\,dv_g
			=|M|\log\epsilon+O(\delta\sqrt{\epsilon})+o(\sqrt{\epsilon}).
		\end{equation}
	\end{lem}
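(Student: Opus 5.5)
The plan is to split $\int_M\phi_{\epsilon,\delta}\,dv_g$ according to the three regions in the definition \eqref{eq:test-function}, and to compare $\phi_{\epsilon,\delta}$ with $G(x,p)+\log\epsilon$. The normalization $\int_M G(x,p)\,dv_g=0$ in \eqref{eq:intro-green-function} then gives
$$
\int_M\phi_{\epsilon,\delta}\,dv_g=|M|\log\epsilon+\int_M\bigl(\phi_{\epsilon,\delta}-\log\epsilon-G(x,p)\bigr)\,dv_g .
$$
So it suffices to show that the last integral is $O(\delta\sqrt\epsilon)+o(\sqrt\epsilon)$. Outside $\{r\le 2\delta\}$ the integrand vanishes identically, because $\eta_\delta=0$ there.

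\emph{Inner and neck regions.} On $\{r_\epsilon<2R_\epsilon\}$ the difference $\phi_{\epsilon,\delta}-\log\epsilon-G$ is a combination of $-2\log(\epsilon+r_\epsilon^2)$, $G+4\log r$ and the affine part $A(p)+b_1x_1+b_2x_2$. Each of these, as well as $G$ itself, is bounded by $C(1+|\log\epsilon|+|\log r|)$. The region has area $O(R_\epsilon^2)=O(\alpha^2\epsilon)$, and $\int_{r\le 3R_\epsilon}|\log r|\,dx=O(R_\epsilon^2|\log R_\epsilon|)$. Hence this contribution is
$$
O\bigl(\alpha^2\epsilon|\log\epsilon|\bigr).
$$
This is the one place where the choice of $\alpha$ matters. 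From $\alpha^4\epsilon=1/\log(-\log\epsilon)$ we get $\alpha^2\epsilon=\sqrt{\epsilon}\,(\log(-\log\epsilon))^{-1/2}$. So I will need to check that $\alpha^2\epsilon|\log\epsilon|=o(\sqrt\epsilon)$. If the crude $L^\infty$ bound is not good enough, the fallback is to note that on this region $|-2\log(\epsilon+r_\epsilon^2)+4\log r|\le C+C|\log(r/\sqrt\epsilon)|$. The $\log\epsilon$ contributions then cancel between $\phi$ and $G$, so the integrand is $O(1+|\log(r/\sqrt\epsilon)|)$ rather than $O(|\log\epsilon|)$. Integrating over a region of area $O(\alpha^2\epsilon)$ gives $O(\alpha^2\epsilon\log\alpha)=o(\sqrt\epsilon)$, since $\alpha^2\epsilon\log\alpha\le C\sqrt\epsilon\,(\log(-\log\epsilon))^{-1/2}\log\log(1/\epsilon)$. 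I will carry out this refined version: write $\phi-\log\epsilon-G=[-2\log(\epsilon+r_\epsilon^2)+4\log r]+\chi_\epsilon[A+b_1x_1+b_2x_2-(G+4\log r)]$, where the second bracket is $O(r^2)$.

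\emph{Outer region.} On $\{r_\epsilon\ge 2R_\epsilon\}$ the integrand is $\eta_\delta\,[4\log r-2\log(\epsilon+r_\epsilon^2)]$. I will use the expansion already employed in Lemma \ref{lem:exterior-gradient}:
$$
2\log(\epsilon+r_\epsilon^2)-4\log r=\frac{4a\sqrt\epsilon x_2}{r^2}+O\!\left(\frac{\epsilon}{r^2}\right).
$$
This is valid for $r\ge 2R_\epsilon$, where $\sqrt\epsilon/r\le 1/\alpha\to0$. Integrating over $\{2R_\epsilon\le r\le 2\delta\}\cap M$ in polar coordinates, the first term gives $O(\sqrt\epsilon\int_0^{2\delta}dr)=O(\delta\sqrt\epsilon)$. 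The second gives $O(\epsilon\log(\delta/R_\epsilon))=o(\sqrt\epsilon)$.

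There is also the thin set where $r_\epsilon\ge2R_\epsilon$ but $r<2R_\epsilon$, i.e.\ where the two radii disagree. It has area $O(R_\epsilon\sqrt\epsilon)$, and there the integrand is bounded by $C\log\alpha$. Its contribution is therefore negligible.

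Summing the three pieces gives \eqref{eq:bubble-neck-integral}. The main obstacle is the inner region: one has to make sure the logarithmic singularities of $G$ and of the bubble cancel, so that the inner contribution is $O(\alpha^2\epsilon\log\alpha)$ instead of carrying a full factor $|\log\epsilon|$.
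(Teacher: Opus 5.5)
Your decomposition $\int_M\phi_{\epsilon,\delta}\,dv_g=|M|\log\epsilon+\int_M(\phi_{\epsilon,\delta}-\log\epsilon-G)\,dv_g$ is sound, and your outer-region estimate is correct. The inner-region step, however, fails as written.

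\textbf{The error.} Since $\alpha=\epsilon^{-1/4}(\log(-\log\epsilon))^{-1/4}$ is a negative power of $\epsilon$, you have
\[
\log\alpha=\tfrac14\log\tfrac1\epsilon-\tfrac14\log\log\log\tfrac1\epsilon .
\]
So $\log\alpha$ is a full factor $\tfrac14|\log\epsilon|$, not $O(\log\log\frac1\epsilon)$. Consequently
\[
\alpha^2\epsilon\log\alpha\sim\frac{\sqrt\epsilon\,\log(1/\epsilon)}{4\sqrt{\log\log(1/\epsilon)}},
\]
which is much larger than $\sqrt\epsilon$. Your inequality $\alpha^2\epsilon\log\alpha\le C\sqrt\epsilon(\log(-\log\epsilon))^{-1/2}\log\log(1/\epsilon)$ is therefore false. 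Your ``refined'' bound is no better than the crude $O(\alpha^2\epsilon|\log\epsilon|)$, which you already suspected is not $o(\sqrt\epsilon)$; it indeed is not. The loss comes from the majorant $C+C|\log(r/\sqrt\epsilon)|$: it grows for $r\gg\sqrt\epsilon$, whereas the actual bracket decays there. In particular, the cancellation between the singularities of $G$ and of the bubble that you point to does not by itself remove the $|\log\epsilon|$ factor with that majorant.

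\textbf{The repair.} Use the expansion from your outer region all the way down to $r\sim\sqrt\epsilon$. Write
\[
-2\log(\epsilon+r_\epsilon^2)+4\log r=-2\log\bigl(1+\tfrac{2a\sqrt\epsilon x_2+(1+a^2)\epsilon}{r^2}\bigr),
\]
and use $x_2\ge -Cr^2$ on $M$ near $p$. Then the bracket is $O(\sqrt\epsilon/r)$ for $r\ge\sqrt\epsilon$, and $O(1+\log(\sqrt\epsilon/r))$ for $r<\sqrt\epsilon$. Hence its integral over $\{r\le 3R_\epsilon\}$ is
\[
O(\epsilon)+O(\sqrt\epsilon R_\epsilon)=O(\alpha\epsilon)=o(\sqrt\epsilon).
\]
The $\chi_\epsilon\cdot O(r^2)$ term contributes only $O(R_\epsilon^4)=o(\sqrt\epsilon)$. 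With this fix your proof closes.

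\textbf{Comparison with the paper.} Your corrected route does not depend on the particular choice of $\alpha$; it works for any $R_\epsilon\to0$. The paper instead bounds $\int|\phi_{\epsilon,\delta}|$ on $\{r_\epsilon<2R_\epsilon\}$ directly by $-4\pi\alpha^2\epsilon\log(\alpha^2\sqrt\epsilon)+o(\sqrt\epsilon)$. That is $o(\sqrt\epsilon)$ only because $\alpha^2\sqrt\epsilon=(\log|\log\epsilon|)^{-1/2}$, i.e.\ because $\log\epsilon$ nearly cancels $-4\log R_\epsilon$. It then bounds the analogous inner integral of $G+\log\epsilon+4\log r-2\log(\epsilon+r_\epsilon^2)$ the same way. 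Its treatment of the bracket $\eta_\delta\,[4\log r-2\log(\epsilon+r_\epsilon^2)]$ on $\{r\le2\delta\}$ is essentially the corrected estimate above: it splits at $r=100\sqrt\epsilon$ and uses $\sqrt\epsilon/r+\epsilon/r^2$.
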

	
	\begin{proof}
		Firstly, noting that $$|\phi_{\epsilon,\delta}|\leq -2\log (\epsilon+ r_\epsilon^2)+\log\epsilon+C,\ \ for\ \ r_\epsilon\leq 2\alpha\sqrt{\epsilon},$$
		we have
		\begin{align*}
			\int_{M\cap\{r_\epsilon<2R_\eps\}}
			|\phi_{\epsilon,\delta}|\,dv_g &\leq \int_{\{r_\epsilon<2R_\eps,\ x_2\geq 0\}}
			(-2\log (\epsilon+ r_\epsilon^2)+\log\epsilon+C)\,dx +o(\sqrt\epsilon)\\
			&=-\pi\epsilon\left( 4\alpha^2\log\epsilon+(1+4\alpha^2)\log(1+4\alpha^2)-4\alpha^2\right)+2\pi\alpha^2\epsilon\log\epsilon +C\alpha^2\epsilon+o(\sqrt\epsilon) \\
			&=-4\pi \alpha^2\epsilon\log (\alpha^2\epsilon)+2\pi\alpha^2\epsilon\log\epsilon+o(\sqrt{\epsilon})\\
			&=-4\pi \alpha^2\epsilon\log (\alpha^2\sqrt{\epsilon})+o(\sqrt{\epsilon})=o(\sqrt{\epsilon}).
		\end{align*}
		
		Secondly,
		\begin{align*}
			\int_{M\cap\{r_\epsilon\geq 2R_\eps\}}
			\phi_{\epsilon,\delta}\,dv_g &=|M|\log\epsilon-\int_{M\cap\{r_\epsilon\leq 2R_\eps\}}
			(G+\log\epsilon)\,dv_g +\int_{M\cap\{r_\epsilon\geq 2R_\eps\}}
			\eta_\delta(4\log r-2\log(\epsilon+r_\epsilon^2))\,dv_g \\
			&=|M|\log\epsilon-\int_{M\cap\{r_\epsilon\leq 2R_\eps\}}
			(G+\log\epsilon+4\log r-2\log(\epsilon+r_\epsilon^2))\,dv_g \\&\quad +\int_{M\cap\{r\leq 2\delta\}}
			\eta_\delta(4\log r-2\log(\epsilon+r_\epsilon^2))\,dv_g\\
			&=|M|\log\epsilon +\int_{M\cap\{r\leq 2\delta\}}
			\eta_\delta(4\log r-2\log(\epsilon+r_\epsilon^2))\,dv_g+o(\sqrt{\epsilon})\\
			&=|M|\log\epsilon +\int_{M\cap\{100\sqrt\epsilon\leq r\leq 2\delta\}}
			\eta_\delta(4\log r-2\log(\epsilon+r_\epsilon^2))\,dv_g+o(\sqrt{\epsilon}),
		\end{align*} where the last equality follows from the fact that $$\int_{M\cap\{ r\leq 100\sqrt\epsilon\}}
		\eta_\delta(4\log r-2\log(\epsilon+r_\epsilon^2))\,dv_g=O(\epsilon\log\epsilon)=o(\sqrt{\epsilon}).$$
		
		Noting that
		\begin{align*}
			|4\log r-2\log(\epsilon+r_\epsilon^2)|&=2\log\left(1+\frac{2a\sqrt{\epsilon}x_2+(a^2+1)\epsilon}{r^2}\right)\\
			&\leq 2\frac{2a\sqrt{\epsilon}x_2+(a^2+1)\epsilon}{r^2},
		\end{align*} for $ x_2\geq 0$, we get
		\begin{align*}
			\int_{M\cap\{100\sqrt\epsilon\leq r\leq 2\delta\}}
			|4\log r-2\log(\epsilon+r_\epsilon^2)|\,dv_g&\leq C\int_{M\cap\{100\sqrt\epsilon\leq r\leq 2\delta\}}(\frac{\sqrt{\epsilon}}{r}+\frac{\epsilon}{r^2})dv_g\\ &=O(\delta\sqrt{\epsilon})+o(\sqrt{\epsilon}).
		\end{align*}
		
	\end{proof}

	\
	
	\begin{lem}\label{lem:bubble-exponential-integral}
		As $\epsilon\to0$, we have
		\begin{align*}
			\log
			\int_MK(x)e^{\phi_{\epsilon,\delta}}\,dv_g
			&=
			A(p)
			+
			\log K(p)
			+
			\log \frac{\pi (\sqrt{1+a^2}-a)}{2\sqrt{1+a^2}}\\&\quad
			+
			\left[
			\sqrt{1+a^2}
			\left(
			\frac{K_2(p)}{K(p)}+b_2(p)
			\right)
			-
			\frac{\kappa(p)}
			{2\bigl(\sqrt{1+a^2}-a\bigr)}
			\right]\sqrt{\epsilon}
			+
			o(\sqrt{\epsilon}).
		\end{align*}
	\end{lem}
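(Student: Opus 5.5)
Split $\int_M Ke^{\phi_{\epsilon,\delta}}\,dv_g$ according to the three regions in the definition \eqref{eq:test-function}. The bubble region $\{r_\epsilon\le R_\eps\}$ should carry the whole leading term together with the $\sqrt\epsilon$ correction. The neck and exterior regions should contribute only $o(\sqrt\epsilon)$.

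\textbf{Exterior and neck.} In the exterior $\{r_\epsilon\ge 2R_\eps\}$ we have $e^{\phi_{\epsilon,\delta}}\le C\epsilon/(\epsilon+r_\epsilon^2)^2$ near $p$ and $e^{\phi_{\epsilon,\delta}}\le C\epsilon$ away from $p$. Hence the integral there is $O(\epsilon\int_{r_\epsilon\ge 2\alpha\sqrt\epsilon}r_\epsilon^{-4})=O(\alpha^{-2})$ relative to the main term. In the neck, $G+4\log r=A+b_1x_1+b_2x_2+O(r^2)$, so $\phi_{\epsilon,\delta}$ agrees with the bubble profile up to $O(R_\eps^2)$ and the same tail bound applies. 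Since $\alpha^4\epsilon=1/\log(-\log\epsilon)$, we get $\alpha^{-2}=\sqrt{\epsilon}\,(\log(-\log\epsilon))^{1/2}$, which is \emph{not} $o(\sqrt\epsilon)$. The tail therefore cannot be discarded; it has to cancel against the truncation of the bubble integral at $|y|=\alpha$. To handle this, I will extend the bubble integrand to all of $\{y_2\ge a+d_\epsilon(y_1)\}$. I will then check that the exact contributions of the neck and exterior regions match the missing tail of $\int_{|y|>\alpha}(1+|y|^2)^{-2}$, up to $O(\alpha^{-3})+O(\delta\sqrt\epsilon)$. This is the same cancellation mechanism as the $8\pi/\alpha^2$, $-6\pi/\alpha^2$, $-2\pi/\alpha^2$ terms in Lemmas \ref{lem:bubble-gradient}--\ref{lem:exterior-gradient}.

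\textbf{Bubble region.} On $\{r_\epsilon\le R_\eps\}$ we have $e^{\phi_{\epsilon,\delta}}=\epsilon e^{A(p)}e^{b_1x_1+b_2x_2}(\epsilon+r_\epsilon^2)^{-2}$. Substitute $y_1=x_1/\sqrt\epsilon$, $y_2=(x_2+a\sqrt\epsilon)/\sqrt\epsilon$, so that $dx=\epsilon\,dy$. Expand
\[
K(x)=K(p)+K_1(p)x_1+K_2(p)x_2+O(r^2),\qquad e^{b_1x_1+b_2x_2}=1+b_1x_1+b_2x_2+O(r^2),
\]
and $dv_g=(1+O(r^2))dx$. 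Because $x_2=\sqrt\epsilon(y_2-a)$, the integral becomes
\[
e^{A(p)}\int_{\Omega}\frac{K(p)+\sqrt\epsilon\bigl[(K_1+K(p)b_1)y_1+(K_2+K(p)b_2)(y_2-a)\bigr]}{(1+|y|^2)^2}\,dy+o(\sqrt\epsilon).
\]
I will evaluate the pieces of this integral in turn.

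\textbf{Zeroth order.} Over the half-plane $\{y_2\ge a\}$,
\[
\int_{y_2\ge a}\frac{dy}{(1+|y|^2)^2}=\frac{\pi}{2}\Bigl(1-\frac{a}{\sqrt{1+a^2}}\Bigr)=\frac{\pi(\sqrt{1+a^2}-a)}{2\sqrt{1+a^2}},
\]
using $\int_{\mathbb R}(1+a^2+t^2)^{-2}\,dt=\pi/(2(1+a^2)^{3/2})$.

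\textbf{Curved-boundary correction.} The strip $a\le y_2\le a+d_\epsilon(y_1)$ removes
\[
\frac{\kappa\sqrt\epsilon}{2}\int_{\mathbb R}\frac{y_1^2}{(1+a^2+y_1^2)^2}\,dy_1=\frac{\kappa\pi\sqrt\epsilon}{4\sqrt{1+a^2}}.
\]
Dividing this by the zeroth-order value gives the relative correction $-\kappa\sqrt\epsilon/(2(\sqrt{1+a^2}-a))$, which is the claimed $\kappa$ term.

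\textbf{First-order terms.} The $y_1$ term vanishes by symmetry. For the $y_2$ term,
\[
\int_{y_2\ge a}\frac{y_2-a}{(1+|y|^2)^2}\,dy=\frac{\pi}{2}\bigl(1-a(\sqrt{1+a^2}-a)\bigr)/\ldots .
\]
I will verify by direct computation that, after dividing by the zeroth-order integral, this equals $\sqrt{1+a^2}$. With $s=\sqrt{1+a^2}$, the $y_2$-integral is $\pi/2-\pi a(s-a)/(2s)$, so the ratio is $(s-a(s-a))s/(s-a)$, which should simplify to $s$. This check is routine algebra.

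\textbf{Main obstacle.} Taking logarithms and using $\log(1+x)=x+O(x^2)$ then gives the statement. The hardest step is the $\alpha^{-2}$ tail bookkeeping across the three regions. The exponentially weighted $O(r^2)$ remainders and the metric error contribute only $O(\alpha^2\epsilon\log\alpha)=o(\sqrt\epsilon)$, so they are not the difficulty. The fallback is to write $e^{\phi_{\epsilon,\delta}}$ in all regions as the exact bubble times $(1+O(r))$ and to control the region $r\ge\delta$ separately. That region contributes $O(\epsilon)$, which is acceptable.
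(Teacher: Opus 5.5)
Your plan is essentially the paper's proof. It uses the same three-region split and the same bubble-region expansion, which produces the boundary-strip $\kappa$ term and the first-order $x_2$ term. It also relies on the same cancellation of the non-negligible $\alpha^{-2}$ tails: in units of $K(p)e^{A(p)}$, the paper finds $-\frac{\pi}{2\alpha^2}$ from the truncated bubble, $\frac{3\pi}{8\alpha^2}$ from the neck and $\frac{\pi}{8\alpha^2}$ from the exterior.

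Two slips need fixing, though neither affects the approach:
\begin{itemize}
\item The integral $\int_{y_2\geq a}y_2(1+|y|^2)^{-2}\,dy$ equals $\frac{\pi}{2\sqrt{1+a^2}}$, not $\frac{\pi}{2}$. Only with this value is the first-order integral $\frac{\pi}{2}(\sqrt{1+a^2}-a)$ and the ratio $\sqrt{1+a^2}$; your displayed ratio does not simplify to $s$.
\item The quadratic remainders are $O(\epsilon\log\alpha)$, or $O(R_\eps^2)$ as in the paper. They are not $O(\alpha^2\epsilon\log\alpha)$, and that bound would not be $o(\sqrt{\epsilon})$ when $\alpha^4\epsilon=1/\log(-\log\epsilon)$.
\end{itemize}
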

	
	\begin{proof}
		\noindent\textbf{Step 1:} Compute $\int_{M\cap \{r_\epsilon\leq R_\epsilon\}}K(x)e^{\phi_{\epsilon,\delta}}\,dv_g$.
		
		\
		
		Since
		\begin{align}\label{equation:12}
			K(x)e^{\phi_{\epsilon,\delta}}&=\epsilon K(p)e^{A(p)}\frac{1}{(\epsilon+r_\epsilon^2)^2}e^{(b_1+\frac{K_1(p)}{K(p)})x_1+ (b_2+\frac{K_2(p)}{K(p)})x_2+O(r^2)}\notag\\
			&=\epsilon K(p)e^{A(p)}\frac{1+(b_1+\frac{K_1(p)}{K(p)})x_1+ (b_2+\frac{K_2(p)}{K(p)})x_2}{(\epsilon+r_\epsilon^2)^2}+O(1)
		\end{align} and
		\begin{align*}
			\int_{\{r_\epsilon\leq R_\epsilon,\ 0\leq x_2\leq f(x_1)\}}\frac{1}{r_\epsilon}dv_g&\leq C\int_0^{R_\epsilon}\int_0^{f(x_1)} \frac{1}{r_\epsilon}dv_g=O(R_\epsilon^2),
		\end{align*}
		we have
		\begin{align*}
			&\int_{M\cap \{r_\epsilon\leq R_\epsilon\}}K(x)e^{\phi_{\epsilon,\delta}}\,dv_g \\&= \epsilon K(p)e^{A(p)} \int_{M\cap \{r_\epsilon\leq R_\epsilon\}}\frac{1+(b_1+\frac{K_1(p)}{K(p)})x_1+ (b_2+\frac{K_2(p)}{K(p)})x_2}{(\epsilon+r_\epsilon^2)^2}dx+O(R_\epsilon^2)\\
			&=  \epsilon K(p)e^{A(p)} \left(\int_{ \{r_\epsilon\leq R_\epsilon,\ x_2\geq 0\}}\frac{(b_1+\frac{K_1(p)}{K(p)})x_1+ (b_2+\frac{K_2(p)}{K(p)})x_2}{(\epsilon+r_\epsilon^2)^2}dx +\int_{ M\cap \{r_\epsilon\leq R_\epsilon\}} \frac{1}{(\epsilon+r_\epsilon^2)^2}dx\right)+o(\sqrt{\epsilon})\\
			&=  \epsilon K(p)e^{A(p)} \left(\int_{ \{r_\epsilon\leq R_\epsilon,\ x_2\geq 0\}}\frac{ (b_2+\frac{K_2(p)}{K(p)})x_2}{(\epsilon+r_\epsilon^2)^2}dx + \int_{ M\cap \{r_\epsilon\leq R_\epsilon\}} \frac{1}{(\epsilon+r_\epsilon^2)^2}dx\right)+o(\sqrt{\epsilon}).
		\end{align*}
		Set $x_1=\sqrt{\epsilon}y_1$, $x_2+a\sqrt{\epsilon}=\sqrt{\epsilon}y_2$. On one hand, by \eqref{equation:07}, we have
		\begin{align}\label{equation:13}
			\int_{|y|\leq \alpha,\ y_2\geq a}\frac{1}{(1+|y|^2)^2}dy
			&=\int_{|y|\leq \alpha,\ y_2\geq 0}\frac{1}{(1+|y|^2)^2}dy -\int_{|y|\leq \alpha,\ 0\leq y_2\leq a}\frac{1}{(1+|y|^2)^2}dy\notag \\
			&=\frac{\pi}{2}\frac{\alpha^2}{\alpha^2+1}-\frac{\pi}{2}\frac{a}{\sqrt{a^2+1}}+O(\frac{1}{\alpha^3})\notag\\&= \frac{\pi}{2}\left(1-\frac{a}{\sqrt{a^2+1}}\right)-\frac{\pi}{2\alpha^2}+o(\sqrt{\epsilon}).
		\end{align}
		Noting that
		\begin{align*}
			\int_{|y|\leq \alpha,\ a\leq y_2\leq a+d_\epsilon(y_1)}\frac{1}{(1+|y|^2)^2}dy
			&=\int_{|y|\leq \alpha,\ a\leq y_2\leq a+d_\epsilon(y_1)}\frac{1}{(1+a^2+y_1^2)^2}dy+o(\sqrt{\epsilon})\\
			&=2\int_0^{\sqrt{\alpha^2-a^2}}\frac{\frac{1}{2}\kappa \sqrt{\epsilon}y_1^2+O(\epsilon y_1^3)}{(1+a^2+y_1^2)^2}dy_1+o(\sqrt{\epsilon})\\
			&=\frac{\pi}{4}\frac{1}{\sqrt{a^2+1}}\kappa\sqrt{\epsilon}+o(\sqrt{\epsilon}),
		\end{align*}
		then
		\begin{align*}
			\epsilon\int_{ M\cap \{r_\epsilon\leq R_\epsilon\}}\frac{ 1}{(\epsilon+r_\epsilon^2)^2}dx &=\int_{|y|\leq \alpha,\ y_2\geq a}\frac{1}{(1+|y|^2)^2}dy-\int_{|y|\leq \alpha,\ a\leq y_2\leq a+d_\epsilon(y_1)}\frac{1}{(1+|y|^2)^2}dy \\
			&=\frac{\pi}{2}\left(1-\frac{a}{\sqrt{a^2+1}}\right)-\frac{\pi}{2\alpha^2}-\frac{\pi}{4}\frac{1}{\sqrt{a^2+1}}\kappa\sqrt{\epsilon}+o(\sqrt{\epsilon}).
		\end{align*}

		On the other hand, there holds
		\begin{align*}
			\epsilon\int_{  \{r_\epsilon\leq R_\epsilon,\ x_2\geq 0\}}\frac{ x_2}{(\epsilon+r_\epsilon^2)^2}dx&=\sqrt{\epsilon}\int_{|y|\leq \alpha,\ y_2\geq a}\frac{y_2-a}{(1+|y|^2)^2}dy\\
			&=\sqrt{\epsilon}\int_{|y|\leq \alpha,\ y_2\geq a}\frac{y_2}{(1+|y|^2)^2}dy-a\sqrt{\epsilon}\frac{\pi}{2}\left(1-\frac{a}{\sqrt{a^2+1}}\right)+o(\sqrt{\epsilon}).
		\end{align*}
		
		A direct computation yields
		\begin{align*}
			\int_{|y|\leq \alpha,\ y_2\geq a}\frac{y_2}{(1+|y|^2)^2}dy&=\int_{|y|\leq \alpha,\ y_2\geq 0}\frac{y_2}{(1+|y|^2)^2}dy -\int_{|y|\leq \alpha,\ 0\leq y_2\leq a}\frac{y_2}{(1+|y|^2)^2}dy
			\\&=\arctan\alpha-\frac{\alpha}{\alpha^2+1}-\int_{|y_1|\leq \alpha,\ 0\leq y_2\leq a}\frac{y_2}{(1+|y|^2)^2}dy+O(\alpha^{-3})\\
			&=-\frac{\alpha}{\alpha^2+1}+\frac{1}{\sqrt{1+a^2}}\arctan\frac{\alpha}{\sqrt{1+a^2}}+O(\alpha^{-3})\\
			&=\frac{\pi}{2}\frac{1}{\sqrt{1+a^2}}+O(\alpha^{-1}).
		\end{align*}
		
		Then
		\begin{align*}
			\epsilon\int_{  \{r_\epsilon\leq R_\epsilon,\ x_2\geq 0\}}\frac{ x_2}{(\epsilon+r_\epsilon^2)^2}dx=\frac{\pi \sqrt{\epsilon}}{2}(\sqrt{a^2+1}-a)+o(\sqrt{\epsilon}).
		\end{align*}

		Thus,
		\begin{align}
			\int_{M\cap\{r_\epsilon\leq R_\eps\}}
			K(x)e^{\phi_{\epsilon,\delta}}\,dv_g &=
			K(p)e^{A(p)}
			\Bigg\{
			\frac{\pi}{2}
			\left(
			1-\frac{a}{\sqrt{1+a^2}}
			\right)
			-\frac{\pi}{2\alpha^2} \notag\\
			&\quad+
			\frac{\pi}{2}
			\left[
			\bigl(\sqrt{1+a^2}-a\bigr)
			\left(
			\frac{K_2(p)}{K(p)}+b_2(p)
			\right)
			-\frac{\kappa(p)}{2\sqrt{1+a^2}}
			\right]\sqrt{\epsilon}
			+o(\sqrt{\epsilon})
			\Bigg\}.
		\end{align}
		
		\
		
		\noindent\textbf{Step 2:} Compute $\int_{M\cap\{R_\epsilon\leq r_\epsilon\leq 2R_\eps\}}
		K(x)e^{\phi_{\epsilon,\delta}}\,dv_g$.
		
		\
		
		Since for $R_\epsilon\leq r_\epsilon\leq 2R_\eps$, we have
		\begin{equation}\label{equation:14}
			\phi_{\epsilon,\delta}
			=
			-2\log\bigl(\epsilon+r_\epsilon^2\bigr)
			+A(p)+b_1(p)x_1+b_2(p)x_2
			+\log\epsilon+O(R_\eps^2).
		\end{equation}
		Then by Step 1 of this lemma, we get
		\begin{align*}
			\int_{M\cap\{R_\epsilon\leq r_\epsilon\leq 2R_\eps\}}
			K(x)e^{\phi_{\epsilon,\delta}}\,dv_g&=\int_{M\cap\{R_\epsilon\leq r_\epsilon\leq 2R_\eps\}}
			K(x)e^{-2\log(\epsilon+r_\epsilon^2)
				+A(p)+b_1(p)x_1+b_2(p)x_2
				+\log\epsilon}\,dv_g+O(R_\eps^2)\\
			&=K(p)e^{A(p)}(-\frac{\pi}{2})(\frac{1}{4\alpha^2}-\frac{1}{\alpha^2})+o(\sqrt{\epsilon})\\
			&=K(p)e^{A(p)}\frac{3\pi}{8\alpha^2}+o(\sqrt{\epsilon}).
		\end{align*}
		
		\
		
		\noindent\textbf{Step 3:} Compute $\int_{M\cap\{ r_\epsilon\geq 2R_\eps\}}
		K(x)e^{\phi_{\epsilon,\delta}}\,dv_g$.
		
		\
		
		On one hand, it is easy to see that
		\begin{align*}
			\left|\int_{M\cap\{ r\geq \delta\}}
			K(x)e^{\phi_{\epsilon,\delta}}\,dv_g\right|&\leq C(\delta)\epsilon=o(\sqrt{\epsilon}).
		\end{align*}
		
		On the other hand, noting that for $ r\leq \delta$, there holds $$
		\phi_{\epsilon,\delta}
		=
		-2\log\bigl(\epsilon+r_\epsilon^2\bigr)
		+A(p)+b_1(p)x_1+b_2(p)x_2
		+\log\epsilon+O(r^2).
		$$
		Then
		\begin{align*}
			\int_{M\cap\{r_\epsilon\geq 2R_\epsilon, \ r\leq \delta\}}
			K(x)e^{\phi_{\epsilon,\delta}}\,dv_g&=\epsilon K(p)e^{A(p)}\int_{M\cap\{r_\epsilon\geq 2R_\epsilon, \ r\leq \delta\}}\frac{1+O(r)}{(\epsilon+r_\epsilon^2)^2}dx+o(\sqrt{\epsilon})\\
			&=\epsilon K(p)e^{A(p)}\int_{M\cap\{r_\epsilon\geq 2R_\epsilon, \ r\leq \delta\}}\frac{1}{(\epsilon+r_\epsilon^2)^2}dx+o(\sqrt{\epsilon})\\
			&=\epsilon K(p)e^{A(p)}\int_{M\cap\{r_\epsilon\geq 2R_\epsilon \}}\frac{1}{(\epsilon+r_\epsilon^2)^2}dx+o(\sqrt{\epsilon})\\
			&=\epsilon K(p)e^{A(p)}\int_{\{r_\epsilon\geq 2R_\epsilon,\ x_2\geq 0\}}\frac{1}{(\epsilon+r_\epsilon^2)^2}dx+o(\sqrt{\epsilon}).
		\end{align*}
		where we used the fact that
		\begin{align*}
			\epsilon \int_{\{r_\epsilon\geq 2R_\epsilon, \ 0\leq x_2\leq f(x_1)\}}\frac{1}{(\epsilon+r_\epsilon^2)^2}dx&\leq \epsilon \int_{\{R_\epsilon\leq r,\ 0\leq x_2\leq f(x_1)\}}\frac{1}{(\epsilon+r_\epsilon^2)^2}dx =o(\sqrt{\epsilon}).
		\end{align*}
		By \eqref{equation:13}, we get
		\begin{align*}
			\epsilon K(p)e^{A(p)}\int_{\{r_\epsilon\geq 2R_\epsilon,\ x_2\geq 0\}}\frac{1}{(\epsilon+r_\epsilon^2)^2}dx& = K(p)e^{A(p)}\int_{|y|\geq 2\alpha,\ y_2\geq a}\frac{1}{(1+|y|^2)^2}dy\\
			&=K(p)e^{A(p)} \frac{\pi}{8\alpha^2}+o(\sqrt{\epsilon}).
		\end{align*}
		
		Then
		\begin{align*}
			\int_MK(x)e^{\phi_{\epsilon,\delta}}\,dv_g
			&=
			\frac{\pi}{2}K(p)e^{A(p)}
			\left(
			1-\frac{a}{\sqrt{1+a^2}}
			\right) \notag\\
			&\quad
			+
			\frac{\pi}{2}K(p)e^{A(p)}
			\left[
			\bigl(\sqrt{1+a^2}-a\bigr)
			\left(
			\frac{K_2(p)}{K(p)}+b_2(p)
			\right)
			-
			\frac{\kappa(p)}{2\sqrt{1+a^2}}
			\right]\sqrt{\epsilon}
			+
			o(\sqrt{\epsilon})
			.
		\end{align*}

	\end{proof}
	
	\

	\begin{lem}\label{lem:bubble-boundary-exponential-integral}
		As $\epsilon\to0$, we have
		\begin{align*}
			&\log
			\int_{\partial M}
			h(x)e^{\phi_{\epsilon,\delta}/2}\,ds_g \\
			&=
			\frac{A(p)}{2}
			+
			\log h(p)
			+
			\log\frac{\pi}{\sqrt{1+a^2}} \notag\\
			&\quad
			+
			\Bigg[
			-\frac{a\kappa(p)}{2}
			+
			\frac{\sqrt{1+a^2}}
			{\pi e^{A(p)/2}h(p)}
			\left(
			\int_{\partial M\cap\{r\geq\delta\}}
			h(x)e^{G(x,p)/2}\,ds_g
			-
			\frac{2e^{A(p)/2}h(p)}{\delta}
			\right)
			\Bigg]\sqrt{\epsilon} \notag\\
			&\quad
			+
			O(\delta\sqrt{\epsilon})
			+
			o(\sqrt{\epsilon}).
		\end{align*}
	\end{lem}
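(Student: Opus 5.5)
We split $\partial M$ into the three regions used for the test function, $\{r_\epsilon\le 2R_\eps\}$, $\{r_\epsilon\ge 2R_\eps,\ r\le\delta\}$ and $\{r\ge\delta\}$. We parametrize $\partial M$ near $p$ by arclength $s$, which we identify with $x_1$ up to $O(s^3)$. On $\partial M$ we have $x_2=f(x_1)=\frac{\kappa}{2}x_1^2+O(x_1^3)$, hence
$$r_\epsilon^2=x_1^2+a^2\epsilon+a\kappa\sqrt\epsilon\,x_1^2+O(x_1^3\sqrt\epsilon+x_1^4).$$
The leading term comes from the near region, where $h e^{\phi_{\epsilon,\delta}/2}=\sqrt\epsilon\,h(p)e^{A(p)/2}\,\frac{1+(b_1+h'(p)/h(p))x_1/2+O(x_1^2)}{\epsilon+r_\epsilon^2}$. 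After substituting $x_1=\sqrt\epsilon\, y_1$, this becomes
$$h(p)e^{A(p)/2}\int\frac{dy_1}{1+a^2+y_1^2\,(1+a\kappa\sqrt\epsilon)}+\cdots$$
The odd term in $x_1$ integrates to $o(\sqrt\epsilon)$ by symmetry, and the $b_2x_2$ contribution is $O(\epsilon y_1^2)$, hence negligible. Since the full-line integral is $\pi/\sqrt{1+a^2}$, expanding in $\sqrt\epsilon$ gives the factor $(1+a\kappa\sqrt\epsilon)^{-1/2}$. This produces the constant $\log\frac{\pi}{\sqrt{1+a^2}}$ and the correction $-\frac{a\kappa}{2}\sqrt\epsilon$ after taking the logarithm.

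The truncation matters. Over $|y_1|\le T$ we get $\frac{\pi}{\sqrt{1+a^2}}-\frac{2}{T}+O(T^{-3})$. With $T=\delta/\sqrt\epsilon$ this tail loss is exactly $-\frac{2\sqrt\epsilon}{\delta}h(p)e^{A(p)/2}$. So the plan is to compute the integral over $\partial M\cap\{r\le\delta\}$ directly. In that set, including the neck region and $r_\epsilon\ge 2R_\eps$, we still have $\phi_{\epsilon,\delta}=-2\log(\epsilon+r_\epsilon^2)+G+4\log r+\log\epsilon$ up to the cutoff $\chi_\epsilon$, and $\chi_\epsilon$ only replaces $G+4\log r$ by its linear Taylor part, at a cost of $O(R_\eps^2)$. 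Writing $G+4\log r-A(p)=b_1x_1+b_2x_2+O(r^2)$ for $r\le\delta$, the $O(r^2)$ and $O(x_1)$ pieces contribute $\sqrt\epsilon\int_{|x_1|\le\delta}O(1)\,dx_1=O(\delta\sqrt\epsilon)$ after the odd part cancels. The $a\kappa$ correction inside the denominator is only an $o(\sqrt\epsilon)$-relevant change outside $|y_1|\lesssim \alpha$, which is consistent with the fact that the correction is weighted by $y_1^2/(1+a^2+y_1^2)^2$, an integrable weight. We also control the mismatch between the geodesic ball $\{r\le\delta\}\cap\partial M$ and $|x_1|\le\delta$ at the order $O(\delta^3\sqrt\epsilon/\delta^2)=O(\delta\sqrt\epsilon)$.

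For $r\ge\delta$ we have $\phi_{\epsilon,\delta}=G+\log\epsilon+\eta_\delta\,O(\sqrt\epsilon/\delta)$. This region therefore contributes $\sqrt\epsilon\int_{\partial M\cap\{r\ge\delta\}}h e^{G/2}\,ds_g\,(1+O(\sqrt\epsilon/\delta))$. Summing the regions, we get
$$\int_{\partial M}he^{\phi_{\epsilon,\delta}/2}=\sqrt\epsilon\Big[\tfrac{\pi h(p)e^{A(p)/2}}{\sqrt{1+a^2}}\big(1-\tfrac{a\kappa}{2}\sqrt\epsilon\big)+\sqrt\epsilon\Big(\int_{r\ge\delta}he^{G/2}-\tfrac{2e^{A/2}h(p)}{\delta}\Big)+O(\delta\sqrt\epsilon)+o(\sqrt\epsilon)\Big].$$
We then take the logarithm and divide by the leading constant; this gives the coefficient $\frac{\sqrt{1+a^2}}{\pi e^{A/2}h(p)}$ in the statement. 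The $\log\sqrt\epsilon$ piece $\frac12\log\epsilon$ presumably cancels against the $\log\epsilon$ shifts when it is combined in $J$. The statement should be read modulo this additive $\frac12\log\epsilon$, coming from the $\log\epsilon$ in $\phi_{\epsilon,\delta}$; I would record it explicitly.

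The main obstacle is the bookkeeping at order $\sqrt\epsilon$ between the near and middle regions. Specifically, we must verify that the $\kappa$-distortion of $r_\epsilon$ along the curved boundary yields exactly $-\frac{a\kappa}{2}$, and not extra contributions from the metric factor $ds_g$ or from the reparametrization $s\leftrightarrow x_1$; both of those are $O(x_1^2)$ relative and give only $O(\delta\sqrt\epsilon)$. We must also check that the $-2/\delta$ tail term matches precisely the divergence of $\int_{r\ge\delta}he^{G/2}\sim 2e^{A/2}h(p)/\delta$. I would do the near-region computation on $|x_1|\le\delta$ in one piece, rather than splitting it at $R_\eps$, to avoid these matching errors.
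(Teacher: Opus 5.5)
Your method is the paper's. You use the same expansion $r_\epsilon^2=x_1^2+a^2\epsilon+a\kappa\sqrt{\epsilon}\,x_1^2+\cdots$ along $x_2=f(x_1)$, which yields $-\frac{\pi a\kappa}{2\sqrt{1+a^2}}\sqrt{\epsilon}$. You also use the same symmetric cancellation of the term linear in $x_1$, the same arctan tail $-2\sqrt{\epsilon}/\delta$, and the same outer contribution $\sqrt{\epsilon}\int_{\partial M\cap\{r\geq\delta\}}he^{G/2}\,ds_g$.

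The one organizational difference is that you integrate over $\partial M\cap\{r\le\delta\}$ in one piece. This is legitimate, because on all of $\{r\le\delta\}$ one has $\phi_{\epsilon,\delta}=-2\log(\epsilon+r_\epsilon^2)+A(p)+b_1x_1+b_2x_2+\log\epsilon+O(r^2)$. The paper instead splits at $r_\epsilon=R_\eps$ and $r_\epsilon=2R_\eps$. There the terms $-\frac{2}{\alpha}$, $+\frac{1}{\alpha}$, $+\frac{1}{\alpha}$ from its three steps must cancel; in your version they never appear.

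\textbf{The error is in your final assembly, and so is the conclusion you draw from it.} There is no overall factor $\sqrt{\epsilon}$. Your own substitution shows the near region contributes
\[
h(p)e^{A(p)/2}\int\frac{dy_1}{1+a^2+(1+a\kappa\sqrt{\epsilon})y_1^2}=O(1).
\]
The factor $\sqrt{\epsilon}$ produced by the $\log\epsilon$ in $\phi_{\epsilon,\delta}$ is exactly compensated by $\sqrt{\epsilon}\,dy_1/\epsilon$. Only the tail and the region $r\geq\delta$ carry a factor $\sqrt{\epsilon}$. With your outer prefactor, the $r\geq\delta$ part would even become $O(\epsilon)$, contradicting your own estimate for that region. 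The correct sum is
\[
\int_{\partial M}he^{\phi_{\epsilon,\delta}/2}\,ds_g
=\frac{\pi e^{A(p)/2}h(p)}{\sqrt{1+a^2}}\Bigl(1-\frac{a\kappa(p)}{2}\sqrt{\epsilon}\Bigr)
+\sqrt{\epsilon}\Bigl(\int_{\partial M\cap\{r\geq\delta\}}he^{G(x,p)/2}\,ds_g-\frac{2e^{A(p)/2}h(p)}{\delta}\Bigr)
+O(\delta\sqrt{\epsilon})+o(\sqrt{\epsilon}).
\]
Taking logarithms gives the lemma exactly as stated, with no additive $\frac12\log\epsilon$.

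Such a term also could not be absorbed later, as you suggest. In $J_{\rho,\rho'}(\phi_{\epsilon,\delta})$ the two $\log\epsilon$ contributions already cancel each other:
\[
\frac12\int_M|\nabla_g\phi_{\epsilon,\delta}|^2\,dv_g=-4\pi\log\epsilon+O(1),\qquad \frac{4\pi}{|M|}\int_M\phi_{\epsilon,\delta}\,dv_g=4\pi\log\epsilon+o(1).
\]
Moreover, $\int_MKe^{\phi_{\epsilon,\delta}}\,dv_g$ is of order one. So a stray $\frac12\log\epsilon$ would leave $-2\rho'\log\epsilon\to+\infty$ for $\rho'>0$, destroying Proposition \ref{prop:01}. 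Delete the remark about reading the statement modulo $\frac12\log\epsilon$.
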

	\begin{proof}
		\noindent\textbf{Step 1:} Compute $\int_{\partial M\cap\{r_\epsilon\leq R_\eps\}}
		h(x)e^{\phi_{\epsilon,\delta}/2}\,ds_g$.
		
		\
		
		Since
		\begin{align*}
			h(x)e^{\frac{1}{2}\phi_{\epsilon,\delta}}&=\sqrt{\epsilon} h(p)e^{\frac{1}{2}A(p)}\frac{1+(\frac{1}{2}b_1+\frac{h_1(p)}{h(p)})x_1+ O(x^2_1)}{\epsilon+r_\epsilon^2},\ \ x\in \partial M\cap\{r_\epsilon\leq R_\eps\}
		\end{align*} and
		\begin{align*}
			ds_g=\left|\frac{\partial}{\partial x_1}+f'(x_1)\frac{\partial}{\partial x_2}\right|_gdx_1=(1+O(x_1^2))dx_1,
		\end{align*}
		we have
		\begin{align*}
			&\int_{\partial M\cap\{r_\epsilon\leq R_\eps\}}
			h(x)e^{\phi_{\epsilon,\delta}/2}\,ds_g = \sqrt{\epsilon} h(p)e^{\frac{1}{2}A(p)}\int_{\partial M\cap\{r_\epsilon\leq R_\eps\}}\frac{1+(\frac{1}{2}b_1+\frac{h_1(p)}{h(p)})x_1}{\epsilon+r_\epsilon^2}dx_1+o(\sqrt{\epsilon}).
		\end{align*}
		
		Let $(l_\epsilon^+,f(l_\epsilon^+))$, $(l_\epsilon^-,f(l_\epsilon^-))$ be the two endpoints of
		$\partial M\cap\{r_\epsilon= R_\eps\}$ where $l_\epsilon^-<0<l_\epsilon^+$. Then
		$$
		(l_\epsilon^\pm)^2
		+
		\bigl(f(l_\epsilon^\pm)+a\sqrt{\epsilon}\bigr)^2
		=
		\alpha^2\epsilon,
		$$ which implies that $$\sqrt{\alpha^2-a^2}\sqrt{\epsilon}-|l_\epsilon^{\pm}|=\frac{1}{2}a\kappa \sqrt{\alpha^2-a^2}\epsilon+O((\alpha\sqrt{\epsilon})^3).$$
		
		Since
		\begin{align*}
			\sqrt{\epsilon}\left|\int_{\partial M\cap\{r_\epsilon\leq R_\eps\}}\left(\frac{x_1}{\epsilon+r_\epsilon^2}- \frac{x_1}{\epsilon+x_1^2+(\frac{1}{2}\kappa x_1^2)^2+a^2\epsilon}\right)dx_1\right| &\leq C\sqrt{\epsilon} \int_{\partial M\cap\{r_\epsilon\leq R_\eps\}}\frac{x_1^6+\sqrt{\epsilon}|x_1^3|}{(\epsilon+x_1^2)^2}dx_1\\&=o(\sqrt{\epsilon})
		\end{align*} and
		\begin{align*}
			\sqrt{\epsilon}\int_{\partial M\cap\{r_\epsilon\leq R_\eps\}} \frac{x_1}{\epsilon+x_1^2+(\frac{1}{2}\kappa x_1^2)^2+a^2\epsilon}dx_1 &=\sqrt{\epsilon}\int_{l^-_\epsilon}^{l^+_\epsilon} \frac{x_1}{\epsilon+x_1^2+(\frac{1}{2}\kappa x_1^2)^2+a^2\epsilon}dx_1=o(\sqrt{\epsilon}),
		\end{align*}where we used the symmetric property of the integration, thus we get
		\begin{align*}
			\int_{\partial M\cap\{r_\epsilon\leq R_\eps\}}
			h(x)e^{\phi_{\epsilon,\delta}/2}\,ds_g &= \sqrt{\epsilon} h(p)e^{\frac{1}{2}A(p)}\int_{\partial M\cap\{r_\epsilon\leq R_\eps\}}\frac{1}{\epsilon+r_\epsilon^2}dx_1+o(\sqrt{\epsilon})\\
			&= \sqrt{\epsilon} h(p)e^{\frac{1}{2}A(p)}\int_{\partial M\cap\{|x_1|\leq \sqrt{\alpha^2-a^2}\sqrt{\epsilon}\}}\frac{1}{\epsilon+r_\epsilon^2}dx_1+o(\sqrt{\epsilon})
		\end{align*} where the last equality follows from
		\begin{align*}
			\sqrt{\epsilon}\int_{\partial M\cap\{|x_1|\leq \sqrt{\alpha^2-a^2}\sqrt{\epsilon},\ r_\epsilon\geq R_\epsilon\}}\frac{1}{\epsilon+r_\epsilon^2}dx_1\leq C\frac{\sqrt{\epsilon}}{R_\epsilon^2}(\sqrt{\alpha^2-a^2}\sqrt{\epsilon}-|l_\epsilon^{\pm}|)=O(\sqrt{\epsilon}\alpha^{-1})=o(\sqrt{\epsilon})
		\end{align*}
		
		Set $x_1=\sqrt{\epsilon}t$, then
		\begin{align*}
			&\sqrt{\epsilon} \int_{\partial M\cap\{|x_1|\leq \sqrt{\alpha^2-a^2}\sqrt{\epsilon}\}}\frac{1}{\epsilon+r_\epsilon^2}dx_1\\
			&=\int_{|t|\leq \sqrt{\alpha^2-a^2}}\frac{1}{a^2+1+t^2+a\kappa \sqrt{\epsilon}t^2+\epsilon\cdot (O(t^3)+O(t^4))}dt\\
			&=\int_{|t|\leq \sqrt{\alpha^2-a^2}}\left[\frac{1}{1+a^2+t^2}
			-
			a\kappa\sqrt{\epsilon}
			\frac{t^2}{(1+a^2+t^2)^2}+
			O\left(
			\epsilon\frac{|t|^3+|t|^4}{(1+a^2+t^2)^2}
			+
			\epsilon\frac{t^4}{(1+a^2+t^2)^3}
			\right)\right]dt\\
			&=\frac{2}{\sqrt{1+a^2}}\arctan \frac{\sqrt{\alpha^2-a^2}}{\sqrt{1+a^2}}-\frac{a\kappa}{\sqrt{1+a^2}}\arctan \frac{\sqrt{\alpha^2-a^2}}{\sqrt{1+a^2}}\sqrt{\epsilon}+o(\sqrt{\epsilon})\\
			&=\frac{\pi}{\sqrt{1+a^2}}
			-\frac{2}{\alpha}
			-\frac{\pi a\kappa}{2\sqrt{1+a^2}}
			\sqrt{\epsilon}+o(\sqrt{\epsilon}).
		\end{align*}
		Then
		\begin{align*}
			&\int_{\partial M\cap\{r_\epsilon\leq R_\eps\}}
			h(x)e^{\phi_{\epsilon,\delta}/2}\,ds_g =
			e^{A(p)/2}h(p)
			\left[
			\frac{\pi}{\sqrt{1+a^2}}
			-\frac{2}{\alpha}
			-\frac{\pi a\kappa(p)}{2\sqrt{1+a^2}}
			\sqrt{\epsilon}
			+
			o(\sqrt{\epsilon})
			\right].
		\end{align*}
		
		\
		
		\noindent\textbf{Step 2:} Compute $\int_{\partial M\cap\{R_\epsilon\leq r_\epsilon\leq 2R_\eps\}}
		h(x)e^{\phi_{\epsilon,\delta}/2}\,ds_g$.
		
		\
		
		By \eqref{equation:14} and Step 1, we get
		$$\int_{\partial M\cap\{R_\epsilon\leq r_\epsilon\leq 2R_\eps\}}
		h(x)e^{\phi_{\epsilon,\delta}/2}\,ds_g=e^{A(p)/2}h(p)\frac{1}{\alpha}+o(\sqrt{\epsilon}).$$

		\
		
		\noindent\textbf{Step 3:} Compute $\int_{\partial M\cap\{ r_\epsilon\geq 2R_\eps\}}
		h(x)e^{\phi_{\epsilon,\delta}/2}\,ds_g$.
		
		\
		
		On one hand, noting that
		\begin{align*}
			\int_{\partial M\cap\{ \delta\leq r\leq 2\delta\}}
			h(x)e^{\phi_{\epsilon,\delta}/2}\,ds_g&=\sqrt{\epsilon}\int_{\partial M\cap\{ \delta\leq r\leq 2\delta\}}h(x)e^{\frac{1}{2}G}\left(1+O(\frac{\sqrt{\epsilon}}{\delta})+o(\sqrt{\epsilon})\right)ds_g\\
			&=\sqrt{\epsilon}\int_{\partial M\cap\{\delta\leq r\leq 2\delta\}}h(x)e^{\frac{1}{2}G}ds_g+o(\sqrt{\epsilon}),
		\end{align*}
		we have
		\begin{align*}
			\int_{\partial M\cap\{ r\geq\delta\}}
			h(x)e^{\phi_{\epsilon,\delta}/2}\,ds_g&=\sqrt{\epsilon}\int_{\partial M\cap\{ r\geq \delta\}}h(x)e^{\frac{1}{2}G}ds_g+o(\sqrt{\epsilon}).
		\end{align*}
		
		On the other hand, we get
		\begin{align*}
			&\int_{\partial M\cap\{ r_\epsilon\geq 2R_\epsilon,\  r\leq \delta\}}
			h(x)e^{\phi_{\epsilon,\delta}/2}\,ds_g\\
			&=\sqrt{\epsilon} h(p)e^{\frac{1}{2}A(p)}\int_{\partial M\cap\{r_\epsilon\geq 2R_\epsilon,\  r\leq \delta\}}\frac{1+(\frac{1}{2}b_1+\frac{h_1(p)}{h(p)})x_1+O(x_1^2)}{\epsilon+r_\epsilon^2}(1+O(x_1^2))dx_1\\
			&=\sqrt{\epsilon} h(p)e^{\frac{1}{2}A(p)}\int_{\partial M\cap\{r_\epsilon\geq 2R_\epsilon,\  r\leq \delta\}}\frac{1+(\frac{1}{2}b_1+\frac{h_1(p)}{h(p)})x_1}{\epsilon+r_\epsilon^2}dx_1+O(\delta\sqrt{\epsilon}).
		\end{align*}
		
		Let $(j_\epsilon^+,f(j_\epsilon^+))$, $(j_\epsilon^-,f(j_\epsilon^-))$ be the two endpoints of
		$\partial M\cap\{r_\epsilon= 2R_\eps\}$ where $j_\epsilon^-<0<j_\epsilon^+$. Let $(j_\delta^+,f(j_\delta^+))$, $(j_\delta^-,f(j_\delta^-))$ be the two endpoints of
		$\partial M\cap\{r= \delta\}$ where $j_\delta^-<0<j_\delta^+$. Then
		$$
		(j_\epsilon^\pm)^2
		+
		\bigl(f(j_\epsilon^\pm)+a\sqrt{\epsilon}\bigr)^2
		=
		4R_\epsilon^2,\ \ (j_\delta^\pm)^2
		+
		\bigl(f(j_\delta^\pm)\bigr)^2
		=
		\delta^2,
		$$ which implies that $$\sqrt{4\alpha^2-a^2}\sqrt{\epsilon}-|j_\epsilon^{\pm}|=\frac{1}{2}a\kappa \sqrt{4\alpha^2-a^2}\epsilon+O((\alpha\sqrt{\epsilon})^3),\ \ \delta-|j_\delta^\pm|=O(\delta^3).$$
		
		Since
		\begin{align*}
			&\sqrt{\epsilon}\left|\int_{\partial M\cap\{r_\epsilon\geq 2R_\eps,\ r\leq\delta\}}\left(\frac{x_1}{\epsilon+r_\epsilon^2}- \frac{x_1}{\epsilon+x_1^2+(\frac{1}{2}\kappa x_1^2)^2+a^2\epsilon}\right)dx_1\right| \\
			&\leq C\sqrt{\epsilon} \int_{\partial M\cap\{r_\epsilon\geq 2R_\eps,\ r\leq\delta\}}\frac{x_1^6+\sqrt{\epsilon}|x_1^3|}{(\epsilon+x_1^2)^2}dx_1=O(\delta\sqrt{\epsilon})+o(\sqrt{\epsilon})
		\end{align*} and
		\begin{align*}
			&\sqrt{\epsilon}\left|\int_{\partial M\cap\{r_\epsilon\geq 2R_\eps,\ r\leq\delta\}} \frac{x_1}{\epsilon+x_1^2+(\frac{1}{2}\kappa x_1^2)^2+a^2\epsilon}dx_1 \right|\\
			&\leq \sqrt{\epsilon}\left(\left|\int_{|j^-_\epsilon|}^{j^+_\epsilon} \frac{x_1}{\epsilon+x_1^2+(\frac{1}{2}\kappa x_1^2)^2+a^2\epsilon}dx_1\right|+ \left|\int_{|j^-_\delta|}^{j^+_\delta} \frac{x_1}{\epsilon+x_1^2+(\frac{1}{2}\kappa x_1^2)^2+a^2\epsilon}dx_1\right|\right)\\
			&=O(\delta\sqrt{\epsilon})+o(\sqrt{\epsilon}),
		\end{align*}where we used the symmetric property of the integration to get the first inequality, thus we get
		\begin{align*}
			\int_{\partial M\cap\{ r_\epsilon\geq 2R_\epsilon,\  r\leq \delta\}}
			h(x)e^{\phi_{\epsilon,\delta}/2}\,ds_g=\sqrt{\epsilon} h(p)e^{\frac{1}{2}A(p)}\int_{\partial M\cap\{r_\epsilon\geq 2R_\epsilon,\  r\leq \delta\}}\frac{1}{\epsilon+r_\epsilon^2}dx_1+O(\delta\sqrt{\epsilon})+o(\sqrt{\epsilon}).
		\end{align*}
		
		Setting $x_1=\sqrt{\epsilon}t$, we have
		\begin{align*}
			\sqrt{\epsilon} \int_{\partial M\cap\{j^+_\epsilon\leq x_1\leq j_\delta^+\}}\frac{1}{\epsilon+r_\epsilon^2}dx_1 &=\int_{\frac{j^+_\epsilon}{\sqrt{\epsilon}}}^{\frac{j^+_\delta}{\sqrt{\epsilon}}}\frac{dt}{a^2+1+t^2 +a\kappa\sqrt{\epsilon}t^2+\epsilon O(t^4)}\\ &=\int_{\frac{j^+_\epsilon}{\sqrt{\epsilon}}}^{\frac{j^+_\delta}{\sqrt{\epsilon}}}\left(\frac{1}{a^2+1+t^2} -\frac{a\kappa \sqrt{\epsilon}t^2}{(a^2+1+t^2)^2}+ \frac{ \epsilon O(t^4)}{(a^2+1+t^2)^2}\right)dt\\
			&=\int_{\frac{j^+_\epsilon}{\sqrt{\epsilon}}}^{\frac{j^+_\delta}{\sqrt{\epsilon}}}\frac{1}{a^2+1+t^2}dt+O(\delta\sqrt{\epsilon})+o(\sqrt{\epsilon})\\
			&=\frac{1}{2\alpha}-\frac{\sqrt{\epsilon}}{\delta}+O(\delta\sqrt{\epsilon})+o(\sqrt{\epsilon}).
		\end{align*}
		Similarly,
		\begin{align*}
			\sqrt{\epsilon} \int_{\partial M\cap\{j^-_\delta\leq x_1\leq j_\epsilon^-\}}\frac{1}{\epsilon+r_\epsilon^2}dx_1 =\frac{1}{2\alpha}-\frac{\sqrt{\epsilon}}{\delta}+O(\delta\sqrt{\epsilon})+o(\sqrt{\epsilon}).
		\end{align*}
		Thus,
		\begin{align*}
			\int_{\partial M\cap\{ r_\epsilon\geq 2R_\epsilon,\  r\leq \delta\}}
			h(x)e^{\phi_{\epsilon,\delta}/2}\,ds_g= h(p)e^{\frac{1}{2}A(p)}\left(\frac{1}{\alpha}-\frac{2\sqrt{\epsilon}}{\delta}\right)+O(\delta\sqrt{\epsilon})+o(\sqrt{\epsilon})
		\end{align*} and
		\begin{align*}
			&\int_{\partial M\cap\{ r_\epsilon\geq 2R_\epsilon\}}
			h(x)e^{\phi_{\epsilon,\delta}/2}\,ds_g= h(p)e^{\frac{1}{2}A(p)}\left(\frac{1}{\alpha}-\frac{2\sqrt{\epsilon}}{\delta}\right)+\sqrt{\epsilon}\int_{\partial M\cap\{ r\geq \delta\}}h(x)e^{\frac{1}{2}G}ds_g+O(\delta\sqrt{\epsilon})+o(\sqrt{\epsilon}).
		\end{align*}
		
		Therefore,
		\begin{align*}
			&\int_{\partial M}
			h(x)e^{\phi_{\epsilon,\delta}/2}\,ds_g \\
			&=
			\frac{\pi e^{A(p)/2}h(p)}{\sqrt{1+a^2}}
			+
			\Bigg[
			\int_{\partial M\cap\{r\geq\delta\}}
			h(x)e^{G(x,p)/2}\,ds_g
			-
			\frac{2e^{A(p)/2}h(p)}{\delta}
			-
			\frac{\pi a\kappa(p)e^{A(p)/2}h(p)}
			{2\sqrt{1+a^2}}
			\Bigg]\sqrt{\epsilon}\\
			&\quad
			+
			O(\delta\sqrt{\epsilon})
			+
			o_\epsilon(\sqrt{\epsilon}).
		\end{align*}

	\end{proof}
	
	\

	Now, we prove Proposition \ref{prop:01}.
	\begin{proof}[\textbf{Proof of Proposition \ref{prop:01}}]
		\begin{align*}
			&J_{\rho,\ \rho'}(\phi_{\epsilon,\delta})\\
			&=\frac{1}{2}\int_M|\nabla \phi_{\epsilon,\delta}|^2dv_g+\frac{4\pi}{|M|}\int_M \phi_{\epsilon,\delta} dv_g-2\rho\log\int_MK e^{\phi_{\epsilon,\delta}}dv_g-4\rho'\log\int_{\partial M}h e^{\frac{\phi_{\epsilon,\delta}}{2}}\,ds_g\\
			&=
			-4\pi
			-8\pi\log\bigl(a+\sqrt{1+a^2}\bigr)
			+\frac{4\pi a}{\sqrt{1+a^2}}
			+2\pi A(p) -2\rho\left[A(p)
			+
			\log K(p)
			+
			\log \frac{\pi (\sqrt{1+a^2}-a)}{2\sqrt{1+a^2}}\right] \\
			&\quad -4\rho'\left[\frac{A(p)}{2}
			+
			\log h(p)
			+
			\log\frac{\pi}{\sqrt{1+a^2}}\right]
			\\ &\quad
			+
			\sqrt{\epsilon} \Bigg\{
			4\pi\sqrt{1+a^2}\,b_2(p)
			+
			\frac{2\pi(2a^2+3)}
			{\sqrt{1+a^2}}\kappa(p)-2\rho \left[
			\sqrt{1+a^2}
			\left(
			\frac{K_2(p)}{K(p)}+b_2(p)
			\right)
			-
			\frac{\kappa(p)}
			{2\bigl(\sqrt{1+a^2}-a\bigr)}
			\right]
			\\
			&
			\quad -4\rho'
			\bigg[
			-\frac{a\kappa(p)}{2}
			+
			\frac{\sqrt{1+a^2}}
			{\pi e^{A(p)/2}h(p)}
			\left(
			\int_{\partial M\cap\{r\geq\delta\}}
			h(x)e^{G(x,p)/2}\,ds_g
			-
			\frac{2e^{A(p)/2}h(p)}{\delta}
			\right)
			\bigg]\Bigg\} \\
			&\quad
			+
			O(\delta\sqrt{\epsilon})
			+
			o(\sqrt{\epsilon}).
		\end{align*}
		By the definition of $a$, it is easy to see that
		\begin{align*}
			\frac{a}{\sqrt{1+a^2}}
			=&
			\frac{\rho'}{2\pi},\ \ \ \ \frac{a}{\sqrt{1+a^2}-a}
			=
			\frac{\rho'}{\rho},\ \
			\ \
			\sqrt{1+a^2}-a
			=
			\sqrt{\frac{\rho}{2\pi+\rho'}},
			\\
			a+\sqrt{1+a^2}
			=&
			\sqrt{\frac{2\pi+\rho'}{\rho}},
			\ \
			\frac{\pi}{\sqrt{1+a^2}}
			=
			\frac12\sqrt{\rho(2\pi+\rho')}.
		\end{align*}
		Then we get
		\begin{align*}
			&-4\pi
			-8\pi\log\bigl(a+\sqrt{1+a^2}\bigr)
			+\frac{4\pi a}{\sqrt{1+a^2}}
			+2\pi A(p) -2\rho\left[A(p)
			+
			\log K(p)
			+
			\log \frac{\pi (\sqrt{1+a^2}-a)}{2\sqrt{1+a^2}}\right] \\
			&\quad -4\rho'\left[\frac{A(p)}{2}
			+
			\log h(p)
			+
			\log\frac{\pi}{\sqrt{1+a^2}}\right]\\
			&=-4\pi
			+
			2\rho'
			+
			4\pi\log4
			-
			2(2\pi+\rho')\log(2\pi+\rho')
			-
			2\pi
			\left[
			A(p)
			+
			\frac{\rho}{\pi}\log K(p)
			+
			\frac{\rho'}{\pi}\log h^2(p)
			\right]
		\end{align*}
		and
		\begin{align*}
			&4\pi\sqrt{1+a^2}\,b_2(p)
			+
			\frac{2\pi(2a^2+3)}
			{\sqrt{1+a^2}}\kappa(p)
			-2\rho
			\left[
			\sqrt{1+a^2}
			\left(
			\frac{K_2(p)}{K(p)}+b_2(p)
			\right)
			-
			\frac{\kappa(p)}
			{2\bigl(\sqrt{1+a^2}-a\bigr)}
			\right]\\&\quad
			+
			2\rho'a\kappa(p)
			-
			\frac{4\rho'\sqrt{1+a^2}}
			{\pi e^{A(p)/2}h(p)}
			\left(
			\int_{\partial M\cap\{r\geq\delta\}}
			h(x)e^{G(x,p)/2}\,ds_g
			-
			\frac{2e^{A(p)/2}h(p)}{\delta}
			\right)\\
			&=
			-4\pi
			\sqrt{\frac{\rho}{2\pi+\rho'}}
			\Bigg[
			\frac{K_2(p)}{K(p)}
			-
			2\kappa(p)
			+
			\frac{2\rho'}
			{\pi\rho e^{A(p)/2}h(p)}
			\left(
			\int_{\partial M\cap\{r\geq\delta\}}
			h(x)e^{G(x,p)/2}\,ds_g
			-
			\frac{2e^{A(p)/2}h(p)}{\delta}
			\right)
			\Bigg],
		\end{align*} which implies the conclusion of the proposition.
		
	\end{proof}
	
	\
	
	\begin{lem}\label{lem:boundary-green-finite-part}
		Let $p\in\partial M$, let $r=d_g(x,p)$, and assume that
		$h\in C^2(\partial M)$ and $h>0$. Then the limit
		\begin{equation}\label{eq:boundary-green-finite-part}
			\lim_{\delta\downarrow0}
			\left[
			\int_{\partial M\cap\{r\geq\delta\}}
			h(x)e^{G(x,p)/2}\,ds_g
			-
			\frac{2e^{A(p)/2}h(p)}{\delta}
			\right]
		\end{equation}
		exists and is finite.
	\end{lem}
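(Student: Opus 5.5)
Fix a small $\delta_0>0$ such that $\partial M\cap\{r\le\delta_0\}$ is a single arc through $p$, parametrized by arclength $s\in[-s_0^-,s_0^+]$ with $s=0$ at $p$. Split the integral as
\[
\int_{\partial M\cap\{r\ge\delta\}}he^{G/2}\,ds_g=\int_{\partial M\cap\{r\ge\delta_0\}}he^{G/2}\,ds_g+\int_{\partial M\cap\{\delta\le r\le\delta_0\}}he^{G/2}\,ds_g .
\]
The first term is a fixed finite number, since $G(\cdot,p)$ is smooth away from $p$. So it suffices to show that
\[
I(\delta):=\int_{\partial M\cap\{\delta\le r\le\delta_0\}}he^{G/2}\,ds_g-\frac{2e^{A(p)/2}h(p)}{\delta}
\]
has a finite limit as $\delta\downarrow 0$.

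\textbf{Local form of the integrand.} By the lemma following Lemma~\ref{lem:maximum-point-on-boundary} (the convergence-to-Green lemma), near $p$ we have $G(x,p)=-4\log r+g(x)$ with $g$ smooth up to the boundary and $g(p)=A(p)$. Hence on the boundary arc
\[
h(x)e^{G(x,p)/2}=\frac{h(x)e^{g(x)/2}}{r^2}=\frac{F(s)}{r(s)^2},\qquad F(s):=h(x(s))e^{g(x(s))/2},
\]
where $F$ is $C^1$ (indeed $C^2$, since $h\in C^2$) and $F(0)=e^{A(p)/2}h(p)$. Since $\partial M$ is smooth, $r(s)=d_g(x(s),p)=|s|\bigl(1+O(s^2)\bigr)$; this uses that the chord and the arclength agree to second order, e.g.\ via the graph representation $x_2=f(x_1)=\frac{\kappa}{2}x_1^2+O(x_1^3)$ from Lemma~\ref{lem:03} and normal coordinates. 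It follows that
\[
\frac{1}{r(s)^2}=\frac{1}{s^2}+O(1),\qquad F(s)=F(0)+F'(0)s+O(s^2).
\]

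\textbf{Extracting the divergence.} Let $s_\delta^\pm$ denote the arclength positions where $r=\delta$. Then $s_\delta^\pm=\delta+O(\delta^3)$, so $1/s_\delta^\pm=1/\delta+O(\delta)$. Now write
\[
\int_{\partial M\cap\{\delta\le r\le\delta_0\}}\frac{F}{r^2}\,ds
=\int\frac{F(0)}{s^2}\,ds+\int\frac{F'(0)s}{s^2}\,ds+\int O(1)\,ds ,
\]
and treat the three pieces separately.
\begin{itemize}
\item The first piece equals $F(0)\bigl(1/s_\delta^++1/s_\delta^-\bigr)+C=2F(0)/\delta+C+O(\delta)$, which exactly cancels the subtracted term $2e^{A(p)/2}h(p)/\delta$.
\item The second piece is odd in $s$. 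Over the region $s_\delta^+\le s\le s_0^+$ together with $-s_0^-\le s\le -s_\delta^-$ it equals $F'(0)\bigl[\log(s_0^+/s_0^-)+\log(s_\delta^-/s_\delta^+)\bigr]$. Since $s_\delta^-/s_\delta^+=1+O(\delta^2)$, this converges as $\delta\to0$.
\item The remainder has an integrand that is bounded on the whole arc, so it converges by dominated convergence.
\end{itemize}
Combining the three pieces, $I(\delta)$ converges, which proves the lemma.

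\textbf{Main obstacle.} The delicate point is the precision of $r$ versus $s$. We need $r^{-2}-s^{-2}$ to be bounded, not merely $O(1/|s|)$, because an $O(1/|s|)$ error would produce a divergent $\log\delta$ term. Similarly, the endpoints $s_\delta^\pm$ must be symmetric up to $O(\delta^2)$ relative error, so that neither the $1/\delta$ cancellation nor the logarithmic term picks up a divergent or oscillating contribution. Both facts follow from $r^2=s^2-\frac{\kappa^2}{12}s^4+O(s^5)$ for a smooth curve in a smooth metric. I would verify this expansion in normal coordinates at $p$ using the graph description of $\partial M$.
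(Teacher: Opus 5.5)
Your proof is correct and is essentially the paper's argument: the paper also fixes an outer piece, writes the density near $p$ as $e^{A(p)/2}h(p)\,\frac{1+c\,x_1}{x_1^2}$ plus a bounded remainder, and uses the near-symmetry $x^\pm_\delta=\pm\delta+O(\delta^3)$ of the cut-off endpoints to cancel the $2e^{A(p)/2}h(p)/\delta$ term and make the logarithmic term converge. The only difference is that you parametrize the arc by $g$-arclength $s$, so you need $r^2=s^2+O(s^4)$ (from $x_1(s)=s+O(s^3)$ in the appendix lemma), whereas the paper uses the normal-coordinate graph variable $x_1$, where $r^2=x_1^2(1+O(x_1^2))$ is immediate but one must carry the factor $ds_g=(1+O(x_1^2))\,dx_1$.
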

	
	\begin{proof}
		Fix $r_0>0$ sufficiently small so that geodesic normal coordinates
		centered at $p$ are defined in $\{r\leq r_0\}$ and
		$\partial M$ is given by
		$$
		x_2=f(x_1)
		=
		\frac{\kappa(p)}{2}x_1^2+O(|x_1|^3).
		$$
		For $0<s\leq r_0$, let $x_s^-<0<x_s^+$ be the $x_1$-coordinates of
		the two points in $\partial M\cap\{r=s\}$. Since
		$$
		s^2=(x_s^\pm)^2+f(x_s^\pm)^2,
		$$
		we have
		\begin{equation}\label{eq:boundary-intersection-expansion}
			x_s^+=s+O(s^3),
			\qquad
			x_s^-=-s+O(s^3).
		\end{equation}
		
		Denote $h_1=\frac{\partial }{\partial x_1}h(p)$ and $$H(x_1)dx_1:=h(x)e^{G(x,p)/2}\,ds_g-h(p)e^{\frac{1}{2}A(p)}\frac{1+(\frac{1}{2}b_1+\frac{h_1}{h(p)})x_1}{x_1^2}dx_1.$$
		Noting that
		\begin{align*}
			h(x)e^{G(x,p)/2}\,ds_g=h(p)e^{\frac{1}{2}A(p)}\frac{1+(\frac{1}{2}b_1+\frac{h_1}{h(p)})x_1+O(x_1^2)}{x_1^2}\left(1+O(x_1^2)\right)dx_1,\ \ \text{for }\delta\leq r\leq r_0,
		\end{align*}
		then $$|H(x_1)|\leq C(r_0).$$
		We have
		\begin{align*}
			&\int_{\partial M\cap\{\delta\leq r\leq r_0\}}
			h(x)e^{G(x,p)/2}\,ds_g
			-
			\frac{2e^{A(p)/2}h(p)}{\delta}\\ &=h(p)e^{\frac{1}{2}A(p)}\int_{\partial M\cap\{\delta\leq r\leq r_0\}}
			\frac{1+(\frac{1}{2}b_1+\frac{h_1}{h(p)})x_1}{x_1^2}dx_1
			-
			\frac{2e^{A(p)/2}h(p)}{\delta}+ \int_{\partial M\cap\{\delta\leq r\leq r_0\}}H(x_1)dx_1\\
			&=
			e^{A(p)/2}h(p)
			\left[
			\frac1{x_\delta^+}
			-
			\frac1{x_\delta^-}
			-
			\frac2\delta
			-
			\frac1{x_{r_0}^+}
			+
			\frac1{x_{r_0}^-}
			\right]+
			e^{A(p)/2}
			\left(
			h_1+\frac12b_1(p)h(p)
			\right)
			\log
			\frac{x_{r_0}^+|x_\delta^-|}
			{x_\delta^+|x_{r_0}^-|}\\
			&\quad +\int_{\partial M\cap\{\delta\leq r\leq r_0\}}H(x_1)dx_1
		\end{align*}
		
		By \eqref{eq:boundary-intersection-expansion},
		$$
		\frac1{x_\delta^+}
		-
		\frac1{x_\delta^-}
		-
		\frac2\delta
		=
		O(\delta),
		\qquad
		\log\frac{|x_\delta^-|}{x_\delta^+}
		=
		O(\delta^2).
		$$
		
		Hence,
		\begin{align*}
			&\lim_{\delta\downarrow0}
			\left[
			\int_{\partial M\cap\{\delta\leq r\leq r_0\}}
			h(x)e^{G(x,p)/2}\,ds_g
			-
			\frac{2e^{A(p)/2}h(p)}{\delta}
			\right]\\
			&=
			e^{A(p)/2}h(p)
			\left(
			-\frac1{x_{r_0}^+}
			+
			\frac1{x_{r_0}^-}
			\right)+
			e^{A(p)/2}
			\left(
			h_1+\frac12b_1(p)h(p)
			\right)
			\log\frac{x_{r_0}^+}{|x_{r_0}^-|}+\int_{\partial M\cap\{ r\leq r_0\}}H(x_1)dx_1.
		\end{align*}
		The right-hand side depends only on the fixed $r_0$ and is finite.
		Therefore, the above limit exists and is finite.
	\end{proof}

\

At the end of this section, we give the proof of Theorem \ref{thm:main-01} and Theorem \ref{thm:main-02}.
\begin{proof}[\textbf{Proof of Theorem \ref{thm:main-01} and Theorem \ref{thm:main-02}}]
We just need to show that the minimizer sequence $\{u_\epsilon\}$ given in Section \ref{sec:blow-up-analysis} does not blow up. Otherwise, on one hand, by Proposition \ref{prop:lower-bound}, we get
\begin{align*}
\inf_{u\in W^{1,2}(M)}J_{\rho,\rho'}(u)=\liminf_{\epsilon\to 0}J_{\rho_\epsilon,
			\rho'}(u_\epsilon)\geq& -4\pi+2\rho'+ 4\pi\log 4-2(2\pi+\rho')\log(2\pi+\rho')\notag\\&-2\pi \left(A(p) +\frac{\rho}{\pi}\log K(p)+\frac{\rho'}{\pi}\log h^2(p)\right).\end{align*}

On the other hand, by Proposition \ref{prop:01}, if we first take $\delta=\delta_0(M,\|K\|_{C^1(M)},\|h\|_{C^1(\partial M)})>0$ small, then for all $\epsilon$ sufficiently small, we have
\begin{align*}
J_{\rho,
			\rho'}(\phi_{\epsilon,\delta})<& -4\pi+2\rho'+ 4\pi\log 4-2(2\pi+\rho')\log(2\pi+\rho')\notag\\&-2\pi \left(A(p) +\frac{\rho}{\pi}\log K(p)+\frac{\rho'}{\pi}\log h^2(p)\right),\end{align*} which is a contradiction.
\end{proof}

	\

\section{A geometric application on the hemisphere}
	\label{sec:hemisphere-application}
	
\

In this section, we consider the case that $(M,g)$ is a spherical cap or a unit disk, where the integration constant $\Lambda(G,h)$ can be expressed by $(-\Delta_{\partial M})^{\frac{1}{2}}h(p)$. We also give some geometric applications to prescribed curvature problems when $M$ is topologically a disk. Theorem \ref{thm:main-03}-Theorem \ref{thm:main-06} are proved in this section.

\

We first give the proof of Theorem \ref{thm:main-03}.
	\begin{proof}[\textbf{Proof of Theorem \ref{thm:main-03}}]
		Assume now that
		$$
		M=\S^2\cap\{x_3\geq\cos\theta_0\},
		\qquad 0<\theta_0<\pi.
		$$
		Using the spherical parametrization
		$$
		x=(\sin\theta\cos\varphi,\sin\theta\sin\varphi,\cos\theta),\ 0\leq\theta\leq\theta_0,\ 0\leq\varphi<2\pi,
		$$
		then
		$$
		g=d\theta^2+\sin^2\theta\,d\varphi^2.
		$$
		Moreover,
		$$
		\begin{aligned}
			\partial M=\{\theta=\theta_0\},\
			|M|=2\pi(1-\cos\theta_0)
			=4\pi\sin^2\frac{\theta_0}{2},\
			ds_g=\sin\theta_0\,d\varphi.
		\end{aligned}
		$$
		The inward unit normal along $\partial M$ is $-\partial_\theta$. Since
		$$
		\nabla_{\frac{1}{\sin\theta_0}\partial_\varphi}
		\left(
		\frac{1}{\sin\theta_0}\partial_\varphi
		\right)
		=
		-\cot\theta_0\,\partial_\theta,
		$$
		the sign convention used above gives
		\begin{equation}\label{eq:spherical-cap-kappa}
			\kappa(p)=\cot\theta_0
			\qquad
			\mbox{for every }p\in\partial M.
		\end{equation}
		
		By a rotation, we may assume that
		$$
		p=(\sin\theta_0,0,\cos\theta_0).
		$$
		Introducing the conformal coordinate
		$$
		z
		=
		\frac{\tan(\theta/2)}{\tan(\theta_0/2)}e^{i\varphi},
		$$
		it maps $M$ onto the closed unit disk, sends $p$ to $1$, and satisfies
		\begin{equation}\label{eq:spherical-cap-conformal-metric}
			g
			=
			\frac{4\tan^2(\theta_0/2)}
			{\left(1+\tan^2(\theta_0/2)|z|^2\right)^2}
			|dz|^2.
		\end{equation}
		One can check that the normalized Green function $G(z,p)$ with pole at $p$ defined in \eqref{eq:intro-green-function} is
		\begin{align}
			G(z,p)
			={}&
			-4\log|z-1|
			+
			\frac{1}{\sin^2(\theta_0/2)}
			\log\left(1+\tan^2\frac{\theta_0}{2}|z|^2\right)
			\notag\\
			&-
			\frac{1}{\sin^2(\theta_0/2)}
			+
			\frac{\cos^2(\theta_0/2)}{\sin^4(\theta_0/2)}
			\log\frac{1}{\cos^2(\theta_0/2)}.
			\label{eq:spherical-cap-green-function}
		\end{align}

		On $\partial M$, formula \eqref{eq:spherical-cap-green-function} becomes
		\begin{align}
			G(e^{i\varphi},p)
			={}&
			-4\log|e^{i\varphi}-1|+
			\frac{
				\log\dfrac{1}{\cos^2(\theta_0/2)}
				-
				\sin^2(\theta_0/2)
			}
			{\sin^4(\theta_0/2)}.
			\label{eq:spherical-cap-green-boundary}
		\end{align}
		If $r=d_g(x,p)$ and
		$$
		x=(\sin\theta_0\cos\varphi,
		\sin\theta_0\sin\varphi,
		\cos\theta_0)\in\partial M,
		$$
		then
		\begin{equation}\label{eq:spherical-cap-distance-relation}
		\sin\frac r2
			=
			\sin\theta_0\left|\sin\frac\varphi2\right|,\ \cos r
		=
		\cos^2\theta_0+\sin^2\theta_0\cos\varphi.
		\end{equation}

		Comparing \eqref{eq:spherical-cap-green-boundary} with
		$G(x,p)=-4\log r+A(p)+O(r)$, by \eqref{eq:spherical-cap-distance-relation}, we obtain
		\begin{equation}\label{eq:spherical-cap-regular-part}
			A(p)
			=
			4\log(\sin\theta_0)
			+
			\frac{
				\log\dfrac{1}{\cos^2(\theta_0/2)}
				-
				\sin^2(\theta_0/2)
			}
			{\sin^4(\theta_0/2)}.
		\end{equation}
		In particular, $A(p)$ is a constant on $\partial M$. Furthermore,
		by \eqref{eq:spherical-cap-green-boundary} and
		\eqref{eq:spherical-cap-regular-part}, we have
		\begin{equation}\label{eq:spherical-cap-boundary-green-density}
			e^{G(e^{i\varphi},p)/2}\,ds_g
			=
			\frac{e^{A(p)/2}}
			{4\sin\theta_0\sin^2(\varphi/2)}\,d\varphi.
		\end{equation}
		
		We identify $h|_{\partial M}$ with the corresponding $2\pi$-periodic
		function $h(\varphi)$, so that $h(0)=h(p)$. For sufficiently small
		$\delta>0$, let $\varphi_\delta\in(0,\pi)$ be determined by
		\begin{equation}\label{eq:spherical-cap-cutoff-relation}
			\sin\frac\delta2
			=
			\sin\theta_0\sin\frac{\varphi_\delta}{2}.
		\end{equation}
		Then \eqref{eq:spherical-cap-distance-relation} gives
		$$
		\partial M\cap\{r\geq\delta\}
		=
		\left\{e^{i\varphi}:
		\varphi_\delta\leq\varphi\leq2\pi-\varphi_\delta
		\right\}.
		$$
		Using \eqref{eq:spherical-cap-boundary-green-density} and
		$$
		\int_{\varphi_\delta}^{2\pi-\varphi_\delta}
		\frac{d\varphi}{\sin^2(\varphi/2)}
		=
		4\cot\frac{\varphi_\delta}{2},
		$$
		we obtain
		\begin{align}
			&\int_{\partial M\cap\{r\geq\delta\}}
			h(x)e^{G(x,p)/2}\,ds_g
			-
			\frac{2e^{A(p)/2}h(p)}{\delta}
			\notag\\
			&\quad=
			\frac{e^{A(p)/2}}{4\sin\theta_0}
			\int_{\varphi_\delta}^{2\pi-\varphi_\delta}
			\frac{h(\varphi)-h(0)}{\sin^2(\varphi/2)}\,d\varphi
			+
			e^{A(p)/2}h(0)
			\left[
			\frac{1}{\sin(\delta/2)}
			\sqrt{
				1-
				\frac{\sin^2(\delta/2)}{\sin^2\theta_0}
			}
			-
			\frac2\delta
			\right]\notag\\
			&\quad=
			\frac{e^{A(p)/2}}{4\sin\theta_0}
			\int_{\varphi_\delta}^{2\pi-\varphi_\delta}
			\frac{h(\varphi)-h(0)}{\sin^2(\varphi/2)}\,d\varphi+O(\delta).
			\label{eq:spherical-cap-finite-part-reduction}
		\end{align}

Since the metric induced on $\partial M$ is	$	\sin^2\theta_0\,d\varphi^2	$, by Proposition A.2 in \cite{Riviere}, we have $$(-\Delta_{\partial M})^{1/2}h(p)=\frac{1}{\sin\theta_0}(-\Delta_{S^1})^{1/2}h(p)=\frac{1}{4\pi\sin\theta_0}
			\operatorname{PV}
			\int_0^{2\pi}
			\frac{h(0)-h(\varphi)}
			{\sin^2(\varphi/2)}\,d\varphi.$$

Then
\begin{align}\label{equation:21}
			\lim_{\delta\downarrow0}
			\left[
			\int_{\partial M\cap\{r\geq\delta\}}
			h(x)e^{G(x,p)/2}\,ds_g
			-
			\frac{2e^{A(p)/2}h(p)}{\delta}
			\right]&
			=
			-\frac{e^{A(p)/2}}{4\sin\theta_0}
			\operatorname{PV}
			\int_0^{2\pi}
			\frac{h(0)-h(\varphi)}
			{\sin^2(\varphi/2)}\,d\varphi\notag\\&=
			-\pi e^{A(p)/2}
			(-\Delta_{\partial M})^{1/2}h(p).
		\end{align}

		Since $A(p)$ is constant on $\partial M$, we may take $p$ to be a
		maximum point of
		$$
		\rho\log K+2\rho'\log h
		\qquad\mbox{on }\partial M.
		$$
		Substituting \eqref{eq:spherical-cap-kappa} and \eqref{equation:21}
		 into Theorem \ref{thm:main-01} and Theorem \ref{thm:main-02},
			we get the first and second conclusions of Theorem \ref{thm:main-03}.
		
		In particular, assume that $K$ and $h$ are positive constants. Then
		$$
		K_\nu(p)=0,
		\qquad
		(-\Delta_{\partial M})^{1/2}h=0,
		$$
		and the sufficient condition \eqref{equation:22} reduces to
		$$
		-2\cot\theta_0>0,
		$$ which yields the third conclusion.
		
	\end{proof}
	
	\

Let $M=\mathbb S^2_+=\S^2\cap\{x_3\geq0\}$ be the standard hemisphere,
equipped with the round metric $g$. Then $|M|=2\pi$,
$\Pi=1$ and $\kappa=0$. Following \cite{Cruz-Ruiz}, for $h>0$, we define
\begin{equation}\label{eq:joint-curvature-functional}
	\begin{aligned}
		\widehat J(u,\rho)
		={}&J_{\rho,\,2\pi-\rho}(u)+2\rho\log\rho
		+4(2\pi-\rho)\log(2\pi-\rho)+2\rho,
	\end{aligned}
\end{equation}
for $(u,\rho)\in W^{1,2}(M)\times[0,2\pi]$ with $\int_{\partial M}uds_g=0$, where we adopt the convention that $0\log 0\equiv 0$.

\begin{proof}[\textbf{Proof of Theorem \ref{thm:main-05}}]
	
Since the conclusion for $h\equiv 0$ follows immediately from Theorem \ref{thm:main-02}, we just prove the result for the case $h>0$.

For $0<t<1$, define
	$$
	\widehat J_t(u,\rho)
	=\widehat J(u,\rho)
	+\frac{t}{2}\int_M|\nabla_g u|^2\,dv_g.
	$$
	
	Since
	$$
	J_{\rho,\,2\pi-\rho}(u)
	=\frac{\rho}{2\pi}J_{2\pi,0}(u)
	+\left(1-\frac{\rho}{2\pi}\right)J_{0,2\pi}(u),
	$$ by the  Moser--Trudinger inequalities  \cite{ChangYang1988,LiLiu2005,Yang2006}
	$$
	J_{2\pi,0}(u)\geq-C,
	\qquad
	J_{0,2\pi}(u)\geq-C,
	\qquad u\in W^{1,2}(M),
	$$
	we get
	$$
	J_{\rho,\,2\pi-\rho}(u)\geq-C,\ \ u\in W^{1,2}(M),\ \rho\in[0,2\pi].
	$$
 Therefore,
	\begin{equation}\label{equation:18}
	\widehat J(u,\rho)\geq-C,
	\qquad
	\widehat J_t(u,\rho)
	\geq\frac{t}{2}\int_M|\nabla_g u|^2\,dv_g-C,\ \ u\in W^{1,2}(M),\ \rho\in[0,2\pi].
	\end{equation}
	 In particular, this implies the functional $\widehat J_t(u,\rho)$ is coercive for any fixed $t>0$ and the following infimum  exists		$$
	m:=\inf_{\substack{u\in W^{1,2}(M)\\0\leq\rho\leq2\pi}}
	\widehat J(u,\rho).
	$$
By the classical variational method, we can assume  $\widehat J_t(u,\rho)$ admits a joint global minimizer
	$(u_t,\rho_t)$. Then
	\begin{equation*}
		\begin{aligned}
			m&\leq\widehat J(u_t,\rho_t)
			\leq\widehat J_t(u_t,\rho_t)\leq\widehat J(v,\rho)
			+\frac{t}{2}\int_M|\nabla_gv|^2\,dv_g,\ \ \forall\  (v,\rho)\in W^{1,2}(M)\times[0,2\pi].
		\end{aligned}
	\end{equation*} That is $$m=\lim_{t\to 0}\widehat J(u_t,\rho_t)
			= \lim_{t\to 0}\widehat J_t(u_t,\rho_t).$$
	
	For fixed $t$, as $s\downarrow0$, we have
	$$
	\begin{aligned}
		\widehat J_t(u_t,s)-\widehat J_t(u_t,0)
		&=2s\log s+O(s),\\
		\widehat J_t(u_t,2\pi-s)-\widehat J_t(u_t,2\pi)
		&=4s\log s+O(s).
	\end{aligned}
	$$
	Thus minimality excludes both endpoints, and
	$0<\rho_t<2\pi$.
	
	The first variation in $u$ gives
	$$
	\begin{cases}
		\displaystyle
		-(1+t)\Delta_g u_t+2
		=2\rho_t\frac{Ke^{u_t}}{\int_MKe^{u_t}\,dv_g},
		&\text{in }M,\\[1.2em]
		\displaystyle
		(1+t)\partial_\nu u_t
		=2(2\pi-\rho_t)
		\frac{he^{u_t/2}}{\int_{\partial M}he^{u_t/2}\,ds_g},
		&\text{on }\partial M.
	\end{cases}
	$$
	Since the regularizing term is independent of $\rho$, the
	first variation in $\rho$ gives
	$\partial_\rho\widehat J(u_t,\rho_t)=0$. That is
	\begin{equation}\label{eq:joint-perturbed-compatibility}
		\frac{
			\left(\displaystyle\int_{\partial M}
			he^{u_t/2}\,ds_g\right)^2}
		{\displaystyle\int_MKe^{u_t}\,dv_g}
		=\frac{(2\pi-\rho_t)^2}{\rho_t}.
	\end{equation}
	Set
	$$
	w_t=u_t+
	\log\frac{\rho_t}
	{(1+t)\displaystyle\int_MKe^{u_t}\,dv_g}.
	$$
	Then \eqref{eq:joint-perturbed-compatibility} and the equation
	for $u_t$ yield
	\begin{equation}\label{eq:joint-perturbed-equation}
		\begin{cases}
			\displaystyle
			-\Delta_gw_t+\frac{2}{1+t}=2Ke^{w_t},
			&\text{in }M,\\[6pt]
			\displaystyle
			\partial_\nu w_t
			=\frac{2h}{\sqrt{1+t}}e^{w_t/2},
			&\text{on }\partial M.
		\end{cases}
	\end{equation}
	Moreover,
	\begin{equation}\label{eq:joint-perturbed-masses}
		\begin{aligned}
			\int_MKe^{w_t}\,dv_g=\frac{\rho_t}{1+t},\ \
			\int_{\partial M}\frac{h}{\sqrt{1+t}}
			e^{w_t/2}\,ds_g=\frac{2\pi-\rho_t}{1+t}.
		\end{aligned}
	\end{equation}

	Passing to a subsequence, assume that
	$\rho_t\to\rho_*\in[0,2\pi]$ as $t\downarrow0$. Then we have to consider the following two cases.

\

\noindent\textbf{Case 1.} The sequence $w_t$ is uniformly bounded above.

\

By standard elliptic estimates and
\eqref{eq:joint-perturbed-masses}, after passing to a subsequence,
we have
$$
w_t\longrightarrow w_*
\qquad\text{in }C^2(M).
$$
Passing to the limit in \eqref{eq:joint-perturbed-equation}
as $t\downarrow0$, we obtain
$$
\begin{cases}
	-\Delta_gw_*+2=2Ke^{w_*},
	&\text{in }M,\\
	\partial_\nu w_*=2he^{w_*/2},
	&\text{on }\partial M.
\end{cases}
$$
Thus $w_*$ is a smooth solution of the prescribed curvature
problem.

\

\noindent\textbf{Case 2.} The sequence $w_t$ blows up.

\

In this case, we will show that the infimum $m$ will be attained by $(u_*,\rho_*)\in W^{1,2}(M)\times (0,2\pi)$. We divide the remaining proof into the following steps.
	
\

	\noindent\textbf{Step 1:} The infimum $m$  satisfies $m<4\pi-8\pi\log\Phi(p)$.

\

	On the standard hemisphere, we have $A=4\log2-2$ and
	$\kappa=0$. Taking
	$$
	\rho_p=2\pi\left(
	1-\frac{h(p)}{\sqrt{K(p)+h(p)^2}}
	\right)\in(0,2\pi),
	$$
	and the test function $\phi_{\epsilon,\delta}$ centered at
	$p$ in \eqref{eq:test-function}, with parameters $(\rho_p,2\pi-\rho_p)$, by Proposition \ref{prop:01}
	and \eqref{equation:22} with
	$\theta_0=\pi/2$, we obtain
	$$
	\begin{aligned}
		\widehat J(\phi_{\epsilon,\delta},\rho_p)
		={}&4\pi-8\pi\log\Phi(p)-\frac{4\pi}{\Phi(p)\sqrt{K(p)}}
		\left[
		K_2(p)-2\Phi(p)(-\Delta_{\partial M})^{1/2}h(p)
		\right]\sqrt{\epsilon}\\
		&+O(\delta\sqrt{\epsilon})
		+o(\sqrt{\epsilon}).
	\end{aligned}
	$$
	By \eqref{eq:joint-boundary-condition}, first choosing
	$\delta>0$ sufficiently small and then $\epsilon>0$
	sufficiently small, we obtain
	\begin{equation}\label{eq:joint-strict-blow-up-level}
		m\leq\widehat J(\phi_{\epsilon,\delta},\rho_p)
		<4\pi-8\pi\log\Phi(p).
	\end{equation}
	
	\
	
\noindent\textbf{Step 2:} There exists $\rho_*\in (0,2\pi)$ such that $m=\inf_{u\in W^{1,2}(M)}\widehat J(u,\rho_*)$.

\

Passing to a subsequence, assume that
$$
\max_M w_t\longrightarrow+\infty,
\qquad
\rho_t\longrightarrow\rho_*\in[0,2\pi].
$$
Suppose $w_t$ blows up at $q$. Since
$$
\int_M2Ke^{w_t}\,dv_g
=\frac{2\rho_t}{1+t}\leq4\pi,
$$
it must be that $q\in\partial M$. Suppose $w_t(x_t)=\max_Mw_t=\lambda_t$. Setting $$U_t(x):=w_t (x_t+\delta_t x)+2\log\delta_t,\ \ where\ \ \delta_t=e^{-\frac{\lambda_t}{2}}.$$
By the argument in Section \ref{sec:blow-up-analysis}, we know $U_t$ converges strongly to $U$ in $C^2_{loc}(\R^2_+)$ where
$$
\begin{cases}
	-\Delta U=2K(q)e^U,
	&\text{in }\R^2_+,\\
	\partial_\nu U=2h(q)e^{U/2},
	&\text{on }\partial\R^2_+.
\end{cases}
$$
By \eqref{eq:joint-perturbed-masses},
we have
\begin{equation}\label{inequ:01}
	\int_{\R^2_+}K(q)e^U\,dx
	\leq\rho_*,\ \
	\int_{\R}h(q)e^{U(x_1,0)/2}\,dx_1
	\leq2\pi-\rho_*.
\end{equation}
The classification result in \cite{Li-Zhu} that
$$
\int_{\R^2_+}K(q)e^U\,dx
+\int_{\R}h(q)e^{U(x_1,0)/2}\,dx_1=2\pi,
$$
yields that both
inequalities in \eqref{inequ:01} are in fact equalities.

We now compute $\rho_*$. By \cite{Li-Zhu}, we have
$$
U(x)
=\log\frac{4\lambda^2}
{K(q)\left[
	\lambda^2+x_1^2+
	\left(x_2+\frac{h(q)\lambda}{\sqrt{K(q)}}\right)^2
	\right]^2},
\qquad \lambda=\frac{2\sqrt{K(q)}}{K(q)+h^2(q)}.
$$
Hence
\begin{equation}\label{eq:joint-limiting-mass}
	\rho_*=2\pi-\int_{\partial \R^2_+}h(q)e^{\frac{1}{2}U(x)}dx_1
	=2\pi\left(
	1-\frac{h(q)}{\sqrt{K(q)+h(q)^2}}
	\right)\in(0,2\pi).
\end{equation}

Noting that
$$
\frac{\partial}{\partial \rho}\widehat J(u_t,\rho_t)=\frac{\partial}{\partial \rho}\widehat J_t(u_t,\rho_t)=0,\ \frac{\partial^2}{\partial \rho^2}\widehat J(u_t,\rho_t)=\frac{4\pi+2\rho_t}{\rho_t(2\pi-\rho_t)}
$$
 and
$\rho_t\to\rho_*\in(0,2\pi)$, by Taylor's expansion, we get
$$
\widehat J(u_t,\rho_*)
=\widehat J(u_t,\rho_t)
+O(|\rho_t-\rho_*|^2)
\longrightarrow m,\ as \ t\to 0.
$$
Hence
$$
m\leq
\inf_{u\in W^{1,2}(M)}\widehat J(u,\rho_*)
\leq\widehat J(u_t,\rho_*)\longrightarrow m,
$$
and therefore
\begin{equation}\label{eq:joint-fixed-slice-infimum}
	\inf_{u\in W^{1,2}(M)}\widehat J(u,\rho_*)=m.
\end{equation}

\

\noindent\textbf{Step 3:} There exists
$u_*\in W^{1,2}(M)$ such that
$$
\widehat J(u_*,\rho_*)
=\inf_{u\in W^{1,2}(M)}\widehat J(u,\rho_*)=m.
$$

\

Let $u_\epsilon$
be a minimizer of
$
J_{\rho_*-\epsilon,\,2\pi-\rho_*},
$
normalized by \eqref{equation:03}.

\

\noindent\textbf{Claim:} Passing to a subsequence, there holds
$$
u_\epsilon\longrightarrow u_*
\qquad\text{strongly in }W^{1,2}(M).
$$
Then $u_*$
minimizes $J_{\rho_*,\,2\pi-\rho_*}$. Since
$\widehat J(\cdot,\rho_*)$ differs from
$J_{\rho_*,\,2\pi-\rho_*}$ only by a constant,
\eqref{eq:joint-fixed-slice-infimum} gives
$$
\widehat J(u_*,\rho_*)
=\inf_{u\in W^{1,2}(M)}\widehat J(u,\rho_*)=m.
$$

\

We prove the claim by a contradiction argument. Otherwise, $u_\epsilon$ blows up. By
Proposition \ref{prop:lower-bound} and the definition of $\widehat J(u,\rho_*)$,  noting that $A=4\log2-2$ for $M=\mathbb S^2_+$, we have
\begin{align*}
\widehat J(u,\rho_*)&=\lim_{\epsilon\to 0} J_{\rho_*-\epsilon,2\pi-\rho_*}(u)+2\rho_*\log\rho_*
		+4(2\pi-\rho_*)\log(2\pi-\rho_*)+2\rho_*\\
&\geq \lim_{\epsilon\to 0} J_{\rho_*-\epsilon,2\pi-\rho_*}(u_\epsilon)+2\rho_*\log\rho_*
		+4(2\pi-\rho_*)\log(2\pi-\rho_*)+2\rho_*\\
&\geq \min_{z\in\partial M}L(\rho_*,z)
\end{align*} where
$$
\begin{aligned}
	L(\rho,z)
	={}&4\pi-2(4\pi-\rho)\log(4\pi-\rho)
	+2\rho\log\rho\\
	&+4(2\pi-\rho)\log(2\pi-\rho)
	-2\rho\log K(z)-4(2\pi-\rho)\log h(z).
\end{aligned}
$$
Thus,
\begin{equation}\label{eq:joint-fixed-mass-threshold}
	m=\inf_{u\in W^{1,2}(M)}\widehat J(u,\rho_*)
	\geq\min_{z\in\partial M}L(\rho_*,z).
\end{equation}

For every fixed $z\in\partial M$, a direct computation gives
$$
\frac{\partial}{\partial\rho} L(\rho,z)
=2\log\frac{\rho(4\pi-\rho)h(z)^2}
{(2\pi-\rho)^2K(z)},\ \ \frac{\partial^2}{\partial \rho^2} L(\rho,z)>0.
$$
Thus the function $L(\cdot,z)$ has a unique minimum at
$$
\rho_z=2\pi\left(
1-\frac{h(z)}{\sqrt{K(z)+h(z)^2}}
\right).
$$
Therefore, by the choice of $p$, we have
\begin{equation}\label{eq:joint-optimal-blow-up-level}
	L(\rho,z)\geq L(\rho_z,z)=4\pi-8\pi\log\Phi(z)
	\geq4\pi-8\pi\log\Phi(p)
\end{equation}
for every $0<\rho<2\pi$ and $z\in\partial M$.
Then \eqref{eq:joint-fixed-mass-threshold}  gives
$$
m\geq4\pi-8\pi\log\Phi(p),
$$
 which is a contradiction to \eqref{eq:joint-strict-blow-up-level}.       Thus we proved the claim.

Consequently, there exists $u_*\in W^{1,2}(M)$ such that
\begin{equation}\label{eq:joint-limit-minimality}
	\widehat J(u_*,\rho_*)
	=\inf_{u\in W^{1,2}(M)}\widehat J(u,\rho_*)=m,
	\qquad 0<\rho_*<2\pi.
\end{equation}

\end{proof}

\

As a consequence of Theorem~\ref{thm:main-05},
we obtain the following existence criterion on the unit disk.

\begin{proof}[\textbf{Proof of Theorem \ref{thm:main-06}}]
	Let $\overline{\mathbb D}$ be the closed unit disk with the metric
	$$
	g=e^\varphi(dx_1^2+dx_2^2),
	\qquad
	\varphi(x)=2\log\frac{2}{1+|x|^2}.
	$$
	Consider the isometric map
	$$
	F(x_1,x_2)
	=\frac{1}{1+|x|^2}
	\left(2x_1,2x_2,1-|x|^2\right): (\overline{\mathbb D},g)\to (\S^2_+,g_{\S^2}).
	$$
	It is easy to see that
	$$
	F^{-1}(y_1,y_2,y_3)
	=\left(\frac{y_1}{1+y_3},\frac{y_2}{1+y_3}\right),\ \ \left\langle
	\frac{\partial F}{\partial x_i},
	\frac{\partial F}{\partial x_j}
	\right\rangle_{\R^3}
	=\frac{4}{(1+|x|^2)^2}\delta_{ij},
	\ \ i,j=1,2.
	$$

Denote $\nu$ and $\nu_g$  the outward unit normal vectors to
$\partial\mathbb D$ with respect to the Euclidean metric
and $g$ respectively. Since $\varphi=0$ on
$\partial\mathbb D$, we have
$$
\nu_g=e^{-\varphi/2}\nu=\nu
\qquad\text{on }\partial\mathbb D.
$$
Set
$$
\widetilde K=K\circ F^{-1},
\ \
\widetilde h=h\circ F^{-1},\ \ \widetilde\Phi
=\widetilde h+\sqrt{\widetilde h^2+\widetilde K}.
$$
Since $H=h$ on $\partial\mathbb D$, we have
$$
\widetilde\Phi\circ F=\Phi
\qquad\text{on }\partial\mathbb D.
$$
Hence $F(p)$ is a maximum point of $\widetilde\Phi$ on
$\partial M$ and
$$
\widetilde K_2(F(p))
=-\partial_{\nu_g}K(p)
=-\partial_\nu K(p).
$$

Recall that $H$ satisfies
$$
\begin{cases}
	\Delta H=0,
	&\text{in }\mathbb D,\\
	H=h,
	&\text{on }\partial\mathbb D,
\end{cases}
$$
where $\Delta$ is the Euclidean Laplacian.
By \cite[formula~(A.3) and Lemma~A.1]{CabreCsatoMas},
$$
(-\Delta_{\S^1})^{1/2}h=\partial_\nu H
\qquad\text{on }\partial\mathbb D.
$$

Differentiating $\Phi=H+\sqrt{H^2+K}$ in the Euclidean
outward normal direction, we obtain
$$
2\sqrt{H^2+K}\,\partial_\nu\Phi
=\partial_\nu K+2\Phi\,\partial_\nu H
\qquad\text{on }\partial\mathbb D.
$$
Recalling that
$$
\widetilde K_2(F(p))=-\partial_\nu K(p),
\ \
\widetilde\Phi \circ F=\Phi,\ \ \widetilde h\circ F=h
$$ and noting that $$
\left[
(-\Delta_{\partial M})^{1/2}\widetilde h
\right]\circ F
=(-\Delta_{\S^1})^{1/2}h
=\partial_\nu H
\qquad\text{on }\partial\mathbb D,
$$
we have
$$
\begin{aligned}
	\widetilde K_2(F(p))
	-2\widetilde\Phi(F(p))
	\left[
	(-\Delta_{\partial M})^{1/2}\widetilde h
	\right](F(p))
	&
	=-\partial_\nu K(p)-2\Phi(p)\partial_\nu H(p)\\
	&
	=-2\sqrt{h(p)^2+K(p)}\,\partial_\nu\Phi(p)>0.
\end{aligned}
$$
Since $F(p)$ is a maximum point of $\widetilde\Phi$ on
$\partial M$, all the assumptions of
Theorem~\ref{thm:main-05} are satisfied.
Then there exists a solution
$w\in C^\infty(\overline{\mathbb D})$ to
$$
\begin{cases}
	-\Delta_gw+2=2Ke^w,
	&\text{in }\mathbb D,\\
	\partial_{\nu_g}w=2he^{w/2},
	&\text{on }\partial\mathbb D,
\end{cases}
$$
where $g=e^\varphi(dx_1^2+dx_2^2)$.

Using the facts that
$$
-\Delta\varphi
=\frac{8}{(1+|x|^2)^2}
=2e^\varphi
\ \ \text{in }\mathbb D, \  \ \varphi=0,
\ \
\partial_\nu\varphi=-2
\ \ \text{on }\partial\mathbb D,
$$
it is easy to see that $u=w+\varphi$ satisfies
$$
\begin{cases}
	-\Delta u=2Ke^u,
	&\text{in }\mathbb D,\\
	\partial_\nu u+2=2he^{u/2},
	&\text{on }\partial\mathbb D,
\end{cases}
$$ which is the desired solution.
\end{proof}

\

	\section{Appendix}

	\begin{lem}\label{lem:03}		
		Let $(M,g)$ be a smooth Riemann surface, and let
		$\gamma:(-\epsilon_0,\epsilon_0)\to M$ be a $C^4$ curve
		parametrized by arc length. Set $p=\gamma(0)$, choose a smooth unit
		normal vector field $\nu(s)$ along $\gamma$, and define the signed
		geodesic curvature $\kappa(s)$ by
		$$
		\nabla_{\dot{\gamma}}\dot{\gamma}=\kappa(s)\nu(s).
		$$
		Let $e_1=\dot{\gamma}(0)$ and $e_2=\nu(0)$, and define geodesic normal
		coordinates centered at $p$ by
		$$
		(x_1,x_2)\longmapsto\exp_p(x_1e_1+x_2e_2).
		$$
		Then, for $|x_1|$ sufficiently small, $\gamma$ can be written as
		$x_2=f(x_1)$, where
		$$
		f(x_1)
		=
		\frac{\kappa(0)}{2}x_1^2
		+
		\frac{\kappa'(0)}{6}x_1^3
		+
		O(x_1^4).
		$$
		Here $\kappa'(0)=\left.\frac{d\kappa}{ds}\right|_{s=0}$.
	\end{lem}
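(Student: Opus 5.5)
We prove Lemma \ref{lem:03} by comparing the curve $\gamma$ with the geodesic through $p$ tangent to $e_1$, working entirely in the normal coordinates $x=(x_1,x_2)$. In these coordinates $g_{ij}(x)=\delta_{ij}+O(|x|^2)$ and the Christoffel symbols satisfy $\Gamma^k_{ij}(0)=0$ and $\Gamma^k_{ij}(x)=O(|x|)$. Write $\gamma(s)=(x_1(s),x_2(s))$ with $x(0)=0$ and $\dot x(0)=(1,0)$, since $e_1=\dot\gamma(0)$.

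\textbf{Step 1 (second-order behaviour).} The geodesic curvature equation reads, in coordinates,
$$\ddot x^k+\Gamma^k_{ij}(x)\dot x^i\dot x^j=\kappa(s)\nu^k(s).$$
Since $\nu(0)=e_2=(0,1)$, evaluating at $s=0$ gives $\ddot x(0)=(0,\kappa(0))$.

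\textbf{Step 2 (third derivative).} Differentiate the equation once at $s=0$. The Christoffel term contributes $\partial_l\Gamma^k_{11}(0)\,\dot x^l$. In normal coordinates the symmetrized first derivatives vanish, $\partial_{(l}\Gamma^k_{ij)}(0)=0$, so in particular $\partial_1\Gamma^k_{11}(0)=0$, and this term drops out.

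For $\dot\nu(0)$: $\nu$ is a unit vector field and $\nu\perp\dot\gamma$. Using $\Gamma(0)=0$, the coordinate derivative $\dot\nu(0)$ equals the covariant derivative $\nabla_{\dot\gamma}\nu(0)$. By the Frenet relation in dimension two, $\nabla_{\dot\gamma}\nu=-\kappa\dot\gamma$, so $\dot\nu(0)=-\kappa(0)e_1$. Hence
$$\dddot x(0)=\bigl(-\kappa(0)^2,\ \kappa'(0)\bigr).$$
Thus
$$x_1(s)=s+O(s^3),\qquad x_2(s)=\tfrac{\kappa(0)}{2}s^2+\tfrac{\kappa'(0)}{6}s^3+O(s^4).$$
The $C^4$ hypothesis gives the $O(s^4)$ remainders.

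\textbf{Step 3 (graph representation).} Since $\dot x_1(0)=1$, the inverse function theorem gives $s=s(x_1)$ with $s=x_1+O(x_1^3)$. Substituting into $x_2(s)$ yields the claimed expansion of $f$. The correction $O(x_1^3)$ in $s$ only affects $x_2$ at order $x_1^4$.

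\textbf{Main obstacle.} The delicate point is Step 2: justifying that the derivative of the Christoffel term vanishes at $s=0$. I would use the normal-coordinate identity $\Gamma^k_{ij}(x)x^ix^j=0$. Differentiating it twice at the origin gives exactly $\partial_l\Gamma^k_{ij}+\partial_i\Gamma^k_{jl}+\partial_j\Gamma^k_{li}=0$ at $0$, and taking $i=j=l=1$ gives $\partial_1\Gamma^k_{11}(0)=0$.

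Alternatively, one can cite \cite{Glutsyuk}, as the paper does.
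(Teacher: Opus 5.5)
Your proposal is correct and follows essentially the same route as the paper: both obtain $x''(0)=(0,\kappa(0))$ from $\Gamma(0)=0$, then $x'''(0)=(-\kappa(0)^2,\kappa'(0))$ from $\nabla_{\dot\gamma}\nu=-\kappa\dot\gamma$ together with $\partial_1\Gamma^m_{11}(0)=0$, and finish with the inverse function theorem and Taylor's theorem. One small slip: deriving $\partial_l\Gamma^k_{ij}+\partial_i\Gamma^k_{jl}+\partial_j\Gamma^k_{li}=0$ at $0$ from $\Gamma^k_{ij}(x)x^ix^j=0$ requires three derivatives, since two only give $\Gamma(0)=0$; more simply, as in the paper, setting $x=(t,0)$ gives $\Gamma^k_{11}(t,0)=0$, which yields the needed fact directly.
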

	
	\begin{proof}
		Write $\gamma(s)=(x_1(s),x_2(s))$ in the above coordinates. Since
		$\gamma(0)=p$, $e_1=\dot{\gamma}(0)$, and $e_2=\nu(0)$, we have
		$$
		x_1(0)=x_2(0)=0,
		\qquad
		x_1'(0)=1,
		\qquad
		x_2'(0)=0.
		$$
		Moreover, geodesic normal coordinates satisfy
		$$
		g_{ij}(p)=\delta_{ij},
		\qquad
		\Gamma_{ij}^m(p)=0,
		$$ where $\Gamma_{ij}^m$ is the Christoffel symbols.
		
		Since
		$\dot{\gamma}=\sum_{i=1}^2x_i'\partial_{x_i}$, the local coordinate
		formula for the covariant derivative gives
		$$
		\left(\nabla_{\dot{\gamma}}\dot{\gamma}\right)^m
		=
		x_m''
		+
		\sum_{i,j=1}^2
		\Gamma_{ij}^m(\gamma(s))x_i'x_j'.
		$$
		Evaluating at $s=0$, all Christoffel symbols vanish, and hence
		$$
		\left(\nabla_{\dot{\gamma}}\dot{\gamma}\right)^m(p)
		=
		x_m''(0).
		$$
		Since
		$\nabla_{\dot{\gamma}}\dot{\gamma}(p)=\kappa(0)e_2$, it follows that
		$$
		x_1''(0)=0,
		\qquad
		x_2''(0)=\kappa(0).
		$$
		
		Metric compatibility and
		$\langle\dot{\gamma},\nu\rangle_g=0$ imply
		$\nabla_{\dot{\gamma}}\nu=-\kappa\dot{\gamma}$. Therefore,
		$$
		\nabla_{\dot{\gamma}}
		\left(\nabla_{\dot{\gamma}}\dot{\gamma}\right)
		=
		\kappa'\nu-\kappa^2\dot{\gamma}.
		$$
		At $s=0$, its two coordinate components are
		$-\kappa(0)^2$ and $\kappa'(0)$.
		
		On the other hand, differentiating the coordinate expression for
		$\nabla_{\dot{\gamma}}\dot{\gamma}$ and using
		$x_1'(0)=1$, $x_2'(0)=0$, and $\Gamma_{ij}^m(p)=0$, we obtain
		$$
		\left[
		\nabla_{\dot{\gamma}}
		\left(\nabla_{\dot{\gamma}}\dot{\gamma}\right)
		\right]^m(p)
		=
		x_m'''(0)
		+
		\partial_1\Gamma_{11}^m(p).
		$$
		The coordinate curve $t\mapsto(t,0)$ is a geodesic, so its geodesic
		equation gives $\Gamma_{11}^m(t,0)=0$. Consequently,
		$\partial_1\Gamma_{11}^m(p)=0$, and hence
		$$
		x_1'''(0)=-\kappa(0)^2,
		\qquad
		x_2'''(0)=\kappa'(0).
		$$
		
		Since $x_1'(0)=1$, the inverse function theorem allows us to write
		$x_2=f(x_1)$. Differentiating
		$x_2(s)=f(x_1(s))$ at $s=0$ gives
		$$
		f(0)=f'(0)=0,
		\qquad
		f''(0)=\kappa(0),
		\qquad
		f'''(0)=\kappa'(0).
		$$
		The conclusion follows from Taylor's theorem.
	\end{proof}
	
	\


\begin{thebibliography}{10}
\bibitem{Aubin}
T. Aubin, \emph{Meilleures constantes dans le th\'eor\`eme d'inclusion de Sobolev et un th\'eor\`eme de Fredholm non lin\'eaire pour la transformation conforme de la courbure scalaire}, J. Funct. Anal. 32(2), 148--174 (1979).

\bibitem{BattagliaLopezSoriano}
L. Battaglia and R. L\'opez-Soriano, \emph{A double mean field equation related to a curvature prescription problem}, J. Differential Equations 269(4), 2705--2740 (2020).

\bibitem{BattagliaLopezSoriano2025}
L. Battaglia and R. L\'opez-Soriano, \emph{A mean field problem approach for the double curvature prescription problem}, Commun. Contemp. Math. 27(8), 2550008, 22 pp. (2025).

\bibitem{Brendle}
S. Brendle, \emph{A family of curvature flows on surfaces with boundary}, Math. Z. 241(4), 829--869 (2002).

\bibitem{CabreCsatoMas}
X. Cabr\'e, G. Csat\'o and A. Mas, \emph{Periodic solutions to integro-differential equations: variational formulation, symmetry, and regularity}, Comm. Partial Differential Equations 50(1--2), 162--210 (2025).

\bibitem{ChangLiu1996}
K. Chang and J. Liu, \emph{A prescribing geodesic curvature problem}, Math. Z. 223(2), 343--365 (1996).

\bibitem{Chang1993}
S. Chang, M. Gursky and P. Yang, \emph{The scalar curvature equation on 2- and 3-spheres}, Calc. Var. Partial Differ. Equ. 1(2), 205--229 (1993).

\bibitem{ChangYang1987}
S. Chang and P. Yang, \emph{Prescribing Gaussian curvature on $\mathbb{S}^2$}, Acta Math. 159(3--4), 215--259 (1987).

\bibitem{ChangYang1988}
S. Chang and P. Yang, \emph{Conformal deformation of metrics on $S^2$}, J. Differential Geom. 27(2), 259--296 (1988).

\bibitem{Chen-Ding1987}
W. Chen and W. Ding, \emph{Scalar curvatures on $\mathbb{S}^2$}, Trans. Amer. Math. Soc. 303(1), 365--382 (1987).

\bibitem{Chen-Ding1988}
W. Chen and W. Ding, \emph{A problem concerning the scalar curvature on $\mathbb{S}^2$}, Kexue Tongbao (English Ed.) 33(7), 533--537 (1988).

\bibitem{Chen-Li}
W. Chen and C. Li, \emph{Classification of solutions of some nonlinear elliptic equations}, Duke Math. J. 63(3), 615--622 (1991).

\bibitem{Chen-Li1995}
W. Chen and C. Li, \emph{A necessary and sufficient condition for the Nirenberg problem}, Commun. Pure Appl. Math. 48(6), 657--667 (1995).

\bibitem{Cruz-Ruiz}
S. Cruz-Bl\'azquez and D. Ruiz, \emph{Prescribing Gaussian and geodesic curvature on disks}, Adv. Nonlinear Stud. 18(3), 453--468 (2018).

\bibitem{Riviere}
F. Da Lio, L. Martinazzi and T. Rivi\`ere, \emph{Blow-up analysis of a nonlocal Liouville-type equation}, Anal. PDE 8(7), 1757--1805 (2015).

\bibitem{DingJostLiWang}
W. Ding, J. Jost, J. Li and G. Wang, \emph{The differential equation $\Delta u=8\pi-8\pi he^u$ on a compact Riemann surface}, Asian J. Math. 1(2), 230--248 (1997).

\bibitem{Glutsyuk}
 A. Glutsyuk, \emph{On curves with the Poritsky property}, J. Fixed Point Theory Appl. 24(2), Paper No. 35, 60 pp, (2022).

\bibitem{GuoHu1995}
K. Guo and S. Hu, \emph{Conformal deformation of metrics on subdomains of surfaces}, J. Geom. Anal. 5(3), 395--410 (1995).

\bibitem{Hu-Bartsch}
Z. Hu and T. Bartsch, \emph{The Morse property of limit functions appearing in mean field equations on surfaces with boundary}, J. Geom. Anal. 34(7), Paper No. 220 (2024).

\bibitem{Jost-Zhou-Zhu}
J. Jost, C. Zhou and M. Zhu, \emph{The qualitative boundary behavior of blow-up solutions of the super-Liouville equations}, J. Math. Pures Appl. (9) 101(5), 689--715 (2014).

\bibitem{Kazdan-Warner}
J. Kazdan and F. Warner, \emph{Curvature functions for compact 2-manifolds}, Ann. Math. 99(1), 14--47 (1974).

\bibitem{LiLiu2005}
Y. Li and P. Liu, \emph{A Moser-Trudinger inequality on the boundary of a compact Riemann surface}, Math. Z. 250(2), 363--386 (2005).

\bibitem{Li-Zhu}
Y. Li and M. Zhu, \emph{Uniqueness theorems through the method of moving spheres}, Duke Math. J. 80(2), 383--417 (1995).

\bibitem{LiuLi1993}
J. Liu and P. Li, \emph{Nirenberg's problem on the 2-dimensional hemi-sphere}, Internat. J. Math. 4(6), 927--939 (1993).

\bibitem{LiuHuang2005}
P. Liu and W. Huang, \emph{On prescribing geodesic curvature on $D^2$}, Nonlinear Anal. 60(3), 465--473 (2005).

\bibitem{Lopez-Soriano-Malchiodi-Ruiz}
R. L\'opez-Soriano, A. Malchiodi and D. Ruiz, \emph{Conformal metrics with prescribed Gaussian and geodesic curvatures}, Ann. Sci. \'Ec. Norm. Sup\'er. (4) 55(5), 1289--1328 (2022).

\bibitem{LopezSorianoRuiz2016}
R. L\'opez-Soriano and D. Ruiz, \emph{Prescribing the Gaussian curvature in a subdomain of $\mathbb S^2$ with Neumann boundary condition}, J. Geom. Anal. 26(1), 630--644 (2016).

\bibitem{Ruiz2024}
D. Ruiz, \emph{Conformal metrics of the disk with prescribed Gaussian and geodesic curvatures}, Math. Ann. 390(1), 1049--1075 (2024).

\bibitem{Struwe2005}
M. Struwe, \emph{A flow approach to Nirenberg's problem}, Duke Math. J. 128(1), 19--64 (2005).

\bibitem{Struwe2024}
M. Struwe, \emph{The prescribed curvature flow on the disc}, Ars Inven. Anal., Paper No. 5, 59 pp. (2024).

\bibitem{Wang2001}
G. Wang, \emph{Nirenberg's problem on domains in the 2-sphere}, J. Geom. Anal. 11(4), 717--726 (2001).


\bibitem{Yang2006}
Y. Yang, \emph{Extremal functions for Moser-Trudinger inequalities on 2-dimensional compact Riemannian manifolds with boundary}, Internat. J. Math. 17(3), 313--330 (2006).

\bibitem{Zhang2015}
H. Zhang, \emph{Prescribing the boundary geodesic curvature on a compact scalar-flat Riemann surface via a flow method}, Pacific J. Math. 273(2), 307--330 (2015).

\end{thebibliography}
	
	\providecommand{\bysame}{\leavevmode\hbox to3em{\hrulefill}\thinspace}
	\providecommand{\MR}{\relax\ifhmode\unskip\space\fi MR }
	\providecommand{\MRhref}[2]{%
		\href{http://www.ams.org/mathscinet-getitem?mr=#1}{#2}
	}
	\providecommand{\href}[2]{#2}

\end{document}